\documentclass{amsart}
\usepackage{latexsym,amssymb,amsmath,enumerate,graphicx,mathrsfs,booktabs,lscape}
\usepackage{float}
\usepackage{epstopdf}
\usepackage{bbm}
\usepackage{rotating}
\usepackage{rotate}
\usepackage{pst-node}
\usepackage{pstricks}
\usepackage{srcltx}
\usepackage{psfrag}
\usepackage[all,cmtip]{xy}
\DeclareMathAlphabet{\mathcalligra}{T1}{calligra}{m}{n}
\begin{document}

\theoremstyle{plain}
  \newtheorem{theorem}{Theorem}[section]
  \newtheorem{proposition}[theorem]{Proposition}
  \newtheorem{lemma}[theorem]{Lemma}
  \newtheorem{corollary}[theorem]{Corollary}
  \newtheorem{conjecture}[theorem]{Conjecture}
\theoremstyle{definition}
  \newtheorem{definition}[theorem]{Definition}
  \newtheorem{example}[theorem]{Example}
  \newtheorem*{example*}{Example}
  \newtheorem{observation}[theorem]{Observation}
  \newtheorem{convention}[theorem]{Convention}
  \newtheorem{observations}[theorem]{Observations}
  \newtheorem{question}[theorem]{Question}
  \newtheorem{mainquestion}{Question}
  \renewcommand{\themainquestion}{\Alph{mainquestion}}
  \newtheorem{problem}[theorem]{Problem}
  \newtheorem*{problem*}{Problem}
 \theoremstyle{remark}
  \newtheorem{remark}[theorem]{Remark}
\def\ff{{\mathbf f}}
\def\ZZ{{\mathbb Z}}
\def\QQ{{\mathbb Q}}
\def\CC{{\mathbb C}}
\def\NN{{\mathbb N}}
\def\comp{\mathrm{c}}
\def\Kbar{{\overline{K}}}
\def\type{\mathrm{type}}
\def\WH{\mathrm{WH}}
\def\isom{\cong}
\def\homotopic{\simeq}
\def\Lcm{{\mathrm{lcm}}}

\makeatletter
\newcommand{\superimpose}[2]{%
{\ooalign{$#1\@firstoftwo#2$\cr\hfil$#1\@secondoftwo#2$\hfil\cr}}}
\makeatother
\newcommand{\bigcupdot}{\mathpalette\superimpose{{\bigcup}{\cdot}}}
\newcommand{\cupdot}{\mathpalette\superimpose{{\cup}{\cdot}}}
\newcommand{\epsilonA}{\delta}
\newcommand{\deltaA}{\epsilon}
\def\Y{{\mathbf Y}}
\def\SY{{S\mathbf Y}}
\def\YF{{\mathbf Y F}}
\def\il{{\text{Smith-eigenvalue}}}
\def\pd{{\frac{\partial}{\partial p_1}}}
\def\K{{\mathbb C}}
\def\E{{\mathcal E}}
\def\A{{\mathscr A}}
\def\P{{\mathscr{P}}}
\def\z{{\mathrm{erase}}}
\def\zero{{\mathrm{zero}}}
\def\Q{{\mathbf Q}}
\def\GL{\mathrm{GL}}
\def\symm{\mathfrak{S}}
\def\codim{{\mathrm{codim}}}
\def\bmu{{\boldsymbol\mu}}
\def\Ell{\ell}
\def\lex{\mathrm{lex}}
\def\word{\mathrm{word}}
\def\Col{\mathrm{Col}}
\def\tilde{\widetilde}
\def\CP{\Pi}
\def\diff{\setminus}
\def\Par{P(W,R)}
\def\coker{\mathrm{coker}}
\def\im{\mathrm{im}}
\def\diag{\mathrm{diag}}
\def\rank{\mathrm{rank}}
\def\Res{\mathrm{Res}}
\def\Ind{\mathrm{Ind}}
\def\triv{{\mathbbm{1}}}
\def\Irr{\mathrm{Irr}}
\def\ones{{\mathrm{ones}}}
\def\Gal{{\mathrm{Gal}}}
\def\Fix{\mathrm{Fix}}
\def\Pw{\mathcal P_W}
\def\type{\mathrm{type}}
\def\Class{{\mathbf{Cl}}}
\def\CoShe{\mathrm{CoShe}}
\def\sd{\mathrm{sd}}
\def\pDelta{\Delta^{U}}
\def\lk{\mathrm{lk}}
\def\L{\mathscr L}
\def\calF{\mathcal{F}}
\def\calP{\mathcal{U}}
\def\calP{\mathcal{U}}
\def\calB{\mathcal{B}}
\def\calT{T}
\def\calJ{\mathcal{J}}
\def\affine{\mathrm{{affine}}}
\def\pPi{\Pi^U}
\def\Stab{\mathrm{Stab}}
\def\St{\mathrm{St}}
\def\Supp{\mathrm{Supp}}
\def\Span{\mathrm{Span}}
\def\Def{\stackrel{\textbf{def}}{=}}
\def\GDef{\stackrel{\phantom{\text{def}}}{=}}
\def\Dash{\text{\textbf{---}}}
\def\Conv{\mathrm{Conv}_{\mathbb R}}
\def\AffSpan{\mathrm{AffSpan}}
\def\LinSpan{\mathrm{Span}}
\def\Hull{\mathrm{Hull}}
\def\Face{\mathrm{Face}}
\def\Sd{\mathrm{Sd}}
\def\Hilb{\mathrm{Hilb}}
\def\Hom{\mathrm{Hom}}
\def\Sh{\mathrm{Sh}}
\renewcommand{\wp}{p}
\newcommand{\Wedge}{{\textstyle{\bigwedge}}}
\newcommand{\tableskip}{}
\newcommand{\circled}[1]{{\raisebox{.5pt}{\textcircled{\raisebox{-.9pt}{#1}}}}}
\title[Eigenspace arrangements]{Eigenspace arrangements of reflection groups}
\author{Alexander R. Miller}
\email{mill1966@math.umn.edu}
\address{ School of Mathematics\\
University of Minnesota\\
Minneapolis, MN 55455} 
\begin{abstract}  
The lattice of intersections of reflecting hyperplanes of a complex reflection group $W$ 
may be considered as the poset of $1$-eigenspaces of the elements of $W$.  
In this paper we replace $1$ with an arbitrary eigenvalue and study the topology and 
homology representation of the resulting poset.  
After posing the main question of whether this poset is shellable, 
we show that all its upper intervals are geometric lattices, and then answer the 
question in the affirmative for the infinite family $G(m,p,n)$ 
of complex reflection groups, and the first 31 of the 34 exceptional groups, by constructing 
CL-shellings.  In addition, we completely determine when these eigenspaces of $W$ form a $K(\pi,1)$ (resp. free) 
arrangement.

For the symmetric group, we also extend the combinatorial model 
available for its intersection lattice to all other eigenvalues by introducing 
\emph{balanced partition posets}, 
presented as particular upper order ideals of Dowling lattices, study 
the representation afforded by the top (co)homology group, and 
give a simple map to the posets of pointed $d$-divisible partitions.
\end{abstract}
\thanks{Partially supported by NSF grant DMS-1001933}
 \keywords{Cohen-Macaulay,
 $d$-divisible partitions, 
 Dowling lattices, 
 Eilenberg-MacLane spaces,
 homology,
 reflection groups,
 ribbon representations, 
 Specht modules,
 subspace arrangements
 }
\maketitle

\section{Main question and results}\label{Section:Introduction}
Let $V$ denote an $n$-dimensional $\mathbb C$-vector space.  A \emph{reflection} 
in $V$ is any non-identity element $r$ in $\GL(V)$ of finite order that fixes some hyperplane 
$H_r$, and a finite subgroup $W$ of $\GL(V)$ is called a \emph{reflection group} 
if it is generated by reflections.  

\begin{example*}\label{Example:Sn}
The action of the symmetric group $\mathfrak S_n$ on $[n]:=\{1,2,\ldots, n\}$ gives 
rise to a faithful action on $\mathbb C^n$ via $\sigma(e_i)=e_{\sigma(i)}$, where 
$e_1,e_2,\ldots, e_n$ denote the standard basis vectors.  Since the transpositions 
act as reflections and generate the group, this representation realizes 
$\mathfrak S_n$ as a reflection group in $\GL(\mathbb C^n)$.  We shall 
refer to it as the \emph{defining representation}.
\end{example*}

For an element $g\in W$ and root of unity $\zeta$, let $V(g,\zeta)$ denote the 
$\zeta$-eigenspace of $g$ in $V$.  
Define $E(W,\zeta)$ to be the $W$-poset (partially ordered set) of all 
such $\zeta$-eigenspaces $\{V(g,\zeta)\}_{g\in W}$ ordered by reverse inclusion, with 
$W$-action given by $h\cdot V(g,\zeta)=V(hgh^{-1},\zeta)$.  
Choosing $\zeta=1$ recovers the lattice $\L_W$ of intersections of reflecting hyperplanes 
for $W$; see~\cite[Lemma 4.4]{Orlik}.  The minimal elements of $E(W,\zeta)$ 
(i.e., inclusion-maximal $\zeta$-eigenspaces of $W$) 
are the focus of Springer's theory of regular elements~\cite{Springer}, and each has dimension equal to 
the number $a(d)$ of \emph{degrees} $d_1,d_2,\ldots, d_n$ 
of $W$ that are divisible by $d$, the order of $\zeta$; see Proposition~\ref{Prop:Springer:Transitive} below.  
When $W$ is crystallographic, the poset $E(W,\zeta)$ itself 
appears in Brou\'e, Malle, and Michel's $\Phi$-Sylow theory~\cite{BMM}.

\begin{example*}
The degrees of $\mathfrak S_4$ are $1,2,3,4$, and those of the dihedral group $I_2(4)$ (whose cardinality is $8$) are $2,4$.  Hence the following table.

\begin{center}\begin{tabular}{lccccc}
\toprule
$d$ & $1$ & $2$ & $3$ & $4$ & $\geq 5$ \\
\midrule
$a(d)$ for $\mathfrak S_4$ & 4 & 2 & 1 & 1 & 0\\

$a(d)$ for $I_2(4)$ & 2 & 2 & 0 & 1 & 0\\
\bottomrule
\end{tabular}
\end{center}
When $a(d)=0$, the poset $E(W,\zeta)$ has only one element, the $0$-dimensional subspace.  For all other listed cases, we have 
provided the Hasse diagram of $E(W,\zeta)$ in Figures~\ref{Figure:A1}-\ref{Figure:B}, labeling each eigenspace 
by linear equations that define it, and adorning maximal eigenspaces with an additional 
integer label.  For example, the maximal eigenspace $E_4$ labeled \circled{$4\hskip .015cm$} in Figure~\ref{Figure:A3} is 
the $\zeta$-eigenspace for the 3-cycle permutation $g=(1,4,3)$.
Note that the $d=1,2$ cases for $I_2(4)$ coincide, since the scalar matrix $-1$ is an element of $I_2(4)$; see Corollary~\ref{Cor:Springer} below.
\end{example*}

\begin{figure}[hbt]
\centering
\includegraphics[scale=1.1]{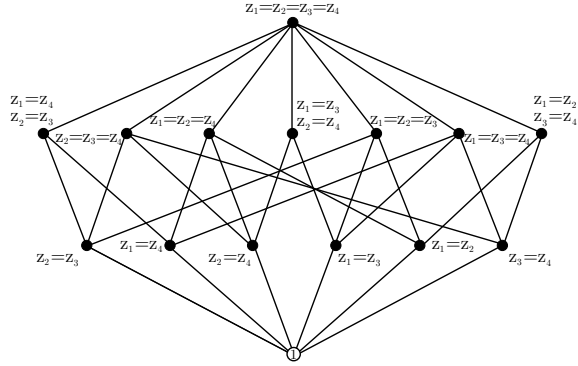}
\caption{The poset $E(\mathfrak S_4,1)=\L_{\mathfrak S_4}$ of $1$-eigenspaces for $\mathfrak S_4\subset\GL(\mathbb C^4)$.}\label{Figure:A1}
\end{figure}

\begin{figure}[hbt]
\centering
\includegraphics[width=\textwidth]{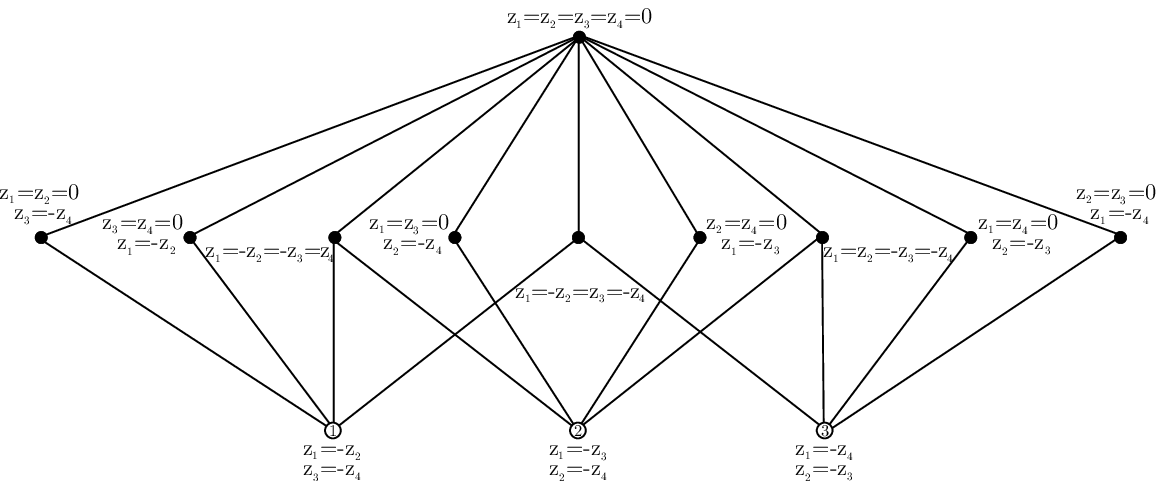}
\caption{The poset $E(\mathfrak S_4,-1)$ of $(-1)$-eigenspaces for $\mathfrak S_4\subset\GL(\mathbb C^4)$.}\label{Figure:A2}
\end{figure}

\begin{figure}[H]
\centering
\includegraphics[scale=1.1]{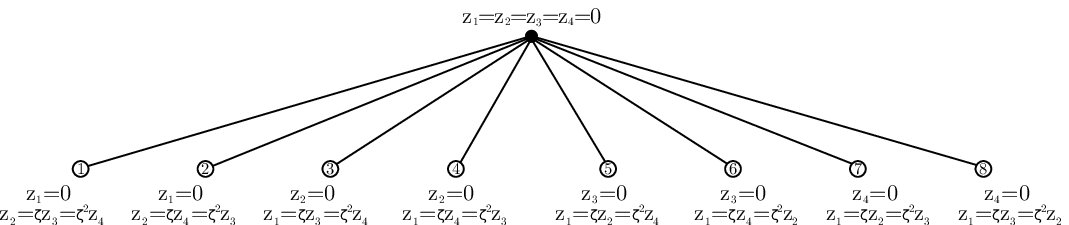}
\caption{The poset $E(\mathfrak S_4,\zeta_3)$ of $\zeta$-eigenspaces for $\zeta$ any primitive 3rd root of unity and $\mathfrak S_4\subset\GL(\mathbb C^4)$.}\label{Figure:A3}
\end{figure}

\begin{figure}[H]
\centering
\includegraphics[scale=1.1]{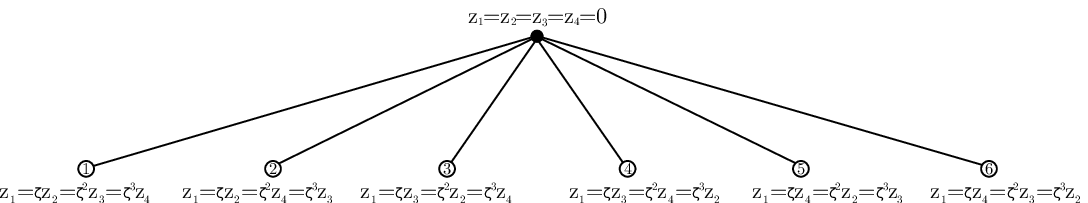}
\caption{The poset $E(\mathfrak S_4,\zeta)$ of $\zeta$-eigenspaces for $\zeta$ any primitive 4rd root of unity and 
$\mathfrak S_4\subset\GL(\mathbb C^4)$.}\label{Figure:A4}
\end{figure}

\begin{figure}[H]
\centering
\includegraphics[scale=1.1]{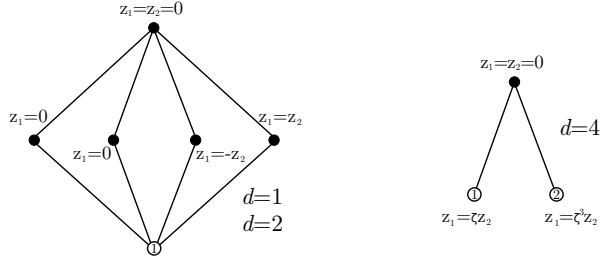}
\caption{The poset $E(W,\zeta)$ of $\zeta$-eigenspaces for the dihedral group $I_2(4)$ of order $8$, 
and $\zeta$ of order $d=1,2,4$.}\label{Figure:B}
\end{figure}

This paper concerns the following problem of Lehrer and Taylor~\cite[Problem 7]{Lehrer}.
\begin{problem*}[Lehrer-Taylor]  
Study connections between the structure and representations of $W$ and the 
topology of the posets $E(W,\zeta)$.
\end{problem*}

In the case of $E(\mathfrak S_n,1)$, Stanley~\cite{Stanley:Aspects} used the work of Hanlon~\cite{Hanlon:Fix} to obtain 
an explicit expression for the top homology representation, which combined with Klyachko's work  
to establish a connection with the \emph{Lie representation} $\mathrm{Lie}_n$; see~\cite{Klyachko,Joyal}.  
Lehrer and Solomon~\cite{Lehrer:Solomon} 
extended Stanley's result 
and conjectured an analogue for all other finite Coxeter groups.  Hanlon's work~\cite{Hanlon} on \emph{Dowling lattices} 
gives an alternate extension and provides the top homology character of $E(W,1)$ for $W=G(m,1,n)$, the 
complex reflection group of $n\times n$ monomial matrices whose nonzero entries are $m^{\text{th}}$ roots of unity.  
N.~Bergeron~\cite{Bergeron} gave a type-$B$ analogue of the abovementioned Lie correspondence that was subsequently generalized to 
Dowling lattices by Gottlieb and Wachs~\cite{GottliebWachs}.  In addition to the above, analogous results have been 
obtained for various subposets of $E(\mathfrak S_n,1)$; see~\cite{Wachs:Tools}.  However, the author is unaware of any analogous 
results for $\zeta\neq 1$, even for $\mathfrak S_n$.    

\begin{mainquestion}\label{Question:Main:Shelling}
Is the following true for every $\zeta$ and every reflection group $W$?\\
\begin{tabular}{ll}
{\bf (Weak version)} & $E(W,\zeta)$ is homotopy Cohen-Macaulay.\\
{\bf (Strong version)} & $\widehat{E(W,\zeta)}$ is CL-shellable.
\end{tabular}
\end{mainquestion}

\noindent
The fact that CL-shellability implies homotopy CM-ness is well-known; see \S\ref{Section:Preliminaries}. \medskip

Our first main result answers affirmatively the strong version of Question~\ref{Question:Main:Shelling}
for all irreducible complex reflection groups except types $E_6,E_7,E_8$ 
(which, in the Shephard-Todd classification, are $G_{35},G_{36},G_{37}$).
Since the question reduces (see \S\ref{Section:General} below) to the case where $W$ acts irreducibly, 
only these three Weyl groups remain.

\begin{theorem}\label{Thm:Main:Shelling}
Let $W=G(m,p,n)$ or one of the first 31 exceptional groups $G_4,G_5,\ldots, G_{34}$, 
and let $\zeta$ be a primitive $d^{\text{th}}$ root of unity.
Then $\widehat{E(W,\zeta)}$ is CL-shellable.  In particular, the order complex $\Delta(\overline{E(W,\zeta)})$ is 
a pure bouquet of spheres.
\end{theorem}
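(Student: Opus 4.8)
The plan is to build explicit CL-labelings of $\widehat{E(W,\zeta)}$ case by case, exploiting the structure already established in the excerpt: every upper interval of $E(W,\zeta)$ is a geometric lattice, so the real work happens at the bottom, among the maximal $\zeta$-eigenspaces (the atoms of $\widehat{E(W,\zeta)}$) and the way they intersect. First I would handle the infinite family $W=G(m,p,n)$. Here Springer theory (Proposition~\ref{Prop:Springer:Transitive} and Corollary~\ref{Cor:Springer}) identifies the maximal $\zeta$-eigenspaces concretely as the fixed spaces of regular-like elements, and one finds a combinatorial model for $E(G(m,p,n),\zeta)$ as an upper order ideal of a Dowling-type lattice — in the symmetric-group case this is exactly the ``balanced partition poset'' promised in the abstract, realized inside a Dowling lattice. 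The key step is to transport the EL- or CL-shelling already known for Dowling lattices (Hanlon~\cite{Hanlon}, Gottlieb-Wachs~\cite{GottliebWachs}) to this upper order ideal. Since an upper order ideal of a CL-shellable poset need not itself be CL-shellable in general, one cannot merely restrict; instead I would produce a recursive atom ordering (equivalently a CL-labeling) adapted to the balanced/divisibility condition, using the fact that the intervals $[x,\hat 1]$ are geometric to get the labeling above each atom for free, and then checking the compatibility conditions along chains through $\hat 0$.

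The main obstacle, and where I expect most of the effort to go, is choosing the order on the atoms (maximal eigenspaces) so that the recursive atom ordering condition holds — that for atoms $a<a'$ and any $y>a,a'$ there is an atom $a''\le a'$, $a''<$ something $\le y$ with $a''$ earlier than $a'$. Concretely this amounts to a careful combinatorial bookkeeping of which $\zeta$-eigenspaces of $G(m,p,n)$ sit below a given intersection, organized by an appropriate lexicographic-type order on the underlying balanced set partitions decorated by $m/d$-th roots of unity; the parameter $p$ (the subgroup condition on the product of nonzero entries) introduces the extra wrinkle that not every decorated balanced partition actually occurs, so the atom order must be chosen to survive passing to $G(m,p,n)$ from $G(m,1,n)$. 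Once the recursive atom ordering is in place, Björner-Wachs' machinery yields the CL-shelling, and purity follows from the dimension formula $\dim V(g,\zeta)=a(d)$ for maximal eigenspaces together with the fact that the geometric lattices $[x,\hat 1]$ are graded.

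For the 31 exceptional groups $G_4,\dots,G_{34}$ I would argue by a finite computation, which is legitimate since each poset $E(W,\zeta)$ is finite and there are finitely many relevant $d$ (only divisors of the degrees give $a(d)>0$). For each such pair $(W,\zeta)$ one enumerates the maximal $\zeta$-eigenspaces from the character table / degrees via Springer's results, builds the finite poset $\widehat{E(W,\zeta)}$, and verifies CL-shellability — most cheaply by exhibiting a recursive atom ordering, which for these small posets can be found and certified by computer (e.g.\ the ``$EL$-labeling search'' or a recursive-atom-ordering check). The geometric-lattice property of upper intervals again reduces the search to ordering the atoms. Finally, the ``pure bouquet of spheres'' conclusion is immediate: CL-shellability of $\widehat{E(W,\zeta)}$ implies $\Delta(\overline{E(W,\zeta)})$ is shellable hence homotopy equivalent to a wedge of spheres, and purity of the shelling (from the common dimension $a(d)$ of maximal elements plus gradedness of the geometric upper intervals) makes all spheres the same dimension.
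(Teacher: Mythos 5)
Your overall strategy coincides with the paper's: use the geometric-lattice structure of the upper intervals $[x,\hat{1}]$ (Theorem~\ref{Thm:Main:Geometric}) together with Sagan's Lemma~\ref{Sagan:Lemma} to reduce CL-shellability to finding an atom ordering satisfying only condition~\eqref{RAO:2} of Definition~\ref{RAO}; realize $E(G(m,p,n),\zeta)$ inside a Dowling-type intersection lattice (Theorem~\ref{Cor:Dowling}); order the atoms lexicographically by a combinatorial label; and dispose of the exceptional groups by finite verification. You also correctly identify that one cannot simply restrict a shelling of the ambient Dowling lattice to the upper order ideal, and that purity follows from $\dim V(g,\zeta)=a(d)$ for maximal eigenspaces plus gradedness of the geometric upper intervals.

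The genuine gap is that the heart of the argument is only announced, not carried out. The paper's \S\ref{Section:Maximal:Eigenspaces} first proves that each maximal eigenspace $E$ determines a \emph{unique} set of $a(d)$ disjoint $\ell(d)$-cycles (Lemma~\ref{Lemma:Eigenspace:Equality}, Propositions~\ref{Prop:Unique:Sets} and~\ref{Prop:Smallest}) — without this uniqueness the label $\word(E)$ is not even well defined — and \S\ref{Section:Shelling} then verifies condition~\eqref{RAO:2} by an explicit case analysis (Cases 1, 2a, 2b of Theorem~\ref{Theorem:Shelling}), in each case exhibiting a reflection $r$ and an earlier atom $C$ with $A\cap B\subseteq B\cap H_r\subseteq C$. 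Your proposal says this is ``where I expect most of the effort to go'' but supplies neither the well-definedness of the labels nor the verification, so as written it is a correct plan rather than a proof. A secondary difference: for the exceptionals you propose brute-force computation throughout, whereas the paper handles most cases uniformly via regular-element theory (Lemma~\ref{Regular:Lemma} gives $E\cap H_r=V(gr,\zeta)$, whence connectivity and CL-shellability when $a(d)=2$, Corollaries~\ref{Connected:Corollary} and~\ref{Shellable:Cor:2}), resorting to {\sc Magma} only for the handful of cases in Table~\ref{Table:Comp}; pure computation is legitimate in principle but, as the paper's own recourse to randomized searches for $G_{33}$ suggests, certifying a recursive atom ordering for the larger posets is not a triviality one can wave at.
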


Our second main result is central to the first.

\begin{theorem}\label{Thm:Main:Geometric}
Let $W$ be a reflection group and let $\zeta$ be a primitive $d^{\text{th}}$ root of unity.  Then
\begin{equation}
E(W,\zeta)=\{E\cap X\ :\ X\in \L_W\ \text{and}\ E\in E(W,\zeta)\ \text{maximal}\}.
\label{Main:Geometric:Equation}
\end{equation}
In particular, each upper interval $[x,\hat{1}]$ in $E(W,\zeta)$ is a geometric lattice.
\end{theorem}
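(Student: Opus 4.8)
The plan is to reduce the whole statement to a single inclusion-maximal $\zeta$-eigenspace and the reflection group that the theory of Springer and Lehrer attaches to it. Fix a primitive $d^{\text{th}}$ root of unity $\zeta$. By Proposition~\ref{Prop:Springer:Transitive} the inclusion-maximal $\zeta$-eigenspaces of $W$ form a single $W$-orbit, each of dimension $a(d)$; fix one of them, $E$. Both sides of~\eqref{Main:Geometric:Equation} are $W$-stable (the right-hand side because $\L_W$ is $W$-stable and $w(E\cap X)=wE\cap wX$), so the right-hand side is the $W$-orbit of $\{E\cap X : X\in\L_W\}$. Since the set of $\zeta$-eigenspaces of $W$ is finite, each of them lies inside some inclusion-maximal one; hence~\eqref{Main:Geometric:Equation} is equivalent to the single assertion
\[
\{\ \zeta\text{-eigenspaces of }W\text{ contained in }E\ \}\ =\ \{\ E\cap X : X\in\L_W\ \}.
\]

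Next I would bring in the reflection subquotient. With $N=N_W(E)$ and $Z=C_W(E)$, the Springer--Lehrer theory (see \cite{Springer,Lehrer}) says that $\overline W:=N/Z$ acts faithfully on $E$ as a complex reflection group whose reflecting hyperplanes are exactly the $E\cap H$ for $H$ a reflecting hyperplane of $W$ with $E\not\subseteq H$. Hence the intersection lattice of $\overline W$ on $E$ is precisely $\{E\cap X : X\in\L_W\}$, and this lattice is \emph{geometric}, being the intersection lattice of a central hyperplane arrangement. Granting the displayed assertion, this gives~\eqref{Main:Geometric:Equation}; and it also gives the ``in particular'', since the upper interval $[x,\hat 1]$ of $E(W,\zeta)$ consists of the $\zeta$-eigenspaces of $W$ contained in the subspace $x$, which --- applying the displayed assertion (valid for every inclusion-maximal $\zeta$-eigenspace, all being $W$-conjugate) to one containing $x$ --- is the set of flats of $\overline W$ contained in $x$, i.e.\ an upper interval in the geometric lattice $\{E\cap X\}$, and every interval of a geometric lattice is geometric (see \S\ref{Section:Preliminaries}).

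It remains to prove the displayed equality, which splits into two inclusions. For ``$\supseteq$'': given $X\in\L_W$, I want an element of $W$ with $\zeta$-eigenspace $E\cap X$. Write $E=V(c,\zeta)$ with $c$ a $\zeta$-regular element; then $c\in N$ and its image $\overline c\in\overline W$ acts on $E$ as the scalar $\zeta$. Multiplying $\overline c$ by an element of the parabolic subgroup of $\overline W$ that fixes $E\cap X$ pointwise and has no eigenvalue $1$ on a complement of $E\cap X$ in $E$ produces $\overline g\in\overline W$ with $\zeta$-eigenspace exactly $E\cap X$ on $E$; a suitable lift of $\overline g$ to $W$ --- one whose $\zeta$-eigenspace is contained in $E$ --- then has $\zeta$-eigenspace $E\cap X$. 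For ``$\subseteq$'': given $U=V(g,\zeta)\subseteq E$, set $X:=V^{C_W(U)}\in\L_W$, the parabolic closure of $U$; then $U\subseteq E\cap X$, and $g$ normalizes $C_W(U)=C_W(X)$, hence preserves $X$ and descends to the reflection subquotient $N_W(X)/C_W(X)$ acting on $X$, where $U$ appears as a $\zeta$-eigenspace. Since moreover $U$ and $E\cap X$ are both \emph{maximal} $\zeta$-eigenspaces of $N_W(X)/C_W(X)$ on $X$ (note that $X$ is the parabolic closure of each), and maximal $\zeta$-eigenspaces of a reflection group form one orbit (Proposition~\ref{Prop:Springer:Transitive}), the containment $U\subseteq E\cap X$ forces $U=E\cap X$.

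The two places where the work really sits are the compatibilities of Springer's theory with parabolic subgroups that these arguments invoke: (i) a \emph{lifting lemma} --- that the element $\overline g\in\overline W$ above admits a lift to $W$ whose $\zeta$-eigenspace is contained in $E$, equivalently that every $E\cap X$ lies in $E(W,\zeta)$; and (ii) a \emph{dimension count} --- that any $\zeta$-eigenspace $Y$ of $W$ is a maximal $\zeta$-eigenspace of the parabolic subquotient $N_W(\overline Y)/C_W(\overline Y)$ acting on its parabolic closure $\overline Y$, i.e.\ $\dim Y$ equals the number of degrees of that subquotient divisible by $d$ --- which must be verified for every flat, not only generic ones. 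I would isolate both as preliminary lemmas; once they are in hand, the argument above is essentially formal.
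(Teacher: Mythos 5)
Your reduction is sound as far as it goes: passing to a single maximal eigenspace $E$ via transitivity, identifying $\{E\cap X : X\in\L_W\}$ with the intersection lattice of the Lehrer--Springer subquotient $\overline{N}=N_W(E)/Z_W(E)$ acting on $E$, and deducing the ``in particular'' from the fact that intersection lattices of hyperplane arrangements are geometric --- this is exactly the content of Proposition~\ref{Prop:Maximal} together with Theorem~\ref{Thm:Lehrer:Springer}\eqref{LS:Hyperplanes}. (Your ``$\subseteq$'' direction is also correct but needlessly elaborate: if $Y=V(h,\zeta)\subseteq E=V(g,\zeta)$ then $Y=E\cap V(g^{-1}h,1)$ directly, with no need for parabolic closures or your dimension-count lemma (ii).)

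The genuine gap is your lemma (i), the ``lifting lemma.'' As you yourself note, it is equivalent to the assertion that every $E\cap X$ lies in $E(W,\zeta)$ --- but that assertion \emph{is} the theorem (it is condition~\eqref{Prop:General:2} of Proposition~\ref{Prop:Maximal}), so deferring it to a preliminary lemma reduces the theorem to itself. The difficulty is precisely that an arbitrary lift $g\in N_W(E)$ of $\overline{g}\in\overline{N}$ satisfies only $V(g,\zeta)\cap E = E\cap X$; the eigenspace $V(g,\zeta)$ may stick out of $E$, and no general principle of Springer--Lehrer theory forces the existence of a lift with $V(g,\zeta)\subseteq E$. The paper has no uniform argument here: for $W=G(m,p,n)$ it constructs the required element $h$ with $V(h,\zeta)=V(g,\zeta)\cap H$ by explicit surgery on colored cycles (Proposition~\ref{Prop:colored:ideal}, with separate cases according to how $H$ meets the cycle decomposition), and for the exceptional groups it uses a determinant argument valid when $d$ is regular and $a(d)=2$ (Lemma~\ref{Regular:Lemma}) together with computer verification of the remaining cases. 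Without supplying a proof of your lemma (i) --- which would amount to redoing this case analysis or finding a new uniform argument --- the proposal does not establish the theorem.
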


\noindent
We will see in Section~\ref{Section:General} below that this theorem also has interesting consequences 
of its own, namely, that $E(W,\zeta)$ depends only on $d$ (see Theorem~\ref{Thm:Strong:Independence} for a much sharper result) 
and is built from copies (conjugates) of the intersection 
lattice $\L_{W(d)}$ of Lehrer and Springer's reflection subquotient $W(d)$ (see Proposition~\ref{Thm:Lehrer:Springer}).

 \medskip

We also answer the natural $K(\pi,1)$ and freeness questions for eigenspaces,  
that is, we determine exactly when the complement of the proper $\zeta$-eigenspaces of a
reflection group $W$ is $K(\pi,1)$, and 
exactly when the $\zeta$-eigenspaces of codimension 1 form a free (hyperplane) arrangement.   
For brevity, define $\mathcal A(W,d)$ to be the set of all 
\emph{proper} $\zeta$-eigenspaces of the reflection group $W$ \emph{that are maximal under inclusion}, 
for a fixed but arbitrary choice of primitive $d^{\text{th}}$ root of unity $\zeta$, so that 
the complement of the proper $\zeta$-eigenspaces of $W$ is
\begin{equation}\mathcal M(W,d)=\mathbb C^n\diff\textstyle{\bigcup_{X\in\mathcal A(W,d)}}X.\end{equation}
The two questions were resolved affirmatively for the highly nontrivial case of the \emph{reflection arrangement} $\mathcal A(W,1)$ by 
Bessis and Terao, respectively, and in contrast to Question~\ref{Question:Main:Shelling}, the non-classical 
case $\mathcal A(W,d)\neq\mathcal A(W,1)$ is surprisingly simple.  
\begin{theorem}\label{Thm:Main:KPi1}
Let $W\subset \GL(\mathbb C^n)$ be a complex reflection group and let $d>0$.\\Then the following are equivalent:
\begin{enumerate}[(i)]
\item  One has $a(d)=n$ or $a(d)=n-1$.  That is, $\mathcal A(W,d)$ contains a hyperplane.
\item  The arrangement $\mathcal A(W,d)$ is a free hyperplane arrangement.
\item  The complement $\mathcal M(W,d)$ is a $K(\pi,1)$ space.
\end{enumerate}
Furthermore, $\mathcal A(W,d)=\mathcal A(W,1)$ if and only if $a(d)=n$.
\end{theorem}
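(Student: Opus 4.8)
The plan is to run a short cycle of implications $(ii)\Rightarrow(i)$, $(i)\Rightarrow(ii)$, $(i)\Rightarrow(iii)$, $\neg(i)\Rightarrow\neg(iii)$, using the integer $a(d)$ as the pivot, and then harvest the last sentence. Throughout I may assume $W$ acts irreducibly (Section~\ref{Section:General}), and I record three facts for repeated use: every inclusion--maximal $\zeta$-eigenspace has dimension $a(d)$ (Proposition~\ref{Prop:Springer:Transitive}); when $a(d)<n$ the set $\mathcal A(W,d)$ is exactly the set of inclusion--maximal $\zeta$-eigenspaces, so all of its members have codimension $n-a(d)$; and $a(d)=n$ holds if and only if $\zeta I\in W$.

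The two ``soft'' implications are quick. For $(ii)\Rightarrow(i)$: if $\mathcal A(W,d)$ is a hyperplane arrangement then each member is a hyperplane, and since (for $a(d)<n$) each member has codimension $n-a(d)$, this forces $a(d)=n-1$; the only remaining alternative is $a(d)=n$, so $(i)$ holds. For $\neg(i)\Rightarrow\neg(iii)$: suppose $a(d)\le n-2$. Then every $X\in\mathcal A(W,d)$ has real codimension $\ge 4$ in $V$, so $\mathcal M(W,d)$ is $V$ with a finite union of linear subspaces of real codimension $\ge 3$ removed; a transversality/general-position argument gives $\pi_1(\mathcal M(W,d))=0$. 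On the other hand $\mathcal A(W,d)\ne\emptyset$, so applying the Goresky--MacPherson formula to an atom of the intersection lattice (whose open interval below is empty) gives $\widetilde H_k(\mathcal M(W,d);\mathbb Q)\ne 0$ for $k=2(n-a(d))-1\ge 3$; hence $\mathcal M(W,d)$ is not contractible. A simply connected $K(\pi,1)$ is contractible, so $\mathcal M(W,d)$ is not a $K(\pi,1)$.

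The substance is $(i)\Rightarrow(ii)$ and $(i)\Rightarrow(iii)$, which I split on $a(d)$. If $a(d)=n$, then $\zeta I\in W$, so by Theorem~\ref{Thm:Main:Geometric} (equivalently Corollary~\ref{Cor:Springer}) the maximal $\zeta$-eigenspace is $V$ and $E(W,\zeta)=\{V\cap X:X\in\L_W\}=\L_W$ as posets; comparing atoms gives $\mathcal A(W,d)=\mathcal A(W,1)$, the reflection arrangement of $W$, which is free and whose complement is a $K(\pi,1)$ by the theorems of Terao and Bessis respectively (Deligne in the Coxeter case) — this also settles the $\Leftarrow$ half of the last sentence. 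If $a(d)=n-1$, I would pin down $\mathcal A(W,d)$ via the Shephard--Todd classification. For $W=G(m,p,n)$ every element is monomial, and a short analysis of when a monomial matrix $g$ has $\rank(g-\zeta I)=1$ shows that the only $(n-1)$-dimensional $\zeta$-eigenspaces arise from diagonal $g$ (giving a coordinate hyperplane $\{x_i=0\}$) or from $g$ with a single $2$-cycle (giving a hyperplane $\{\zeta x_i=\beta x_j\}$); for $n\ge 3$ the first type forces $d\mid m$ and the second type is not realizable inside $W$ once $a(d)=n-1$ (the required product of matrix entries would have forbidden order), so $\mathcal A(W,d)$ is exactly the Boolean arrangement $\{\{x_i=0\}:i\in[n]\}$, which is supersolvable; and for $n\le 2$, $\mathcal A(W,d)$ is a central arrangement of rank $\le 2$, again supersolvable. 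For the exceptional groups, inspection of the degree lists shows that in rank $\ge 3$ the equality $a(d)=n-1$ occurs only for $W=G_{25}$ with $d\in\{2,6\}$ (the rank-$2$ groups $G_4,\dots,G_{22}$ being automatic), and these two arrangements are dispatched by direct computation. In every case $\mathcal A(W,d)$ is supersolvable, hence free (Jambu--Terao) and fiber-type (Falk--Randell), so its complement is a $K(\pi,1)$; moreover $\mathcal A(W,d)$ is then a proper subarrangement of, or disjoint from, $\mathcal A(W,1)$ (for $G_{25}$: the Hessian arrangement $\mathcal A(G_{25},1)$ is not supersolvable), while if $a(d)\le n-2$ then $\mathcal A(W,d)$ contains no hyperplane although $\mathcal A(W,1)\ne\emptyset$ — so $\mathcal A(W,d)=\mathcal A(W,1)$ forces $a(d)=n$, completing the last sentence.

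The main obstacle is the $a(d)=n-1$ case: there is no uniform argument, so one is pushed into the classification — the monomial-matrix bookkeeping for $G(m,p,n)$, where the delicate point is distinguishing $\zeta$-eigenspaces of elements of $W$ from $\zeta$-eigenspaces of pseudo-reflections that lie only in $\GL(V)$, together with the (fortunately tiny) finite check for the exceptional groups. The payoff, as the statement advertises, is that $\mathcal A(W,d)$ always turns out supersolvable, which at one stroke yields freeness, the $K(\pi,1)$ property, and its separation from the reflection arrangement.
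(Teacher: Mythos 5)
Your overall architecture matches the paper's: reduce to irreducible $W$, dispose of $a(d)=n$ via Corollary~\ref{Cor:Springer} together with the theorems of Terao and Bessis, rule out $a(d)\le n-2$ by the principle that a $K(\pi,1)$ complement forces a hyperplane (the paper cites Bj\"orner--Welker here; your simply-connectedness plus Goresky--MacPherson argument is a correct proof of that fact), and classify the $a(d)=n-1$ cases. Your monomial-matrix computation for $G(m,p,n)$ is a legitimate alternative to the paper's route through Proposition~\ref{Prop:Springer:Intersection} and the basic invariant $(z_1\cdots z_n)^{m/p}$, and it reaches the same Boolean arrangement. (Do note that the irreducible list also contains $W(A_3)$ with $d=2$; your split into ``$G(m,p,n)$'' and ``exceptionals'' catches it only via the coincidence $A_3\cong D_3\cong G(2,2,3)$, whereas the paper lists it as a separate case.)

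The genuine gap is your blanket claim that ``in every case $\mathcal A(W,d)$ is supersolvable,'' from which you deduce both freeness and the $K(\pi,1)$ property at one stroke. This fails at exactly the one nontrivial exceptional case: for $W=G_{25}$ and $d\in\{2,6\}$ the arrangement $\mathcal A(W,d)$ is the reflection arrangement of $G(3,3,3)$ (the zero locus of the degree-$9$ invariant $(z_1^3-z_2^3)(z_1^3-z_3^3)(z_2^3-z_3^3)$), and this arrangement is \emph{not} supersolvable: no rank-$2$ flat is modular --- for instance, with $\omega=e^{2\pi i/3}$, the flats $\{z_1=z_2=z_3\}$ and $\{z_1=\omega z_2=\omega^2 z_3\}$ lie on no common hyperplane of the arrangement --- consistent with the known fact that $\mathcal A(G(r,r,\ell),1)$ is not fiber-type for $r,\ell\ge 3$. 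So the Jambu--Terao and Falk--Randell route cannot close this case; one must instead recognize $\mathcal A(G_{25},d)$ as the reflection arrangement of $G(3,3,3)$ and invoke Terao for freeness and Bessis (or Nakamura, or Orlik--Solomon) for the $K(\pi,1)$ property, which is precisely what the paper's Lemma on the $a(d)=n-1$ cases does. A smaller by-product of the same error: you distinguish $\mathcal A(G_{25},d)$ from $\mathcal A(G_{25},1)$ by observing that the Hessian arrangement is not supersolvable, which tacitly presupposes the (false) supersolvability of $\mathcal A(G_{25},d)$; simply counting hyperplanes ($9$ versus $12$) settles that point instead.
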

The reader should be warned that no \emph{new} examples of free or $K(\pi,1)$ arrangements 
occur in Theorem~\ref{Thm:Main:KPi1}.  
Nevertheless, the result 
is essentially all one could hope for, and provides a pleasant extension of the 
classical picture to arbitrary eigenvalues.

\tableofcontents
{\vskip -.3cm}
\noindent{\bf Acknowledgements.}  This work was partly supported by NSF grant DMS-1001933, and 
forms part of the author's doctoral work at the University of Minnesota under the supervision of 
Victor Reiner, whom the author thanks for many helpful conversations.  He is also 
grateful to Gustav Lehrer and Donald Taylor for writing~\cite{Lehrer}, in which they 
posed the above problem, and to Anders Bj\"orner and Volkmar Welker for their helpful comments.

\section{Preliminaries}\label{Section:Preliminaries}   
Recall that a \emph{$G$-poset} 
is a poset $P$ with a $G$-action that preserves order, i.e., $x<y$ implies $gx<gy$ for all $g\in G$ and $x,y\in P$. 
It is \emph{bounded} if it has a unique minimal element (called the 
\emph{bottom element} and denoted $\hat{0}$) 
and a unique maximal element (called the \emph{top element} and denoted $\hat{1}$).
Write $P\cup\{\hat{0}\}$ for the poset obtained from $P$ by adjoining 
a new element $\hat{0}$, regardless of whether $P$ has a bottom element.  Similarly, $P\cup\{\hat{1}\}$ 
is obtained by adjoining a new element $\hat{1}$.   Appending both yields $\widehat{P}:=P\cup\{\hat{0},\hat{1}\}$. 

The \emph{order complex} of a poset $P$ is the (abstract) simplicial complex $\Delta(P)$ consisting of all 
totally ordered sets $x_1<x_2<\cdots<x_i$ in $P$.  Because it has a cone point (and is therefore contractible) if $P$ 
has a top or bottom element, one often considers the \emph{proper part} $\overline{P}:=P\diff\{\hat{0},\hat{1}\}$ of the poset, 
which is simply $P$ if neither a top nor bottom element is present.

Recall that a (finite) simplicial complex $\Delta$ is $k$-connected if its homotopy groups $\pi_j(\Delta)$ 
are trivial for $0\leq j\leq k$.  Define the \emph{link} of a face $F\in \Delta$ to be the 
subcomplex 
\[\lk_{\Delta}(F)=\{G\in\Delta\ :\ G\cup F\in \Delta,\ G\cap F=\varnothing\},\]
and say $\Delta$ is \emph{homotopy Cohen-Macaulay} (abbreviated HCM) if 
for each face $F\in\Delta$ the link 
$\lk_{\Delta}(F)$ is 
$(\dim\lk_{\Delta}(F)-1)$-connected.  For $\mathbf k$ a field or $\mathbb Z$, 
the simplicial complex is said to be \emph{Cohen-Macaulay} over 
$\mathbf{k}$ (abbreviated CM/$\mathbf k$ or simply CM when $\mathbf k=\mathbb Z$) if for each 
face $F\in \Delta$ the homology groups 
$\widetilde{H}_i(\lk_{\Delta}(F); \mathbf{k})$ vanish for $i<\dim\lk_{\Delta}(F)$.  
The property of being Cohen-Macaulay is topologically invariant and implies that the 
complex is homologically a bouquet of ($\dim\Delta$)-spheres, whereas the stronger property of 
being homotopy Cohen-Macaulay is not topologically invariant but implies that the complex 
is homotopically a bouquet of $(\dim\Delta)$-spheres; see~\cite{Munkres} and~\cite[p. 117]{Quillen}, 
respectively, or surveys~\cite{Bjorner:Methods, Wachs:Tools}.

Though there are many techniques for establishing Cohen-Macaulayness, we shall 
be concerned with \emph{(pure) CL-shellability.}  
A simplicial complex $\Delta$ which is pure $d$-dimensional (i.e., each maximal face under 
inclusion has dimension $d$) is said to be \emph{shellable} if its maximal faces (called \emph{facets}) 
can be ordered $F_1,F_2,\ldots, F_\ell$ so that for each $k$ the subcomplex generated by the first 
$k$ facets intersects the $(k+1)$st facet in a pure $(d-1)$-dimensional subcomplex.  
A poset $P$ is called \emph{shellable} (resp. \emph{HCM, CM}) if its order complex 
$\Delta(P)$ is shellable (resp. HCM, CM).  
Finally, a \emph{CL-shellable}\footnote{This recursive formulation of CL-shellability 
is due to Bj\"orner and Wachs~\cite{BW}.} poset is a bounded poset that 
\emph{admits a recursive atom ordering}, as defined in Section~\ref{Section:Geometric}.
The following implications for poset shellability are strict:
\[
\text{CL-shellable}
\Rightarrow
\text{shellable}
\Rightarrow 
\text{HCM} 
\Rightarrow
\text{CM}
\Rightarrow
\text{CM/{$\mathbf k$}}
\Rightarrow
\text{CM/{$\mathbb Q$}},
\]
Note that a poset $P$ is shellable 
(resp. HCM, CM) if and only if $\overline{P}$, or just $P\diff\{\hat{1}\}$, is shellable (resp. HCM, CM).

\section{General reductions}\label{Section:General} 
Shephard and Todd classified all irreducible reflection groups in~\cite{Shephard:Todd}. 
There are 34 exceptional groups in their classification, labeled $G_4,G_5,\ldots, G_{37}$, and 3 infinite families, 
explained below:
\begin{itemize}
\item $W(A_n)\subset \GL(\mathbb C^n)$.
\item $G(m,p,n)$ with $m>1$, $p$ a divisor of $m$, and $(m,p,n)\neq (2,2,2)$.
\item $C_m:=G(m,1,1)$.
\end{itemize}
$W(A_n)$ denotes the representation of $\mathfrak S_{n+1}$ one obtains from the defining 
representation of \S\ref{Section:Introduction} after modding out 
by the fixed space $\mathbb C(e_1+e_2+\cdots+e_{n+1})$.  For $\bmu_m$ the 
set of $m^{\text{th}}$ roots of unity (and $p$ a divisor or $m$), 
the group $G(m,p,n)$ consists of all $n\times n$ monomial 
matrices with nonzero entries in 
$\bmu_m$ whose product lies in $\bmu_{m/p}$.
General reflection groups decompose into irreducible ones as follows.

\begin{proposition}[Theorem 1.27 in \cite{Lehrer}]\label{Prop:Decomposition}  
Let $W\subset \GL(V)$ be a reflection group.  
Let $V_1,\ldots, V_k$ denote the nontrivial irreducible submodules of $V$, 
so that the restriction $W_i$ of $W$ to $V_i$ is irreducible.  Then
$W\cong W_1\times \cdots \times W_k$ and 
one has an orthogonal sum decomposition 
$V=V^W\oplus V_1\oplus \cdots\oplus V_k$ with respect to a chosen $W$-invariant positive definite Hermitian form.
\end{proposition}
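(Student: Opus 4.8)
The plan is to build the orthogonal decomposition from complete reducibility and then pin down the product structure by exploiting the fact that each reflection moves only a single line. First I would average an arbitrary positive definite Hermitian form over the finite group $W$ to produce a $W$-invariant one, so that $W$ acts by unitary transformations. Since the orthogonal complement of any $W$-submodule is again a $W$-submodule, I can peel off irreducible summands one at a time, starting by splitting off the fixed space $V^W$ (the trivial isotypic part) and then decomposing $(V^W)^\perp$ into mutually orthogonal nontrivial irreducibles. This yields the asserted orthogonal sum $V=V^W\oplus V_1\oplus\cdots\oplus V_k$ with each $V_i$ a $W$-stable irreducible submodule; writing $W_i$ for the image of $W$ in $\GL(V_i)$, the module $V_i$ is automatically $W_i$-irreducible because it is $W$-irreducible. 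Restriction to the factors defines a homomorphism $\rho\colon W\to W_1\times\cdots\times W_k$, and since $W$ acts trivially on $V^W$ but faithfully on $V$, it acts faithfully on $V_1\oplus\cdots\oplus V_k$; hence $\rho$ is injective.

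The crux is surjectivity, and here is where the reflection hypothesis enters. For a reflection $r$ the operator $r-1$ has rank one, with image the root line $\ell_r=\im(r-1)$. Because $V^W$ is fixed pointwise by all of $W$ one has $V^W\subseteq\ker(r-1)$, so unitarity gives $\ell_r=(\ker(r-1))^\perp\subseteq(V^W)^\perp=V_1\oplus\cdots\oplus V_k$. Now each $V_i$ is $r$-stable, so $r-1$ carries $V_i$ into $V_i$ and therefore $\im(r-1)=\bigoplus_i (r-1)(V_i)$ with the $i$-th summand contained in $V_i$. Since this image is one-dimensional and the $V_i$ are independent, exactly one summand can be nonzero. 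Thus $\ell_r\subseteq V_{i_0}$ for a unique index $i_0$, the restriction $r|_{V_j}$ is the identity for every $j\neq i_0$, and $r|_{V_{i_0}}$ is a reflection of $V_{i_0}$; in particular $\rho(r)$ is supported in a single factor.

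To finish I would observe that $W_i$ is generated by the restrictions $r|_{V_i}$ as $r$ ranges over the reflections of $W$, and by the previous paragraph only those reflections whose root lies in $V_i$ contribute nontrivially, each such reflection mapping under $\rho$ into $1\times\cdots\times W_i\times\cdots\times 1$. Consequently $\rho(W)$ contains the full subgroup $1\times\cdots\times W_i\times\cdots\times 1$ for every $i$, hence contains the whole product, so $\rho$ is onto. Combined with injectivity this gives the isomorphism $W\cong W_1\times\cdots\times W_k$, and as a byproduct each $W_i$ is itself an irreducible reflection group. The genuinely essential step is the rank-one argument confining each root to a single irreducible component; the remaining ingredients (Maschke complete reducibility and faithfulness) are standard. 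The only other point demanding a little care is arranging the irreducible decomposition to be orthogonal when several $V_i$ are isomorphic, which the inductive orthogonal-complement construction above handles cleanly.
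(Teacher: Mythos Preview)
Your argument is correct and is essentially the standard proof of this decomposition theorem. Note, however, that the paper does not supply its own proof of this proposition: it is simply quoted as Theorem~1.27 of the Lehrer--Taylor reference~\cite{Lehrer}, so there is no in-paper argument to compare against. Your approach---Maschke-type orthogonal decomposition followed by the rank-one observation that a reflection's root line must lie in a single irreducible summand---is exactly the classical line of reasoning one finds in the cited source.
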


\begin{corollary}\label{Cor:Eigenspace:Decomposition}
Maintain the notation of Proposition~\ref{Prop:Decomposition} and let $\zeta$ be a root of unity.  Then 
\begin{equation}E(W,\zeta)\cong E(W_1,\zeta)\times\cdots\times E(W_k,\zeta).\label{Eq:Factorization}\end{equation}
\end{corollary}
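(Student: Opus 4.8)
The plan is to exhibit an explicit map and reduce everything to one elementary lemma about direct sums. By Proposition~\ref{Prop:Decomposition} we may identify $W$ with $W_1\times\cdots\times W_k$ and write a general element as $g=(g_1,\dots,g_k)$, acting on $V=V^W\oplus V_1\oplus\cdots\oplus V_k$ as the identity on $V^W$ and as $g_i$ on $V_i$. Each of these summands is $g$-stable, so the $\zeta$-eigenspace of $g$ decomposes as
\[
V(g,\zeta)=c\oplus V_1(g_1,\zeta)\oplus\cdots\oplus V_k(g_k,\zeta),
\]
where $c:=V^W$ when $\zeta=1$ and $c:=\{0\}$ otherwise (since $g$ carries only the eigenvalue $1$ on $V^W$). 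This identity shows that every element of $E(W,\zeta)$ has the form $c\oplus U_1\oplus\cdots\oplus U_k$ with $U_i\in E(W_i,\zeta)$, and conversely that every such direct sum equals $V(g,\zeta)$ for $g=(g_1,\dots,g_k)$ with $U_i=V_i(g_i,\zeta)$; so I would define
\[
\Phi\colon E(W_1,\zeta)\times\cdots\times E(W_k,\zeta)\longrightarrow E(W,\zeta),\qquad
\Phi(U_1,\dots,U_k):=c\oplus U_1\oplus\cdots\oplus U_k,
\]
and the previous sentence says $\Phi$ is well defined and surjective.

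The order-theoretic content reduces to a single fact: if $V_1,\dots,V_k$ are in internal direct sum in $V$, then for subspaces $U_i,U_i'\subseteq V_i$ one has $\bigoplus_i U_i'\subseteq\bigoplus_i U_i$ if and only if $U_i'\subseteq U_i$ for every $i$; the nontrivial direction follows by intersecting both sides with $V_i$ and using $V_i\cap\bigoplus_j U_j=U_i$, and adjoining the common summand $c$ to both sides (then intersecting back down with $V_1\oplus\cdots\oplus V_k$ to remove it) changes nothing. Since the order on each $E(\,\cdot\,,\zeta)$ is reverse inclusion and the product $E(W_1,\zeta)\times\cdots\times E(W_k,\zeta)$ carries the componentwise order, this fact says exactly that $\Phi$ is order-preserving and order-reflecting; a surjection with that property is automatically injective, hence a poset isomorphism, which is~\eqref{Eq:Factorization}. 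Finally $\Phi$ is equivariant: under $W\cong W_1\times\cdots\times W_k$ an element $h=(h_1,\dots,h_k)$ conjugates $g=(g_1,\dots,g_k)$ componentwise, so the action $h\cdot V(g,\zeta)=V(hgh^{-1},\zeta)$ corresponds under $\Phi$ to the product of the $W_i$-actions on the $E(W_i,\zeta)$, and~\eqref{Eq:Factorization} is in fact an isomorphism of $W$-posets.

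I do not anticipate a real obstacle; the only point needing care is the bookkeeping of the summand $c$, present exactly when $\zeta=1$ --- so that $E(W,1)=\L_W$ genuinely is the product of the $\L_{W_i}$, the common fixed flat $V^W$ being carried along uniformly --- and absent for every $\zeta\neq1$. Once $c$ is isolated the statement collapses to the direct-sum lemma above; and for the purposes of the rest of the paper one may as well assume $W$ is essential, i.e.\ $V^W=\{0\}$, in which case $c=\{0\}$ and even this bookkeeping vanishes.
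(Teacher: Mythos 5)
Your proof is correct and is precisely the argument the paper leaves implicit: the corollary is stated without proof as an immediate consequence of Proposition~\ref{Prop:Decomposition}, and the identity $V(g,\zeta)=c\oplus V_1(g_1,\zeta)\oplus\cdots\oplus V_k(g_k,\zeta)$ for block-diagonal $g=(g_1,\ldots,g_k)$ (with $c=V^W$ when $\zeta=1$ and $c=\{0\}$ otherwise) is exactly the intended reason. Your handling of the summand $c$, the order-reflection argument for injectivity, and the check of $W$-equivariance are all correct.
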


In particular, the family of posets $E(W,\zeta)$ obtained by letting $W$ vary over the three infinite families above is the same 
as that obtained by letting $W$ vary over the single infinite family $G(m,p,n)$.  Another consequence is 
that Question~\ref{Question:Main:Shelling} and Theorem~\ref{Thm:Main:Geometric} reduce to irreducible reflection groups.

\begin{corollary}\label{Cor:Reduce}  
Maintain the notation of Corollary~\ref{Cor:Eigenspace:Decomposition}.  Then the following hold.
\begin{enumerate}[(i)]
\item  $E(W,\zeta)$ is HCM (resp. CM) if and only if each $E(W_i,\zeta)$ is HCM (resp. CM).\label{Eq:Decomp:1}
\item  $\widehat{E(W,\zeta)}$ is CL-shellable if and only if each $\widehat{E(W_i,\zeta)}$ is CL-shellable.\label{Eq:Decomp:2}
\end{enumerate}
\end{corollary}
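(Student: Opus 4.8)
\textbf{Proof plan for Corollary~\ref{Cor:Reduce}.}
The plan is to deduce both statements from the product decomposition \eqref{Eq:Factorization} of Corollary~\ref{Cor:Eigenspace:Decomposition} together with standard facts about how shellability-type properties behave under poset products. First I would observe that each factor $E(W_i,\zeta)$ has a bottom element (the full irreducible submodule $V_i$, which is the $\zeta$-eigenspace of the identity, hence contained in every other $\zeta$-eigenspace when we order by reverse inclusion) but in general no top element, so the relevant object with a top and bottom is $\widehat{E(W_i,\zeta)}$. The product $\prod_i E(W_i,\zeta)$ then has bottom element $(\hat 0_1,\dots,\hat 0_k)$ but again no top, and adjoining $\hat 1$ gives $\widehat{E(W,\zeta)}$ via \eqref{Eq:Factorization}.

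For part~\eqref{Eq:Decomp:1}, I would invoke the known result that a (finite) product of posets, each having a bottom element, is HCM (resp. CM over $\mathbf k$) if and only if each factor is; this is a standard consequence of the Künneth formula for joins together with the fact that $\Delta(P_1\times\cdots\times P_k)$ is homotopy equivalent to the join $\Delta(P_1)\ast\cdots\ast\Delta(P_k)$ once one removes the bottom elements, or alternatively one can cite the product theorems of Quillen and of Baclawski--Walker. Concretely, after deleting the bottom elements the order complex of the product is the join of the order complexes of the proper parts, and a join is HCM (resp. CM) iff each piece is; see the surveys~\cite{Bjorner:Methods, Wachs:Tools}. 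Since $E(W,\zeta)$ is HCM/CM iff $\overline{E(W,\zeta)}$ is (as noted at the end of \S\ref{Section:Preliminaries}), and likewise for each factor, this gives~\eqref{Eq:Decomp:1}.

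For part~\eqref{Eq:Decomp:2}, the key input is that CL-shellability (equivalently, admitting a recursive atom ordering) is preserved under products of bounded posets: if $\widehat{P_1},\dots,\widehat{P_k}$ are CL-shellable, then so is $\widehat{P_1\times\cdots\times P_k}$, and conversely each factor embeds as an interval in the product so inherits CL-shellability. This is the product theorem for CL-shellability due to Björner and Wachs~\cite{BW}; one builds a recursive atom ordering on the product from those on the factors (the atoms of $\widehat{\prod_i P_i}$ are the tuples that are an atom of $\widehat{P_i}$ in one coordinate and $\hat 0$ elsewhere, and recursively the upper intervals are again products). Applying this with $P_i=E(W_i,\zeta)$ and using \eqref{Eq:Factorization} to identify $\widehat{E(W,\zeta)}$ with $\widehat{\prod_i E(W_i,\zeta)}$ yields the equivalence.

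The only mild subtlety — and the place where one must be slightly careful rather than the ``main obstacle'' — is bookkeeping about bottom elements: the $E(W_i,\zeta)$ do have bottoms but no tops, so one should phrase the product statements for posets-with-$\hat 0$ and adjoin a single common $\hat 1$ at the end, matching the fact that in \eqref{Eq:Factorization} the new top $\hat 1$ of $\widehat{E(W,\zeta)}$ corresponds to adjoining $\hat 1$ to the whole product rather than to each factor separately. Once this is set up correctly, both parts are immediate from the cited product theorems, so there is no genuine hard step here; the content of the reduction lies entirely in Corollary~\ref{Cor:Eigenspace:Decomposition}.
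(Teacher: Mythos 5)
Your overall strategy --- read both statements off the product decomposition \eqref{Eq:Factorization} together with standard product theorems (a join decomposition for CM/HCM, and a Bj\"orner--Wachs product theorem for CL-shellability) --- is exactly the paper's. However, your setup contains a genuine error about the bounded structure of the factors, and the rest of the argument is built on it. You assert that each $E(W_i,\zeta)$ has a bottom element, namely $V_i=V(\mathrm{id},\zeta)$, and in general no top element. This is backwards. For $\zeta\neq 1$ the identity has $\zeta$-eigenspace $\{0\}$, not $V_i$, and $V_i$ itself belongs to $E(W_i,\zeta)$ only when $\zeta\cdot\mathrm{id}\in W_i$, i.e.\ when $a(d)=\dim V_i$ (Corollary~\ref{Cor:Springer}). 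What each factor always has is a \emph{top} element, namely the zero subspace ($V(\mathrm{id},\zeta)$ for $\zeta\neq 1$, and $V_i^{W_i}=0$ for $\zeta=1$), whereas a bottom element generally fails to exist: by Proposition~\ref{Prop:Springer:Transitive} the inclusion-maximal eigenspaces form a single $W_i$-orbit of dimension-$a(d)$ subspaces, and there are usually several of them (see Figure~\ref{Figure:A2}, or Remark~\ref{Remark:Rank} for the type-$A$ model). Consequently your description of $\widehat{E(W,\zeta)}$ as ``the product, which already has a $\hat 0$, with one new $\hat 1$ adjoined'' is also backwards, and your description of the atoms of $\widehat{\prod_i E(W_i,\zeta)}$ (an atom of some $\widehat{E(W_i,\zeta)}$ in one coordinate and $\hat 0$ elsewhere) actually describes the atoms of $\prod_i\widehat{E(W_i,\zeta)}$, a different and strictly larger poset; the true atoms are the tuples all of whose coordinates are minimal in their factors.

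The repair is simply to dualize your bookkeeping, which lands you on the paper's proof: each factor has a top $\hat 1_i$, so one applies the Quillen/Walker homeomorphism
$\Delta\bigl(\textstyle\prod_i P_i\diff\{(\hat1_1,\ldots,\hat1_k)\}\bigr)\cong\Delta(P_1\diff\{\hat1_1\})*\cdots*\Delta(P_k\diff\{\hat1_k\})$
together with the fact that a join is HCM (resp.\ CM) iff its factors are, giving \eqref{Eq:Decomp:1}; and for \eqref{Eq:Decomp:2} one applies the product theorem of \cite[Thm.~10.16]{Shellable:Nonpure:II} (not the 1983 paper \cite{BW}) in the form appropriate to factors that share no bottom, i.e.\ to $\widehat{E(W,\zeta)}=\bigl(\prod_i E(W_i,\zeta)\bigr)\cup\{\hat0,\hat1\}$. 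So the route is the intended one, but as written several of your assertions about the posets are false and must be corrected before the citations you invoke actually apply.
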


\begin{proof}
Note that each $E(W_i,\zeta)$ has a top element $\hat{1}_i$.  Then~\eqref{Eq:Decomp:1} 
follows from~\eqref{Eq:Factorization} and a homeomorphism of Quillen (see~\cite[Ex. 8.1]{Quillen} 
and~\cite[Thm. 5.1(b)]{Walker}), while~\eqref{Eq:Decomp:2} follows 
from~\eqref{Eq:Factorization} and~\cite[Thm. 10.16]{Shellable:Nonpure:II}.
\end{proof}

Because of the important role that \emph{maximal} eigenspaces play in what follows, we make the following convention before proceeding.

\begin{convention}
A \emph{maximal} $\zeta$-eigenspace for $W$ is one that is not properly contained in any other.  
Because such a space is minimal with respect to the poset order of $E(W,\zeta)$ given by 
reverse inclusion, in order to avoid confusion we shall always take \emph{minimal} and \emph{maximal} to 
be with respect to inclusion when dealing with subspaces.  
For example, ``$E\in E(W,\zeta)$ maximal'' thus means that $E$ is not properly 
contained in any $V(g,\zeta)$.
\end{convention}

A reflection group $W\subset\GL(V)$ acts on the algebra of polynomial functions $S=S(V^*)$ 
via $gf(v)=f(g^{-1}v)$, and Shephard and Todd showed that the subalgebra $S^W$ of $W$-fixed 
polynomials is again polynomial, generated by $n:=\dim V$ algebraically 
independent homogeneous polynomials $f_1,f_2,\ldots, f_n$, called \emph{basic invariants}, the degrees of which 
are uniquely determined by the group $W$ and denoted $d_1\leq d_2\leq \cdots\leq d_n$.  We shall always assume an indexing 
such that $\deg(f_i)=d_i$.  For $d>0$, 
write $a(d)$ for the number of $d_i$ divisible by $d$ as in \S\ref{Section:Introduction}; see Springer~\cite{Springer}.

\begin{proposition}[Springer]\label{Prop:Springer:Intersection}
Let $W$ be a reflection group, let $\zeta$ be a primitive $d^{\text{th}}$ root of unity, and let $f_1,f_2,\ldots, f_n$ 
be a set of basic invariants for $W$.  Set $H_i:=\{v\in V\ :\  f_i(v)=0\}$.  Then one has $\ \bigcup_{g\in W}V(g,\zeta)=\bigcap_{d\, \nmid\, d_i}H_i$.
\end{proposition}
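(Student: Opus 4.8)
The plan is to prove the two inclusions simultaneously through the orbit map. Let $\pi\colon V\to\mathbb C^n$ be given by $\pi(v)=(f_1(v),\dots,f_n(v))$. I would use two standard facts from the invariant theory of finite reflection groups: each $f_i$ is homogeneous of degree $d_i$, so that $f_i(\lambda v)=\lambda^{d_i}f_i(v)$ for $\lambda\in\mathbb C$; and the fibers of $\pi$ are precisely the $W$-orbits on $V$. The latter holds because, by the Shephard--Todd--Chevalley theorem, $\pi$ identifies $\mathbb C^n$ with the quotient variety $V/W=\mathrm{Spec}\,S^W$, and for a finite group the ring of invariants separates orbits; I would cite Bourbaki or Springer's invariant-theory notes for this.

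The first step is the computation $\pi(\zeta v)=(\zeta^{d_1}f_1(v),\dots,\zeta^{d_n}f_n(v))$, immediate from homogeneity. Since $\zeta$ is a \emph{primitive} $d^{\text{th}}$ root of unity, $\zeta^{d_i}=1$ exactly when $d\mid d_i$; hence for any index $i$ with $d\nmid d_i$ the equation $\zeta^{d_i}f_i(v)=f_i(v)$ forces $f_i(v)=0$, while if $f_i(v)=0$ whenever $d\nmid d_i$ then clearly $\pi(\zeta v)=\pi(v)$. Thus $v\in\bigcap_{d\nmid d_i}H_i$ if and only if $\pi(\zeta v)=\pi(v)$.

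The second step translates $\pi(\zeta v)=\pi(v)$ into the eigenspace condition via the orbit description of the fibers: $\pi(\zeta v)=\pi(v)$ holds if and only if $\zeta v$ and $v$ lie in one $W$-orbit, i.e. if and only if there is $h\in W$ with $h(\zeta v)=v$; linearity rewrites this as $hv=\zeta^{-1}v$, that is, $gv=\zeta v$ with $g:=h^{-1}\in W$. Chaining the equivalences, $v\in\bigcap_{d\nmid d_i}H_i$ if and only if $v\in V(g,\zeta)$ for some $g\in W$, if and only if $v\in\bigcup_{g\in W}V(g,\zeta)$, which is the asserted identity.

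The inclusion $\bigcup_{g}V(g,\zeta)\subseteq\bigcap_{d\nmid d_i}H_i$ is in fact elementary: it uses only $W$-invariance of the $f_i$ together with $\zeta^{d_i}\neq1$ for $d\nmid d_i$. The substance lies in the reverse inclusion, whose one genuine input is the orbit-separation property of $S^W$. The only points requiring care are the bookkeeping between $\zeta$ and $\zeta^{-1}$ (harmless, since $h\mapsto h^{-1}$ permutes $W$ and $\zeta^{-1}$ is again a primitive $d^{\text{th}}$ root of unity) and invoking the correct statement, namely that it is the fibers of $V\to\mathrm{Spec}\,S^W$---not the level sets of a single invariant---that coincide with the $W$-orbits.
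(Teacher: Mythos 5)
Your proof is correct. The paper gives no proof of this proposition at all---it is stated with attribution to Springer~\cite{Springer}---and your argument via the orbit map $\pi=(f_1,\dots,f_n)$, the homogeneity computation $\pi(\zeta v)=(\zeta^{d_1}f_1(v),\dots,\zeta^{d_n}f_n(v))$, and the orbit-separation property of $S^W$ is precisely Springer's original argument, with the $\zeta$ versus $\zeta^{-1}$ bookkeeping handled correctly.
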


Consequentially, the collection of maximal $\zeta$-eigenspaces of 
$W$ depends only on (the group and) the multiset of degrees 
$d_i$ that are divisible by $d$, which we shall denote by  
$A(d):=\{d_i\ :\ d\mid d_i\}$, so that $a(d)=|A(d)|$; see Theorem~\ref{Thm:Strong:Independence} below for a
much sharper result.

\begin{corollary}\label{Cor:Independent:Maximals}
Let $W$ be a reflection group.  Let $\zeta$ and $\zeta'$ be roots of unity of orders $d$ and $d'$
such that $A(d)=A(d')$.  Then the set of maximal $\zeta$-eigenspaces 
of $W$ coincides with the set of maximal $\zeta'$-eigenspaces of $W$.
\end{corollary}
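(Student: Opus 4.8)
The plan is to reduce everything to Springer's formula in Proposition~\ref{Prop:Springer:Intersection}, which expresses the union $\bigcup_{g\in W}V(g,\zeta)$ of \emph{all} $\zeta$-eigenspaces purely in terms of which basic-invariant hyperplanes $H_i$ occur, and then to recover the individual \emph{maximal} eigenspaces from that union by a soft linear-algebra argument that is completely symmetric in $\zeta$ and $\zeta'$.

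First I would note that $A(d)=A(d')$ forces the index sets $\{i:d\mid d_i\}$ and $\{i:d'\mid d_i\}$ to coincide. Since the multiset of degrees $\{d_1,\dots,d_n\}$ is fixed once and for all, for each value $v$ the multiplicity of $v$ in $A(d)$ equals its multiplicity among all the degrees when $d\mid v$ and is $0$ otherwise; hence $A(d)=A(d')$ says precisely that the set of degree-values divisible by $d$ coincides with the set of degree-values divisible by $d'$, so $\{i:d\mid d_i\}=\{i:d'\mid d_i\}$ and therefore $\{i:d\nmid d_i\}=\{i:d'\nmid d_i\}$. Feeding this into Proposition~\ref{Prop:Springer:Intersection} shows that the two unions agree:
\[
\bigcup_{g\in W}V(g,\zeta)\ =\ \bigcap_{d\,\nmid\,d_i}H_i\ =\ \bigcap_{d'\,\nmid\,d_i}H_i\ =\ \bigcup_{g\in W}V(g,\zeta')\ =:\ U.
\]

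The crux is to see that the maximal $\zeta$-eigenspaces are already determined by $U$ alone. Here I would use the elementary fact that a vector space over an infinite field (such as $\mathbb C$) is never a union of finitely many proper subspaces: since $W$ is finite, any linear subspace $L\subseteq U$ satisfies $L=\bigcup_{h\in W}\bigl(L\cap V(h,\zeta')\bigr)$, so $L\subseteq V(h,\zeta')$ for some $h$ (and likewise with the roles of $\zeta$ and $\zeta'$ swapped). Now let $E$ be a maximal $\zeta$-eigenspace. Applying this to $L=E\subseteq U$ gives some $h$ with $E\subseteq V(h,\zeta')$; were the inclusion proper, applying the fact again to the subspace $V(h,\zeta')\subseteq U$ would produce some $g$ with $E\subsetneq V(h,\zeta')\subseteq V(g,\zeta)$, contradicting the maximality of $E$ among $\zeta$-eigenspaces. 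Hence $E=V(h,\zeta')$, and the same reasoning shows $E$ is contained in no strictly larger $\zeta'$-eigenspace, so $E$ is a maximal $\zeta'$-eigenspace. By symmetry every maximal $\zeta'$-eigenspace is a maximal $\zeta$-eigenspace, and the two sets coincide.

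I expect the only real content to be this last step: passing from ``the eigenspaces have the same union'' to ``they have the same maximal members'' is exactly the observation that a subspace sitting inside a finite union of subspaces must lie in one of them. The translation $A(d)=A(d')\Rightarrow\{i:d\mid d_i\}=\{i:d'\mid d_i\}$ is bookkeeping, and Springer's dimension count $\dim E=a(d)$ is not needed for the argument, although it is reassuringly consistent with the conclusion since $a(d)=|A(d)|=|A(d')|=a(d')$.
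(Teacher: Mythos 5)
Your proof is correct and follows the same route the paper intends: the corollary is stated as an immediate consequence of Proposition~\ref{Prop:Springer:Intersection}, since $A(d)=A(d')$ forces $\{i:d\nmid d_i\}=\{i:d'\nmid d_i\}$ and hence the two unions of eigenspaces coincide. The only added content in your write-up is the (correct) standard lemma that a subspace contained in a finite union of subspaces lies in one of them, which the paper leaves implicit when passing from equal unions to equal sets of maximal members.
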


\begin{proposition}[Springer]\label{Prop:Springer:Transitive}
Let $W$ be a reflection group, let $\zeta$ be a primitive $d^{\text{th}}$ root of unity, and let $E,E'\in E(W,\zeta)$ 
be maximal eigenspaces.  Then
\begin{enumerate}[(i)]
\item  $\dim E=a(d)$, and\label{Springer:1}
\item  $gE=E'$ for some $g\in W$.\label{Springer:2}
\end{enumerate}
\end{proposition}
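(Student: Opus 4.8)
The plan is to realize the union of all $\zeta$-eigenspaces as the preimage of a coordinate subspace under the quotient morphism $\pi=(f_1,\dots,f_n)\colon V\to V/W=\mathbb C^n$, and then deduce both assertions from standard properties of $\pi$. Recall that $\pi$ is finite, surjective, flat (indeed $S$ is free over $S^W$), and that its fibers are exactly the $W$-orbits. Since $f_i$ is homogeneous of degree $d_i$, multiplication by $\zeta$ on $V$ descends to the linear map $\delta:=\diag(\zeta^{d_1},\dots,\zeta^{d_n})$ on $\mathbb C^n$, whose fixed space $L=\Fix(\delta)$ is the coordinate subspace cut out by the equations $y_i=0$ for $d\nmid d_i$; thus $\dim L=a(d)$. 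A vector $v$ lies in some $V(g,\zeta)$ if and only if $\zeta v\in Wv$, i.e. if and only if $\pi(\zeta v)=\pi(v)$, i.e. if and only if $\delta\pi(v)=\pi(v)$, i.e. if and only if $\pi(v)\in L$. Hence
\[
Y:=\bigcup_{g\in W}V(g,\zeta)=\pi^{-1}(L),
\]
which also recovers Proposition~\ref{Prop:Springer:Intersection}. As $L\hookrightarrow\mathbb C^n$ is a linear subspace, base change shows $\pi|_Y\colon Y\to L$ is again finite, surjective, and flat; since the coordinate ring of $L$ is the polynomial ring $\mathbb C[f_i:d\mid d_i]$, a finite flat algebra over it is equidimensional of dimension $a(d)$, so $Y$ is equidimensional of dimension $a(d)$.

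For the dimension statement, I would use that $Y$ is a finite union of the linear subspaces $V(g,\zeta)$, so its irreducible components are precisely those $V(g,\zeta)$ that are maximal under inclusion. Combined with the equidimensionality of $Y$, this forces $\dim E=a(d)$ for every maximal $\zeta$-eigenspace $E$.

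For the transitivity statement, note that $W$ acts on $Y$ by linear maps, permuting its components $C_1,\dots,C_r$ (which are exactly the maximal $\zeta$-eigenspaces), while $\pi\circ g=\pi$ forces $W$ to stabilize every fiber $\pi^{-1}(y)$, $y\in L$. Each $C_i$ has dimension $\dim L$ and maps finitely into the irreducible $L$, hence surjects onto $L$; and each intersection $C_i\cap C_j$ with $i\neq j$ is a proper subspace of $C_i$, so $\pi(C_i\cap C_j)$ is a proper closed subset of $L$. Choosing $y$ outside $\bigcup_{i\neq j}\pi(C_i\cap C_j)$, the fiber $\pi^{-1}(y)$ is a nonempty subset of $Y$ that meets every $C_i$ and lies in no $C_i\cap C_j$, so sending a point of $\pi^{-1}(y)$ to the unique component containing it defines a surjective $W$-equivariant map $\pi^{-1}(y)\to\{C_1,\dots,C_r\}$. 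Since $\pi^{-1}(y)$ is a single $W$-orbit, $W$ acts transitively on $\{C_1,\dots,C_r\}$, which is the desired transitivity.

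The main obstacle I anticipate is pinning down the equidimensionality of $Y$: one genuinely needs the flatness of $S$ over $S^W$, not merely finiteness, both to exclude lower-dimensional maximal $\zeta$-eigenspaces and (equivalently) to guarantee in the last step that every component $C_i$ surjects onto $L$. Everything else is formal once $Y=\pi^{-1}(L)$ has been established.
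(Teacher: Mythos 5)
The paper states this proposition without proof, attributing it to Springer, so there is no in-paper argument to compare against; your proof must be judged on its own, and it is correct. It is in fact essentially Springer's original argument: identify $\bigcup_{g\in W}V(g,\zeta)$ with $\pi^{-1}(L)$ for the orbit map $\pi=(f_1,\ldots,f_n)\colon V\to V/W$ and the coordinate subspace $L=\Fix\bigl(\diag(\zeta^{d_1},\ldots,\zeta^{d_n})\bigr)$ of dimension $a(d)$, deduce that every irreducible component of $\pi^{-1}(L)$ (equivalently, every inclusion-maximal $\zeta$-eigenspace) has dimension $a(d)$, and obtain transitivity by noting that a fiber over a point of $L$ avoiding the images of the pairwise intersections of components is a single $W$-orbit hitting every component. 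All the individual steps check out: the fibers of $\pi$ are the $W$-orbits, $f_i(\zeta v)=\zeta^{d_i}f_i(v)$ gives $Y=\pi^{-1}(L)$, finite flat over a polynomial ring implies equidimensional via going-down, and finite morphisms are closed and dimension-preserving, which gives both the surjectivity of each component onto $L$ and the properness of $\pi(C_i\cap C_j)$ in $L$. One small quibble with your closing remark: flatness is convenient but not genuinely indispensable. Krull's height theorem already gives $\dim C\geq n-(n-a(d))=a(d)$ for every component $C$ of the common zero locus of the $f_i$ with $d\nmid d_i$, while finiteness of $\pi|_C$ gives $\dim C\leq\dim L=a(d)$ together with the surjectivity of $C\to L$; this is the equidimensionality statement the paper itself quotes from the proof of Springer's Theorem 3.4(i) in its proof of Theorem~\ref{Thm:Strong:Independence}.
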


\begin{corollary}\label{Cor:Springer}
Let $W$ be a reflection group and let $\zeta$ be a primitive $d^{\text{th}}$ root of unity.  Then 
the following are equivalent.
\begin{enumerate}[(i)]
\item  $E(W,\zeta)=\L_W$.\label{Cor:Springer:2}
\item  $\mathbb C^n\in E(W,\zeta)$.\label{Cor:Springer:3}
\item  $a(d)=n$.\label{Cor:Springer:4}
\end{enumerate}
\end{corollary}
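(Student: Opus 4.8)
The plan is to prove the equivalence by establishing the cycle (i) $\Rightarrow$ (ii) $\Rightarrow$ (iii) $\Rightarrow$ (i), and the only real input needed is Springer's Proposition~\ref{Prop:Springer:Transitive}. The implication (i) $\Rightarrow$ (ii) is immediate: the whole space $\mathbb C^n$ is the intersection of the empty family of reflecting hyperplanes, hence $\mathbb C^n\in\L_W$, so if $E(W,\zeta)=\L_W$ then $\mathbb C^n\in E(W,\zeta)$. For (ii) $\Rightarrow$ (iii), observe that if $\mathbb C^n\in E(W,\zeta)$ then $\mathbb C^n=V(g,\zeta)$ for some $g\in W$, and since no proper subspace contains $\mathbb C^n$ this is a \emph{maximal} $\zeta$-eigenspace; Proposition~\ref{Prop:Springer:Transitive}(i) then gives $n=\dim\mathbb C^n=a(d)$.

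The substantive step is (iii) $\Rightarrow$ (i). Assuming $a(d)=n$, I would first note that $E(W,\zeta)$ is finite and nonempty (it contains $V(1,\zeta)$), so it has a maximal $\zeta$-eigenspace $E$, and $\dim E=a(d)=n$ by Proposition~\ref{Prop:Springer:Transitive}(i), forcing $E=\mathbb C^n$. Writing $E=V(g,\zeta)=\ker(g-\zeta\cdot\mathrm{id})$ for a suitable $g\in W$, the equality $E=\mathbb C^n$ forces $g=\zeta\cdot\mathrm{id}$; hence the scalar transformation $\zeta\cdot\mathrm{id}$ lies in $W$, and so does its inverse $\zeta^{-1}\cdot\mathrm{id}$. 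Then for every $h\in W$ one has
\[
V(h,\zeta)=\ker(h-\zeta\cdot\mathrm{id})=\ker(\zeta^{-1}h-\mathrm{id})=V(\zeta^{-1}h,\,1),
\]
and since $h\mapsto\zeta^{-1}h$ is a bijection of $W$ this yields
\[
E(W,\zeta)=\{V(h,\zeta):h\in W\}=\{V(\zeta^{-1}h,1):h\in W\}=\{V(h,1):h\in W\}=E(W,1)=\L_W,
\]
which is (i).

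I do not expect any real obstacle here: the proof is short once one recognizes that the condition $a(d)=n$ is secretly the statement that $\zeta\cdot\mathrm{id}\in W$, after which the coincidence $E(W,\zeta)=E(W,1)$ falls out of the trivial change of variables $h\mapsto\zeta^{-1}h$. (For $\zeta\neq1$ the whole space $\mathbb C^n$ appears in $E(W,\zeta)$ not as $V(1,\zeta)$ but as $V(\zeta\cdot\mathrm{id},\zeta)$, so (ii) is indeed an assertion about $\zeta\cdot\mathrm{id}$ belonging to $W$, as in the example $-1\in I_2(4)$.) If one prefers to route through the machinery of the paper instead, (iii) $\Rightarrow$ (i) also follows from Theorem~\ref{Thm:Main:Geometric}: Proposition~\ref{Prop:Springer:Transitive} shows $\mathbb C^n$ is then the unique maximal $\zeta$-eigenspace, and $E(W,\zeta)=\{\mathbb C^n\cap X:X\in\L_W\}=\L_W$ follows at once.
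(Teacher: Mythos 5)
Your proposal is correct and is essentially the paper's own argument: the paper also reduces everything to the observation that condition (ii) (equivalently (iii), via Springer's dimension formula) forces the scalar $\zeta\cdot\mathrm{id}$ to lie in $W$, and then concludes via the same change of variables $V(g,\zeta)=V(\zeta^{-1}g,1)$ together with the bijection $g\mapsto\zeta^{-1}g$ of $W$. The only cosmetic difference is that you arrange the implications as a cycle (i)$\Rightarrow$(ii)$\Rightarrow$(iii)$\Rightarrow$(i) while the paper proves (i)$\Rightarrow$(ii)$\Leftrightarrow$(iii) and then (ii)$\Rightarrow$(i).
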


\begin{proof}
Clearly \eqref{Cor:Springer:2}$\Rightarrow$\eqref{Cor:Springer:3}$\Leftrightarrow$\eqref{Cor:Springer:4}.  
Assume $\mathbb C^n\in E(W,\zeta)$ so that $\zeta\in W$, and hence $\zeta^{-1} W=W$.  Writing 
$V(g,\zeta)=V(\zeta^{-1}g,1)$ for each $g\in W$, it follows that $E(W,\zeta)=\L_W$.
\end{proof}
\pagebreak
\begin{theorem}[Lehrer-Springer~\cite{Lehrer:Springer,Lehrer:Springer:Canada}]\label{Thm:Lehrer:Springer}  
Let $W$ be a reflection group, let $\zeta$ be a primitive $d^{\text{th}}$ root of unity, and let 
$E\in E(W,\zeta)$ be maximal with normalizer
\begin{align*}
N_W(E)&=\{g\in W\ :\ gE\subseteq E\}
\intertext{and centralizer} 
Z_W(E)&=\{g\in W\ :\ gv=v\text{ for all }v\in E\}.
\end{align*}  
Then $\overline{N}:=N_W(E)/Z_W(E)$ acts as a reflection group on $E$, and the following hold:
\begin{enumerate}[(i)]
\item If $f_1,f_2,\ldots, f_n$ form a set of basic invariants for $W$, then the restrictions $f_i|_E$ of those whose degree $d_i$ is divisible by $d$ 
form a set of basic invariants for $\overline{N}$.
\item The reflecting hyperplanes of $\overline{N}$ on $E$ are the intersections of $E$ with 
the reflecting hyperplanes of $W$ that do not contain $E$.\label{LS:Hyperplanes}
\item If $W$ is irreducible, then $\overline{N}$ acts irreducibly on $E$.
\item $W(d):=\overline{N}$ is uniquely determined by $W$ and $d$, up to conjugation by $W$.
\end{enumerate}
\end{theorem}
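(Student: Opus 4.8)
The plan is to derive everything from Springer's theory of regular elements together with elementary invariant theory, handling the centralizer of a regular element first and then transferring to the normalizer of the eigenspace. The genuinely hard part is to make the argument work when $d$ is \emph{not} a regular number of $W$ (which can happen even though $a(d)>0$, i.e.\ even though $d$ divides some degree); that is precisely the content of the cited papers of Lehrer and Springer.

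\emph{Normalization and faithfulness.} Using Proposition~\ref{Prop:Springer:Transitive} I would first replace $E$ by $V(g,\zeta)$ for a conveniently chosen $g\in W$ of order $d$, so that $g$ acts on $E$ by the scalar $\zeta$. By construction $Z_W(E)$ is exactly the kernel of the action of $N_W(E)$ on $E$, so $\overline N=N_W(E)/Z_W(E)$ acts faithfully on $E$; what remains is to see the action is by reflections and to identify the invariants, the reflecting hyperplanes, and the irreducibility.

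\emph{The invariant-theoretic core: part (i) and the reflection-group claim.} Restriction of polynomial functions gives a graded algebra map $\rho\colon S(V^*)\to S(E^*)$, $f\mapsto f|_E$; since each $f_i$ is $W$-invariant and $N_W(E)$ stabilizes $E$, each $\rho(f_i)$ lies in $S(E^*)^{\overline N}$. Because an element commuting with $g$ preserves each eigenspace of $g$, we have $Z_W(g)\subseteq N_W(E)$, and hence, writing $\overline{Z_W(g)}$ for the image of $Z_W(g)$ in $\GL(E)$, a chain
\[
\mathbb C\bigl[\rho(f_i):d\mid d_i\bigr]\ \subseteq\ S(E^*)^{\overline N}\ \subseteq\ S(E^*)^{\overline{Z_W(g)}}\ =\ \mathbb C\bigl[\rho(f_i):d\mid d_i\bigr],
\]
in which the first inclusion is $N_W(E)$-invariance, the second is ``larger group, smaller invariant ring'', and the last equality is Springer's identification of the invariant ring of the centralizer acting on $V(g,\zeta)$. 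All three terms therefore coincide, so $S(E^*)^{\overline N}$ is a polynomial ring; by the converse to the Shephard--Todd--Chevalley theorem this forces $\overline N$ to be a reflection group, with the $\rho(f_i)$ ($d\mid d_i$) as a set of basic invariants and hence with degrees exactly $\{d_i:d\mid d_i\}$. That is (i), and it yields the principal claim that $\overline N$ acts as a reflection group on $E$.

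\emph{Reflecting hyperplanes, irreducibility, uniqueness, and the obstacle.} For (ii), a genericity argument with Steinberg's theorem shows the reflecting hyperplanes of $\overline N$ can only be the subspaces $H\cap E$ with $H$ a reflecting hyperplane of $W$ and $E\not\subseteq H$: for $v$ generic in a codimension-one subspace $E'\subset E$, $\Stab_W(v)$ is generated by the reflections $s_H$ with $E'\subseteq H$, and if none of these $H$ meets $E$ in exactly $E'$ then all contain $E$, so $\Stab_W(v)=Z_W(E)$ and $\Stab_{\overline N}(v)=1$; conversely each eligible $H\cap E$ does occur, this being part of Springer's description of $\overline{Z_W(g)}$. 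Part (iv) is immediate: any two maximal $\zeta$-eigenspaces are $W$-conjugate by Proposition~\ref{Prop:Springer:Transitive}(ii), and conjugation carries $N_W(\cdot)$ and $Z_W(\cdot)$ (hence their quotient) along, so $W(d)$ is well defined up to $W$-conjugacy. For (iii) one must show $\overline N$ is irreducible on $E$ when $W$ is irreducible on $V$; the degree multiset alone will not decide this (for example both $\mu_4\times\mu_4$ and the irreducible group $G(4,2,2)$ have degrees $4,4$), so one argues via (ii) that the induced arrangement $\{H\cap E:E\not\subseteq H\}$ cannot be decomposed into two mutually orthogonal nonempty subarrangements, because the arrangement of $W$ cannot (or, lacking a uniform argument, one checks this case by case from the Shephard--Todd classification). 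The \emph{main obstacle} in all of this is that Springer's centralizer theorem, used in the second step, presupposes that $g$ is $\zeta$-\emph{regular}, i.e.\ that $E$ contains a vector lying on no reflecting hyperplane of $W$; this fails exactly when $d$ is not a regular number (for instance $W=\mathfrak S_6$, $d=4$), and supplying the substitute---roughly, replacing ``regular for $W$'' by ``regular for the relevant reflection subquotient'' and inducting along the arrangement---is exactly the work carried out in~\cite{Lehrer:Springer,Lehrer:Springer:Canada}.
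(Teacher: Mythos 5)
The paper offers no proof of Theorem~\ref{Thm:Lehrer:Springer}: it is quoted verbatim from Lehrer and Springer's papers, so there is nothing internal to compare your argument against. Judged on its own terms, your sketch is a reasonable reconstruction of the Lehrer--Springer argument in the case where $d$ is a \emph{regular} number: the chain $\mathbb C[\rho(f_i):d\mid d_i]\subseteq S(E^*)^{\overline N}\subseteq S(E^*)^{\overline{Z_W(g)}}$ is correctly oriented (since $Z_W(g)\subseteq N_W(E)$, the invariants of the larger group $\overline N$ sit inside those of $\overline{Z_W(g)}$), the closing equality is Springer's Theorem~4.2, and the converse to Shephard--Todd--Chevalley then gives both the reflection-group claim and part~(i); parts~(ii) and~(iv) are handled plausibly by the Steinberg genericity argument and by the transitivity in Proposition~\ref{Prop:Springer:Transitive}, respectively.

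The one point to be clear-eyed about: as a self-contained proof your proposal is incomplete in exactly the place you flag. Springer's centralizer theorem requires a regular eigenvector in $E$, and when $d$ is not regular (your $\mathfrak S_6$, $d=4$ example, or the starred entries $G_{33},G_{34},G_{36}$ with $d=4$ in Table~\ref{Table:Exceptionals}) the equality $S(E^*)^{\overline{Z_W(g)}}=\mathbb C[\rho(f_i):d\mid d_i]$ is not available, and the substitute argument---which is the actual content of the cited papers---is only gestured at, not supplied. Likewise part~(iii) is left to either a decomposition argument you do not carry out or to a case check. Since the paper itself treats the theorem as a black box from the literature, deferring the non-regular case to \cite{Lehrer:Springer,Lehrer:Springer:Canada} is consistent with the paper's usage, but you should present your write-up as a proof sketch for the regular case plus a citation, not as a complete proof.
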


\begin{proposition}\label{Prop:Maximal}
Let $W$ be a reflection group and let $\zeta$ be a root of unity.  
Then one has an inclusion 
\begin{equation}
E(W,\zeta)\subseteq \{E\cap X\ :\ X\in\L_W,\ E\in E(W,\zeta)\ \text{maximal}\}
\label{Inclusion:Equation}
\end{equation}
and the following are equivalent.
\begin{enumerate}[(i)]
\item  Equality in~\eqref{Inclusion:Equation}.\label{Prop:General:1}
\item  There exists $E\in E(W,\zeta)$ maximal such that for all $X\in \L_W$ one has\\ $E\cap X\in E(W,\zeta)$.\label{Prop:General:2}
\item  For every $E\in E(W,\zeta)$ maximal and $X\in \L_W$ one has $E\cap X\in E(W,\zeta)$.\label{Prop:General:3}
\item  There exists $E\in E(W,\zeta)$ maximal such that $[E,\hat{1}]= \L_{N_W(E)/Z_W(E)}$.  \label{Prop:General:4}
\item  For every $E\in E(W,\zeta)$ maximal, one has $[E,\hat{1}]= \L_{N_W(E)/Z_W(E)}$\label{Prop:General:5}
\end{enumerate}
\end{proposition}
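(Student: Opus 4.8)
The plan is to establish the inclusion first, then prove the chain of equivalences essentially via \eqref{Prop:General:3} $\Rightarrow$ \eqref{Prop:General:5} $\Rightarrow$ \eqref{Prop:General:1} $\Rightarrow$ \eqref{Prop:General:2} $\Rightarrow$ \eqref{Prop:General:4} $\Rightarrow$ (back around), using Springer's transitivity (Proposition~\ref{Prop:Springer:Transitive}) to move between different maximal eigenspaces and the Lehrer--Springer description (Theorem~\ref{Thm:Lehrer:Springer}) to identify $[E,\hat1]$ with $\L_{\overline N}$. For the inclusion~\eqref{Inclusion:Equation}: given any $V(g,\zeta)\in E(W,\zeta)$, it is contained in some maximal $\zeta$-eigenspace $E$; I claim $V(g,\zeta) = E\cap X$ for a suitable $X\in\L_W$. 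The natural candidate is $X = \bigcap_{g v = v} H$, the fixed space of the stabilizer-type subgroup, but more carefully: write $E' = gE'=\cdots$ — actually the cleanest route is to use that $E$ is the $\zeta$-eigenspace of some regular-type element $w$ with $V(w,\zeta)=E$, and then $V(g,\zeta)$ and $E$ are both $\zeta$-eigenspaces, so on $V(g,\zeta)$ the element $g$ acts as $\zeta$, hence $g^{-1}w$ fixes $V(g,\zeta)\cap E$ pointwise; one shows $V(g,\zeta) = E \cap \Fix(g^{-1}w)^{\text{something}}$. I would instead lean on the Lehrer--Springer picture directly: $V(g,\zeta)\subseteq E$, and within $E$ the subspace $V(g,\zeta)$ is cut out by reflecting hyperplanes of $W$ (those through $V(g,\zeta)$ but not through $E$ meet $E$ in hyperplanes of $\overline N$, by Theorem~\ref{Thm:Lehrer:Springer}\eqref{LS:Hyperplanes}), so $V(g,\zeta) = E\cap X$ where $X$ is the intersection of those $W$-hyperplanes; this $X$ lies in $\L_W$.

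For the equivalences, the key observation is the identification $[E,\hat1] \cong \L_{\overline N}$ as posets when $E$ is maximal. One direction is cheap: any element of $[E,\hat1]$ is a $\zeta$-eigenspace $V(g,\zeta)\subseteq E$, and by the inclusion argument above it equals $E\cap X$ for $X\in\L_W$; by Theorem~\ref{Thm:Lehrer:Springer}\eqref{LS:Hyperplanes} the reflecting hyperplanes of $\overline N$ on $E$ are exactly the traces $E\cap H$ for reflecting hyperplanes $H$ of $W$ not containing $E$, so every intersection of such traces — i.e.\ every element of $\L_{\overline N}$ — is of the form $E\cap X$. Thus $[E,\hat1]\subseteq\L_{\overline N}$ always. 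The content of condition \eqref{Prop:General:4}/\eqref{Prop:General:5} is the reverse: that \emph{every} $E\cap X$ (equivalently every element of $\L_{\overline N}$) is actually a $\zeta$-eigenspace of some element of $W$. Then: \eqref{Prop:General:3} $\Leftrightarrow$ \eqref{Prop:General:5} is immediate from this identification (for every maximal $E$); \eqref{Prop:General:2} $\Leftrightarrow$ \eqref{Prop:General:4} likewise for a single $E$; \eqref{Prop:General:1} $\Leftrightarrow$ \eqref{Prop:General:3} because equality in~\eqref{Inclusion:Equation} means precisely that every $E\cap X$ is a $\zeta$-eigenspace, and every maximal eigenspace is conjugate (Proposition~\ref{Prop:Springer:Transitive}\eqref{Springer:2}), so "for some $E$" upgrades to "for all $E$": if $gE = E'$ then $E'\cap X = g(E\cap g^{-1}X)$ and $g$ sends $\zeta$-eigenspaces to $\zeta$-eigenspaces and $\L_W$ to $\L_W$.

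The one step requiring the most care is the passage from "exists" to "for all" — i.e.\ showing \eqref{Prop:General:2} $\Rightarrow$ \eqref{Prop:General:3} and \eqref{Prop:General:4} $\Rightarrow$ \eqref{Prop:General:5}. Here I would argue: suppose $E_0$ is a maximal $\zeta$-eigenspace with $E_0\cap X\in E(W,\zeta)$ for all $X\in\L_W$. Let $E$ be any other maximal $\zeta$-eigenspace; by Proposition~\ref{Prop:Springer:Transitive}\eqref{Springer:2} there is $w\in W$ with $wE_0 = E$. Given $X\in\L_W$, we have $w^{-1}X\in\L_W$ (since $W$ permutes $\L_W$), so $E_0\cap w^{-1}X = V(g,\zeta)$ for some $g\in W$; applying $w$, $E\cap X = w(E_0\cap w^{-1}X) = wV(g,\zeta) = V(wgw^{-1},\zeta)\in E(W,\zeta)$. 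This closes the loop. I expect the genuine obstacle to be nailing down the very first inclusion cleanly — specifically, verifying that a $\zeta$-eigenspace contained in a maximal one is cut out \emph{inside} that maximal space by traces of $W$-hyperplanes, rather than by some more exotic subspace; the resolution is exactly Theorem~\ref{Thm:Lehrer:Springer}\eqref{LS:Hyperplanes} together with the fact that any $\zeta$-eigenspace $V(g,\zeta)$, being itself a maximal $\zeta'$-eigenspace in a suitable reflection subgroup / sitting inside $\L_W$-type data, is an intersection of reflecting hyperplanes of $W$ — but I should double-check whether $V(g,\zeta)$ genuinely lies in $\L_W$ or only its trace in $E$ does, and phrase the argument so that only the latter (weaker, and true) claim is needed.
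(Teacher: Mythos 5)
Your handling of the five equivalences is correct and is essentially the paper's own route: (i)$\Leftrightarrow$(iii) is definitional once the inclusion~\eqref{Inclusion:Equation} is in hand, (iii)$\Leftrightarrow$(v) comes from identifying $\{E\cap X : X\in\L_W\}$ with $\L_{\overline{N}}$ via Theorem~\ref{Thm:Lehrer:Springer}\eqref{LS:Hyperplanes}, and the ``exists''-to-``for all'' upgrades giving (ii)$\Leftrightarrow$(iii) and (iv)$\Leftrightarrow$(v) follow from transitivity of $W$ on maximal eigenspaces (Proposition~\ref{Prop:Springer:Transitive}\eqref{Springer:2}) exactly by the conjugation computation $E\cap X = w(E_0\cap w^{-1}X)$ that you spell out.

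The one genuine gap is the inclusion~\eqref{Inclusion:Equation} itself, and the irony is that you wrote down the correct argument and then discarded it. The argument you ultimately commit to --- that ``within $E$ the subspace $V(g,\zeta)$ is cut out by reflecting hyperplanes of $W$'' --- is circular: the assertion that $V(g,\zeta)$ equals the intersection of $E$ with the $W$-hyperplanes containing it is (a strengthening of) the very claim $V(g,\zeta)=E\cap X$ being proved, and your fallback justification, that $V(g,\zeta)$ is itself an intersection of reflecting hyperplanes of $W$, is false in general: for $\mathfrak S_4$ and $\zeta=-1$ the eigenspace $z_1=-z_2,\ z_3=-z_4$ is not in $\L_{\mathfrak S_4}$ (compare Figures~\ref{Figure:A1} and~\ref{Figure:A2}); indeed $E(W,\zeta)\not\subseteq\L_W$ is the whole point of the paper. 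The argument you abandoned is the paper's, and it needs no mysterious ``something'': writing $E=V(w,\zeta)\supseteq Y=V(g,\zeta)$, a vector $v$ lies in $Y$ if and only if $gv=\zeta v=wv$ (the second equality because $Y\subseteq E$), which holds if and only if $v\in E\cap V(w^{-1}g,1)$; and $V(w^{-1}g,1)=\Fix(w^{-1}g)$ lies in $\L_W$ because $\L_W$ is precisely the set of $1$-eigenspaces of elements of $W$ (the $\zeta=1$ case recalled in \S\ref{Section:Introduction}, citing \cite[Lemma 4.4]{Orlik}). With that one identity restored, the rest of your proposal goes through, including the claim $[E,\hat{1}]\subseteq\L_{\overline{N}}$ that feeds your (iii)$\Leftrightarrow$(v) step.
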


\begin{proof}
For the inclusion~\eqref{Inclusion:Equation}, let $Y\in E(W,\zeta)$ and choose a maximal $\zeta$-eigenspace $E$ such that $Y\subseteq E$.  
Write $E=V(g,\zeta)$ and $Y=V(h,\zeta)$ for some $g,h\in W$.  Then $v\in Y$ if and only if 
$hv=\zeta v=gv$, i.e., if and only if $v\in E\cap V(g^{-1}h,1)$.  Hence $Y=E\cap X$ for some $X\in \L_W$.  
As for the equivalences, clearly~\eqref{Prop:General:1} is equivalent to~\eqref{Prop:General:3}, which is equivalent to~\eqref{Prop:General:5} by 
Theorem~\ref{Thm:Lehrer:Springer}\eqref{LS:Hyperplanes}, and the remaining two equivalences \eqref{Prop:General:2}$\Leftrightarrow$\eqref{Prop:General:3} 
and \eqref{Prop:General:4}$\Leftrightarrow$\eqref{Prop:General:5} follow from Proposition~\ref{Prop:Springer:Transitive}\eqref{Springer:2}.
\end{proof}

\section{$G(m,p,n)$ case of Theorem~\ref{Thm:Main:Geometric}}\label{Section:Geometric}
Recall that $\bmu_m$ denotes the collection of all $m^{\text{th}}$ roots of unity, and that 
$G(m,p,n)$ denotes the group of all $n\times n$ monomial matrices with nonzero entries in $\bmu_m$ 
whose product lies in $\bmu_{m/p}$.
Note that $G(1,1,n)$ is the defining 
representation of $\mathfrak S_n$ given in \S\ref{Section:Introduction}, and that   
$G(1,1,n)\subseteq G(m,p,n)\subseteq G(m,1,n)$.
When $m>p$ the set of reflecting hyperplanes for $G(m,p,n)$ coincides with that for $G(m,1,n)$ and is given by the 
union of the following two sets:
\begin{align}
&\{z_i=\xi z_j\ :\ 1\leq i< j\leq n,\  \xi\in\bmu_m\}\label{Eq:Hyp:1}\\
&\{z_i=0\ :\ \hfill 1\leq i\leq n \}. \label{Eq:Hyp:2}
\end{align}
When $m=p$ the set of reflecting hyperplanes for $G(p,p,n)$ is simply given by~\eqref{Eq:Hyp:1}.

For roots of unity 
$\epsilonA_1,\epsilonA_2,\ldots, \epsilonA_\ell\in\bmu_m$ and an $\ell$-set $\{\sigma_1,\sigma_2,\ldots,\sigma_\ell\}\subseteq [n]$, 
identify the $2$-line array
\[\sigma=\begin{pmatrix} \sigma_1 & \sigma_2 &\cdots & \sigma_\ell \\ \epsilonA_1 \sigma_2 & \epsilonA_2 \sigma_3 & 
\cdots & \epsilonA_\ell \sigma_1\end{pmatrix}\]
with the linear map that 
fixes each $e_i$ with $i\in [n]\diff \{\sigma_1,\ldots,\sigma_\ell\}$ 
and that sends $e_{\sigma_i}$ to $\epsilonA_i e_{\sigma_{i+1}}$ for $i\in [\ell-1]$, while $e_{\sigma_\ell}\mapsto \epsilonA_\ell e_{\sigma_1}$.  
Because multiple arrays may represent the same map, in the next section 
we will require that $\sigma_1<\sigma_2,\sigma_3,\ldots,\sigma_\ell$, but we postpone the restriction until then.  
Call such an element $\sigma$ a \emph{(colored) cycle}, and define
\begin{itemize}
\item  $\ell(\sigma):=\ell$ (the \emph{length} of $\sigma$),
\item  $\Supp(\sigma):=\{\sigma_1,\ldots, \sigma_\ell\}$ (the \emph{support} of $\sigma$), and 
\item  $\Col(\sigma):=\{\epsilonA_1,\ldots, \epsilonA_\ell\}$ (the \emph{multiset of colors} of $\sigma$).
\end{itemize}
With two cycles 
$\sigma,\sigma'$ said to be \emph{disjoint} if $\Supp(\sigma)\cap\Supp(\sigma')=\varnothing$, note 
that any element of $G(m,1,n)$ may be decomposed as a product $\sigma^{(1)}\sigma^{(2)}\cdots\sigma^{(q)}$ 
of disjoint cycles, and that such a product is an element of $G(m,p,n)$ if and only if 
$\prod_i\prod_{\epsilonA\in\Col(\sigma^{(i)})}\epsilonA$ is an element of $\bmu_{m/p}$.  
The following lemma is a straightforward calculation.

\begin{lemma}\label{lemma:color}  
Let $\sigma\in G(m,1,n)$ be a cycle and write 
$\sigma=\begin{pmatrix} \sigma_1 & \sigma_2 &\cdots & \sigma_\ell \\ \epsilonA_1 \sigma_2 & \epsilonA_2 \sigma_3 & 
\cdots & \epsilonA_\ell \sigma_1\end{pmatrix}$.\\  Let $\zeta\neq 1$ be a root of unity.  Then
\[\dim V(\sigma,\zeta)=\begin{cases} 1 & \text{if }\zeta^\ell=\prod\epsilonA_i\text{;}\\ 0 & \text{otherwise.}\end{cases}\]
Moreover, in the former case $V(\sigma,\zeta)$ is the solution set of the following equations:
\begin{eqnarray}
&z_{\sigma_1}=\zeta \epsilonA_1^{-1}z_{\sigma_2}=\zeta^2(\epsilonA_1\epsilonA_2)^{-1}z_{\sigma_3}=\ldots =
\zeta^{\ell-1}(\epsilonA_1\epsilonA_2\cdots\epsilonA_{\ell-1})^{-1}z_{\sigma_\ell}\label{Eigen:Equation:1}\\
&z_i=0\quad\text{for}\quad i\in [n]\diff\{\sigma_1,\sigma_2,\ldots, \sigma_\ell\}.\label{Eigen:Equation:2}
\end{eqnarray}
\end{lemma}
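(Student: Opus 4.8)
The plan is a direct computation of the $\zeta$-eigenspace from the matrix of $\sigma$. Write a general vector as $v=\sum_{i=1}^n z_i e_i$. First I would dispose of the coordinates outside the support: since $\sigma$ fixes $e_i$ for every $i\in[n]\diff\Supp(\sigma)$, comparing coefficients of such an $e_i$ on the two sides of $\sigma v=\zeta v$ gives $z_i=\zeta z_i$, and because $\zeta\neq 1$ this forces $z_i=0$. That is exactly~\eqref{Eigen:Equation:2}, and it reduces the problem to the coordinate subspace spanned by $e_{\sigma_1},\ldots,e_{\sigma_\ell}$.

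On that subspace $\sigma v=\sum_{j=1}^{\ell-1}z_{\sigma_j}\epsilonA_j e_{\sigma_{j+1}}+z_{\sigma_\ell}\epsilonA_\ell e_{\sigma_1}$, so $\sigma v=\zeta v$ is equivalent to the cyclic system consisting of $\epsilonA_j z_{\sigma_j}=\zeta z_{\sigma_{j+1}}$ for $1\leq j\leq\ell-1$ together with the closing relation $\epsilonA_\ell z_{\sigma_\ell}=\zeta z_{\sigma_1}$. I would solve the first $\ell-1$ equations successively to get $z_{\sigma_j}=\zeta^{-(j-1)}(\epsilonA_1\cdots\epsilonA_{j-1})z_{\sigma_1}$; rearranged, these are precisely the chain of equalities in~\eqref{Eigen:Equation:1}, and they exhibit every coordinate of $v$ as a fixed scalar multiple of $z_{\sigma_1}$.

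Finally I would substitute $z_{\sigma_\ell}=\zeta^{-(\ell-1)}(\epsilonA_1\cdots\epsilonA_{\ell-1})z_{\sigma_1}$ into the closing relation $\epsilonA_\ell z_{\sigma_\ell}=\zeta z_{\sigma_1}$, which collapses to $(\epsilonA_1\cdots\epsilonA_\ell)\,z_{\sigma_1}=\zeta^\ell z_{\sigma_1}$. If $\zeta^\ell=\prod_i\epsilonA_i$, this is automatic, so the full solution set is the line cut out by~\eqref{Eigen:Equation:1}--\eqref{Eigen:Equation:2} and $\dim V(\sigma,\zeta)=1$; if $\zeta^\ell\neq\prod_i\epsilonA_i$, then $z_{\sigma_1}=0$, which propagates through the recursion of the previous paragraph to force $v=0$, so $V(\sigma,\zeta)=0$.

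There is no genuine obstacle here --- this is the ``straightforward calculation'' promised in the statement. The two points that deserve a moment's attention are the use of the hypothesis $\zeta\neq 1$ to kill the coordinates fixed by $\sigma$, and the bookkeeping of the wrap-around index in the closing relation, which is exactly what makes the product $\epsilonA_1\cdots\epsilonA_\ell$ of \emph{all} $\ell$ colors appear and hence produces the $\zeta^\ell$ versus $\prod_i\epsilonA_i$ dichotomy; it is also worth spot-checking the degenerate case $\ell=1$, in which~\eqref{Eigen:Equation:1} is vacuous and the condition is simply $\epsilonA_1=\zeta$.
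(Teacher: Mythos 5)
Your computation is correct and is precisely the ``straightforward calculation'' that the paper leaves to the reader (the paper gives no proof of this lemma beyond that remark): kill the off-support coordinates using $\zeta\neq 1$, chain the first $\ell-1$ coefficient equations into~\eqref{Eigen:Equation:1}, and let the wrap-around relation produce the dichotomy $\zeta^\ell=\prod_i\epsilonA_i$ versus $z_{\sigma_1}=0$. Nothing is missing, and your attention to the $\ell=1$ case is a sensible sanity check.
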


The crux of Theorem~\ref{Thm:Main:Geometric} is Proposition~\ref{Prop:colored:ideal} below, for which we 
will need the following.

\begin{lemma}\label{Lemma:proper}
Let $W=G(m,p,n)$, let $g\in W$, and let $\zeta$ be a primitive $d^{\text{th}}$ root of unity.  
Suppose that $d\mid m$ and that $a(d)<n$.  
Then there exists an $i\in[n]$ such that for all ${\mathbf z}\in V(g,\zeta)$ one has $z_i=0$.
\end{lemma}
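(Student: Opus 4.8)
The plan is to reduce everything to the structure of $g$ as a product of disjoint colored cycles and apply Lemma~\ref{lemma:color} cycle by cycle. First I would write $g=\sigma^{(1)}\sigma^{(2)}\cdots\sigma^{(q)}$ as a product of disjoint cycles, together with possibly some fixed points; here I include length-$1$ cycles (the fixed coordinates of the underlying permutation) in the bookkeeping. Since the $\zeta$-eigenspace of a block-diagonal map is the direct sum of the $\zeta$-eigenspaces of the blocks, and for a fixed point $e_i\mapsto e_i$ the $\zeta$-eigenspace contribution is zero (as $\zeta\neq 1$, using $d\mid m$ and $a(d)<n$ which forces $d>1$), we get
\[
V(g,\zeta)=\bigoplus_{j=1}^{q}V(\sigma^{(j)},\zeta),
\]
where each summand lives in the coordinate subspace indexed by $\Supp(\sigma^{(j)})$. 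By Lemma~\ref{lemma:color}, each $V(\sigma^{(j)},\zeta)$ is either $0$ or $1$-dimensional, and in the latter case it is a line inside the $\ell(\sigma^{(j)})$-dimensional coordinate space spanned by $\{e_s : s\in\Supp(\sigma^{(j)})\}$.

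Now I would produce the coordinate $z_i$ that vanishes identically on $V(g,\zeta)$. The key numerical input is $\dim V(g,\zeta)=a(d)<n$, which is Springer's Proposition~\ref{Prop:Springer:Transitive}(i) applied to a maximal eigenspace — but here $g$ is arbitrary, so I only know $\dim V(g,\zeta)\le a(d)<n$. For each cycle $\sigma^{(j)}$ with $\dim V(\sigma^{(j)},\zeta)=1$, the line is cut out by the equations~\eqref{Eigen:Equation:1}, so on that block $n_j:=\ell(\sigma^{(j)})$ coordinates are constrained by $n_j-1$ independent linear relations; thus this block contributes $n_j-1$ to the ``codimension count'' within its own $n_j$ coordinates. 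For a cycle with $\dim V(\sigma^{(j)},\zeta)=0$ all $n_j$ of its coordinates vanish on $V(g,\zeta)$. Summing over all blocks and fixed points, the total codimension of $V(g,\zeta)$ in $\mathbb C^n$ is $n-\dim V(g,\zeta)\ge n-a(d)\ge 1$. The point is that I want not merely positive codimension but a coordinate hyperplane $\{z_i=0\}$ containing $V(g,\zeta)$: this happens precisely when some block contributes all of its coordinates, i.e., some cycle has $\dim V(\sigma^{(j)},\zeta)=0$, or there is a fixed point. So the real content is to rule out the ``bad'' case where every cycle (including length-$1$ ones) contributes a full line.

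In that bad case each cycle $\sigma^{(j)}$ of length $\ell_j$ satisfies $\zeta^{\ell_j}=\prod_{i}\epsilonA_i^{(j)}$ (Lemma~\ref{lemma:color}), there are no fixed points, and $\dim V(g,\zeta)=q$, the number of cycles. I would then argue this forces $a(d)\ge q$ is not restrictive enough by itself, so instead I would extract a genuine contradiction with $a(d)<n$ by a counting/Springer-theoretic argument: since $\zeta^{\ell_j}=\prod\epsilonA_i^{(j)}\in\bmu_m$ and $d\mid m$, one can adjust the colors within each cycle without changing whether the eigenspace is a line, and by choosing the $\ell_j$ and colors appropriately build a maximal $\zeta$-eigenspace of dimension $q$; but a maximal one has dimension $a(d)$ by Proposition~\ref{Prop:Springer:Transitive}, while a regular element (existence from Springer's theory, equivalently $a(d)$ equals the largest possible $\dim V(h,\zeta)$) with $a(d)<n$ must move some coordinate to $0$ — I'd prefer to sidestep this and instead observe directly: in the bad case $\sum_j \ell_j=n$ and the $\zeta$-eigenspace has dimension $q\le a(d)<n$, so $q<n$, hence some $\ell_j\ge 2$; but that alone is consistent, so the bad case is \emph{not} impossible in general, and the lemma must instead be proved by a smarter choice.

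The honest approach: I would not try to forbid the bad configuration for a \emph{single} $g$, but rather use that $d\mid m$ together with $a(d)<n$ to show the degrees of $G(m,p,n)$ — which are $m,2m,\ldots,(n-1)m,nm/p$ (for $m>p$) or $m,2m,\ldots,(n-1)m,n$ (for $m=p$) — force $d\nmid d_n$ or similar, so that by Proposition~\ref{Prop:Springer:Intersection}, $\bigcup_{g\in W}V(g,\zeta)\subseteq H_i$ for some basic-invariant hypersurface $H_i$; and among the standard basic invariants for $G(m,p,n)$ the ones of degree divisible by $d$ can be taken to be elementary symmetric functions in $z_1^m,\ldots,z_n^m$ (plus the last twisted one), whose common zero locus, when a(d)<n, is contained in a coordinate hyperplane $\{z_i=0\}$ after a monomial permutation. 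First I would pin down the basic invariants explicitly, then read off from $d\mid m$ that exactly the first $a(d)$ (essentially the symmetric functions $e_1,\ldots,e_{a(d)}$ in the $z_i^m$) are the relevant ones, and finally observe that $V_1=\cdots=V_{a(d)}=0$ with $a(d)<n$ implies at most $a(d)$ of the values $z_i^m$ are nonzero — in fact their vanishing as symmetric functions of degree $>a(d)$... hmm, that gives $z_i^m=0$ for all but $a(d)$ indices, hence $z_i=0$ for some $i$. That is the clean route. The main obstacle will be getting the explicit basic invariants and the elementary-symmetric-function vanishing argument exactly right, including the $m=p$ twisted case where $d_n=n$ may or may not be divisible by $d$; but since $d\mid m$ and (in the $m=p$ case) typically $d\nmid n$, the last invariant is irrelevant and the argument goes through, with the edge case $d\mid n$ handled separately by noting it cannot make $a(d)=n$.
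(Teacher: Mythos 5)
Your final argument is correct in outline and takes a genuinely different route from the paper's. The paper argues by exhibiting a witness: since $d\mid m$ gives $\zeta\in\bmu_m$, it writes down the explicit element $h\in W$ sending $e_i\mapsto\zeta e_i$ for $i\leq a(d)$ and $e_n\mapsto\zeta^{-a(d)}e_n$, notes that $\dim V(h,\zeta)=a(d)$ (Proposition~\ref{Prop:Springer:Transitive}) forces $z_n=0$ on $V(h,\zeta)$, and then transports this to every maximal eigenspace using transitivity and the fact that monomial matrices permute coordinate hyperplanes. You instead observe that $d\mid m$ makes the degrees $m,2m,\dots,(n-1)m$ all divisible by $d$, so $a(d)<n$ forces $a(d)=n-1$ and leaves $(z_1\cdots z_n)^{m/p}$ as the unique basic invariant of degree \emph{not} divisible by $d$; Proposition~\ref{Prop:Springer:Intersection} then puts the union of all $\zeta$-eigenspaces inside $\{z_1\cdots z_n=0\}$. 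Your route needs no transitivity and actually identifies that union exactly, at the cost of invoking explicit basic invariants. The long first portion of your writeup (the cycle-by-cycle codimension count) is a dead end, as you yourself conclude, and should be deleted; note also that the ``relevant'' invariants in Proposition~\ref{Prop:Springer:Intersection} are those whose degrees are \emph{not} divisible by $d$, so your reference to $e_1,\dots,e_{a(d)}$ vanishing has the roles reversed, even though the numerical conclusion you draw is the right one.

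Two points must be tightened before this is a proof. First, $\{z_1\cdots z_n=0\}$ is the \emph{union} of the coordinate hyperplanes, not contained in any single one, so your parenthetical claim that the common zero locus ``is contained in a coordinate hyperplane'' is false as stated, and your conclusion ``$z_i=0$ for some $i$'' is a priori only pointwise, with $i$ depending on $\mathbf z$. The lemma demands one $i$ that works for all of $V(g,\zeta)$. You need the (easy but genuinely missing) step that a linear subspace of $\mathbb C^n$ contained in a finite union of hyperplanes lies entirely in one of them, because a nonzero vector space over an infinite field is not a finite union of proper subspaces. Second, in the $m=p$ case with $d\mid n$ every degree is divisible by $d$, so $a(d)=n$ and the hypotheses are simply violated; that edge case is vacuous rather than something to be ``handled separately.'' With these repairs the argument is complete.
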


\begin{proof}
It suffices to assume that $V(g,\zeta)$ is maximal.  
Since $\zeta\in\bmu_m$, we have that $W$ contains 
\[h:=\begin{pmatrix} 1 \\ \zeta 1 \end{pmatrix}
\begin{pmatrix} 2 \\ \zeta 2\end{pmatrix}\cdots
\begin{pmatrix} a(d) \\ \zeta a(d) \end{pmatrix}
\begin{pmatrix} n \\ \zeta^{-a(d)}n\end{pmatrix},\]
which must have $\dim  V(h,\zeta)= a(d)$ by Proposition~\ref{Prop:Springer:Transitive}\eqref{Springer:1}, and therefore  
$z_n=0$ for each ${\mathbf z}\in V(h,\zeta)$.  
Since $W$ acts transitively on 
its maximal $\zeta$-eigenspaces by Proposition~\ref{Prop:Springer:Transitive}\eqref{Springer:2}, the result follows.
\end{proof}

In the next proposition we define another group $W'$ within the $G(m,p,n)$ family, that contains $W$.  
In particular, $\L_W\subset \L_{W'}$.  
We do so to obtain a stronger version of Theorem~\ref{Thm:Main:Geometric} (Theorem~\ref{Cor:Dowling} below) 
which will be used in 
Sections~\ref{Section:Consequences} 
and~\ref{Section:Type:A}.

\begin{proposition}\label{Prop:colored:ideal}
Let $W=G(m,p,n)$, let $\zeta$ be a primitive $d^{\text{th}}$ root of unity, 
and let $m\vee d$ denote the least common multiple of $m$ and $d$.  
Set
\[
W'
:=
\begin{cases} W & \text{if $a(d)=n$;}\\
G(m\vee d,1,n) & \text{if $a(d)<n$.}
\end{cases}
\]
Then $V(g,\zeta)\cap X\in E(W,\zeta)$ for every $g\in W$ and $X\in \L_{W'}$.
\end{proposition}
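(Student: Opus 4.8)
The plan is to reduce to an explicit coordinate computation using Lemma~\ref{lemma:color} and then build, for each given $X \in \L_{W'}$, an element $h \in W$ whose $\zeta$-eigenspace is exactly $V(g,\zeta) \cap X$. First I would dispose of the case $a(d)=n$: there $W'=W$, and by Corollary~\ref{Cor:Springer} we have $\zeta \in W$, so $E(W,\zeta)=\L_W$ is a lattice closed under intersection, and $V(g,\zeta)\cap X$ is a member of $\L_W=E(W,\zeta)$ directly. So the substance is the case $a(d)<n$, where $W'=G(m\vee d,1,n)$; note $d \mid m\vee d$, so Lemma~\ref{Lemma:proper} applies to $W'$ in place of $W$ (with the same $a(d)$, since $a(d)$ depends only on $d$ and the degrees of $W$, which equal those of $G(m\vee d,1,n)$ when $a(d)<n$... more carefully, I only need that $d\mid m\vee d$ and that the maximal $\zeta$-eigenspaces of $W$ and of $W'$ agree, which follows from Proposition~\ref{Prop:Springer:Intersection} since both have the same hyperplanes $z_i=0$ and enough hyperplanes $z_i=\xi z_j$).

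The core step: fix $g \in W$ with $V(g,\zeta)$ nonzero (if it is zero there is nothing to prove, or rather $V(g,\zeta)=\hat 1 \in E(W,\zeta)$), and write $g$ as a product of disjoint colored cycles. By Lemma~\ref{lemma:color}, $V(g,\zeta)$ is cut out by a system of equations of the two types \eqref{Eigen:Equation:1}, \eqref{Eigen:Equation:2}: on each cycle $\sigma^{(j)}$ with $\zeta^{\ell_j}=\prod \epsilonA_i^{(j)}$ one gets a chain of proportionality relations among $\{z_i : i \in \Supp(\sigma^{(j)})\}$, and all remaining coordinates are set to zero. Equivalently, $V(g,\zeta)$ corresponds to a ``partial colored matching'' on $[n]$: a partition of a subset $S\subseteq [n]$ into blocks, each block carrying prescribed ratios $z_i/z_j$ (which are roots of unity in $\bmu_{m\vee d}$, actually in $\bmu_m\cdot\langle\zeta\rangle$), with $z_i=0$ forced for $i\notin S$. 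Now $X \in \L_{W'}$ is likewise an intersection of hyperplanes of types \eqref{Eq:Hyp:1}, \eqref{Eq:Hyp:2} for $G(m\vee d,1,n)$, so $X$ is exactly the solution set of a system: a partition of a subset $T\subseteq[n]$ into blocks with prescribed ratios in $\bmu_{m\vee d}$, and $z_i=0$ for $i\notin T$. Intersecting the two systems, $V(g,\zeta)\cap X$ is again of this shape: take the common refinement of the two block-partitions on $S\cap T$, impose both sets of ratio constraints (on any merged block the ratios must be consistent or the block collapses to $z\equiv 0$), and set $z_i=0$ off $S\cap T$, and additionally collapse any block on which the combined constraints are inconsistent.

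It remains to realize this intersection as $V(h,\zeta)$ for some $h\in W=G(m,p,n)$. On each surviving block $B$ of the refined partition, order its elements $b_1<b_2<\dots<b_k$; the constraint is $z_{b_1}=c_2 z_{b_2}=\dots=c_k z_{b_k}$ for certain $c_i$, and I need a colored cycle on $B$ whose eigenspace is exactly this. Comparing with \eqref{Eigen:Equation:1}, the colors $\epsilonA_i$ of a cycle $(b_1\,b_2\,\cdots\,b_k)$ are forced: $\epsilonA_1\cdots\epsilonA_{i-1} = \zeta^{i-1}/\bar c_i$ for a suitable reading, and the closing constraint $\zeta^k = \prod\epsilonA_i$ is automatic from how $c$'s were produced. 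The catch is that these $\epsilonA_i$ must lie in $\bmu_m$ (not merely $\bmu_{m\vee d}$) for $h$ to lie in $G(m,1,n)$, and the product over the whole of $h$ of all colors must lie in $\bmu_{m/p}$ for $h$ to lie in the smaller group $G(m,p,n)$ — \textbf{this is the main obstacle.} I expect to handle it exactly as in the proof of Lemma~\ref{Lemma:proper}: since $a(d)<n$ there is a ``free'' index (some $i_0$ with $z_{i_0}=0$ on every $\zeta$-eigenspace, i.e. $i_0 \notin S\cap T$), and I can append to $h$ a length-one cycle $\begin{pmatrix} i_0 \\ \xi\, i_0\end{pmatrix}$ with $\xi\in\bmu_m$ chosen freely; this adjusts the total color product into $\bmu_{m/p}$ without changing $V(h,\zeta)$. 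To see the individual colors can be taken in $\bmu_m$: the ratios $c_i$ appearing in $V(g,\zeta)$ are products of $\zeta$'s and $m$-th roots of unity, so $\zeta^{i-1}/\bar c_i$ is itself an $m$-th root of unity once we note each block of the refinement contains at least one index from $g$'s support (any block meeting only $X$'s blocks but not $S$ is either a singleton, giving no color constraint, or forced to $z\equiv 0$); the colors coming purely from $X$'s constraints are $(m\vee d)$-th roots but combine, via the $\zeta$-shift built into \eqref{Eigen:Equation:1}, back into $\bmu_m$. I would make this bookkeeping precise by treating the two cases (block entirely inside $\Supp(g)$ versus a singleton) separately, and conclude $h\in G(m,p,n)$ with $V(h,\zeta)=V(g,\zeta)\cap X$, which is the assertion.
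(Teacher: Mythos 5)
Your overall strategy --- write $V(g,\zeta)$ and $X$ in coordinates via Lemma~\ref{lemma:color}, then exhibit an $h\in W$ with $V(h,\zeta)=V(g,\zeta)\cap X$ --- is the same as the paper's, and your treatment of the case $a(d)=n$ matches. But your execution of the case $a(d)<n$ has a genuine gap at exactly the point you defer to ``bookkeeping.'' The paper does not attack a general $X\in\L_{W'}$ all at once: it reduces to intersecting with a \emph{single} reflecting hyperplane $H$ of $W'$ at a time (legitimate because the output is again some $V(h,\zeta)$ with $h\in W$, so one can iterate). This means each step either kills coordinates, splits one cycle, or merges exactly two cycles of $g$, and in the merging case the crux is a coset argument: the coefficients of $z_{\tau_1},\dots,z_{\tau_t}$ in the merged chain~\eqref{eq:merge} sweep out the cosets $\bmu_m,\zeta\bmu_m,\dots,\zeta^{t-1}\bmu_m$ of $\bmu_m$ in $\bmu_m\bmu_d=\bmu_{m\vee d}$, so one can locate an index $k$ at which to splice the two cycles so that \emph{all} colors of the new cycle land in $\bmu_m$. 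Your prescription --- order the elements of a merged block as $b_1<b_2<\cdots<b_k$ and read the colors off in that order --- will in general produce colors in $\bmu_{m\vee d}\setminus\bmu_m$; the freedom that saves you is the choice of \emph{where to start traversing} each constituent cycle, and proving that a good starting point exists is precisely the coset-covering argument you have not supplied. For a general $X$ merging several cycles at once this has to be done compatibly across all the splices, which is why the one-hyperplane-at-a-time induction is not a cosmetic choice. (Minor but symptomatic: the block structure of $V(g,\zeta)\cap X$ is the \emph{join}, i.e.\ common coarsening, of the two block partitions, not the common refinement.)

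The second gap is membership of $h$ in $G(m,p,n)$ rather than merely $G(m\vee d,1,n)$. In the paper this is automatic in every case: splitting or merging cycles preserves the total color product, and in the two-cycle case $h=gr'$ with $r'$ a reflection of color product $1$, hence $r'\in W$. Your construction builds $h$ from scratch block by block, so the color-product condition $\prod\epsilonA\in\bmu_{m/p}$ must be arranged separately, and your proposed fix --- append a $1$-cycle with a free color on an index $i_0$ with $z_{i_0}\equiv 0$ --- is not always available: Lemma~\ref{Lemma:proper} requires $d\mid m$, and when $\ell(d)\cdot a(d)=n$ and $X$ kills no coordinate (e.g.\ $\mathfrak S_4$ with $d=2$, or $G(2,2,4)$ with $d=4$) there is no coordinate vanishing identically on $V(g,\zeta)\cap X$. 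Even where a free coordinate exists, you would still need the correcting color to lie in $\bmu_m$ and the appended cycle to have trivial $\zeta$-eigenspace. None of this is fatal to the statement, of course, but as written your argument does not establish it; the paper's hyperplane-by-hyperplane reduction is what makes both the $\bmu_m$-color issue and the $\bmu_{m/p}$-product issue disappear.
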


\begin{proof}
If $a(d)=n$, then one has $E(W,\zeta)=E(W,1)=\L_W$, and the result follows.

Assume that $a(d)<n$. 
It suffices to show that $V(g,\zeta)\cap H\in E(W,\zeta)$ for every $g\in W$ and every 
\emph{reflecting hyperplane} $H$ of $W'$, since a general element of $\L_{W'}$ is an intersection of 
reflecting hyperplanes.  
Let $g\in W$ and let $r\in W'$ be a reflection with fixed space $H$.
We show that $V(g,\zeta)\cap H$ is a $\zeta$-eigenspace of $W$ by exhibiting 
an $h\in W$ such that $V(h,\zeta)=V(g,\zeta)\cap H$.

Set $h=g$ if $V(g,\zeta)\subseteq H$.  Assume otherwise so that 
$V(g,\zeta)\not\subseteq H$.  Write
$g=\sigma^{(1)}\sigma^{(2)}\cdots \sigma^{(q)}$ as a maximal product of nonempty disjoint cycles so that 
$[n]=\cup_i\Supp(\sigma^{(i)})$ and $V(g,\zeta)=\oplus_i V(\sigma^{(i)},\zeta)$, and define 
$\Sigma:=\{\sigma^{(1)},\ldots, \sigma^{(q)}\}$.
Since $H$ is of the form $z_j=0$ or $z_j=\xi z_k$, we see from 
Lemma~\ref{lemma:color} that it contains all but exactly one or exactly two of the eigenspaces $V(\sigma^{(i)},\zeta)$.

\smallskip

\noindent{\sf Case 1.}  There exists exactly one cycle $\sigma\in\Sigma$ such that $V(\sigma,\zeta)\not\subseteq 
H$.

\noindent{\sf Subcase 1a.} $\ell(\sigma)\geq 2$.

Write $\sigma=\begin{pmatrix} \sigma_1 & \sigma_2 &\cdots & \sigma_{\ell-1} & \sigma_\ell \\ \epsilonA_1 \sigma_2 & \epsilonA_2 \sigma_3 & 
\cdots & \epsilonA_{\ell-1}\sigma_\ell &\epsilonA_\ell \sigma_1\end{pmatrix}$ and   
let $h$ be the element of $W$ that one obtains from $g$ by replacing $\sigma$ with the product of the cycles
\[
\sigma':=\begin{pmatrix} \sigma_1 & \sigma_2 &\cdots & \sigma_{\ell-2} & \sigma_{\ell-1} \\ \epsilonA_1 \sigma_2 & \epsilonA_2 \sigma_3 & 
\cdots & \epsilonA_{\ell-2}\sigma_{\ell-1} & \epsilonA_{\ell-1}\epsilonA_{\ell} \sigma_{1}\end{pmatrix}\quad\text{and}\quad 
\sigma'':=\begin{pmatrix}\sigma_{\ell}\\ \sigma_{\ell} \end{pmatrix}.
\]
Then $V(h,\zeta)=V(g,\zeta)\cap H$ if and only if $V(\sigma',\zeta)=V(\sigma'',\zeta)=\{0\}$.  
Clearly $V(\sigma'',\zeta)=\{0\}$, since $d>1$.  Suppose that $V(\sigma',\zeta)\neq\{0\}$.   Since 
$V(\sigma,\zeta)\neq\{0\}$ also, Lemma~\ref{lemma:color} tells us that
$\zeta^\ell=\epsilonA_1\epsilonA_2\cdots\epsilonA_\ell=\zeta^{\ell-1}$.    
However, $\zeta\neq 1$.

For example, if $W=G(2,2,8)$ and $\zeta=e^{2\pi i/6}$, then for 
\begin{align*}
g&=\begin{pmatrix} 1 &  2 & 3 \\ 2 & - 3 & 1\end{pmatrix}\begin{pmatrix} 5 & 6 & 7 \\ - 6 & - 7 & -5\end{pmatrix}
\begin{pmatrix} 4 & 8 \\ 8 & 4\end{pmatrix}\\
H&=\{{\mathbf z}\in\mathbb C^8\ :\ z_5=\zeta^2z_6\},
\end{align*}
one has $h=\begin{pmatrix} 1 &  2 & 3 \\ 2 & - 3 &  1\end{pmatrix}\left[\begin{pmatrix} 5 & 6  \\  -6 &  -(-5)\end{pmatrix}
\begin{pmatrix} 7 \\  7 \end{pmatrix}\right]
\begin{pmatrix} 4 & 8 \\ 8 & 4\end{pmatrix}$. 

\smallskip

\noindent{\sf Subcase 1b.} $\ell(\sigma)=1$.

Then $\sigma=\begin{pmatrix} \tau_0 \\ \zeta \tau_0\end{pmatrix}$ for some $\tau_0\in [n]$, implying 
that $\zeta\in\bmu_m$, and so $d\mid m$ and $a(d)<n$.  
Applying Lemmas~\ref{lemma:color} and~\ref{Lemma:proper}, it follows that $V(\tau,\zeta)=\{0\}$ for some $\tau\in \Sigma$.  
Note that necessarily $\tau\neq \sigma$, as their $\zeta$-eigenspaces disagree.  Write
\[\tau=\begin{pmatrix} \tau_1 & \tau_2 & \cdots & \tau_\ell \\ \deltaA_1 \tau_2 & \deltaA_2 \tau_3 & \cdots & \deltaA_\ell \tau_1\end{pmatrix}\]
and obtain $h$ from $g$ by replacing the two cycles $\sigma,\tau$ with the single cycle
\[\sigma' =\begin{pmatrix} \tau_0 & \tau_1 & \tau_2 & \cdots & \tau_\ell \\ \zeta \tau_1 & \deltaA_1 \tau_2 & \deltaA_2 \tau_3 & \cdots & \deltaA_\ell 
\tau_0\end{pmatrix}.\]
Applying Lemma~\ref{lemma:color} shows that $V(\sigma',\zeta)=\{0\}$; indeed, one has 
$\zeta^\ell\neq \prod_{i=1}^\ell\deltaA_i$, and so $\zeta^{\ell+1}\neq \zeta\prod_{i=1}^\ell\deltaA_i$.  
It follows that $V(h,\zeta)=V(g,\zeta)\cap H$.  That $h$ is in $W$ is clear.

For example, if $W=G(6,6,4)$ and $\zeta=e^{2\pi i/3}$, then for $g=
\begin{pmatrix} 2 \\ \zeta 2 \end{pmatrix}
\begin{pmatrix} 3 \\ \zeta 3 \end{pmatrix}
\begin{pmatrix} 4 \\ \zeta 4 \end{pmatrix}
\begin{pmatrix} 1 \\  1 \end{pmatrix}$ and $H=\{{\mathbf z}\in\mathbb C^4\ :\ z_3=0\}$, 
one has $h=\begin{pmatrix} 2 \\ \zeta 2 \end{pmatrix}
\begin{pmatrix} 4 \\ \zeta 4 \end{pmatrix}
\begin{pmatrix} 3 & 1 \\  \zeta 1 & 3 \end{pmatrix}$. \\ \\


\noindent{\sf Case 2.}  There exist exactly two cycles $\sigma,\tau\in\Sigma$ such that $V(\sigma,\zeta),V(\tau,\zeta)\not\subseteq H$.

Then for some $\xi\in \bmu_{m\vee d}$ and a suitable indexing, we have
\[
\sigma=\begin{pmatrix} \sigma_1 & \sigma_2 & \cdots & \sigma_s\\ 
\epsilonA_1 \sigma_2 & \epsilonA_2 \sigma_3 & \cdots & \epsilonA_s \sigma_1\end{pmatrix}, 
\tau=\begin{pmatrix}\tau_1 & \tau_2 & \cdots & \tau_t \\ 
\deltaA_1\tau_2 & \deltaA_2\tau_3 & \cdots & \deltaA_t\tau_1\end{pmatrix},
r=\begin{pmatrix} \sigma_1 & \tau_1 \\ \xi^{-1} \tau_1 & \xi \sigma_1\end{pmatrix}
\]
Since $H$ is given by $z_{\sigma_1}=\xi z_{\tau_1}$, Lemma~\ref{lemma:color} implies that 
$\left(V(\sigma,\zeta)\oplus V(\tau,\zeta)\right)\cap H$ 
consists of the points $\mathbf{z}\in\mathbb C^n$ that satisfy 
$z_i=0$ for $i\not\in\Supp(\sigma)\cup\Supp(\tau)$ and 
\begin{equation}
\begin{split}
z_{\sigma_1}=\zeta \epsilonA_1^{-1}z_{\sigma_2}=
\ldots = \zeta^{s-1}&(\epsilonA_1\cdots\epsilonA_{s-1})^{-1}z_{\sigma_s}\\
=\,&\xi z_{\tau_1}=\xi \zeta \deltaA_1^{-1}z_{\tau_2}=
\ldots = \xi\zeta^{t-1}(\deltaA_1\cdots\deltaA_{t-1})^{-1}z_{\tau_t}.
\end{split}\label{eq:merge}
\end{equation}
Let $k$ be such that the coefficient $\deltaA$ of $z_{\tau_k}$ in~\eqref{eq:merge} is an element of $\bmu_m$.  
(For existence, note that the cosets $\bmu_m,\zeta\bmu_m,\ldots,\zeta^{t-1}\bmu_m$ cover 
the group $\bmu_d\bmu_m$, since $\zeta$ generates $\bmu_d$ and $\zeta^t\in\bmu_m$, then observe 
that $\xi$ permutes these cosets, since 
$\bmu_{m\vee d}=\bmu_m\bmu_d$ from basic algebra.)  Then
\begin{equation}
\left(V(\sigma,\zeta)\oplus V(\tau,\zeta)\right)\cap H=\left(V(\sigma,\zeta)\oplus V(\tau,\zeta)\right)\cap H'\label{Eq:Intersection}
\end{equation}
for $H'$ the reflecting hyperplane of $r'=\begin{pmatrix} \sigma_1 & \tau_k\\ \deltaA^{-1} \tau_k & \deltaA\sigma_1\end{pmatrix}$.

We claim that $h:=gr'$ satisfies $V(h,\zeta)=V(g,\zeta)\cap H$, or in other words, 
that $V(\sigma\tau r')=V(\sigma\tau)\cap H$.  
To see this, employ~\eqref{Eq:Intersection} to rewrite the equality as
\begin{equation}
V(\sigma\tau r',\zeta)=\left(V(\sigma,\zeta)\oplus V(\tau,\zeta)\right)\cap H'.\label{Eq:Claim}
\end{equation}
Now observe that, on one hand, 
$V(\sigma\tau,\zeta)\cap H'$ is clearly contained in $V(\sigma\tau r',\zeta)$ 
and has dimension $1$ by hypothesis.  On the other hand, 
$\sigma\tau r'$ is necessarily a cycle, and therefore has $\zeta$-eigenspace of dimension at most $1$ by Lemma~\ref{lemma:color}. 

For example, if $W=G(6,3,8)$ and $\zeta=e^{2\pi i/9}$, then for $\omega:=e^{2\pi i/6}$ and 
\begin{align*}
g&=\begin{pmatrix} 1 & 2 & 3 \\ \omega 2 & 3 & \omega 1 \end{pmatrix} \begin{pmatrix} 4 & 5 & 6 \\
 - 5 & \omega^4 6 & \omega 4\end{pmatrix}
\begin{pmatrix}7 & 8 \\ \omega 8 & \omega 7\end{pmatrix}\\
\xi&= e^{2\pi i / 18}\\
r&=\begin{pmatrix} 1 & 4 \\ \xi^{-1} 4 & \xi 1\end{pmatrix}, 
\end{align*}
one has $r'=\begin{pmatrix} 1 & 6 \\ \omega^2 6 & \omega^{-2} 1 \end{pmatrix}$,\ \   
$h=\begin{pmatrix} 1 & 4 & 5 & 6 & 2 & 3 \\ -4 & -5 & \omega^4 6 & \omega^52 & 3 & \omega 1 \end{pmatrix}
\begin{pmatrix} 7 & 8 \\ \omega 8 & \omega 7\end{pmatrix}$.
\end{proof}

\begin{proof}[{\bf Proof of Theorem~\ref{Thm:Main:Geometric} for $\mathbf{G(m,p,n)}$}]
Note that $\L_W\subseteq \L_{W'}$ in Proposition~\ref{Prop:colored:ideal}, since $W$ is a subgroup of $W'$, 
then invoke Proposition~\ref{Prop:Maximal}.
\end{proof}

\noindent
Another consequence of Proposition~\ref{Prop:colored:ideal} is the following stronger result, which 
will play an important role in Sections~\ref{Section:Consequences} and~\ref{Section:Type:A} below.

\begin{theorem}\label{Cor:Dowling}Let $W=G(m,p,n)$, let $\zeta$ be a primitive $d^{\text{th}}$ root of unity, and let $W'$ be as in 
Proposition~\ref{Prop:colored:ideal}.  
Then
\begin{enumerate}[(i)]
\item $E(W,\zeta)\subseteq \L_{W'}$, and 
\item $V(g,\zeta)\cap X\in E(W,\zeta)$ for every $g\in W$ and $X\in \L_{W'}$.
\end{enumerate}
In other words, $E(W,\zeta)$ is an upper order ideal of $\L_{W'}$.
\end{theorem}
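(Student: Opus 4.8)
The plan is to dispatch part (ii) immediately — it is precisely Proposition~\ref{Prop:colored:ideal} — and to prove part (i) by a short monotonicity argument, after which the phrase ``upper order ideal'' follows formally from (i) and (ii) together.

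For part (i), the key observation I would make is that in every case the scalar matrix $\zeta\cdot I$ lies in $W'$. Indeed, if $a(d)<n$ then $W'=G(m\vee d,1,n)$ and $\zeta\in\bmu_d\subseteq\bmu_{m\vee d}$, so $\zeta\cdot I\in W'$ outright; and if $a(d)=n$ then $W'=W$, and Corollary~\ref{Cor:Springer} gives $\mathbb C^n\in E(W,\zeta)$, which forces the element realizing this eigenspace to be $\zeta\cdot I$, so again $\zeta\cdot I\in W'$. Consequently $\mathbb C^n=V(\zeta\cdot I,\zeta)\in E(W',\zeta)$, and a second application of Corollary~\ref{Cor:Springer}, now to the group $W'$, yields $E(W',\zeta)=\L_{W'}$. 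Since $W\subseteq W'$ we have the tautological inclusion $E(W,\zeta)\subseteq E(W',\zeta)$ (every $\zeta$-eigenspace of an element of $W$ is a $\zeta$-eigenspace of an element of $W'$), whence $E(W,\zeta)\subseteq\L_{W'}$, which is (i).

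To conclude that $E(W,\zeta)$ is an up-set in $\L_{W'}$ — recalling that the order is reverse inclusion, so ``above'' means ``smaller subspace'' — I would take $Y\in\L_{W'}$ with $Y\subseteq V(g,\zeta)$ for some $g\in W$. Then $Y=V(g,\zeta)\cap Y$, and the right-hand side lies in $E(W,\zeta)$ by part (ii); hence $Y\in E(W,\zeta)$. Combined with (i), this is exactly the assertion that $E(W,\zeta)$ is an upper order ideal of $\L_{W'}$.

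I do not anticipate a serious obstacle: essentially all the content beyond what is already recorded sits in Proposition~\ref{Prop:colored:ideal}, and the remaining steps are formal. The one point needing a moment's care is the degenerate case $a(d)=n$ in the verification that $\zeta\cdot I\in W'$. If one prefers to avoid the case split in (i), an alternative is to use Lemma~\ref{lemma:color} to write each $V(g,\zeta)$, via its disjoint-cycle decomposition, explicitly as an intersection of hyperplanes of the form $z_i=0$ and $z_i=\xi z_j$, and to observe that every coefficient $\xi$ occurring is a product of a power of $\zeta$ with an $m^{\text{th}}$ root of unity, hence lies in $\bmu_m\bmu_d=\bmu_{m\vee d}$ and therefore cuts out a reflecting hyperplane of $G(m\vee d,1,n)$; this shows $V(g,\zeta)\in\L_{W'}$ directly.
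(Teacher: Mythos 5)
Your proposal is correct and follows essentially the same route as the paper: the key point in both is that the scalar matrix $\zeta$ lies in $W'$, so that $V(g,\zeta)=V(\zeta^{-1}g,1)\in\L_{W'}$ (you reach this via Corollary~\ref{Cor:Springer} applied to $W'$, whose proof is exactly that identity), while part (ii) is quoted from Proposition~\ref{Prop:colored:ideal}. Your explicit verification of the up-set property, writing $Y=V(g,\zeta)\cap Y$ for $Y\subseteq V(g,\zeta)$, correctly fills in a step the paper leaves implicit.
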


\begin{proof}
Observe that $W'$ contains the scalar matrix $\zeta$, and that 
\[V(g,\zeta)=V(\zeta^{-1}g,1)\in \L_{W'}\] whenever $g\in W$.  The first claim follows, and 
the second is Proposition~\ref{Prop:colored:ideal}.
\end{proof}

\section{Maximal eigenspaces of $G(m,p,n)$}\label{Section:Maximal:Eigenspaces}  
In this section we associate a certain \emph{word}, denoted $\word(E)$, to each maximal 
eigenspace $E\in E(W,\zeta)$ for $W=G(m,p,n)$ and $\zeta\neq 1$.  
In the next section we show that lexicographically ordering these words gives 
a recursive atom ordering for $\widehat{E(W,\zeta)}$.  In addition to $a(d)$, the 
following number plays an important role in our discussion.
\begin{align}
\ell(d)&\Def \min\{s\in\mathbb Z_{\geq 1}\ :\ \zeta^s\in\bmu_m\}\\
&=\frac{d}{\gcd(m,d)}.
\end{align}
The crux of our construction is that each maximal eigenspace $E\in E(W,\zeta)$ determines 
a unique set of $a(d)$ many $\ell(d)$-cycles in $G(m,1,n)$ whose product 
has $\zeta$-eigenspace $E$.  It is from   
this set that we construct $\word(E)$ in Corollary~\ref{Cor:Word} below.  We establish the correspondence by 
first showing that any product $g\in G(m,1,n)$ of $\dim V(g,\zeta)$ many nontrivial $\ell(d)$-cycles 
is uniquely determined by its eigenspace $V(g,\zeta)$, and then showing that 
each maximal $E\in E(W,\zeta)$ may be realized as the $\zeta$-eigenspace of 
such a product.

Recall from Section~\ref{Section:Geometric} the identification of a 2-line array of the form 
\begin{equation}
\sigma=\begin{pmatrix} 
\sigma_1 & \sigma_2 & \cdots & \sigma_\ell\\ 
\epsilonA_1 \sigma_2 & \epsilonA_2\sigma_3 & \cdots & \epsilonA_\ell \sigma_1
\end{pmatrix}\label{Generic:Permutation}
\end{equation}
and a particular element of $G(m,1,n)$, and note 
that the element determines the 2-line array up to cyclically permuting columns.  Thus, by requiring 
that the smallest $\sigma_i$ come first, the array is uniquely determined.  We adopt this convention 
for the remainder of the section, i.e., that $\sigma_1<\sigma_2,\sigma_3,\ldots,\sigma_\ell$.

\begin{lemma}\label{Lemma:Eigenspace:Equality}
Let $\zeta$ be a primitive $d^{\text{th}}$ root of unity for $d>1$, and set $\ell:=\ell(d)$.  Suppose that 
$\sigma,\tau\in G(m,1,n)$ are two $\ell$-cycles such that $V(\sigma,\zeta)=V(\tau,\zeta)\neq\{0\}$.  
Then $\sigma=\tau$.
\end{lemma}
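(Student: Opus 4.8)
The plan is to use the explicit description of $\zeta$-eigenspaces from Lemma~\ref{lemma:color} to recover the two-line array of a nontrivial $\ell$-cycle from its $\zeta$-eigenspace. First I would record that since $V(\sigma,\zeta)\neq\{0\}$ and $\sigma$ is an $\ell(d)$-cycle, Lemma~\ref{lemma:color} applies, so $\zeta^{\ell}=\prod\epsilonA_i$ (this is automatic: $\zeta^{\ell(d)}\in\bmu_m$ by definition of $\ell(d)$, but more to the point the hypothesis that the eigenspace is nonzero forces the colors to satisfy this relation), and $V(\sigma,\zeta)$ is precisely the line described by equations~\eqref{Eigen:Equation:1}--\eqref{Eigen:Equation:2}. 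The key observation is that the vanishing coordinates $z_i=0$ for $i\notin\Supp(\sigma)$ are visible from the line $V(\sigma,\zeta)$ itself, so $V(\sigma,\zeta)=V(\tau,\zeta)$ immediately gives $\Supp(\sigma)=\Supp(\tau)$; call this common $\ell$-set $\{\sigma_1,\dots,\sigma_\ell\}$ with $\sigma_1$ the smallest, which by our standing convention is the first entry of both arrays.

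Next I would extract the colors. Pick the (unique up to scalar) nonzero vector $\mathbf z$ spanning the common line $V(\sigma,\zeta)=V(\tau,\zeta)$; after normalizing, $z_{\sigma_1}=1$. Reading off equation~\eqref{Eigen:Equation:1} for $\sigma$, the ratios $z_{\sigma_1}/z_{\sigma_{j+1}}=\zeta^{j}(\epsilonA_1\cdots\epsilonA_j)^{-1}$ are determined by $\mathbf z$, hence the partial products $\epsilonA_1\cdots\epsilonA_j$ are determined for $j=1,\dots,\ell-1$, and the last one $\epsilonA_1\cdots\epsilonA_\ell$ equals $\zeta^\ell$, again forced. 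Taking successive quotients of these partial products recovers each $\epsilonA_j$ individually. Since the exact same computation applied to $\tau$ recovers its colors $\deltaA_j$ from the exact same vector $\mathbf z$, we get $\epsilonA_j=\deltaA_j$ for all $j$. Combined with $\Supp(\sigma)=\Supp(\tau)$ (and the fact that the ordering $\sigma_1<\sigma_2,\dots,\sigma_\ell$ pins down which support element plays which role — here one must check that not just the support set but the cyclic sequence of support elements agrees, which follows because the index $j$ in~\eqref{Eigen:Equation:1} simultaneously picks out the coordinate $z_{\sigma_{j+1}}$ and the color data), the two arrays coincide, so $\sigma=\tau$.

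The one genuinely delicate point — and the step I expect to be the main obstacle — is disentangling the support ordering from the color data: a priori, knowing only the line $V(\sigma,\zeta)$ as a subset of $\mathbb C^n$, one knows the set of nonvanishing coordinates and, for each such coordinate $\sigma_{j+1}$, the ratio $z_{\sigma_{j+1}}/z_{\sigma_1}$, but to conclude $\sigma_{j}$ is the $j$th support element \emph{in the cyclic order of the array} rather than merely an element of the support, one uses the convention that fixes $\sigma_1$ as the minimum and then argues that the positions $\sigma_2,\dots,\sigma_\ell$ are determined by $\sigma$ as a permutation (the underlying uncolored permutation is $\sigma_1\mapsto\sigma_2\mapsto\cdots\mapsto\sigma_\ell\mapsto\sigma_1$) and that this underlying permutation can itself be read off from $V(\sigma,\zeta)$ — this is where one must be slightly careful, since $V(\sigma,\zeta)$ sees the coordinate ratios but the permutation structure has to be reconstructed. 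In fact the cleanest route is: the underlying permutation is recoverable because $V(\sigma,\zeta)$ determines, for each $k$, the eigenspace equations, and matching~\eqref{Eigen:Equation:1} for $\sigma$ and $\tau$ with the \emph{same} spanning vector forces the two chains of equalities to be literally the same chain, hence the same ordered tuple $(\sigma_1,\dots,\sigma_\ell)$ and the same colors. I would write this out carefully in a sentence or two rather than leaving it implicit.
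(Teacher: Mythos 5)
Your reduction of the lemma to recovering the two-line array from the line $V(\sigma,\zeta)$ is the right frame, and you correctly isolate the delicate point: from the line one sees, for each support element $t$, only the ratio $z_{\sigma_1}/z_t$, and one must still determine which position $j$ the element $t$ occupies in the cyclic order before one can read off the partial product $\epsilonA_1\cdots\epsilonA_j$ from the coefficient $\zeta^j(\epsilonA_1\cdots\epsilonA_j)^{-1}$. But your proposed resolution --- that matching \eqref{Eigen:Equation:1} for $\sigma$ and $\tau$ against the same spanning vector ``forces the two chains of equalities to be literally the same chain'' --- is circular: a line does not come with a preferred presentation as a chain, and the assertion that two chains of the form \eqref{Eigen:Equation:1} cutting out the same line must list the support in the same cyclic order is exactly what has to be proved. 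That this is a genuine gap, not a presentational one, is shown by the fact that your argument nowhere uses the minimality in the definition $\ell(d)=\min\{s\in\mathbb Z_{\geq 1}\ :\ \zeta^s\in\bmu_m\}$ (you invoke only $\zeta^{\ell}\in\bmu_m$); without minimality the conclusion is false, e.g.\ in $\mathfrak S_n$ with $\zeta=-1$ the distinct $4$-cycles $(1,2,3,4)$ and $(1,4,3,2)$ have the same $(-1)$-eigenspace $z_1=-z_2=z_3=-z_4$, the two chains listing the common support in opposite cyclic orders.

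The missing ingredient, which is the crux of the paper's proof, is a coset argument. Since $\ell=\ell(d)$ is the order of $\zeta\bmu_m$ in $\bmu_{m\vee d}/\bmu_m$, the cosets $\bmu_m,\,\zeta\bmu_m,\,\ldots,\,\zeta^{\ell-1}\bmu_m$ are pairwise disjoint, and the coefficient $\zeta^j(\epsilonA_1\cdots\epsilonA_j)^{-1}$ of $z_{\sigma_{j+1}}$ in \eqref{Eigen:Equation:1} lies in $\zeta^j\bmu_m$ because each $\epsilonA_i\in\bmu_m$. Hence the coset containing the ratio $z_{\sigma_1}/z_t$ determines the position $j$ of each support element $t$ uniquely. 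With the positions pinned down, your extraction of the partial products and then of the individual colors by successive quotients (using $\epsilonA_1\cdots\epsilonA_\ell=\zeta^\ell$ for the last color) goes through verbatim, and the rest of your argument --- reading off the support from the vanishing coordinates and taking $\sigma_1$ to be its minimum --- matches the paper's. You should add that one coset observation explicitly; it is where the hypothesis $\ell=\ell(d)$ does its work.
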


\begin{proof}
We show that the map $\sigma\mapsto V(\sigma,\zeta)$ is a bijection when restricted 
to the $\ell$-cycles $\sigma$ such that $V(\sigma,\zeta)\neq 0$ by 
constructing its inverse.  Fix such a cycle $\sigma$ and 
label its entries as in~\eqref{Generic:Permutation} so 
that its image $V(\sigma,\zeta)$ is defined by 
equations~\eqref{Eigen:Equation:1} and~\eqref{Eigen:Equation:2} of Lemma~\ref{lemma:color}.  
Working backwards, first note that $V(\sigma,\zeta)$ is a line, and therefore uniquely 
determines~\eqref{Eigen:Equation:1} and~\eqref{Eigen:Equation:2}.
Next observe that
$\bmu_{m\vee d}$ is the disjoint union of 
the cosets $\bmu_m,\, \zeta\bmu_m,\, \ldots,\,  \zeta^{\ell}\bmu_m$, so that
for each $i$ there is a \emph{unique} 
scalar $\zeta^j(\delta_1\delta_2\cdots\delta_j)^{-1}$ appearing in~\eqref{Eigen:Equation:1} that is
contained in the coset $\zeta^i\bmu_m$, from which one recovers
$\delta_1\delta_2\cdots\delta_{i-1}$ and $\sigma_i$.   
The cycle $\sigma$ is obtained by letting $i$ range 
from $1$ to $\ell$ while taking successive quotients so as to isolate each $\delta_i$.
\end{proof}

The general case follows:

\begin{proposition}\label{Prop:Unique:Sets}
Let $\zeta$ and $\ell$ be as in Lemma~\ref{Lemma:Eigenspace:Equality}.  Suppose that 
\[S:=\{\sigma^{(1)},\sigma^{(2)},\ldots,\sigma^{(q)}\}\quad\text{ and }\quad T:=\{\tau^{(1)},\tau^{(2)},\ldots,\tau^{(q)}\}\]
are two sets of pairwise disjoint $\ell$-cycles in $G(m,1,n)$ that satisfy
\begin{enumerate}[(1)]
\item $\dim V(\sigma^{(i)},\zeta)=\dim V(\tau^{(i)},\zeta)=1\text{\ \ for all\ \ }i\in[q]\text{, and}$
\item $\bigoplus_i V(\sigma^{(i)},\zeta)
=
\bigoplus_i V(\tau^{(i)},\zeta)$.
\end{enumerate}
Then $S=T$.
\end{proposition}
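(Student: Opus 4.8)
The plan is to reduce to the single-cycle case settled by Lemma~\ref{Lemma:Eigenspace:Equality}. The point is that neither of the two decompositions is canonical on its own, but the common space $E := \bigoplus_i V(\sigma^{(i)},\zeta) = \bigoplus_j V(\tau^{(j)},\zeta)$ \emph{is}, so I will recover the partition of coordinates into cycle supports, and then each line, directly from $E$.

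First I would record the geometry of the summands. By Lemma~\ref{lemma:color}, for each $i$ the line $V(\sigma^{(i)},\zeta)$ is cut out by the equations $z_k=0$ for $k\notin\Supp(\sigma^{(i)})$ together with a chain of equations proportionally relating the coordinates indexed by $\Supp(\sigma^{(i)})$. Since this line is nonzero, a spanning vector is nonzero in \emph{every} coordinate of $\Supp(\sigma^{(i)})$ and zero elsewhere. As the supports $\Supp(\sigma^{(i)})$ are pairwise disjoint, the sum of the $V(\sigma^{(i)},\zeta)$ is automatically direct of dimension $q$, and the set $N$ of coordinates on which $z_k$ is not identically zero on $E$ equals $\bigcup_i\Supp(\sigma^{(i)})$; running this with the $\tau^{(j)}$ instead shows $N=\bigcup_j\Supp(\tau^{(j)})$ as well.

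Next I would give an intrinsic description of the support partition. For $a\in N$ the functional $z_a$ restricted to $E$ is, up to a nonzero scalar, the coordinate reading off the $V(\sigma^{(i)},\zeta)$-component, where $i$ is the unique index with $a\in\Supp(\sigma^{(i)})$; hence $E\cap\{z_a=0\}=\bigoplus_{k\neq i}V(\sigma^{(k)},\zeta)$, a hyperplane of $E$ depending only on $i$, and two coordinates lying in distinct supports yield distinct hyperplanes. Therefore declaring $a\equiv b$ when $E\cap\{z_a=0\}=E\cap\{z_b=0\}$ defines an equivalence relation on $N$ whose classes are exactly $\{\Supp(\sigma^{(i)})\}_i$, and the same argument with the $\tau^{(j)}$ shows its classes are also exactly $\{\Supp(\tau^{(j)})\}_j$. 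Thus the two set-partitions of $N$ coincide, and after reindexing we may assume $\Supp(\sigma^{(i)})=\Supp(\tau^{(i)})$ for every $i$.

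Finally I would peel off the individual cycles. Intersecting $E$ with the coordinate subspace $\{z_k=0:k\notin\Supp(\sigma^{(i)})\}$ kills every summand $V(\sigma^{(k)},\zeta)$ with $k\neq i$ (its spanning vector is nonzero in some coordinate of $\Supp(\sigma^{(k)})$, which is disjoint from $\Supp(\sigma^{(i)})$) and retains $V(\sigma^{(i)},\zeta)$; the identical computation with the $\tau$'s, using $\Supp(\tau^{(i)})=\Supp(\sigma^{(i)})$, yields $V(\tau^{(i)},\zeta)$. Hence $V(\sigma^{(i)},\zeta)=V(\tau^{(i)},\zeta)\neq\{0\}$, and Lemma~\ref{Lemma:Eigenspace:Equality} gives $\sigma^{(i)}=\tau^{(i)}$, so $S=T$. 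The only step requiring genuine care is the middle one: checking that the relation $\equiv$ really is intrinsic to $E$ (independent of either chosen decomposition), which rests on the coordinatewise non-vanishing observed in the first step; the rest is bookkeeping on top of Lemmas~\ref{lemma:color} and~\ref{Lemma:Eigenspace:Equality}.
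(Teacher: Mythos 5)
Your proof is correct and follows essentially the same route as the paper's: both exploit the fact that each line $V(\sigma^{(i)},\zeta)$ is supported exactly on $\Supp(\sigma^{(i)})$ with all of those coordinates nonvanishing, recover the support partition and then the individual lines from $E$ itself, and finish with Lemma~\ref{Lemma:Eigenspace:Equality}. The only cosmetic difference is that the paper uses orthogonal projection onto $\bigoplus_{t\in\Supp(\tau^{(1)})}\mathbb{C}e_t$ together with induction, where you intersect with coordinate subspaces and make the matching of supports explicit via the relation $E\cap\{z_a=0\}=E\cap\{z_b=0\}$.
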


\begin{proof}
Consider the image of $\bigoplus_i V(\sigma^{(i)},\zeta)$ under the orthogonal projection of $\mathbb C^n$ onto 
$\bigoplus_{t\in \Supp(\tau^{(1)})}\mathbb C e_t$ and employ Lemma~\ref{lemma:color} to show that 
$V(\tau^{(1)},\zeta)$ is equal to $V(\sigma^{(k)},\zeta)$ for some $k$, then apply Lemma~\ref{Lemma:Eigenspace:Equality}.  
The result follows by induction.
\end{proof}

We now come to the crux of this section.

\begin{proposition}\label{Prop:Smallest}
Let $\zeta$ and $\ell$ be as in Lemma~\ref{Lemma:Eigenspace:Equality}, and set $W=G(m,p,n)$.  
Let $E\in E(W,\zeta)$ be maximal under inclusion.  Then there exists a set 
of pairwise disjoint $\ell$-cycles $\{\sigma^{(1)},\ldots, \sigma^{(a(d))}\}\subset G(m,1,n)$ 
such that \[E=V(\sigma^{(1)},\zeta)\oplus\cdots\oplus V(\sigma^{(a(d))},\zeta).\]
\end{proposition}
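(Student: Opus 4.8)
The plan is to pick any $g\in W$ with $V(g,\zeta)=E$, take its disjoint cycle decomposition $g=\sigma^{(1)}\cdots\sigma^{(q)}$ (so that $[n]=\bigcup_j\Supp(\sigma^{(j)})$ and $V(g,\zeta)=\bigoplus_j V(\sigma^{(j)},\zeta)$), and then cut every cycle whose length exceeds $\ell=\ell(d)$ into a product of disjoint $\ell$-cycles in such a way that the resulting element $h$ still lies in $W$ while $V(h,\zeta)\supseteq E$. Maximality of $E$ will force this containment to be an equality, after which a dimension count shows that no cutting was ever necessary.

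First I would set up the bookkeeping. By Lemma~\ref{lemma:color} each summand $V(\sigma^{(j)},\zeta)$ has dimension $0$ or $1$, and dimension $1$ forces $\zeta^{\ell(\sigma^{(j)})}\in\bmu_m$ (it equals the product of the colors of $\sigma^{(j)}$), hence $\ell\mid\ell(\sigma^{(j)})$, since $\ell=d/\gcd(m,d)$ divides every $s$ with $\zeta^s\in\bmu_m$. Because $E$ is maximal, $\dim E=a(d)$ by Proposition~\ref{Prop:Springer:Transitive}\eqref{Springer:1}, so exactly $a(d)$ of the cycles have a one-dimensional $\zeta$-eigenspace; reindex these as $\sigma^{(1)},\ldots,\sigma^{(a(d))}$, of lengths $k_1\ell,\ldots,k_{a(d)}\ell$ with each $k_i\geq 1$.

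Next comes the surgery, which is the only step with real content. Take one of these cycles, say $\sigma$ of length $k\ell$ with colors $\epsilon_1,\ldots,\epsilon_{k\ell}$ (so $\zeta^{k\ell}=\prod_j\epsilon_j$ by Lemma~\ref{lemma:color}); cut it into $k$ consecutive arcs of length $\ell$ and close the $a$-th arc ($0\leq a<k$) into an $\ell$-cycle $\tau^{(a)}$ supported on $\{\sigma_{a\ell+1},\ldots,\sigma_{a\ell+\ell}\}$, keeping $\epsilon_{a\ell+1},\ldots,\epsilon_{a\ell+\ell-1}$ internal to the arc and using $\epsilon'_a:=\zeta^{\ell}(\epsilon_{a\ell+1}\cdots\epsilon_{a\ell+\ell-1})^{-1}$ as the wrap-around color. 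Since $\zeta^{\ell}\in\bmu_m$ one has $\epsilon'_a\in\bmu_m$, so $\tau^{(a)}\in G(m,1,n)$; moreover the colors of $\tau^{(a)}$ multiply to $\zeta^{\ell}$, so (as $\ell(\tau^{(a)})=\ell$) Lemma~\ref{lemma:color} makes $V(\tau^{(a)},\zeta)$ a line, and the colors of $\tau^{(0)},\ldots,\tau^{(k-1)}$ together multiply to $(\zeta^{\ell})^{k}=\prod_j\epsilon_j$, the product of the colors of $\sigma$. A final comparison of the defining equations of Lemma~\ref{lemma:color} shows that each relation $z_{\sigma_j}=\zeta\epsilon_j^{-1}z_{\sigma_{j+1}}$ (for $a\ell+1\leq j\leq a\ell+\ell-1$) entailed by the equations of $V(\sigma,\zeta)$ is precisely one of the relations defining $V(\tau^{(a)},\zeta)$, while $z_i=0$ off $\Supp(\sigma)$ holds for both; hence $V(\sigma,\zeta)\subseteq\bigoplus_a V(\tau^{(a)},\zeta)$.

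Finally, let $h$ be obtained from $g$ by performing this surgery on each of $\sigma^{(1)},\ldots,\sigma^{(a(d))}$ and leaving the remaining (zero-eigenspace) cycles untouched. The color-product observation, applied cycle by cycle, shows that the product of all colors of $h$ equals that of $g$, which lies in $\bmu_{m/p}$ since $g\in W$; hence $h\in W=G(m,p,n)$. The equation observation, together with disjointness of all the pieces, gives $E=V(g,\zeta)\subseteq V(h,\zeta)$, and $V(h,\zeta)\in E(W,\zeta)$. Maximality of $E$ now forces $V(h,\zeta)=E$, whence $a(d)=\dim E=\dim V(h,\zeta)=\sum_{i=1}^{a(d)}k_i$; since every $k_i\geq 1$, all $k_i=1$. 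Thus $\sigma^{(1)},\ldots,\sigma^{(a(d))}$ are already pairwise disjoint $\ell$-cycles with $E=\bigoplus_i V(\sigma^{(i)},\zeta)$, as required. I expect the only genuine difficulty to be calibrating the wrap-around color $\epsilon'_a$ so that each $\tau^{(a)}$ lands in $G(m,1,n)$, has a \emph{one}-dimensional $\zeta$-eigenspace, \emph{and} the surgery preserves membership in $G(m,p,n)$; once that is in place, the maximality argument closes things off immediately.
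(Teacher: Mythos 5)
Your proof is correct, but it takes a genuinely different route from the paper's. The paper simply exhibits one explicit maximal eigenspace of the required form --- a product of $a(d)$ explicit $\ell$-cycles on the blocks $\{i\ell+1,\dots,i\ell+\ell\}$, padded by a scalar $1$-cycle on $e_n$ to land in $G(m,p,n)$ --- and then transports this to the given $E$ using the transitivity of $W$ on its maximal $\zeta$-eigenspaces (Proposition~\ref{Prop:Springer:Transitive}\eqref{Springer:2}), conjugation carrying disjoint $\ell$-cycles to disjoint $\ell$-cycles. You instead work directly with an arbitrary $g$ representing $E$: you upgrade the paper's observation $\ell(\tau^{(i)})\geq\ell$ to the divisibility $\ell\mid\ell(\sigma^{(i)})$ (correct, since $\{s:\zeta^s\in\bmu_m\}$ is the subgroup $\ell\mathbb Z$), cut each contributing cycle of length $k_i\ell$ into $k_i$ disjoint $\ell$-cycles with wrap-around color $\zeta^\ell(\epsilonA_{a\ell+1}\cdots\epsilonA_{a\ell+\ell-1})^{-1}\in\bmu_m$, check that the color product (hence membership in $G(m,p,n)$) and the containment $V(\sigma,\zeta)\subseteq\bigoplus_aV(\tau^{(a)},\zeta)$ are preserved, and let maximality plus $\dim E=a(d)$ force $\sum k_i=a(d)$, i.e.\ all $k_i=1$. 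Your argument only invokes part \eqref{Springer:1} of Springer's result rather than transitivity, and it proves something slightly sharper: for a maximal $E=V(g,\zeta)$, \emph{every} disjoint-cycle representative $g$ already has all of its $\zeta$-contributing cycles of length exactly $\ell$, which is in the spirit of the uniqueness statements (Lemma~\ref{Lemma:Eigenspace:Equality}, Proposition~\ref{Prop:Unique:Sets}) surrounding this proposition. The trade-off is that your surgery requires the careful color bookkeeping you flag at the end, whereas the paper's witness-plus-transitivity argument dispenses with all of that in a few lines.
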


\begin{proof}
Choose $g\in W$ such that $E=V(g,\zeta)$, and write $g=\tau^{(1)}\tau^{(2)}\cdots\tau^{(q)}$ as a product of 
disjoint cycles, indexed so that 
\[
\dim V(\tau^{(i)},\zeta)
=
\begin{cases}
1 & \text{if $1\leq i\leq a(d)$;}\\
0 & \text{otherwise.}
\end{cases}
\]
Lemma~\ref{lemma:color} tells us that $\ell(\tau^{(i)})\geq \ell $ for each $i\in [a(d)]$, and so $n\geq a(d)\ell$.  
If $n=a(d)\ell $, the claim follows.  Assume otherwise so that $n>a(d)\ell$, and set
\[h=\prod_{i=0}^{a(d)-1}
\begin{pmatrix}
i\ell+1 & i\ell+2 & \cdots & i\ell+\ell\\
\zeta^\ell(i\ell+2) & i\ell+3 & \cdots & i\ell+1
\end{pmatrix}.
\]
Let $\pi'$ be the element of $G(m,1,n)$ mapping $e_n$ to $\zeta^{-a(d)\ell}e_n$ while fixing all other $e_i$.  
Clearly $h\pi'\in G(m,p,n)$ and 
\[V(h\pi',\zeta)=V(\pi^{(0)},\zeta)\oplus V(\pi^{(1)},\zeta)\oplus \cdots\oplus V(\pi^{(a(d)-1)},\zeta)\oplus V(\pi',\zeta)\]
for \[\pi^{(i)}:=\begin{pmatrix}  i\ell+1 & i\ell+2 &\cdots & i\ell+\ell  \\ 
\zeta^\ell(i\ell+2) & i\ell+3 & \cdots & i\ell+1\end{pmatrix}.\]  
Since Lemma~\ref{lemma:color} implies $\dim V(\pi^{(i)},\zeta)=1$ for 
$0\leq i\leq a(d)-1$, it follows that 
$V(h\pi',\zeta)$ is a maximal eigenspace in $E(W,\zeta)$.  
Hence the result, as $W$ acts transitively on its maximal $\zeta$-eigenspaces by Proposition~\ref{Prop:Springer:Transitive}\eqref{Springer:2}.
\end{proof}

We can now label each maximal eigenspace of $G(m,p,n)$ by the cycles that it determines.  
In the next section we shall use this labeling of the maximal eigenspaces to construct a 
CL-shelling of $\widehat{E(W,\zeta)}$.

\begin{corollary}\label{Cor:Word}
Let $\zeta$ and $\ell$ be as in Lemma~\ref{Lemma:Eigenspace:Equality}, and set $W=G(m,p,n)$.  
Let $E\in E(W,\zeta)$ be a maximal eigenspace under inclusion.  
Then there exists a unique sequence 
\[\word(E):=(s_1,s_2,\ldots,s_{n-a(d)\ell}\ ;\ \sigma^{(1)},\ldots, \sigma^{(a(d))})\] 
of integers $s_i$ and disjoint $\ell$-cycles $\sigma^{(i)}\in G(m,1,n)$ with the following properties.
\begin{enumerate}[(i)]
\item  $E=\bigoplus_i V(\sigma^{(i)},\zeta)$.
\item  $\{s_1,s_2,\ldots,s_{n-a(d)\ell}\}=[n]\diff \bigcup_{i=1}^{a(d)}\Supp(\sigma^{(i)})$.
\item  $s_1<s_2<\ldots< s_{n-a(d)\ell}$.
\item  $\min\Supp (\sigma^{(i)})<\min \Supp(\sigma^{(j)})$ whenever $i<j$.
\end{enumerate}
\end{corollary}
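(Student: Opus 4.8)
The plan is to read this off, almost formally, from Propositions~\ref{Prop:Smallest} and~\ref{Prop:Unique:Sets}: the substantive content already lives in those two results, and the corollary only repackages it into a canonical labeling. First I would invoke Proposition~\ref{Prop:Smallest} to obtain a set $\{\sigma^{(1)},\ldots,\sigma^{(a(d))}\}\subset G(m,1,n)$ of pairwise disjoint $\ell$-cycles with $E=\bigoplus_i V(\sigma^{(i)},\zeta)$, which is property~(i). By Lemma~\ref{lemma:color} each summand $V(\sigma^{(i)},\zeta)$ has dimension $0$ or $1$, and since $\dim E=a(d)$ by Proposition~\ref{Prop:Springer:Transitive}\eqref{Springer:1} while the sum is direct over $a(d)$ terms, every $V(\sigma^{(i)},\zeta)$ must be a line. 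Hence the hypotheses of Proposition~\ref{Prop:Unique:Sets} are satisfied, and the \emph{unordered} set $\{\sigma^{(1)},\ldots,\sigma^{(a(d))}\}$ is uniquely determined by $E$.

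Next I would pin down the two orderings. The supports $\Supp(\sigma^{(i)})$ are pairwise disjoint subsets of $[n]$, each of size $\ell$, so $\bigcup_{i=1}^{a(d)}\Supp(\sigma^{(i)})$ has exactly $a(d)\ell$ elements and its complement in $[n]$ has $n-a(d)\ell$ elements; letting $(s_1,\ldots,s_{n-a(d)\ell})$ be the increasing enumeration of that complement gives~(ii) and~(iii) and determines these integers uniquely. Since the supports are disjoint, the minima $\min\Supp(\sigma^{(i)})$ are pairwise distinct, so reindexing the already-determined set of cycles so that these minima strictly increase yields~(iv) and fixes the order of the $\sigma^{(i)}$ uniquely. (Recall also that, by the convention of this section, each $\sigma^{(i)}$ has a unique $2$-line representation with smallest support entry first, so nothing is ambiguous about the cycles themselves.)

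For uniqueness of the whole sequence, suppose $(s_1,\ldots,s_{n-a(d)\ell};\sigma^{(1)},\ldots,\sigma^{(a(d))})$ satisfies~(i)--(iv). Property~(i) together with the first paragraph forces its underlying unordered set of cycles to equal the unique set provided by Proposition~\ref{Prop:Unique:Sets}; property~(iv) then forces the order of the cycles, property~(ii) forces $\{s_1,\ldots,s_{n-a(d)\ell}\}$ to be the complement of $\bigcup_i\Supp(\sigma^{(i)})$ in $[n]$, and property~(iii) forces the order of the $s_i$. Thus the sequence is unique. I do not expect a genuine obstacle here; the one point that needs care is keeping separate the uniqueness of the \emph{set} of cycles (which is exactly Proposition~\ref{Prop:Unique:Sets}) from the uniqueness of the \emph{sequence} $\word(E)$ (which is then forced by the ordering conditions~(iii) and~(iv)).
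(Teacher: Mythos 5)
Your proposal is correct and follows exactly the route the paper intends: the corollary is stated without proof as an immediate consequence of Proposition~\ref{Prop:Smallest} (existence of the set of $\ell$-cycles) and Proposition~\ref{Prop:Unique:Sets} (uniqueness of that set), with the ordering conditions (ii)--(iv) merely fixing a canonical enumeration. Your dimension count showing each $V(\sigma^{(i)},\zeta)$ is a line, and your separation of set-uniqueness from sequence-uniqueness, fill in precisely the routine details the paper omits.
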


\begin{definition}\label{Def:Lex:Order}
Linearly order $\bmu_m$ as follows: set $\epsilonA:=e^{2\pi i/m}$ and define $\epsilonA^j<\epsilonA^k$ whenever $0\leq j<k< m$.  
Suppose that $\sigma$ and $\tau$ are two distinct cycles of the same length in $G(m,1,n)$, 
and let $k$ index the first column in which they differ:
\begin{align*}
\sigma&=\begin{pmatrix} 
\sigma_1 & \sigma_2 & \cdots & \sigma_k & \cdots & \sigma_\ell\\ 
\epsilonA_1\sigma_2 & \epsilonA_2\sigma_3 & \cdots & \epsilonA_k\sigma_{k+1} & \cdots &\epsilonA_\ell \sigma_1
\end{pmatrix}\\
\tau&=\begin{pmatrix} 
\tau_1 & \tau_2 & \,\cdots & \tau_k & \hspace{.5mm}\cdots & \tau_\ell\hspace{.35mm}\\ 
\deltaA_1\tau_2 & \deltaA_2\tau_3 & \,\cdots & \deltaA_k\tau_{k+1} & \hspace{.5mm}\cdots & \deltaA_\ell \tau_1\hspace{.35mm}
\end{pmatrix}.
\end{align*}
Define $\sigma<_\lex\tau$ if the $k$th column of $\sigma$ is lexicographically less than that of $\tau$ in the sense that 
one of the following holds.
\begin{enumerate}[(a)]
\item  $\sigma_k<\tau_k$.
\item  $\sigma_k=\tau_k$ and $\sigma_{k+1}<\tau_{k+1}$.
\item  $\sigma_k=\tau_k$, $\sigma_{k+1}=\tau_{k+1}$, and $\epsilonA_k<\deltaA_k$.
\end{enumerate}
\end{definition}

\begin{definition}
Let $W=G(m,p,n)$ and let $\zeta$ be a primitive $d^{\text{th}}$ root of unity for $d>1$.  
For two words 
\begin{align*}
\word(E)&=(s_1,\ldots, s_{n-a(d)\ell}\ ;\ \sigma^{(1)},\ldots,\sigma^{(a(d))})\\
\word(E')&=(t_1,\ldots, t_{n-a(d)\ell}\ ;\ \tau^{(1)},\ldots,\tau^{(a(d))})
\end{align*}
of distinct maximal eigenspaces $E,E'\in E(W,\zeta)$,
define $\word(E)<_\lex \word(E')$ if in the first position in which they differ, the 
term of $\word(E)$ is strictly less than the corresponding term of $\word(E')$.
\end{definition}

\begin{example}  
Ordering the three maximal $(-1)$-eigenspaces $E_i$ of $\mathfrak S_4$ by their words $\word(E_i)$, we have 
$E_1<_\lex E_2<_\lex E_3$ for $E_i$ the eigenspace labeled by \circled{$i$} in Figure~\ref{Figure:A2}; see
Table~\ref{Table:Dimensions}.  (Note that $n=a(d)\ell$ in this case.)
\begin{table}[hbt]
\center
\begin{tabular}{c@{\hskip 1.1cm}c@{\hskip 1.1cm}c}
\toprule
$i$ & $E_i=\{\mathbf z\in\mathbb C\ \text{satisfying }\ldots\}$  & $\word(E_i)$ \\ 
\midrule
& & \\[-12pt]
1 & $z_1=-z_2$, $z_3=- z_4$ & $\left(\begin{pmatrix} 1 & 2 \\ 2 & 1 \end{pmatrix},\begin{pmatrix} 3 & 4 \\ 4 & 3\end{pmatrix}\right)$\\[9pt]
2 & $z_1=-z_2$, $z_3=- z_4$ & $\left(\begin{pmatrix} 1 & 3 \\ 3 & 1 \end{pmatrix},\begin{pmatrix} 2 & 4 \\ 4 & 2\end{pmatrix}\right)$\\[9pt]
3 & $z_1=-z_2$, $z_3=- z_4$ & $\left(\begin{pmatrix} 1 & 4 \\ 4 & 1 \end{pmatrix},\begin{pmatrix} 2 & 3 \\ 3 & 2\end{pmatrix}\right)$\\[9pt]
\bottomrule
\end{tabular}
\tableskip
\caption{The maximal $(-1)$-eigenspaces $E_i$ of $\mathfrak S_4$ in Figure~\ref{Figure:A2}, indexed with respect 
to lexicographic order on their words.}\label{Table:Dimensions}
\end{table}

\noindent
The maximal spaces in Figures~\ref{Figure:A1}-\ref{Figure:B} are similarly indexed.  In the poset 
$E(\mathfrak S_4,\zeta)$ of Figure~\ref{Figure:A3}, for example, the eigenspaces
\begin{align*}
E_3&=\{\mathbf z\in\mathbb C\ :\ z_2=0,\ z_1=\zeta z_3=\zeta^2 z_4\}\\
E_5&=\{\mathbf z\in\mathbb C\ :\ z_3=0,\ z_1=\zeta z_2=\zeta^2 z_4\}
\end{align*}
have words
\begin{align*}
\word(E_3)&=\left(2,\begin{pmatrix} 1 & 3 & 4\\ 3 & 4 & 1 \end{pmatrix}\right)\\
\word(E_5)&=\left(3,\begin{pmatrix} 1 & 2 & 4\\ 2 & 4 & 1 \end{pmatrix}\right)
\end{align*}
such that $\word(E_3)<_\lex\word(E_5)$.
\end{example}

\section{$G(m,p,n)$ case of Theorem~\ref{Thm:Main:Shelling}}\label{Section:Shelling}
For $P$ a finite graded poset, denote its \emph{rank function} by $r(x):P\to \mathbb Z$ (with minimal elements having rank $0$),
and its \emph{rank} by $r(P):=\max\{r(x)\ :\ x\in P\}$.  
Recall that a poset is \emph{bounded} if it contains both a bottom element $\hat{0}$ and a top element $\hat{1}$, and that an 
\emph{atom} in a poset with a $\hat{0}$ is any element that covers $\hat{0}$.

\begin{definition}[Bj\"orner-Wachs~\cite{BW}]\label{RAO}
A bounded poset $P$ is said to \emph{admit a recursive atom ordering} if its rank $r(P)$ is $1$, or if 
$r(P)>1$ and there is an ordering of the atoms $a_1,\ldots, a_t$ that satisfies the following.
\begin{enumerate}[(i)]
\item\label{RAO:1}  Each interval $[a_j,\hat{1}]$ admits a recursive atom ordering in which its atoms that are contained in 
$[a_i,\hat{1}]$ for some $i<j$ come first. 
\item\label{RAO:2}  If $i<j$ and $a_i,a_j<x$, then there exists a $k<j$ and an atom $\tilde{x}$ of $[a_j,\hat{1}]$ for 
which $a_k<\tilde{x}\leq x$.
\end{enumerate}
\end{definition}

A well-known result of Bj\"orner and Wachs~\cite{BW} states 
that any ordering of the atoms in a \emph{totally semimodular poset} is a recursive atom ordering.  
In particular, any ordering of the atoms in a semimodular lattice is a recursive atom ordering, 
from which the next useful lemma follows immediately.

\begin{lemma}[Lemma 3 in~\cite{Sagan}]\label{Sagan:Lemma}
If $P$ is a bounded poset in which $[a,\hat{1}]$ is a semimodular lattice for every atom $a\in P$, 
then an atom ordering $a_1,\ldots, a_t$ is 
a recursive atom ordering if and only if it satisfies condition~\eqref{RAO:2} of Definition~\ref{RAO}.
\end{lemma}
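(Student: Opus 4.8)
The plan is to dispatch the two implications of the ``if and only if'' separately; the forward one is immediate, and all the content lies in the reverse one, where the semimodularity hypothesis is used to supply condition~\eqref{RAO:1} of Definition~\ref{RAO} for free. For the direction ``recursive atom ordering $\Rightarrow$ satisfies~\eqref{RAO:2},'' there is nothing to prove: obeying~\eqref{RAO:2} is literally part of Definition~\ref{RAO} (and when $r(P)=1$ both sides hold vacuously, since then $P$ has a single atom).

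For the substantive direction, suppose $a_1,\dots,a_t$ is an atom ordering of $P$ obeying~\eqref{RAO:2}. To conclude it is a recursive atom ordering I must additionally produce the data of condition~\eqref{RAO:1}: for each $j$, a recursive atom ordering of the upper interval $[a_j,\hat1]$ in which the atoms of $[a_j,\hat1]$ lying in $[a_i,\hat1]$ for some $i<j$ come first. Here I would invoke the result of Bj\"orner and Wachs~\cite{BW} recalled just above the lemma: since $[a_j,\hat1]$ is by hypothesis a semimodular lattice, \emph{every} ordering of its atoms is a recursive atom ordering. Hence I simply order the atoms of $[a_j,\hat1]$ by first listing, in arbitrary order, those contained in some $[a_i,\hat1]$ with $i<j$, and then the remaining ones; this ordering is automatically recursive and has the ``old atoms first'' property demanded by~\eqref{RAO:1}. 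With~\eqref{RAO:1} established and~\eqref{RAO:2} holding by assumption, $a_1,\dots,a_t$ meets every requirement of Definition~\ref{RAO}, so it is a recursive atom ordering of $P$.

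I do not anticipate a genuine obstacle: the lemma is essentially a bookkeeping corollary of the Bj\"orner--Wachs criterion quoted immediately above it. The one point worth a sentence is that the recursion implicit in Definition~\ref{RAO} applied to $[a_j,\hat1]$ terminates correctly — but this is already absorbed into the quoted theorem, since every upper interval of a semimodular lattice is again a semimodular lattice (indeed the poset is totally semimodular in the sense of Bj\"orner--Wachs), so the statement ``any atom ordering works'' propagates all the way down the recursion.
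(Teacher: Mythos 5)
Your argument is correct and matches the paper's own justification: the paper states the lemma as an immediate consequence of the Bj\"orner--Wachs fact that any atom ordering of a (totally) semimodular poset is a recursive atom ordering, which is exactly how you supply condition~(i) of Definition~\ref{RAO} with the ``old atoms first'' arrangement. Your write-up simply makes explicit the bookkeeping the paper leaves implicit.
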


Our goal is to give, when $W=G(m,p,n)$ and $\zeta$ is a root of unity, 
a recursive atom ordering for $P=\widehat{E(W,\zeta)}$, whose atoms 
are the maximal $\zeta$-eigenspaces of $W$ under inclusion.  By Lemma~\ref{Sagan:Lemma}, 
this amounts to producing a candidate ordering and verifying condition~\eqref{RAO:2} of Definition~\ref{RAO}.  
The case $d=1$ is \emph{trivial} in the sense that there is only one atom.
For $d>1$ we order the atoms by their words:

\begin{theorem}\label{Theorem:Shelling}
Let $W=G(m,p,n)$ and let $\zeta$ be a primitive $d^{\text{th}}$ root of unity for $d>1$.  
Then the lexicographic ordering of atoms $E$ of $\widehat{E(W,\zeta)}$ by their words 
$\word(E)$ is a recursive atom ordering.
\end{theorem}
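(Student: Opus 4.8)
The strategy is to invoke Lemma~\ref{Sagan:Lemma}: since Theorem~\ref{Cor:Dowling} shows that $E(W,\zeta)$ is an upper order ideal of the geometric lattice $\L_{W'}$, every upper interval $[E,\hat 1]$ in $\widehat{E(W,\zeta)}$ (for $E$ an atom, i.e.\ a maximal $\zeta$-eigenspace) is an interval $[E,\hat 1]$ in $\L_{W'}$, hence a geometric, in particular semimodular, lattice. So it suffices to verify condition~\eqref{RAO:2} of Definition~\ref{RAO} for the lexicographic order on atoms given by their words $\word(E)$. Concretely: given two maximal $\zeta$-eigenspaces $E<_\lex E'$ and a subspace $X\in E(W,\zeta)$ with $X\subseteq E$ and $X\subseteq E'$, I must produce a maximal eigenspace $E''<_\lex E'$ and an atom $\widetilde X$ of $[E',\hat 1]$ (that is, a subspace $\widetilde X\in E(W,\zeta)$ covered by $E'$, equivalently of dimension $a(d)-1$) with $E''\cap E'\subseteq \widetilde X\subseteq X$. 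Wait—more precisely $\widetilde X$ should satisfy $E''\supseteq$ something and $\widetilde X\subseteq X$; in poset-of-reverse-inclusion language, $a_k < \widetilde x \le x$ becomes $E'' \supsetneq \widetilde X \supseteq X$ with $\widetilde X$ covered by $E'$ in $E(W,\zeta)$ and $\widetilde X \subseteq E''$.

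\textbf{Key steps.} First I would unwind the combinatorics of the word order. By Corollary~\ref{Cor:Word} each maximal $E$ corresponds to a set of $a(d)$ disjoint $\ell$-cycles plus the sorted list of fixed points, and $E$ is the direct sum of the lines $V(\sigma^{(i)},\zeta)$. The inclusion $X\subseteq E$ with $X\in E(W,\zeta)$ means, by Lemma~\ref{lemma:color} and the coordinate description of eigenspaces, that $X$ is cut out from $E$ by imposing some of the linear relations among the $z_j$ for $j\in\Supp(\sigma^{(i)})$—i.e.\ $X$ is obtained by "breaking" or "merging" the cycles of $E$ in a controlled way, and the same $X$ sits inside $E'$ in the analogous fashion. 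Second, I would locate the first position $p$ in which $\word(E)$ and $\word(E')$ differ and analyze the three cases of Definition~\ref{Def:Lex:Order} (a smaller support element appears, or the supports agree but a later support entry differs, or supports agree and a color differs), together with the "fixed-point coordinate vs.\ cycle coordinate" dichotomy coming from the positions $s_i$ vs.\ $\sigma^{(i)}$. In each case I would exhibit $\widetilde X$ as the codimension-one subspace of $E'$ obtained by merging exactly the two $\ell$-cycles of $E'$ whose supports contain the coordinates at which $E$ and $E'$ first disagree (this is a valid element of $E(W,\zeta)$ by Theorem~\ref{Cor:Dowling}, realized as $V(g',\zeta)\cap H$ for an appropriate hyperplane $H\in\L_{W'}$), check that $X\subseteq\widetilde X$, and then construct $E''$ by performing the "reverse surgery" on $E$: replace one of $E$'s cycles by one built to make $\word(E'')$ lexicographically smaller than $\word(E')$ at position $p$ (typically by shrinking the relevant support entry or by moving a coordinate into the fixed-point block), while keeping $\widetilde X\subseteq E''$, i.e.\ keeping the merged line of $\widetilde X$ inside $E''$.

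\textbf{Main obstacle.} The genuine difficulty is bookkeeping: verifying that the constructed $E''$ really does lie strictly below $E'$ in lex order \emph{and} contains $\widetilde X$, uniformly across the case split of Definition~\ref{Def:Lex:Order} and the fixed-point/cycle dichotomy. One must be careful that "merging the two offending cycles of $E'$" produces a legitimate $\ell$-cycle (the color arithmetic from Lemma~\ref{lemma:color}, $\zeta^\ell\in\bmu_m$, is exactly what makes this work, and one may need to absorb a stray scalar by the coset argument already used in the proof of Proposition~\ref{Prop:colored:ideal}), and that the surgery defining $E''$ does not accidentally make $\word(E'')$ larger at an earlier position than $p$. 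I expect the cleanest route is to show $\word(E'')$ agrees with $\word(E')$ in positions $1,\dots,p-1$ and is strictly smaller in position $p$ by design, which forces $E''<_\lex E'$; the containment $\widetilde X\subseteq E''$ then follows because $\widetilde X$ is defined purely in terms of coordinates and relations common to both $E'$ and the surgically modified $E$. Once condition~\eqref{RAO:2} is checked in every case, Lemma~\ref{Sagan:Lemma} finishes the proof, since condition~\eqref{RAO:1} is automatic from semimodularity of the upper intervals.
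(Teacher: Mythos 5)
Your reduction is exactly the one the paper uses: by Theorem~\ref{Thm:Main:Geometric} every upper interval $[E,\hat{1}]$ is a geometric, hence semimodular, lattice, so Lemma~\ref{Sagan:Lemma} leaves only condition~\eqref{RAO:2} of Definition~\ref{RAO} to check, and the paper reformulates that condition just as you do: given atoms $A<_\lex B$, produce a maximal eigenspace $C<_\lex B$ and a hyperplane $H\in\L_{W'}$ with $A\cap B\subseteq B\cap H\subseteq C$. The problem is that this verification is the entire content of the theorem, and your plan for it does not survive the case split. Your proposed uniform witness $\widetilde X$ \Dash ``merge the two $\ell$-cycles of $E'$ whose supports contain the coordinates at which $E$ and $E'$ first disagree'' \Dash is correct only in the one case where the supports of the first differing cycles disagree at some entry $\sigma_{k+1}<\tau_{k+1}$. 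If the first disagreement occurs in the fixed-point prefix ($a_i<b_i$), then $a_i$ lies in the support of a cycle of $B$ while $z_{b_i}=0$ on $B$, so intersecting $B$ with the hyperplane $z_{a_i}=z_{b_i}$ does not merge two cycles: it forces $z_{a_i}=0$ and annihilates the entire line contributed by the cycle of $B$ through $a_i$. Similarly, if the first disagreement is a color (case (c) of Definition~\ref{Def:Lex:Order}), the relation forced on $A\cap B$ by Lemma~\ref{lemma:color} degenerates to $z_{\tau_{k+1}}=0$, and the correct hyperplane is the mirror of a \emph{diagonal} reflection, which again kills a whole cycle of $B$ rather than merging two. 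So the witness cannot be described uniformly as a merge; the three cases require three different hyperplanes.

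The construction of the lex-smaller atom $C$ is also left essentially unspecified, and the direction you indicate (``reverse surgery on $E$,'' i.e.\ modifying the lex-\emph{smaller} atom $A$) gives you no a priori control over the required containment $B\cap H\subseteq C$. The paper instead modifies $B$: in the first two cases $C=rB$ for the reflection $r$ defining $H$, so $B\cap H_r=C\cap H_r\subseteq C$ is automatic and $\word(C)$ is obtained from $\word(B)$ by a swap that visibly decreases it; in the color case $C=V(hrs,\zeta)$ for two diagonal reflections $r,s$ supported on the offending cycle $\tau$ of $B$, and one must then argue separately that $C$ is still a maximal eigenspace (via Lemma~\ref{lemma:color}), that $C\neq B$ (which needs $k<\ell$, forced by the color-product identity $\epsilonA_1\cdots\epsilonA_\ell=\zeta^\ell=\deltaA_1\cdots\deltaA_\ell$, together with Proposition~\ref{Prop:Unique:Sets}), and hence that $\word(C)<_\lex\word(B)$. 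None of these verifications is routine bookkeeping that can be waved through; they are the proof.
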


\begin{proof}
By Theorem~\ref{Thm:Main:Geometric} and Lemma~\ref{Sagan:Lemma}, we need only 
verify that the atom ordering satisfies condition~\eqref{RAO:2} of Definition~\ref{RAO}.  
The result follows immediately if there is only one atom, so assume otherwise and   
note that $a(d)< n$ by Corollary~\ref{Cor:Springer}.

Suppose that $A,B\in E(W,\zeta)$ are two atoms with $\word(A)<_\lex \word(B)$. Write 
\begin{align*}
\word(A)&=(a_1,\ldots, a_{n-a(d)\ell}\ ;\ \sigma^{(1)},\ldots,\sigma^{(a(d))})\\
\word(B)&=(b_1,\ldots, b_{n-a(d)\ell}\ ;\ \tau^{(1)},\ldots,\tau^{(a(d))}).
\end{align*}
Since any element that lies above both $A$ and $B$ in $E(W,\zeta)$ must be a subspace of $A\cap B$,   
it suffices to exhibit a maximal eigenspace $C\in E(W,\zeta)$ that satisfies 
\begin{enumerate}[(i)]
\item $\word(C)<_\lex \word(B)$, and \label{Want:1}
\item $A\cap B\subseteq B\cap H \subseteq C$ for some hyperplane $H\in \L_{W'}$,\label{Want:2}
\end{enumerate}
where $W'$ is as in Proposition~\ref{Prop:colored:ideal}.  

Choose (possibly empty) cycles $\sigma^{(0)},\tau^{(0)}\in G(m,1,n)$ such that 
\begin{align*}
g&:=\sigma^{(0)}\sigma^{(1)}\cdots\sigma^{(a(d))}\\
h&:=\tau^{(0)}\tau^{(1)}\cdots\tau^{(a(d))}
\end{align*} 
are products of disjoint cycles with $g,h\in W$.  Then $A=V(g,\zeta)$ and $B=V(h,\zeta)$, 
since $A=\bigoplus_{i\geq 1}V(\sigma^{(i)},\zeta)$ and 
$B=\bigoplus_{i\geq 1}V(\tau^{(i)},\zeta)$ are maximal.

\ \\
\noindent{\sf Case 1.}  $a_i\neq b_i$ for some $i$. 

Let $i$ be the smallest such index.  Then $a_i< b_i$ and $a_i\in \Supp(\tau^{(j)})$ for some $j\geq 1$.  
Set $C:= r B=V(rhr^{-1},\zeta)$ for \[r=\begin{pmatrix}a_i & b_i \\ b_i & a_i\end{pmatrix}.\]  
Since one obtains $\word(C)$ by interchanging $a_i$ and $b_i$ in $\word(B)$, it follows that   
$\word(C)<_\lex \word(B)$.  Applying $r$ to both sides of 
$B\cap H_r\subseteq B$ shows that $B\cap H_r\subseteq C$.  Lastly, $A\cap B\subseteq B\cap H_r$ follows from 
the fact that $z_{a_i}=0$ on $A$ and $z_{b_i}=0$ on $B$.

For example, if $W=G(4,2,8)$ and $\zeta=e^{2\pi i/6}$, then for $\omega:=e^{2\pi i/4}$ and
\begin{align*}
\word(A)&=\left(4,7,\begin{pmatrix} 1 & 3 & 2 \\ \omega 3 & - 2 & \omega 1 \end{pmatrix}, 
\begin{pmatrix} 5 & 6 & 8 \\ - 6 & - 8 & 5\end{pmatrix}\right)\\
\word(B)&=\left(4,8,\begin{pmatrix} 1 & 6 & 5\\ - 6 & 5 & - 1\end{pmatrix},
\begin{pmatrix} 2 & 7 & 3 \\ 7 & 3 & 2\end{pmatrix}\right),
\intertext{one has $r=\begin{pmatrix} 7 & 8 \\ 8 & 7\end{pmatrix}$ and}
\word(C)&=\left(4,7,\begin{pmatrix} 1 & 6 & 5\\ - 6 & 5 & - 1\end{pmatrix},
\begin{pmatrix} 2 & 8 & 3 \\ 8 & 3 & 2\end{pmatrix}\right).
\end{align*}

\ \\
\noindent{\sf Case 2.}  $a_i=b_i$ for all $i$.  

Let $j\geq 1$ be the smallest integer for which $\sigma^{(j)}\neq \tau^{(j)}$.  Set $\sigma:=\sigma^{(j)}$ and 
$\tau:=\tau^{(j)}$, and let $k$ index the first column in which $\sigma$ and $\tau$ differ:
\begin{align*}
\sigma&=\begin{pmatrix} 
\sigma_1 & \sigma_2 & \cdots & \sigma_k & \cdots & \sigma_\ell\\ 
\epsilonA_1\sigma_2 & \epsilonA_2\sigma_3 & \cdots & \epsilonA_k\sigma_{k+1} & \cdots &\epsilonA_\ell \sigma_1
\end{pmatrix}\\
\tau&=\begin{pmatrix} 
\tau_1 & \tau_2 & \,\cdots & \tau_k & \hspace{.5mm}\cdots & \tau_\ell\hspace{.35mm}\\ 
\deltaA_1\tau_2 & \deltaA_2\tau_3 & \,\cdots & \deltaA_k\tau_{k+1} & \hspace{.5mm}\cdots & \deltaA_\ell \tau_1\hspace{.35mm}
\end{pmatrix}.
\end{align*}
Note that $\sigma_i=\tau_i$ for $i\leq k$, and $\epsilonA_i=\deltaA_i$ for $i<k$.   

By Lemma~\ref{lemma:color}, each ${\mathbf{z}}\in A\cap B$ satisfies the two equations
\begin{align*}
z_{\sigma_1}&=\zeta^k\epsilonA_1^{-1}\cdots\epsilonA_k^{-1}z_{\sigma_{k+1}}\\
z_{\tau_1}&=\zeta^k\deltaA_1^{-1}\cdots\deltaA_k^{-1}z_{\tau_{k+1}}.
\end{align*}  
Since $\sigma_1=\tau_1$ and $\epsilonA_i=\deltaA_i$ for $i<k$, it follows that 
\begin{equation}
z_{\sigma_{k+1}}=\epsilonA_k\deltaA_k^{-1} z_{\tau_{k+1}}\quad\text{for all}\quad{\mathbf z}\in A\cap B.
\label{Eq:Shell}\end{equation}  

\noindent{\sf Case 2a.}  $\sigma_{k+1}<\tau_{k+1}$. 

Equation~\eqref{Eq:Shell} says that  
  $A\cap B\subseteq H_r$ for the reflection   
\[r=\begin{pmatrix}\sigma_{k+1} & \tau_{k+1} \\ \deltaA_k\epsilonA_k^{-1}\tau_{k+1} & \epsilonA_k\deltaA_k^{-1}\sigma_{k+1}\end{pmatrix}.\]
Set  $C:=r B=V(rhr^{-1},\zeta)$ so that, as in Case 1, one has $A\cap B\subseteq B\cap H_r\subseteq C$.
Then $\word(C)<_\lex \word(B)$, since $\sigma_{k+1}$ occurs to the right of 
$\tau_{k+1}$ in $\word(B)$.  (More precisely, either $\sigma_{k+1}=\tau_{k+1+i}$ 
for some $i\geq 1$, or 
$\sigma_{k+1}$ is in the support of $\tau^{(l)}$ 
for some $l>j$.) 

For example, if $W=G(4,2,9)$ and $\zeta=e^{2\pi i/16}$, then for $\omega:=e^{2\pi i/4}$ and
\begin{align*}
\word(A)&=\left(4,\begin{pmatrix} 1 & 3 & 2 & 7 \\  \omega 3 & - 2 &  - 7 & 1 \end{pmatrix}, 
\begin{pmatrix} 5 & 9 & 6 & 8 \\ 9 & \omega 6 & 8 & 5 \end{pmatrix}\right)\\
\word(B)&=\left(4,\begin{pmatrix} 1 & 3 & 9 & 8 \\  \omega 3 & \omega^{-1} 9 & 8 & \omega 1 \end{pmatrix},
\begin{pmatrix} 2 & 6 & 5 & 7 \\ \omega^{-1} 6 & 5 & - 7 & 2 \end{pmatrix}\right),
\intertext{one has $k=2$, $r=\begin{pmatrix} 2 & 9 \\  \omega 9 &  \omega^{-1} 2\end{pmatrix}$, and}
\word(C)&=\left(4,\begin{pmatrix} 1 & 3 & 2 & 8\\ \omega 3 &- 2 & \omega 8& \omega 1\end{pmatrix},
\begin{pmatrix}  5 & 7 & 9 & 6 \\ - 7& \omega 9 & - 6 & 5 \end{pmatrix}\right).
\end{align*}
\noindent{\sf Case 2b.}  $\sigma_{k+1}=\tau_{k+1}$. 

Then $\epsilonA_k<\deltaA_k$.  
It follows from~\eqref{Eq:Shell} that $z_{\tau_{k+1}}=0$, and hence $z_i=0$ whenever $i\in \Supp(\tau)$, 
for each ${\mathbf{z}}\in A\cap B$.
In particular, $A\cap B\subseteq B\cap H_r=B\cap H_s$ for  
\[r=\begin{pmatrix} \tau_k \\ \epsilonA_k\deltaA_k^{-1}\tau_k\end{pmatrix}\quad\text{and}\quad 
s=\begin{pmatrix}\tau_\ell \\ \epsilonA_k^{-1}\deltaA_k\tau_\ell\end{pmatrix}.\]  
Set $h':=h r s \in W$ and $C:=V(h',\zeta)$.  First note that $C$ is maximal, since 
$B$ is maximal and the cycle $\tau r s$ in $h'$ has $\dim V(\tau r s, \zeta)=1$ by 
Lemma~\ref{lemma:color}.  
It is also clear that $B\cap H_r=C\cap H_r$, and hence 
$A\cap B\subseteq B\cap H_r\subseteq C$.  It remains to see that $C\neq B$, from which it follows 
that $\word(C)<_\lex \word(B)$.  To this end, observe that $k<\ell$, since 
since  
$\epsilonA_i=\deltaA_i$ for $i<k$ and $\epsilonA_1\cdots\epsilonA_\ell=\zeta^\ell=\deltaA_1\cdots\deltaA_\ell$, while $\epsilonA_k<\deltaA_k$.  
It follows that $\tau rs\neq \tau$, and therefore $C\neq B$ by Proposition~\ref{Prop:Unique:Sets}.

For example, if $W=G(4,2,8)$ and $\zeta=e^{2\pi i/6}$, then for $\omega:=e^{2\pi i/4}$ and
\begin{align*}
\word(A)&=\left(4,7,\begin{pmatrix} 1 & 3 & 2 \\ \omega 3 & -2 & \omega 1 \end{pmatrix}, \begin{pmatrix} 5 & 6 & 8 \\ -6 & -8 & 5\end{pmatrix}\right)\\
\word(B)&=\left(4,7,\begin{pmatrix} 1 & 3 & 2\\ - 3 & 2 & -1\end{pmatrix},\begin{pmatrix} 5 & 8 & 6 \\ 8 & 6 & 5\end{pmatrix}\right),
\intertext{one has $k=1$, $r=\begin{pmatrix} 1 \\ \omega^{-1} 1 \end{pmatrix}$, $s=\begin{pmatrix} 2 \\ \omega 2 \end{pmatrix}$, and}
\word(C)&=\left(4,7,\begin{pmatrix} 1 & 3 & 2\\ \omega 3 &  2 & \omega^{-1} 1\end{pmatrix},\begin{pmatrix} 5 & 8 & 6 \\  8 & 6 & 5\end{pmatrix}\right).
\end{align*}
\end{proof}

\section{Exceptional cases of Theorems~\ref{Thm:Main:Shelling} and~\ref{Thm:Main:Geometric}}\label{Section:Exceptionals}  
The exceptional cases of Theorems~\ref{Thm:Main:Shelling} and~\ref{Thm:Main:Geometric} are treated here, 
in this short, and largely independent, section.  
Both Theorem~\ref{Thm:Main:Geometric} and Theorem~\ref{Thm:Main:Shelling} 
are trivial when $a(d)$ is $0$, $1$, or $n$, the rank of $W$.  The remaining 
exceptional cases are listed in Table~\ref{Table:Exceptionals},
which reveals the fact that in the majority of these cases one has both $a(d)=2$ and $d$ a \emph{regular number}, defined below.  
After establishing the main theorems in this case 
(Corollaries~\ref{Geometric:Cor:2} and~\ref{Shellable:Cor:2} below), 
we sharpen Corollary~\ref{Cor:Independent:Maximals}:

\begin{theorem}\label{Thm:Strong:Independence}
Let $W$ be a reflection group, and let $\zeta$ and $\zeta'$ be roots of 
unity of 
orders $d$ and $d'$, respectively.  Then $E(W,\zeta)=E(W,\zeta')$ if and only if $A(d)=A(d')$.
\end{theorem}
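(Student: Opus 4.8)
The plan is to derive both directions almost immediately from results already available, the crux being that the Lehrer--Springer reflection subquotient attached to a maximal eigenspace is intrinsic to that eigenspace and carries no memory of which eigenvalue produced it.

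For the ``if'' direction I would argue as follows. Assume $A(d)=A(d')$. Then Corollary~\ref{Cor:Independent:Maximals} shows that the collection $\mathcal A$ of inclusion-maximal $\zeta$-eigenspaces of $W$ coincides, as a set of subspaces, with the collection of inclusion-maximal $\zeta'$-eigenspaces of $W$. Invoking Theorem~\ref{Thm:Main:Geometric} for $W$, one has
\[
E(W,\zeta)=\{\,E\cap X\ :\ E\in\mathcal A,\ X\in\L_W\,\},
\]
and the right-hand side, with exactly the same $\mathcal A$ and the same $\L_W$, also computes $E(W,\zeta')$. Since in both cases the order is reverse inclusion of subspaces, this gives $E(W,\zeta)=E(W,\zeta')$ as posets (in fact as $W$-posets).

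For the ``only if'' direction I would start from $E(W,\zeta)=E(W,\zeta')$ and observe that the inclusion-maximal $\zeta$-eigenspaces are exactly the minimal elements of the poset $E(W,\zeta)$; hence the set of maximal $\zeta$-eigenspaces of $W$ equals the set of maximal $\zeta'$-eigenspaces of $W$. If that common set is $\{\{0\}\}$, then $a(d)=a(d')=0$ and $A(d)=\varnothing=A(d')$, so assume otherwise and fix a subspace $E$ that is at once a maximal $\zeta$-eigenspace and a maximal $\zeta'$-eigenspace of $W$. Now $N_W(E)=\{g\in W:gE=E\}$ and $Z_W(E)=\{g\in W:gv=v\text{ for all }v\in E\}$ depend only on $(W,E)$, so $\overline N:=N_W(E)/Z_W(E)$ is a single, fixed reflection group acting faithfully on $E$, and therefore carries a well-defined multiset of degrees. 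Applying Theorem~\ref{Thm:Lehrer:Springer}(i) with the eigenvalue $\zeta$ identifies that multiset as $\{d_i:d\mid d_i\}=A(d)$; applying it with $\zeta'$ (legitimate since $E$ is also a maximal $\zeta'$-eigenspace) identifies the very same multiset as $\{d_i:d'\mid d_i\}=A(d')$. Hence $A(d)=A(d')$, completing the proof.

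The only thing that needs a little care — and what I would flag as the single delicate point rather than a genuine obstacle — is the assertion in the second direction that $N_W(E)/Z_W(E)$ is literally one and the same subgroup of $\GL(E)$ whether we regard $E$ as a $\zeta$- or a $\zeta'$-eigenspace; it is precisely this that lets Theorem~\ref{Thm:Lehrer:Springer}(i) be played off against itself. For the ``if'' direction one should simply be sure that Theorem~\ref{Thm:Main:Geometric}, being a purely geometric statement, is in force for every reflection group $W$. Notably, neither the transitivity statement of Proposition~\ref{Prop:Springer:Transitive} nor the reduction to irreducible reflection groups is required, because the argument runs through one fixed common maximal eigenspace.
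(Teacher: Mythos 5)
Your ``only if'' direction is correct and genuinely different from the paper's. The paper proves the contrapositive: if $A(d)\neq A(d')$, then by Proposition~\ref{Prop:Springer:Intersection}, together with the fact that every irreducible component of an intersection of $|S|$ of the hypersurfaces $H_i$ has codimension $|S|$, the unions $\bigcup_g V(g,\zeta)$ and $\bigcup_g V(g,\zeta')$ are distinct sets, so the posets differ. You instead fix a subspace $E$ that is simultaneously an inclusion-maximal $\zeta$- and $\zeta'$-eigenspace (legitimate, since the minimal elements of equal posets of subspaces coincide) and read off $A(d)$ and $A(d')$ as the multiset of degrees of the single reflection group $N_W(E)/Z_W(E)$ via Theorem~\ref{Thm:Lehrer:Springer}(i); since the degrees of a reflection group are uniquely determined, $A(d)=A(d')$. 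This is clean, needs no reduction to the irreducible case, and the point you flag as delicate is not actually delicate: $N_W(E)$ and $Z_W(E)$ depend only on the pair $(W,E)$ and not on the eigenvalue.

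The ``if'' direction, however, has a real problem as a replacement for the paper's argument: circularity. You invoke Theorem~\ref{Thm:Main:Geometric} for every reflection group and every root of unity, but at the point in Section~\ref{Section:Exceptionals} where Theorem~\ref{Thm:Strong:Independence} is proved, Theorem~\ref{Thm:Main:Geometric} has been established only for $G(m,p,n)$, for $a(d)\in\{0,1,n\}$, and for the exceptional regular cases with $a(d)=2$ (Corollary~\ref{Geometric:Cor:2}). The remaining exceptional cases are verified by computer only afterwards, and only for the single representative eigenvalue per class listed in Table~\ref{Table:Comp}; the extension to the other primitive roots of the same order, and to the partner orders $d'$ with $A(d)=A(d')$ (e.g.\ $d'=6$ for $G_{33}$, or $d'=12$ for $G_{34}$), is precisely one of the things Theorem~\ref{Thm:Strong:Independence} is used for. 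This is why the paper's proof has a third case, handled not by Theorem~\ref{Thm:Main:Geometric} but by the elementary identities $V(g,\zeta)=V(g^{-1},\zeta^{-1})$ and, when a scalar $\zeta_p$ lies in $W$, $V(g,\zeta_p\zeta_d)=V(\zeta_p^{-1}g,\zeta_d)$. Your argument is fine wherever Theorem~\ref{Thm:Main:Geometric} is independently available (in particular for all $G(m,p,n)$ and hence for all non-exceptional $W$), but for the leftover exceptional pairs you need either the paper's Case-3 trick or a direct verification of Theorem~\ref{Thm:Main:Geometric} for every primitive root separately.
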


\noindent Lastly, we discuss the reductions that one obtains in the remaining cases 
by employing Theorem~\ref{Thm:Strong:Independence} and Proposition~\ref{Prop:Maximal}\eqref{Prop:General:2}, 
and the straightforward computer verifications that result.

\begin{table}[htb]
{
\begin{tabular}{ccl@{\hskip 1.5cm}ccl@{\hskip 1.5cm}ccl }
\toprule
$W$ & $a(d)$ & $d$  &$\phantom{W}$ & $\phantom{a(d)}$ & $\phantom{d}$  &$\phantom{W}$ & $\phantom{a(d)}$ & $\phantom{d}$ \\
\midrule
$G_{25}$&      2                              &            2,6                          &  
$G_{32}$&      2                              &            4,12                         &
$G_{36}$&      2\makebox[0cm]{\, *}                             &            4                            
\\ 

$G_{28}$&      2                              &            3,6                          & 
$G_{33}$&      2\makebox[0cm]{\, *}           &            4                            & 
       &      3                              &            3,6                          
\\
       &      2                               &            4                           &       
       &      3                              &            3,6                          &
$G_{37}$&      2                              &            5,10                         
\\       

$G_{30}$&      2                              &            3,6                          &     
$G_{34}$&      2\makebox[0cm]{\, *}           &            4,12                         &
       &      2                               &            8                               
\\
       &      2                               &            4                            &  
$G_{35}$&      2                              &            4                            &
       &      2                               &            12
\\     
       &      2                               &            5,10                         &           
       &      2                               &            6                            & 
       &      4                              &            3,6  
\\

$G_{31}$&      2                              &            3,6,12                       &       
       &      3                              &            3                            &  
       &      4                               &            4                            

\\
       &      2                               &            8                            &             
       &      4                              &            2                            & 
       &                                     &
\\
\bottomrule
\end{tabular}
}
\tableskip
\caption{
All instances of an exceptional reflection group $W$ of rank $n$ and positive integer $d$ such that $a(d)\neq 0,1,n$.  
\emph{Nonregular} cases are indicated by $^*$, and values $d,d'$ 
appear together if and only if $A(d)=A(d')$; see Theorem~\ref{Thm:Strong:Independence}.
See also Table~\ref{Big:Table} below.}
\label{Table:Exceptionals}
\end{table}

We start by recalling some facts from Springer's theory of regular elements~\cite{Springer}, in which   
an element $g$ of a (finite) reflection group $W\subset\GL(V)$ is called \emph{$\zeta$-regular} if it has 
a $\zeta$-eigenvector $v$ that is not contained in any reflecting hyperplane for $W$.  
When such an element $g$ exists, the eigenvalue $\zeta$ is called a \emph{regular eigenvalue}, 
and the order $d$ of $\zeta$ is a \emph{regular number}.  For such a number $d$, 
Springer~\cite{Springer} showed that  
an element $h\in W$ is $\zeta$-regular if and only if $\dim V(h,\zeta)=a(d)$, 
and by a result of Springer and Lehrer~\cite{Lehrer:Springer:Canada}, 
these regular numbers $d$ of $W$ are easy to compute:

\begin{theorem}[Lehrer-Springer]\label{Deg:Codeg:Thm}
For any complex reflection group, a positive integer $d$ is a regular number if and only if 
it divides as many degrees as it does codegrees.
\end{theorem}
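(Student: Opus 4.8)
The plan is to turn the statement into a geometric criterion on the maximal $\zeta$-eigenspaces of $W$, and then to evaluate the two sides of the asserted equality using Springer's eigenvalue theory together with the structure of the reflection arrangement. Throughout, fix a primitive $d^{\text{th}}$ root of unity $\zeta$ and let $E$ be a maximal $\zeta$-eigenspace of $W$; by Proposition~\ref{Prop:Springer:Transitive}\eqref{Springer:2} any two such spaces are $W$-conjugate, so statements about $E$ do not depend on the choice.

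The first step is the reduction: \emph{$d$ is a regular number if and only if $E$ is contained in no reflecting hyperplane of $W$, equivalently, if and only if the pointwise stabilizer $Z_W(E)$ is trivial.} If $w$ is $\zeta$-regular with hyperplane-avoiding $\zeta$-eigenvector $v$, then $v$ lies in some maximal $\zeta$-eigenspace, which is therefore not contained in any hyperplane; conversely, a linear subspace of $\mathbb C^n$ contained in a finite union of hyperplanes already lies in one of them, so if $E$ meets the complement of $\bigcup_H H$ then, writing $E=V(w,\zeta)$, the element $w$ is $\zeta$-regular. Finally, $E\subseteq H_r$ for a reflection $r$ exactly when $r$ fixes $E$ pointwise, i.e. $r\in Z_W(E)$; and by Steinberg's fixed-point theorem $Z_W(E)$ is a reflection group whose reflecting hyperplanes are precisely those containing $E$, so $Z_W(E)=\{1\}$ if and only if no hyperplane contains $E$.

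The second step proves the forward implication by computing the spectrum of a $\zeta$-regular element $w$ on $V=\mathbb C^n$. There exist $W$-invariant homogeneous polynomial vector fields $\eta_1,\dots,\eta_n$ whose coefficient-degrees are the coexponents $m_i^{*}=d_i^{*}+1$, and the determinant of the coefficient matrix of $\eta_1,\dots,\eta_n$ is a nonzero scalar multiple of a power-product of the linear forms defining the reflecting hyperplanes; in particular it vanishes exactly along $\bigcup_H H$ (see~\cite{Orlik}). Each $\eta_i$ is then a $W$-equivariant homogeneous polynomial map $\mathbb C^n\to\mathbb C^n$ of degree $m_i^{*}$, i.e. $\eta_i(wx)=w\,\eta_i(x)$ and $\eta_i(cx)=c^{m_i^{*}}\eta_i(x)$; evaluating at a hyperplane-avoiding $\zeta$-eigenvector $v$ of $w$ gives
\[
w(\eta_i(v))=\eta_i(wv)=\eta_i(\zeta v)=\zeta^{m_i^{*}}\eta_i(v),
\]
and regularity of $v$ makes $\eta_1(v),\dots,\eta_n(v)$ a basis of $\mathbb C^n$, so the eigenvalues of $w$ on $\mathbb C^n$ are $\zeta^{m_1^{*}},\dots,\zeta^{m_n^{*}}$. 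Hence $\dim V(w,\zeta)=\#\{i:\zeta^{m_i^{*}}=\zeta\}=\#\{i:d\mid d_i^{*}\}$, the number of codegrees of $W$ divisible by $d$. On the other hand, Springer~\cite{Springer} showed $\dim V(w,\zeta)=a(d)$ for $\zeta$-regular $w$; comparing the two counts shows that $d$ divides exactly as many codegrees as degrees.

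The reverse implication is the main obstacle. One is handed the numerical equality and must produce a $\zeta$-regular element — equivalently, by the first step, must show that $E$ escapes every reflecting hyperplane, i.e. that $Z_W(E)=\{1\}$. After reducing to $W$ irreducible, Theorem~\ref{Thm:Lehrer:Springer} gives that $\overline N=N_W(E)/Z_W(E)$ is an irreducible reflection group on $E$ with degrees $\{d_i:d\mid d_i\}$; since all of these are divisible by $d$, Corollary~\ref{Cor:Springer} applied to $\overline N$ shows $\overline N$ contains the scalar $\zeta\,\mathrm{id}_E$, so $E$ is realized as $V(g,\zeta)$ for a $g\in N_W(E)$ acting on $E$ by $\zeta$. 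What remains — and this is where the delicate part of the Lehrer--Springer argument sits — is to rule out a reflecting hyperplane of $W$ through $E$ using only the equality of the degree- and codegree-counts; multiplying $g$ by reflections in $Z_W(E)$ does not change $V(g,\zeta)$, so the naive attempt to contradict Springer's bound $\dim V(\cdot,\zeta)\le a(d)$ fails, and one genuinely needs to relate the codegrees of $W$ to those of $\overline N$ and to the reflecting hyperplanes of $Z_W(E)$ (for instance by analyzing the invariant-derivation module of $W$ localized at the flat $E$), or, as in~\cite{Lehrer:Springer:Canada}, a reduction to the primitive groups finished by a check against the Shephard--Todd classification. Closing this gap between the codegree count and the vanishing of $Z_W(E)$ is the hard part of the theorem.
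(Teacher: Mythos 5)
The paper does not actually prove this statement: it is quoted as a known result of Lehrer and Springer (\cite{Lehrer:Springer:Canada}), so there is no internal argument to compare yours against. Judged on its own terms, your proposal establishes only half of the equivalence. Your first step (regularity of $d$ $\Leftrightarrow$ a maximal $\zeta$-eigenspace $E$ avoids every reflecting hyperplane $\Leftrightarrow$ $Z_W(E)=1$) is correct, using that a subspace over an infinite field contained in a finite union of hyperplanes lies in one of them, together with Steinberg's theorem. Your second step is a correct rendition of the standard coexponent argument: evaluating the basic $W$-equivariant polynomial maps $\eta_i$ of degrees $m_i^{*}=d_i^{*}+1$ at a regular $\zeta$-eigenvector produces an eigenbasis with eigenvalues $\zeta^{m_i^{*}}$, whence $\dim V(w,\zeta)$ equals the number of codegrees divisible by $d$, and comparison with Springer's $\dim V(w,\zeta)=a(d)$ gives the forward implication.

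The genuine gap is the converse, and you have named it yourself: from the numerical hypothesis that $d$ divides equally many degrees and codegrees you must produce a regular eigenvector, i.e.\ show $Z_W(E)=1$, and your argument via Theorem~\ref{Thm:Lehrer:Springer} and Corollary~\ref{Cor:Springer} only produces $g\in N_W(E)$ acting on $E$ as the scalar $\zeta$ with $E=V(g,\zeta)$ maximal \emph{--} which says nothing about whether some reflecting hyperplane of $W$ contains $E$. The codegree count has not yet entered the converse at all, and as you observe, multiplying $g$ by elements of $Z_W(E)$ cannot force a contradiction with $\dim V(\cdot,\zeta)\le a(d)$. This direction is precisely the hard content of the Lehrer--Springer theorem (proved by them partly by reduction to the Shephard--Todd classification, with a uniform argument only found later), so the proposal, while honest and correct as far as it goes, does not constitute a proof of the stated biconditional.
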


\begin{lemma}\label{Regular:Lemma}
Let $W$ be a reflection group and let $\zeta$ be a primitive $d^{\text{th}}$ root of unity.  Suppose that 
$d$ is regular and that $E=V(g,\zeta)$ is a maximal $\zeta$-eigenspace of $W$ under inclusion.  
Then for any reflection $r\in W$ one has $E\cap H_r=V(gr,\zeta)$, where $H_r:=\ker(1-r)$ denotes the reflecting 
hyperplane of $r$.  
\end{lemma}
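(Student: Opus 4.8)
The plan is to prove the two inclusions of the claimed equality $E\cap H_r=V(gr,\zeta)$ separately; one is immediate and the other uses regularity of $d$ in an essential way.

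The containment $E\cap H_r\subseteq V(gr,\zeta)$ is a one-line check: if $v\in E\cap H_r$ then $rv=v$ and $gv=\zeta v$, hence $grv=g(rv)=\zeta v$. For the reverse containment it suffices to show that every $v\in V(gr,\zeta)$ satisfies $rv=v$, since then $gv=grv=\zeta v$ and so $v\in E\cap H_r$. So I would argue by contradiction: suppose $v\in V(gr,\zeta)$ with $rv\neq v$. Recall that $W$, being finite, preserves a positive-definite Hermitian form on $V$, so every element of $W$ is unitary and hence diagonalizable with pairwise orthogonal eigenspaces. Applying this to the reflection $r$: its non-fixed eigenvalue has a one-dimensional eigenspace $\mathbb C a$ (the root line), $\im(1-r)=\mathbb C a$, and $H_r=\ker(1-r)=a^{\perp}$. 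Since $v\notin H_r$ we get $(r-1)v=ca$ with $c\neq 0$, and $grv=\zeta v$ rearranges to $(g-\zeta)v=-c\,ga$. Applying the same unitarity to $g$, we have an orthogonal decomposition $V=E\oplus E^{\perp}$ with $E^{\perp}=\bigoplus_{\lambda\neq\zeta}V(g,\lambda)$; here $g-\zeta$ kills $E$ and is invertible on $E^{\perp}$, so $\im(g-\zeta)=E^{\perp}$. Therefore $ga\in E^{\perp}$, and since $E^{\perp}$ is $g$-stable we conclude $a\in E^{\perp}$, i.e.\ $a\perp E$, i.e.\ $E\subseteq a^{\perp}=H_r$.

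The crux — and where the hypothesis that $d$ is regular is used — is to rule out $E\subseteq H_r$. Since $E$ is a maximal $\zeta$-eigenspace, $\dim E=a(d)$ by Proposition~\ref{Prop:Springer:Transitive}\eqref{Springer:1}. As $d$ is regular, Springer's criterion (that $h$ is $\zeta$-regular if and only if $\dim V(h,\zeta)=a(d)$) shows that $g$ is $\zeta$-regular; hence $E=V(g,\zeta)$ contains a $\zeta$-eigenvector lying on no reflecting hyperplane of $W$, in particular not lying in $H_r$. This contradicts $E\subseteq H_r$, so in fact $rv=v$ for all $v\in V(gr,\zeta)$, and the proof is complete. (Alternatively, using transitivity of $W$ on its maximal $\zeta$-eigenspaces, Proposition~\ref{Prop:Springer:Transitive}\eqref{Springer:2}, it is enough to know that \emph{some} maximal $\zeta$-eigenspace contains a regular vector, which the existence of a $\zeta$-regular element guarantees.) I expect the only delicate point is passing to the $W$-invariant Hermitian form and keeping the two orthogonal decompositions of $V$ distinct — the one coming from $r$, giving $H_r=a^{\perp}$, and the one coming from $g$, giving $E^{\perp}=\im(g-\zeta)$ — while noting that finiteness of $W$ is exactly what makes both available, and regularity of $d$ enters only at the final contradiction.
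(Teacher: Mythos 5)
Your proof is correct, but it takes a genuinely different route from the paper's. The paper argues by contradiction on the \emph{dimension} of $V(gr,\zeta)$: if the trivial inclusion $E\cap H_r\subseteq V(gr,\zeta)$ were proper, then $\dim V(gr,\zeta)\geq a(d)$, so $V(gr,\zeta)$ would be maximal; regularity of $d$ then makes both $g$ and $gr$ $\zeta$-regular, hence conjugate by Springer's theorem (whose proof rests on Steinberg's theorem that pointwise stabilizers of subspaces are generated by reflections), which is impossible since $\det(gr)=\det(g)\det(r)\neq\det(g)$. You instead work vector-by-vector with the $W$-invariant Hermitian form: a putative $v\in V(gr,\zeta)$ with $rv\neq v$ forces the root line $\mathbb C a$ of $r$ into $\im(g-\zeta)=E^{\perp}$ and hence $E\subseteq H_r$, which regularity forbids because $E$ contains a vector on no reflecting hyperplane. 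Your argument is more elementary and self-contained --- it uses only the definition of a regular number together with Springer's characterization of regular elements via $\dim V(h,\zeta)=a(d)$ (which the paper quotes), plus unitary linear algebra, and avoids the conjugacy theorem for regular elements entirely --- while the paper's is shorter on the page at the cost of invoking heavier machinery. Both uses of regularity are essential and neither argument survives without it, as the lemma genuinely fails for nonregular $d$.
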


\begin{proof}
Suppose that the trivial inclusion $E\cap H_r\subseteq V(gr,\zeta)$ is proper.  Then 
by considering dimension, $V(gr,\zeta)$ is necessarily maximal.  
It follows from a standard argument using 
Proposition~\ref{Prop:Springer:Transitive}\eqref{Springer:2} 
and a theorem of Steinberg~\cite[Thm. 1.5]{Steinberg} that
$gr$ is therefore conjugate to $g$.  
In particular, $\det(gr)=\det(g)$.  But $\det(r)\neq 1$.
\end{proof}

Applying Proposition~\ref{Prop:Maximal}\eqref{Prop:General:2} to 
the case when $\dim E=2$ gives the following.

\begin{corollary}\label{Geometric:Cor:2}
Maintain the notation and assumptions of Lemma~\ref{Regular:Lemma}, and suppose in addition 
that $a(d)=2$.  Then $E(W,\zeta)$ 
satisfies~\eqref{Main:Geometric:Equation}.
\end{corollary}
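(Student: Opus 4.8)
The plan is to deduce the corollary from Proposition~\ref{Prop:Maximal}: it suffices to check condition~\eqref{Prop:General:2}, i.e.\ to exhibit one maximal eigenspace $E\in E(W,\zeta)$ with $E\cap X\in E(W,\zeta)$ for every $X\in\L_W$, since Proposition~\ref{Prop:Maximal} then promotes this to equality in~\eqref{Inclusion:Equation}, which is exactly~\eqref{Main:Geometric:Equation}. So I would fix a maximal $\zeta$-eigenspace $E=V(g,\zeta)$ (one exists, and by Proposition~\ref{Prop:Springer:Transitive}\eqref{Springer:1} it is a plane, $\dim E=a(d)=2$), take an arbitrary $X\in\L_W$, and write it as an intersection $X=H_{r_1}\cap\cdots\cap H_{r_k}$ of reflecting hyperplanes of $W$ (with $X=\mathbb C^n$ when $k=0$), which is possible because $\L_W$ is the intersection lattice of the reflection arrangement.

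Next I would split into cases by $\dim(E\cap X)\in\{0,1,2\}$. When $\dim(E\cap X)=2$ we have $E\subseteq X$ and $E\cap X=E=V(g,\zeta)\in E(W,\zeta)$. When $\dim(E\cap X)=0$ we have $E\cap X=\{0\}$; here I may assume $d>1$ (if $d=1$ then $a(d)=n$ and the corollary is trivial by Corollary~\ref{Cor:Springer}), so $\zeta\neq 1$ and $\{0\}=V(1,\zeta)\in E(W,\zeta)$.

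The hard part — and the only place the hypothesis $a(d)=2$ genuinely enters — is the case $\dim(E\cap X)=1$. The key observation I would use is that each $E\cap H_{r_i}$ is a subspace of the plane $E$ containing the line $E\cap X$, hence equals either that line or all of $E$; were every $E\cap H_{r_i}$ equal to $E$ we would get $E\subseteq X$, contradicting $\dim(E\cap X)=1$. Thus for some $i$ one has $E\not\subseteq H_{r_i}$, which forces $E\cap H_{r_i}=E\cap X$. Then Lemma~\ref{Regular:Lemma} — applicable because $d$ is regular and $E$ is maximal — yields $E\cap X=E\cap H_{r_i}=V(gr_i,\zeta)\in E(W,\zeta)$, finishing the verification of condition~\eqref{Prop:General:2}.

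Beyond this I expect no real difficulty; the content is just that a proper section of a $2$-plane $E$ is automatically cut out by a single hyperplane already appearing in a presentation of $X$, which lets Lemma~\ref{Regular:Lemma} be applied directly. For $a(d)\geq 3$ this shortcut disappears — a codimension-$\geq 2$ section of $E$ need not be realized by one of the $H_{r_i}$ — so one is then forced back onto the full argument behind Theorem~\ref{Thm:Main:Geometric}.
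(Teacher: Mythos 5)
Your proposal is correct and is essentially the paper's own (largely implicit) argument: the paper simply says that applying Proposition~\ref{Prop:Maximal}\eqref{Prop:General:2} when $\dim E=2$ gives the corollary, and the details you supply — that a proper nonzero section $E\cap X$ of the plane $E$ must coincide with $E\cap H_{r_i}$ for some reflecting hyperplane $H_{r_i}$ not containing $E$, whereupon Lemma~\ref{Regular:Lemma} identifies it as $V(gr_i,\zeta)$, with the $\{0\}$ and $E$ cases handled trivially — are exactly what is intended. No gaps.
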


Another consequence of Lemma~\ref{Regular:Lemma} is that $E(W,\zeta)$ is connected.

\begin{corollary}\label{Connected:Corollary}
Maintain the notation and assumptions 
of Lemma~\ref{Regular:Lemma}, and suppose in addition that $a(d)\geq 2$.  Then (the Hasse diagram of) the 
poset $E(W,\zeta)\diff\{\hat{1}\}$ is connected (as a graph).  Equivalently, 
 $\Delta(E(W,\zeta)\diff\{\hat{1}\})$ is connected.
\end{corollary}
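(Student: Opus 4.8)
The plan is to show directly that the Hasse diagram of $E(W,\zeta)\diff\{\hat1\}$ is connected; since the connected components of the Hasse diagram, of the comparability graph, and of the $1$-skeleton of the order complex of any poset all coincide, this gives both assertions of the corollary. Throughout I would use that the minimal elements of $E(W,\zeta)$ are exactly the maximal $\zeta$-eigenspaces, that each of these has dimension $a(d)$ and that $W$ permutes them transitively (Proposition~\ref{Prop:Springer:Transitive}), and the elementary observation that $\hat1$ is the unique subspace of least dimension occurring in $E(W,\zeta)$: it is contained in every member of $E(W,\zeta)$, being the top element of the reverse-inclusion order, so any $Y\in E(W,\zeta)\diff\{\hat1\}$ has $\dim Y>\dim\hat1$. (For $d>1$ one has $\hat1=V(1,\zeta)=\{0\}$, but the dimension comparison is all that is needed.)

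First I would connect every vertex to a maximal eigenspace. Given $Y\in E(W,\zeta)\diff\{\hat1\}$, finiteness of $E(W,\zeta)$ gives a maximal $\zeta$-eigenspace $E$ with $Y\subseteq E$, hence a saturated chain $E=x_0\lessdot x_1\lessdot\cdots\lessdot x_m=Y$ in $E(W,\zeta)$. Along such a chain the dimension strictly decreases, so every $x_i$ has dimension between $\dim Y$ and $a(d)$; since $\dim x_i\geq\dim Y>\dim\hat1$ and $\hat1$ is the unique subspace of least dimension, no $x_i$ equals $\hat1$. Thus this chain is a path from $Y$ to $E$ in the Hasse diagram of $E(W,\zeta)\diff\{\hat1\}$, and it remains only to join the maximal eigenspaces to one another.

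Next, given two maximal eigenspaces $E,E'$, use Proposition~\ref{Prop:Springer:Transitive} to write $E'=wE$ with $w\in W$, and factor $w=r_1\cdots r_k$ into reflections. Putting $s_i:=(r_1\cdots r_{i-1})r_i(r_1\cdots r_{i-1})^{-1}$ and $E_i:=(r_1\cdots r_i)E$, each $E_i$ is a maximal eigenspace, $E_0=E$, $E_k=E'$, and $E_i=s_iE_{i-1}$ with $s_i$ a reflection, so it suffices to join $E_{i-1}$ to $E_i$. If $E_{i-1}\subseteq H_{s_i}$ then $E_i=E_{i-1}$. Otherwise set $Y:=E_{i-1}\cap H_{s_i}$. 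Writing $E_{i-1}=V(g_{i-1},\zeta)$, Lemma~\ref{Regular:Lemma} (which needs exactly the regularity hypothesis on $d$) gives $Y=V(g_{i-1}s_i,\zeta)\in E(W,\zeta)$; moreover $Y=s_i(E_{i-1}\cap H_{s_i})=s_iE_{i-1}\cap s_iH_{s_i}=E_i\cap H_{s_i}$ because $s_i$ fixes $H_{s_i}$ pointwise. One has $\dim Y=a(d)-1$, which is $\geq1>\dim\hat1$ by the hypothesis $a(d)\geq2$, so $Y\neq\hat1$; and since no integer lies strictly between $a(d)-1$ and $a(d)$, $Y$ is covered in $E(W,\zeta)$ by each of $E_{i-1}$ and $E_i$. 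Hence $Y$ gives an edge of the Hasse diagram of $E(W,\zeta)\diff\{\hat1\}$ to each of $E_{i-1}$ and $E_i$, and concatenating over $i$ joins $E$ to $E'$. Combining with the previous paragraph proves connectedness.

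The main obstacle is the third step, and within it the only genuinely non-formal input is that $E_{i-1}\cap H_{s_i}$ is again a $\zeta$-eigenspace of $W$; this is precisely where the regularity of $d$ is used, through Lemma~\ref{Regular:Lemma}, and it is also the point at which $a(d)\geq2$ is indispensable, since for $a(d)=1$ the maximal eigenspaces are pairwise-incomparable lines and $E(W,\zeta)\diff\{\hat1\}$ can be totally disconnected. The remaining ingredients—the dimension count along saturated chains and the identification of the components of the Hasse diagram, the comparability graph, and the order complex—are routine.
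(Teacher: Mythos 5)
Your proof is correct and follows essentially the same route as the paper: transitivity of $W$ on the maximal eigenspaces, a factorization of the connecting group element into reflections, and Lemma~\ref{Regular:Lemma} to see that the intersection of consecutive maximal eigenspaces with the relevant reflecting hyperplane is again an element of $E(W,\zeta)$. You are merely more explicit than the paper about the two routine points it leaves implicit, namely joining an arbitrary element to an atom along a saturated chain and checking that the connecting elements are not $\hat{1}$ (which is where $a(d)\geq 2$ enters).
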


\begin{proof}
Consider two maximal eigenspaces $E,E'\in E(W,\zeta)$ and 
choose an element $g\in W$ such that $E'=gE$ (possible by Proposition~\ref{Prop:Springer:Transitive}\eqref{Springer:2}).  
Write $g$ as a product of reflections $r_kr_{k-1}\cdots r_1$ and define $E_i=r_ir_{i-1}\cdots r_1E$ for each $i$ so that
\[E=E_0,\ E_1,\ E_2,\ \ldots,\ E_k=E'\]
is a sequence of maximal eigenspaces.
Since the intersection of a neighboring pair  $E_i,E_{i+1}$ is $H_{r_{i+1}}\cap E_i$, an element 
of $E(W,\zeta)$ by Lemma~\ref{Regular:Lemma}, we conclude that $E$ is connected to $E'$ in 
the Hasse diagram of $E(W,\zeta)$, and hence the result.
\end{proof}

\begin{corollary}\label{Shellable:Cor:2}
Maintain the notation and assumptions of Lemma~\ref{Regular:Lemma}, and suppose in addition 
that $a(d)=2$.  Then  $\widehat{E(W,\zeta)}$ is CL-shellable.
\end{corollary}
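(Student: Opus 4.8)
The plan is to exhibit a recursive atom ordering for $P:=\widehat{E(W,\zeta)}$ and invoke Lemma~\ref{Sagan:Lemma}. We may assume $a(d)<n$, since otherwise $\mathbb C^n\in E(W,\zeta)$ by Corollary~\ref{Cor:Springer}, so $E(W,\zeta)=\L_W$ is a geometric lattice and CL-shellability is classical. Then $d>1$, the subspace $V(\mathrm{id},\zeta)=\{0\}$ is the top element $\hat 1$ of $E(W,\zeta)$, and $P=E(W,\zeta)\cup\{\hat 0\}$ is bounded. By Corollary~\ref{Geometric:Cor:2} and Proposition~\ref{Prop:Maximal}\eqref{Prop:General:5} one has $[E,\hat1]=\L_{N_W(E)/Z_W(E)}$ for every maximal $\zeta$-eigenspace $E$; since $\dim E=a(d)=2$, this is a rank-$2$ geometric lattice. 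Hence $P$ is graded of rank $3$, its atoms are exactly the maximal $\zeta$-eigenspaces, and the interval above each atom is a semimodular lattice, so by Lemma~\ref{Sagan:Lemma} it suffices to order the atoms so as to satisfy condition~\eqref{RAO:2} of Definition~\ref{RAO}.

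Next I would unwind condition~\eqref{RAO:2} in this rank-$3$ setting. Fix an ordering $E_1,\ldots,E_t$ of the maximal eigenspaces, and suppose $i<j$ with $E_i,E_j<x$ in $P$. Then $x$ is either a $1$-dimensional $\zeta$-eigenspace or $x=\hat 1$. In the first case $x\subseteq E_i$ and $x\subsetneq E_j$, and nothing lies strictly between $E_j$ and $x$ (a $\zeta$-eigenspace there would have dimension strictly between $1$ and $2$), so $x$ is an atom of $[E_j,\hat1]$ and we may take $k=i$ and $\tilde x=x$; thus condition~\eqref{RAO:2} holds automatically here. In the second case, every atom of $[E_j,\hat1]$ is a $1$-dimensional $\zeta$-eigenspace contained in $E_j$, so condition~\eqref{RAO:2} merely asks for some $k<j$ and some $1$-dimensional $\zeta$-eigenspace contained in $E_k\cap E_j$; and since $E_k\neq E_j$ forces $\dim(E_k\cap E_j)\leq1$, this is the same as requiring that $E_k\cap E_j$ itself be a ($1$-dimensional) $\zeta$-eigenspace.

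It therefore remains to order the maximal eigenspaces so that for each $j\geq2$ there is some $k<j$ with $E_k\cap E_j\in E(W,\zeta)$ of dimension $1$. Form the graph $\Gamma$ on the set of maximal $\zeta$-eigenspaces, joining $E$ to $E'$ whenever $E\cap E'$ is a $1$-dimensional $\zeta$-eigenspace. By Lemma~\ref{Regular:Lemma}, the neighbours of $E=V(g,\zeta)$ include every $E\cap H_r=V(gr,\zeta)$ with $r$ a reflection and $E\not\subseteq H_r$; running the argument of Corollary~\ref{Connected:Corollary} (write $E'=gE$ with $g$ a product of reflections and walk along the resulting chain $E=E_0,E_1,\ldots,E_k=E'$ of maximal eigenspaces, each consecutive distinct pair meeting in a common $1$-dimensional eigenspace) shows that $\Gamma$ is connected. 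Finally, any connected graph admits a vertex ordering in which every vertex other than the first is adjacent to an earlier one (build up a spanning tree, at each stage adjoining a vertex joined to the already-chosen set). Taking such an ordering of the vertices of $\Gamma$ as the atom ordering of $P$ gives the required recursive atom ordering, and hence $\widehat{E(W,\zeta)}$ is CL-shellable.

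I expect the only delicate point to be the bookkeeping of the second paragraph — correctly listing the elements $x$ that can sit above two atoms in this rank-$3$ poset and checking that the $1$-dimensional case of condition~\eqref{RAO:2} is vacuous. Once that reduction is in place, the result is essentially a repackaging of the connectivity already recorded in Corollary~\ref{Connected:Corollary}, which itself rests on Lemma~\ref{Regular:Lemma}.
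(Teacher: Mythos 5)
Your proposal is correct and follows exactly the route the paper intends (the paper leaves this corollary's proof implicit, but the placement of Corollary~\ref{Connected:Corollary} and the later remark that connectivity of $E(W,\zeta)$ suffices for CL-shellability when $a(d)=2$ make clear that the argument is geometric upper intervals plus Lemma~\ref{Sagan:Lemma} plus connectivity). Your write-up simply supplies the rank-$3$ bookkeeping and the spanning-tree ordering that the paper takes for granted.
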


\noindent{\bf{Proof of Theorem~\ref{Thm:Strong:Independence}.}}
By Proposition~\ref{Prop:Decomposition}, we may assume that $W$ is irreducible.  

Assume that $A(d)\neq A(d')$.  It is well-known~\cite[Proof of Thm. 3.4(i)]{Springer} that 
for any $S\subset[n]$, each irreducible component of $\bigcap_{i\in S} H_i$ has dimension $n-|S|$, 
where recall from Proposition~\ref{Prop:Springer:Intersection} that 
$H_i$ is the hypersurface defined by the invariant polynomial $f_i$ of a 
set of basic invariants $f_1,f_2,\ldots, f_n$ whose respective degrees are $d_1,d_2,\ldots, d_n$.  
In particular, 
\begin{equation}
\textstyle{\bigcap_{\, d\, \nmid\, d_i}} H_i\neq 
( \textstyle{\bigcap_{\, d\, \nmid\, d_i}} H_i )\cap
( \textstyle{\bigcap_{\, d'\, \nmid\, d_i}} H_i ), \label{Intersection:Equation}
\end{equation}
for indeed, $n-|A(d)|$ is not equal to $n-|A(d)\cup A(d')|$.  It follows from~\eqref{Intersection:Equation} that 
$\textstyle{\bigcap_{d\, \nmid\, d_i}} H_i \neq \textstyle{\bigcap_{d'\, \nmid\, d_i}} H_i$,
and so $E(W,\zeta)\neq E(W,\zeta')$ by Proposition~\ref{Prop:Springer:Intersection}.

Assume that $A(d)=A(d')$ so that $a(d)=a(d')$, and consider the following cases.

\noindent{\sf Case 1.} Either $W=G(m,p,n)$ or $a(d)\in\{0,1,n\}$.  
Theorem~\ref{Thm:Main:Geometric} is certainly true when $a(d)\in\{0,1,n\}$, 
and was established in \S\ref{Section:Geometric} for $G(m,p,n)$.  We thus have
\begin{align}
E(W,\zeta)&=\{E\cap X\ :\ X\in \L_W\ \text{and}\ E\in E(W,\zeta)\ \text{maximal}\},\label{eq:eig:1}\\
E(W,\zeta')&=\{E\cap X\ :\ X\in \L_W\ \text{and}\ E\in E(W,\zeta')\ \text{maximal}\}.\label{eq:eig:2}
\end{align}
Now observe that the right sides of~\eqref{eq:eig:1} and~\eqref{eq:eig:2} agree by Corollary~\ref{Cor:Independent:Maximals}.

\noindent{\sf Case 2.} $W$ exceptional, $a(d)=2$, and $d$ regular.
In this case~\eqref{eq:eig:1} and~\eqref{eq:eig:2} follow from Corollary~\ref{Geometric:Cor:2}, and 
so again $E(W,\zeta)=E(W,\zeta')$ by Corollary~\ref{Cor:Independent:Maximals}.

\noindent{\sf Case 3.} $W$ exceptional, and either $3\leq a(d)\leq n-1$ or $d$ is not regular.  
It clearly suffices to assume that $d$ is the smallest (positive) value for 
which $A(d)=A(d')$,  
so that $\zeta$ and $\zeta^{-1}$ are the only 
primitive $d^{\text{th}}$ roots of unity by Table~\ref{Table:Exceptionals}.  If $d=d'$, then 
the equality $E(W,\zeta)=E(W,\zeta')$ follows from the fact that  
$V(g,\zeta)=V(g^{-1},\zeta^{-1})$.
If $d< d'$, the classification 
shows that there exists a prime $p$ such that $\bmu_{d'}=\bmu_p\bmu_d$ and $a(p)=n$.
Choose $\zeta_p$ of order $p$ and $\zeta_d$ of order $d$ such that 
$\zeta'=\zeta_p\zeta_d$ and note that $\zeta_p\in W$ by 
Proposition~\ref{Cor:Springer}\eqref{Cor:Springer:3}.
Since $V(g,\zeta_p\zeta_d)=V(\zeta_p^{-1}g,\zeta_d)$, it follows that 
$E(W,\zeta')=E(W,\zeta_d)$, while $E(W,\zeta_d)=E(W,\zeta)$ from the previous case $d=d'$.
\qed
\pagebreak
\begin{table}[htb]
{
\begin{tabular}{l@{\hskip 1.5cm}l}
\toprule
$W$ & $\zeta$\\
\midrule
$G_{35}$ & $-1$\\
$G_{33},G_{35},G_{36},G_{37}$ & $e^{2\pi i /3}$\\
$G_{33},G_{34},G_{36},G_{37}$ & $e^{2\pi i /4}$\\
\bottomrule
\end{tabular}
}
\tableskip
\caption{}
\label{Table:Comp}
\end{table}
The remaining cases of 
Question~\ref{Question:Main:Shelling} and 
of Theorem~\ref{Thm:Main:Geometric} are those listed in Table~\ref{Table:Comp}.
We used the computer algebra software {\sc Magma} to verify Theorem~\ref{Thm:Main:Geometric} 
in each of these cases by first 
choosing\footnote{For example, $g$ may be taken to be any 
\emph{Coxeter element} raised to the power $d_n/d$ in each of the regular cases except 
 $W=G_{37},\zeta=e^{2\pi i/4}$ when $d\nmid d_n$.}
an element $g\in W$ whose eigenspace $E:=V(g,\zeta)$ is maximal 
(i.e., of dimension $a(d)$ by Proposition~\ref{Prop:Springer:Transitive}\eqref{Springer:1}), then 
constructing and checking that the set $\{E\cap X\ :\ X\in \L_W\}$ is contained in the set 
\[\{V(g r_1\cdots r_k,\zeta)\ :\ r_i\ \text{is a reflection of $W$ and }0\leq k\leq a(d)\},\]
from which the desired equality~\eqref{Main:Geometric:Equation} follows by Proposition~\ref{Prop:Maximal}\eqref{Prop:General:2}.

For the remaining cases of Theorem~\ref{Thm:Main:Shelling}, we 
first constructed the upper interval $[E,\hat{1}]$ 
(with $g$ and $E$ as above), then from the transitivity of  
$W$ on its maximal $\zeta$-eigenspaces, 
we constructed all such maximal intervals of $E(W,\zeta)$ 
by taking the $W/N_W(E)$-orbit of $[E,\hat{1}]$.  When 
$g$ is regular, we used the additional fact that 
the normalizer $N_W(E)$ is the centralizer $Z_W(g):=\{h\in W\ :\ hg=gh\}$ of $g$ in $W$.
From the collection of these upper intervals, in 
the three nonregular cases ($G_{33},G_{34},G_{36}$ with $\zeta=e^{2\pi i/4}$ so that $a(d)=2$)
we easily verified that 
$E(W,\zeta)$ is connected, so that $\widehat{E(W,\zeta)}$ is CL-shellable.  
For the single remaining case $W=G_{33}$, $\zeta=e^{2\pi i/3}$, 
we chose at random one hundred orderings $E=E_0,E_1,\ldots, E_{40}$ of 
the maximal eigenspaces (atoms) of $E(W,\zeta)$ with the property that 
\[{\rm{dist}}(E,E_1)\leq {\rm{dist}}(E,E_2)\leq{\rm{dist}}(E,E_3)\leq \cdots\leq {\rm{dist}}(E,E_{40}),\]
where ${\rm{dist}}(E,E_i)$ denotes the graph-theoretic distance from $E$ to $E_i$ in 
the restriction of $E(W,\zeta)$ to its bottom two ranks, i.e., to those eigenspaces of 
dimension either $a(d)$ or $a(d)-1$.  We found that each of these orderings was, in fact, 
a recursive atom ordering.  

\begin{table}[htb]
\centering
{
{\hskip 0mm}
\begin{tabular}{l@{ }l@{\hskip 1.5cm}l@{\hskip 0cm}l@{\hskip .5cm}l}
\toprule
$W$    &              &&$\zeta$&  \\
\midrule
$G_{35}$& (type $E_6$) &&$-1$& $e^{2\pi i/3}$\\
$G_{36}$& (type $E_7$) &&$e^{2\pi i /3}$& \\
$G_{37}$& (type $E_8$) &&$e^{2\pi i /3}$& $e^{2\pi i /4}$\\
\bottomrule
\end{tabular}
}
\tableskip
\caption{The open cases of Question~\ref{Question:Main:Shelling}
}
\label{OpenCases}
\end{table}

The remaining cases of Question~\ref{Question:Main:Shelling} 
are listed in Table~\ref{OpenCases}.  As evidence, we have in fact found 
(an unenlightening) recursive atom ordering for $E(G_{35},-1)$, and also used {\sc Magma} to show that 
$E(W,\zeta)$ is Cohen-Macaulay when $W=G_{35},\zeta=e^{2\pi i/3}$.  
We are hopeful that 
such computations can be carried out for the remaining groups.

\section{Consequences for the topology of eigenspace arrangements}\label{Section:Consequences}
Recall that the order complex of a finite poset $P$ 
is the (abstract) simplicial complex $\Delta(P)$ consisting 
of all totally ordered sets $x_1<x_2<\cdots<x_i$ in $P$.  
We adopt the convention 
of writing $\widetilde{H}_i(P)$ for its $i$th reduced homology group $\widetilde{H}_i(\Delta(P),\mathbb C)$, and 
$\widetilde{H}_i(x,y)$ for $\widetilde{H}_i(\Delta((x,y)),\mathbb C)$, where $(x,y)$ denotes 
the open interval formed by $x,y\in P$.  
If $P$ is a $G$-poset, then $\Delta(P)$ inherits an action of $G$ from $P$, and thus each
$\widetilde{H}_i(P)$ may be regarded as a $G$-module (i.e., a $\mathbb C[G]$-module) 
by functoriality.  One may alternatively 
consider the module afforded by the $i$th reduced cohomology group 
$\widetilde{H}^i(P)=\widetilde{H}^i(\Delta(P),\mathbb C)$, but 
this representation is simply dual to $\widetilde{H}_i(P)$.  

\begin{remark}  
We adopt the following conventions for the rank and 
order complex of an empty open interval $I=(x,y)$ 
and the empty poset $P\diff\{\hat{1}\}$ when $|P|=1$. 
If $y$ covers $x$, we set $r(I)=r(P\diff\{\hat{1}\})=-1$ and 
take $\Delta(x,y)$ and $\Delta(P\diff\{\hat{1}\})$ 
to be the $(-1)$-dimensional complex $\{\varnothing\}$ 
containing only the empty 
face; if $x=y$, we set $r(I)=-2$ and regard $\Delta(x,x)$ 
as a $(-2)$-dimensional 
\emph{degenerate empty complex} $\varnothing$ with 
no faces at all.  Then 
$\widetilde{H}_{i}(\{\varnothing\})$ is $\mathbb C$ 
when $i=-1$, and 
$0$ otherwise, while we let the reduced homology of 
$\varnothing$ 
vanish in all dimensions 
and define $\widetilde{H}_{-2}(\varnothing):=\mathbb C$.   
\end{remark}

For our purposes, call a finite collection $\mathcal A$ of proper complex linear subspaces of $\mathbb C^n$ an 
\emph{arrangement}.  Associate with $\mathcal A$ its {\it intersection lattice} $L(\mathcal A)$, 
obtained by ordering all intersections of subspaces in $\mathcal A$ by reverse inclusion.  Note that  
the top element $\hat{1}$ of $L(\mathcal A)$ is the (nonempty) intersection over $\mathcal A$ 
and that the bottom element $\hat{0}$ is $\mathbb C^n$, the intersection over the empty set.  
The following celebrated 
result of Goresky and MacPherson~\cite{GoreskyMacPherson} says that the cohomology of the complement 
$\mathcal M_{\mathcal A}:=\mathbb C^n\diff \bigcup_{X\in \mathcal{A}}X$ is completely determined by the combinatorial data of the 
arrangement.  

\begin{theorem}[Goresky-MacPherson]\label{GS}  Let $\mathcal A$ be a complex arrangement.  Then
\[\widetilde{H}^i(\mathcal M_{\mathcal A})=\bigoplus_{x\in L(\mathcal A)\diff\{\hat{0}\}}
\widetilde{H}_{2\, \codim\, x-i-2}(\hat{0},x)\quad\text{for all $i$.}\]
\end{theorem}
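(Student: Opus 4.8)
The original argument of Goresky and MacPherson is an application of their stratified Morse theory to the distance function from a generic point of $\mathcal M_{\mathcal A}$; the plan I would follow instead is the shorter, more combinatorial route through Alexander duality and the homotopy-colimit description of an arrangement of subspheres. First I would pass to the reals: write $\mathbb C^n=\mathbb R^N$ with $N=2n$, so that each $X\in\mathcal A$ of complex codimension $\codim x$ is a real linear subspace of real codimension $2\codim x$. Since every member of $\mathcal A$ is a cone with apex $0$, the radial deformation retraction of $\mathbb R^N\diff\{0\}$ onto $S^{N-1}$ restricts to a homotopy equivalence
\[
\mathcal M_{\mathcal A}=\mathbb R^N\diff\bigcup_{X\in\mathcal A}X\ \homotopic\ S^{N-1}\diff\bigcup_{X\in\mathcal A}S_X,\qquad S_X:=X\cap S^{N-1},
\]
where $S_X$ is a round subsphere of dimension $\dim_{\mathbb R}X-1=N-2\codim x-1$. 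The union $K:=\bigcup_{X}S_X$ is a finite, triangulable, locally contractible subset of $S^{N-1}$, so classical Alexander duality applies and gives $\widetilde H^i(\mathcal M_{\mathcal A})\isom\widetilde H_{N-i-2}(K)$.

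It then remains to compute $\widetilde H_*(K)$ combinatorially. For $\mathcal B\subseteq\mathcal A$ one has $\bigcap_{X\in\mathcal B}S_X=(\bigcap_{X\in\mathcal B}X)\cap S^{N-1}=S_y$ with $y=\bigvee_{X\in\mathcal B}X$ in $L(\mathcal A)$, again a subsphere (determined by $y$, since a subspace is the cone on its trace on the unit sphere); hence the poset of nonempty intersections of the $S_X$ is naturally identified with $L(\mathcal A)\diff\{\hat 0\}$, and $K$ is the colimit of the corresponding diagram of subspheres. I would then invoke the homotopy-colimit (``wedge'') decomposition for such diagrams --- the Ziegler--\v Zivaljevi\'c formula, a consequence of the generalized nerve lemma (Projection Lemma) for diagrams over a poset --- to obtain
\[
K\ \homotopic\ \bigvee_{x\in L(\mathcal A)\diff\{\hat 0\}}\bigl(\Delta((\hat 0,x))*S_x\bigr),
\]
the wedge over $x$ of the join of the order complex of the open interval $(\hat 0,x)$ with the sphere $S_x$; the degenerate joins --- when $(\hat 0,x)=\varnothing$, or when $x$ is the apex $\{0\}$ so that $S_x=\varnothing=S^{-1}$ --- are handled exactly by the conventions fixed in the Remark above.

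Finally I would feed this back. Using $\widetilde H_j(A*S^r)\isom\widetilde H_{j-r-1}(A)$ with $r=\dim S_x=N-2\codim x-1$, the $x$-summand contributes $\widetilde H_{j-N+2\codim x}(\hat 0,x)$ to $\widetilde H_j(K)$; summing over $x$ and setting $j=N-i-2$ in the Alexander-duality isomorphism yields
\[
\widetilde H^i(\mathcal M_{\mathcal A})\isom\bigoplus_{x\in L(\mathcal A)\diff\{\hat 0\}}\widetilde H_{2\codim x-i-2}(\hat 0,x),
\]
which is the assertion. It is worth sanity-checking the whole chain on a small case, e.g. two distinct hyperplanes in $\mathbb C^2$, where it reproduces $\widetilde H^*((\mathbb C^\ast)^2)$.

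The technical heart --- and the step I expect to be the main obstacle --- is the homotopy-colimit decomposition of $K$. One must check that the diagram $x\mapsto S_x$ is cofibrant enough that its ordinary colimit $K$ agrees with its homotopy colimit (the standard fact that a union of closed subcomplexes closed under intersection is its own hocolim), and then extract the wedge formula; equivalently, one shows that the Bousfield--Kan spectral sequence for $\widetilde H_*(\mathrm{hocolim})$ collapses. This works because each $S_x$ has reduced homology concentrated in the single degree $\dim S_x$, but since that degree varies with $x$ the collapse requires stratifying the poset (or building $K$ up one subsphere at a time) rather than a one-line argument. Secondary care is needed to match the degenerate-interval conventions of the Remark to the degenerate joins, and in the degree bookkeeping of Alexander duality over $S^{N-1}$ --- one could equally work in $S^{N}$ with the one-point compactification $(\bigcup_{X}X)^+\homotopic\Sigma K$, obtaining the same count after one suspension.
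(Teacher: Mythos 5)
Your argument is correct, but it is not the proof the paper points to: Theorem~\ref{GS} is quoted as a black box from Goresky--MacPherson's book, where it is obtained by applying stratified Morse theory to the distance function from a generic point, whereas you reprove it along the Ziegler--\v{Z}ivaljevi\'c route (radial retraction to the link in $S^{2n-1}$, Alexander duality, and the wedge decomposition of the link as $\bigvee_x \Delta((\hat 0,x))*S_x$). Your degree bookkeeping checks out: with $r=\dim S_x=N-2\codim x-1$ the join formula $\widetilde H_j(A*S^r)\isom\widetilde H_{j-r-1}(A)$ turns the $x$-summand into $\widetilde H_{j-N+2\codim x}(\hat 0,x)$, and $j=N-i-2$ from Alexander duality in $S^{N-1}$ gives exactly $\widetilde H_{2\codim x-i-2}(\hat 0,x)$; the degenerate cases ($(\hat 0,x)=\varnothing$ for atoms, $S_{\hat 1}=S^{-1}$ when $\hat 1=\{0\}$) are absorbed by the join conventions as you say. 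You correctly isolate the one nontrivial step, the homotopy-colimit wedge decomposition; that is precisely the content of the Ziegler--\v{Z}ivaljevi\'c theorem (via the Projection Lemma for diagrams of spaces over $L(\mathcal A)\diff\{\hat 0\}$), so nothing is missing provided you cite or prove that result --- it is not a formality, since the spectral-sequence collapse does require the stratification argument you describe. As for what each approach buys: the Morse-theoretic original works directly on $\mathcal M_{\mathcal A}$ and generalizes to other stratified settings, while your route is more elementary, yields integral coefficients without Tor corrections (each $S_x$ has free homology concentrated in one degree), and actually proves the stronger statement that the one-point compactification of $\bigcup_X X$ has the homotopy type of a wedge. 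That refinement is also the natural road to the equivariant version (Theorem~\ref{Thm:Sundaram:Welker} of Sundaram--Welker) that the paper actually uses in Proposition~\ref{Cor:Cohomology}, since the wedge decomposition can be made compatible with the $G$-action at the level of induced modules.
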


Call an arrangement $\mathcal A$ a \emph{$G$-arrangement} if $G\subset \GL(\mathbb C^n)$ is such that $gX\in \mathcal A$ for every 
$X\in\mathcal A$ and $g\in G$.  For such an arrangement both the cohomology of $\mathcal M_{\mathcal A}$
and the homology of $L(\mathcal A)$ inherit a $G$-module structure, and 
Sundaram and Welker~\cite{Sundaram:Welker} established the following equivariant formulation of Theorem~\ref{GS}.

\begin{theorem}[Sundaram-Welker]\label{Thm:Sundaram:Welker}
For a (finite, complex, linear) $G$-arrangement $\mathcal A$, one has a $G$-module isomorphism
\begin{equation}\widetilde{H}^i(\mathcal M_{\mathcal A})\cong\bigoplus_{x}
\Ind_{G_x}^G\widetilde{H}_{2\, \codim\, x-i-2}(\hat{0},x)\qquad\text{for all $i$},\label{Eq:Sundaram:Welker}\end{equation}
where $x$ runs over a collection of representatives for the $G$-orbits on $L(\mathcal A)\diff\{\hat{0}\}$, and $G_x$ denotes 
the stabilizer $\{g\in G\ :\ gx=x\}$ for $x\in L(\mathcal A)$. 
\end{theorem}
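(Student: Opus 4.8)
The strategy is to promote the Goresky--MacPherson decomposition of Theorem~\ref{GS} to an isomorphism of $G$-modules and then to collect summands according to the $G$-action on $L(\mathcal A)$. The point is that both ingredients behind Theorem~\ref{GS} are \emph{natural} with respect to linear isomorphisms of $\mathbb C^n$ that carry $\mathcal A$ to itself, and $G$ acts by exactly such isomorphisms.

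Concretely, I would argue as follows. Compactify: set $S^{2n}=\mathbb C^n\cup\{\infty\}$ and $Y=\mathrm{sp}\bigl(\bigcup_{X\in\mathcal A}X\bigr)$, the one-point compactification of the union, so that $\mathcal M_{\mathcal A}=S^{2n}\diff Y$. Alexander duality gives $\widetilde H^i(\mathcal M_{\mathcal A})\cong\widetilde H_{2n-i-1}(Y)$, and both the compactification and the duality isomorphism are functorial in the pair $(S^{2n},Y)$, hence $G$-equivariant. Next, the Ziegler--\v{Z}ivaljevi\'c diagram-of-spaces model (equivalently, the homotopy-colimit proof of Goresky--MacPherson) supplies a $G$-equivariant homotopy equivalence
\[
Y\ \simeq\ \bigvee_{x\in L(\mathcal A)\diff\{\hat 0\}}\bigl(\Delta(\hat 0,x)\ast S^{\dim_{\mathbb R}x}\bigr),
\]
in which $G$ permutes the wedge summands through its action on $L(\mathcal A)$: an element $g$ carries the summand indexed by $x$ to the one indexed by $gx$ via the simplicial isomorphism $\Delta(\hat 0,x)\xrightarrow{\ \sim\ }\Delta(\hat 0,gx)$, $z\mapsto gz$. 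Passing to homology, using $\dim_{\mathbb R}x=2n-2\codim x$, and composing with Alexander duality recovers exactly the decomposition of Theorem~\ref{GS}, now carrying this explicit $G$-action.

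It remains to regroup. Since each $g\in G$ is a linear isomorphism, $\codim(gx)=\codim x$, so every $y$ in a single $G$-orbit $\mathcal O=Gx$ contributes in the common homological degree $2\codim x-i-2$; hence $\bigoplus_{y\in\mathcal O}\widetilde H_{2\codim x-i-2}(\hat 0,y)$ is a $G$-submodule of $\widetilde H^i(\mathcal M_{\mathcal A})$. Within it $G$ permutes the summands transitively, with $G_x$ the stabilizer of the summand indexed by $x$, on which $G_x$ acts as on $\widetilde H_{2\codim x-i-2}(\hat 0,x)$; the standard description of an induced module as the direct sum over a transitive orbit then identifies this submodule with $\Ind_{G_x}^{G}\widetilde H_{2\codim x-i-2}(\hat 0,x)$. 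Summing over a set of orbit representatives gives~\eqref{Eq:Sundaram:Welker}. The one genuine obstacle is checking that the isomorphism of Theorem~\ref{GS} can be taken $G$-equivariant, i.e. that some construction of it proceeds entirely through $G$-invariant data --- concretely, that the Ziegler--\v{Z}ivaljevi\'c diagram of spaces and all maps in its homotopy colimit may be arranged to be $G$-equivariant (equivalently, one may revisit the original Goresky--MacPherson argument and verify directly that its stratification and Morse-theoretic bookkeeping are $G$-invariant). Once such a model is in place, everything above is formal.
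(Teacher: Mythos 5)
There is nothing in the paper to compare your argument against: Theorem~\ref{Thm:Sundaram:Welker} is quoted from Sundaram and Welker~\cite{Sundaram:Welker} with no proof given, the paper treating it as an external input alongside Theorem~\ref{GS}. That said, your outline is the right one, and it is essentially the route Sundaram and Welker themselves take: equivariant Alexander duality against the one-point compactification, an equivariant form of the Ziegler--\v{Z}ivaljevi\'c wedge decomposition of the compactified union, and the orbit-sum description of induced modules to regroup summands. Your dimension bookkeeping ($\dim_{\mathbb R}x=2n-2\,\codim x$, landing in degree $2\,\codim x-i-2$) and the final regrouping into $\Ind_{G_x}^{G}$ are both correct as stated.

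The difficulty is that the step you defer to the end --- ``checking that the isomorphism of Theorem~\ref{GS} can be taken $G$-equivariant'' --- is not a routine verification to be postponed; it is the entire mathematical content of the theorem beyond Theorem~\ref{GS}, and your earlier assertion that ``$G$ permutes the wedge summands'' is precisely what fails to be automatic. The homotopy equivalence from the homotopy colimit of the arrangement diagram to the wedge $\bigvee_{x}\bigl(\Delta(\hat 0,x)\ast S^{\dim_{\mathbb R}x}\bigr)$ is constructed by choosing, for each relation $x<y$ in $L(\mathcal A)$, a null-homotopy of the corresponding map of compactified subspaces; the resulting equivalence depends on these choices, and only when they are made compatibly with the $G$-action does the induced map on the wedge become the permutation action you describe. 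Sundaram and Welker devote their equivariant projection and wedge lemmas precisely to this point. In the linear setting the choices \emph{can} be made equivariantly --- the compactification point is a $G$-fixed basepoint lying in every compactified subspace, and the linear contractions $v\mapsto tv$ commute with the $G$-action since $G$ acts linearly --- but this has to be carried out, not merely flagged. As written, your proposal reduces the theorem to its hardest step and stops there, so it is an accurate roadmap of the published proof rather than a proof.
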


A consequence of Theorem~\ref{Thm:Main:Shelling}, Theorem~\ref{Cor:Dowling}, and Theorem~\ref{Thm:Sundaram:Welker} is that 
for $W=G(m,p,n)$, 
the top nonvanishing cohomology module $\widetilde{H}^{*}(\mathcal M(W,d))$ of the complement $\mathcal M(W,d)$ 
of the proper eigenspaces of $W$ (see \S\ref{Section:Introduction}) 
is equivariantly isomorphic to the top homology module $\widetilde{H}_*(\overline{E(W,\zeta)})$ of the proper 
part of the corresponding poset of eigenspaces:

\begin{proposition}\label{Cor:Cohomology}  Let $W$ be an irreducible non-exceptional reflection group of rank $n$.  That is, 
$W$ is either $W(A_n)$ or $G(m,p,n)$ with $m>1$ and 
$(m,p,n)\neq (2,2,2)$.  Let $\zeta$ be a primitive $d^{\text{th}}$ root of unity and let 
$\mathcal A$ be the arrangement of all proper $\zeta$-eigenspaces of $W$ so 
that $\mathcal M(W,d)=\mathcal M_{\mathcal A}$.   
Then $L(\mathcal A)=E(W,\zeta)\cup \{\mathbb C^n\}$ and the following hold:
\begin{enumerate}[(i)]
\item  If $a(d)=n$, so that $\mathbb C^n\in E(W,\zeta)$, then as $W$-modules
\[\widetilde{H}^{2n-a(d)}(\mathcal M(W,d))
\ \cong\ \widetilde{H}_{a(d)-2}(\overline{E(W,\zeta)}).\] \label{Complement:Corollary:1}
\item  If $a(d)\neq n$, so that $\mathbb C^n\not\in E(W,\zeta)$, then as $W$-modules
\[\widetilde{H}^{2n-a(d)-1}(\mathcal M(W,d))
\ \cong\ \widetilde{H}_{a(d)-1}(\overline{E(W,\zeta)}).\]\label{Complement:Corollary:2}
\end{enumerate}
\end{proposition}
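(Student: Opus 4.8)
The plan is to compute the intersection lattice $L(\mathcal{A})$ explicitly, substitute it into the equivariant Goresky--MacPherson formula (Theorem~\ref{Thm:Sundaram:Welker}), and isolate the top cohomological degree, which will receive a contribution from a single orbit, that of the top element $\hat{1}$. First I would identify $L(\mathcal{A})$. Since every proper $\zeta$-eigenspace lies in a maximal one, the complement is unchanged, $\mathcal{M}_\mathcal{A}=\mathcal{M}(W,d)$; and $E(W,\zeta)$ is closed under intersection: given $V(g,\zeta),V(h,\zeta)\in E(W,\zeta)$, choose a maximal eigenspace $E=V(g_0,\zeta)$ with $V(g,\zeta)\subseteq E$; on $E$ the equation $hv=\zeta v$ is equivalent to $g_0^{-1}hv=v$, so $V(g,\zeta)\cap V(h,\zeta)=E\cap\bigl(V(g_0^{-1}g,1)\cap V(g_0^{-1}h,1)\bigr)$, and the parenthesized subspace lies in $\L_W$, whence the whole intersection lies in $E(W,\zeta)$ by Theorem~\ref{Thm:Main:Geometric}; iterating treats arbitrary intersections. (For $W=W(A_n)$ I would use here the identifications $E(\mathfrak{S}_{n+1},\zeta)\cong E(W(A_n),\zeta)$ and $\L_{\mathfrak{S}_{n+1}}\cong\L_{W(A_n)}$ of Corollary~\ref{Cor:Eigenspace:Decomposition}, together with the already-treated case $G(1,1,n+1)$.) Thus $L(\mathcal{A})=E(W,\zeta)$ with a bottom $\hat{0}=\mathbb{C}^n$ adjoined (already present exactly when $a(d)=n$, by Corollary~\ref{Cor:Springer}), and its top is $\hat{1}=\{0\}$. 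This poset is graded: when $a(d)=n$ it is the geometric lattice $\L_W$ graded by codimension, and when $a(d)<n$ its atoms are the maximal $\zeta$-eigenspaces (each covering $\hat{0}$) while above any atom $E$ the interval $[E,\hat{1}]$ is a geometric lattice of rank $a(d)$ (Theorem~\ref{Thm:Main:Geometric} and Proposition~\ref{Prop:Springer:Transitive}), so every maximal chain of $L(\mathcal{A})$ has length $a(d)+1$. Writing $r$ for the rank function of $L(\mathcal{A})$, an element $x$ of rank $k\ge 1$ then has $\dim x=a(d)-k+1$ and $\codim x=n-a(d)+k-1$, and $r(\hat{1})=a(d)+1$.

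Next I would note that by Theorem~\ref{Thm:Main:Shelling} (and Corollary~\ref{Cor:Reduce} when $W=W(A_n)$) the bounded poset $\widehat{E(W,\zeta)}=L(\mathcal{A})$ is CL-shellable, hence Cohen--Macaulay, hence so is every closed interval $[\hat{0},x]$; being graded, the order complex $\Delta\bigl((\hat{0},x)\bigr)$ is then Cohen--Macaulay of pure dimension $r(x)-2$, so $\widetilde{H}_j(\hat{0},x)=0$ for $j\ne r(x)-2$ (empty intervals being handled by the degenerate conventions above). Substituting this into Theorem~\ref{Thm:Sundaram:Welker}, the summand attached to an orbit representative $x\ne\hat{0}$ is $\Ind_{W_x}^{W}\widetilde{H}_{2\,\codim\,x-i-2}(\hat{0},x)$, and it is nonzero precisely when $i=2\,\codim\,x-r(x)$.

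It remains to maximize $2\,\codim\,x-r(x)$ over $x\ne\hat{0}$ and to identify the maximizing term. When $a(d)<n$, the formulas above give $2\,\codim\,x-r(x)=2n-2a(d)+r(x)-2$, strictly increasing in $r(x)\in\{1,\dots,a(d)+1\}$; its maximum $2n-a(d)-1$ is thus attained only at the top rank, i.e.\ only at $x=\hat{1}=\{0\}$. There $W_{\hat{1}}=W$, the interval $(\hat{0},\hat{1})$ equals $\overline{L(\mathcal{A})}=\overline{E(W,\zeta)}$, and by the preceding paragraph its reduced homology is concentrated in degree $r(\hat{1})-2=a(d)-1$ (consistently with Theorem~\ref{Thm:Main:Shelling}, which gives a pure bouquet of spheres), so Theorem~\ref{Thm:Sundaram:Welker} yields the $W$-module isomorphism $\widetilde{H}^{2n-a(d)-1}(\mathcal{M}(W,d))\cong\widetilde{H}_{a(d)-1}(\overline{E(W,\zeta)})$, every other $x$ contributing in strictly smaller degree; this is (ii). When $a(d)=n$, $L(\mathcal{A})=\L_W$ is graded by codimension, the summand for $x\ne\hat{0}$ is nonzero iff $i=\codim\,x$, this is largest (namely $n=2n-a(d)$) only at $x=\hat{1}=\{0\}$, and there $(\hat{0},\hat{1})=\overline{\L_W}=\overline{E(W,\zeta)}$ carries its top homology in degree $n-2=a(d)-2$; this is (i). The step demanding the most care will be precisely this last bookkeeping, since the rank function of $L(\mathcal{A})$ coincides with codimension only when the maximal $\zeta$-eigenspaces are hyperplanes (i.e.\ $a(d)\in\{n,n-1\}$), so in general the two gradings must be tracked separately; transferring Theorems~\ref{Thm:Main:Shelling} and~\ref{Thm:Main:Geometric} from $G(m,p,n)$ to $W(A_n)$ through the factorization of $E(\mathfrak{S}_{n+1},\zeta)$ is routine but must be invoked.
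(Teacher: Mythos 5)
Your proposal is correct and follows essentially the same route as the paper's proof: identify $L(\mathcal A)$ with $E(W,\zeta)\cup\{\mathbb C^n\}$, feed it into the Sundaram--Welker formula of Theorem~\ref{Thm:Sundaram:Welker}, and use the Cohen--Macaulayness supplied by Theorem~\ref{Thm:Main:Shelling} to kill every summand except the one attached to $\hat{1}=\{0\}$. The only cosmetic differences are that you verify closure of $E(W,\zeta)$ under intersections directly from Theorem~\ref{Thm:Main:Geometric} where the paper simply quotes the upper-order-ideal statement of Theorem~\ref{Cor:Dowling}, and that you track degrees via the rank function rather than via $\dim x$ as the paper does.
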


\begin{proof}  
The first claim, that $L(\mathcal A)=E(W,\zeta)\cup\{\mathbb C^n\}$, follows from the fact that 
$E(W,\zeta)$ is an upper order ideal of an intersection lattice $\L_{W'}$; see Theorem~\ref{Cor:Dowling}.  
Supposing that $a(d)\neq n$, the intersection lattice $L(\mathcal A)$ is obtained from $E(W,\zeta)$ by adjoining the new element 
$\mathbb C^n$.  
Because the top element of $L(\mathcal A)$ has dimension $0$ by irreducibility, it follows that the rank of an interval $(\hat{0},x)$ 
is equal to $a(d)-\dim x-1$ whenever $x\neq\hat{0}$. 
Taking $i$ to be $2n-a(d)-1$, the homology summands 
$\widetilde{H}_{2\codim x-i-2}(\hat{0},x)$ in~\eqref{Eq:Sundaram:Welker} become $\widetilde{H}_{a(d)-2\dim x-1}(\hat{0},x)$.
Since shellability is inherited by open intervals, such a summand vanishes if $a(d)-2\dim x-1$ is not equal to 
the dimension $a(d)-\dim x-1$ of $\Delta(\hat{0},x)$, and so the only surviving summand of~\eqref{Eq:Sundaram:Welker}  
corresponds to the top element $x=\hat{1}$ of dimension $0$.  The second claim follows, and the first is similar.
\end{proof}

The first case of Corollary~\ref{Cor:Cohomology} is well-known, since $E(W,1)=\L_W$ is a geometric lattice.  
The homology module of $\Delta(\overline{\L_W})$ for $W=W(A_n)$ has received considerable attention; 
see Section~\ref{Section:Introduction}.  
In this case Stanley~\cite{Stanley:Aspects} used work of Hanlon~\cite{Hanlon} to express the top 
homology module, and 
thus the top cohomology module of $\mathcal M(W(A_n),1)$, as an induced linear representation.  
Lehrer and Solomon~\cite{Lehrer:Solomon} extended Stanley's results to all cohomology groups of 
$\mathcal M(W(A_n),1)$, and conjectured an extension to all (complexified) finite Coxeter groups.  In 
the next section we will extend Stanley's result to arbitrary eigenvalues $\zeta$.  
We have not explored the possibility of extending Lehrer and Solomon's result.

\section{Combinatorics of $E(W,\zeta)$ in type A}\label{Section:Type:A}
Recall that $E(\mathfrak S_n,1)$ is the well-understood intersection lattice for the braid 
arrangement.  As such, it may be considered as the lattice of all set partitions of $\{1,2,\ldots, n\}$ ordered by refinement.  
This simple combinatorial model extends to the intersection lattice for $G(r,1,n)$ through a construction of 
Dowling~\cite{Dowling}, which in turn provides a model for $E(W,\zeta)$ via Theorem~\ref{Cor:Dowling} when 
$W=G(m,p,n)$ and $E(W,\zeta)\neq \L_W$.  
Theorem~\ref{Cor:Dowling} also tells us that 
the model inherited by $E(W,\zeta)$ is again particularly simple when the minimal elements have a 
simple description as elements of $\L_{W'}$, where $W'=G(m\vee d,1,n)$ and $d$ is the order of $\zeta$.  
In this section we set $W=\mathfrak S_n$ and 
$d>1$.  Not only is the resulting combinatorial description for $E(\mathfrak S_n,\zeta)$ interesting in its own right, but it 
naturally distinguishes the poset of \emph{$d$-divisible partitions} and leads to a precise description of the 
homotopy type of $E(\mathfrak S_n,\zeta)\diff\{\hat{1}\}$ in terms of integer partitions.  Throughout, $d>1$ and $\zeta$ denotes a primitive $d^{\text{th}}$ root of unity.

\subsection{Balanced Partitions}\label{Section:Balanced}
The purpose of this subsection is to give a combinatorial description of $E(\mathfrak S_n,\zeta)$.  As outlined above, we do so 
by viewing $E(\mathfrak S_n,\zeta)$ as an upper order ideal of $\L_W$ for $W=G(d,1,n)$, regarded as a Dowling lattice.  
Though Dowling's original notation~\cite{Dowling} is convenient for general groups, the lattice $\L_W$ is constructed 
as a Dowling lattice from the cyclic group $\mathbb Z/d\mathbb Z$ and lends itself to description in terms of integrally 
weighted set partitions, a notation that we shall use throughout.

Recall that a \emph{set partition} of $\{0,1,\ldots,n\}$ is a collection $\pi=\{B_0,B_1,\ldots, B_\ell\}$ of 
nonempty disjoint sets with $\bigcup_{i=0}^\ell B_i=\{0,1,\ldots,n\}$.  We call $\ell$ the \emph{length} of $\pi$ and write 
$\ell(\pi):=\ell$.  Henceforth, we shall assume that the \emph{blocks} 
are indexed so that $B_0$ is the \emph{zero block}, i.e., that $0\in B_0$.
We also adopt the convention of writing such a partition as $B_0/B_1/\cdots/B_\ell$, omitting commas and set braces for 
individual blocks when working with explicit sets of integers.

From a partition $B_0/B_1/\cdots/B_\ell$ of $\{0,1,\ldots,n\}$ and a positive integer $m$, one obtains an 
\emph{$m$-weighted partition} by assigning a weight $w_i\in \mathbb Z/m\mathbb Z$ to each element $i\in B_1\cup B_2\cup\cdots\cup B_\ell$, 
while elements of $B_0$ remain unweighted.  
Indicating weights with superscripts, we have 
$0 1 5 / 2^0 3^4/ 4^0$ and $023/1^2 5^2/ 4^4$ 
as examples of weighted partitions for $n=5$, with $0 2 3$ being the zero block of the latter.  We shall 
also find it useful to depict each weight by an equal number of bars, in which case we superfluously underline the zero block in order to emphasize that its 
elements bear no weights; 
see Figure~\ref{Fig:Dowling} and~\ref{Fig:EP42}. 

For $B$  a nonzero block of a weighted partition, let $B^w$ denote the block obtained from $B$ by adding 
$w$ to the weight of each of its elements (modulo $d$), 
and let $B^\z$ denote the block obtained by removing all weights.  
For example, if $B=1^3 3^0 4^4$ and $d=5$, then $B^3= 1^1 3^3 4^2$ and $B^\z=1 3 4$.

Since $d>1$, the arrangement of reflecting hyperplanes of $G(d,1,n)$ is given by
\[\mathcal A_{n,d}:=\{z_i=0\ :\ 1\leq i \leq n\}\cup\{z_i=\zeta^w z_j\ :\ 1\leq i<j\leq n,\ w\in\mathbb Z/d\mathbb Z\}.\]
The intersection lattice $L(\mathcal A_{n,d})$ is $\mathfrak S_n$-equivariantly isomorphic 
(in fact, $G(d,1,n)$-isomorphic) to the (particular) Dowling lattice $\Pi_{n,d}$ defined as follows:
\begin{definition}
$\Pi_{n,d}$ is the poset of all ($d$-)weighted partitions 
$\pi=B_0/\cdots/B_\ell$ of $\{0,\ldots, n\}$ with partial order 
defined by setting $\pi_1\leq\pi_2$ if for each block $B$ of $\pi_1$ 
there exists a block of $\pi_2$ that either contains $B^\z$ or contains $B^w$ for some $w$; see Figure~\ref{Fig:Dowling}.  
It is the \emph{Dowling lattice} constructed from the cyclic group $\mathbb Z/d\mathbb Z$; see~\cite{Dowling}.
\end{definition}
We consider $\Pi_{n,d}$ as an $\mathfrak S_n$-poset by defining
$\sigma(B_0/\cdots /B_\ell) :=\sigma B_0/\cdots/ \sigma B_\ell$, and  
the abovementioned equivariant isomorphism $\Pi_{n,d}\stackrel{\sim}{\longrightarrow} L(\mathcal A_{n,d})$ is given by
\begin{equation}
B_0/\cdots/ B_\ell\mapsto V(B_0)\oplus\cdots\oplus V(B_\ell),\label{Equation:Isomorphism}
\end{equation} 
where $V(B_0):={\mathbf 0}$, and for any nonzero block $B=\{b_1^{w_1},b_2^{w_2},\ldots, b_k^{w_k}\}$ 
we set
\[V(B):=\{\mathbf{z}\in\mathbb C^n\ :\ \zeta^{w_1}z_{b_1}=\zeta^{w_2}z_{b_2}=\cdots=\zeta^{w_k}z_{b_k}\ \ \text{and}\ \ z_i=0\text{ for }i\not\in B\}.\]

\begin{figure}[hbt]
\center
\includegraphics[scale=.8]{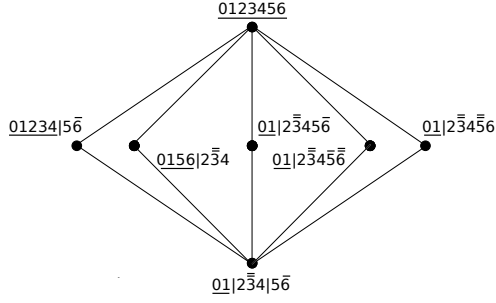}
\caption{$[x,\hat{1}]$ in $\Pi_{6,3}$ for $x=01|2^03^24^0|5^06^1=\underline{01}|2\overline{\overline{3}}4|5\overline{6}$ 
corresponding to    
$\{\mathbf{z}\in\mathbb C^n\ :\ z_1=0,\ z_2=\zeta^2z_3=z_4,\ z_5=\zeta z_6\}$.}
\label{Fig:Dowling}
\end{figure}

\begin{definition}\label{Definition:Balanced}
Let $d>1$.  Call a partition $\pi$ of $\Pi_{n,d}$ \emph{balanced} if for each block $B$ of $\pi$, all weights 
$w\in\mathbb Z/d\mathbb Z$ appear in $B$ with equal multiplicity.  The $\mathfrak S_n$-subposet 
of $\Pi_{n,d}$ of all balanced partitions is denoted $\Pi_n^d$.
\end{definition}

\begin{figure}[hbt]
\center
\includegraphics[scale=.8]{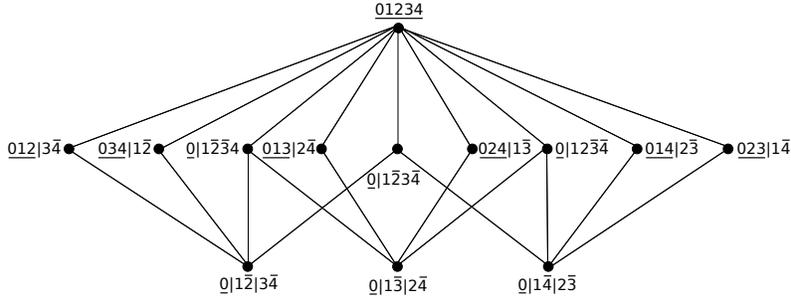}
\caption{$\Pi_4^2$ with each weight replaced by an equal number of bars.}\label{Fig:EP42}
\end{figure}

Recall that an \emph{upper order ideal} of a poset $P$ is a subposet $I$ with the property that $y\in I$ whenever 
$x\leq y$ for some $x\in I$.  The minimal elements of $I$ are called \emph{generators}.
With this terminology, the next observation follows directly from the definitions of $\Pi_{n,d}$ and $\Pi_n^d$.

\begin{proposition}\label{Prop:Filter}
$\Pi_n^d$ is the upper order ideal of $\Pi_{n,d}$ that is generated by the $\mathfrak S_n$-orbit of the element
$\pi=B_0/B_1/\cdots/B_{a(d)}$, where $a(d)=\left\lfloor\frac{n}{d}\right\rfloor$ and 
\begin{align*}
B_0& = \{0\}\cup\{\ a(d)\cdot d+1\ ,\ a(d)\cdot d+2\ ,\ \ldots\ ,\ n\ \}\\
\intertext{while}
B_1& =\{1^0,2^1,\ldots, d^{d-1}\}\\
B_2& =\{(d+1)^0,(d+2)^1,\ldots, (2d)^{d-1}\}\\
&{\hskip .15cm}\vdots\\
B_{a(d)}& =\{((a(d)-1)\cdot d+1)^0,\ldots, (a(d)\cdot d)^{d-1}\}.
\end{align*}
\end{proposition}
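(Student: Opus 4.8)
The plan is to prove the proposition in two moves. First I would show that $\Pi_n^d$ is genuinely an \emph{upper} order ideal of $\Pi_{n,d}$, i.e.\ that balancedness is preserved under going up in the Dowling order. Then I would identify the minimal elements (the generators) of this ideal as precisely the $\mathfrak S_n$-orbit of the displayed partition $\pi$, from which the statement follows: the ideal generated by the orbit of $\pi$ is contained in $\Pi_n^d$ by the first move, and contains $\Pi_n^d$ because every element of the finite poset $\Pi_n^d$ lies above one of its minimal elements.

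For the first move, recall that if $\pi_1\le\pi_2$ in $\Pi_{n,d}$ then every nonzero block $C$ of $\pi_2$ is a disjoint union $C=\bigsqcup_j B_{i_j}^{w_j}$ of cyclic shifts of those nonzero blocks $B_{i_j}$ of $\pi_1$ whose $\z$-images lie in $C$; no element of $C$ can come from the zero block of $\pi_1$, since that block's $\z$-image must land in the zero block of $\pi_2$ (it contains $0$). The zero block of $\pi_2$ carries no weights and so is trivially balanced, while a cyclic shift $B^w$ merely permutes the weight-classes of $B$, and a disjoint union of blocks in each of which every weight occurs with equal multiplicity again has every weight occurring with equal multiplicity. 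Hence each block of $\pi_2$ is balanced whenever every block of $\pi_1$ is, so $\pi_1\in\Pi_n^d$ forces $\pi_2\in\Pi_n^d$.

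For the second move, note first that the displayed $\pi$ is balanced (its zero block is unweighted, and each $B_i$ with $1\le i\le a(d)$ contains exactly one element of each weight in $\mathbb Z/d\mathbb Z$). To see that $\pi$ is minimal in $\Pi_n^d$, take any $\rho'<\pi$; then $\rho'$ refines $\pi$, so each nonzero block of $\rho'$ lies inside some $B_i$ (hence has size $\le d$) or inside the non-$0$ part of the zero block (hence has size $\le n\bmod d<d$); but a nonzero block is balanced only when its size is a positive multiple of $d$, so every nonzero block of $\rho'$ must equal some $B_i$ entirely and nothing can be split off the zero block, forcing $\rho'=\pi$ in $\Pi_{n,d}$ (the only remaining freedom, cyclically shifting weights within a block, is built into the Dowling order). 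Since $\mathfrak S_n$ acts by automorphisms of $\Pi_{n,d}$ preserving balancedness, the whole orbit of $\pi$ consists of minimal elements of $\Pi_n^d$. Conversely, let $\rho$ be any minimal element of $\Pi_n^d$. If some nonzero block $C$ of $\rho$ had size $\ge 2d$, I would peel off one element of each weight to form a balanced sub-block of size $d$, obtaining a balanced $\rho'<\rho$; if the zero block of $\rho$ contained $\ge d$ non-$0$ elements, I would split off $d$ of them, give them weights $0,1,\dots,d-1$, and again obtain a balanced $\rho'<\rho$. Either possibility contradicts minimality, so $\rho$ has exactly $a(d)=\lfloor n/d\rfloor$ nonzero blocks, each of size $d$ with one element of each weight, and a zero block with $n\bmod d$ non-$0$ elements — the same block sizes and weight pattern as $\pi$. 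A permutation $\sigma\in\mathfrak S_n$ sending, for each $i$, the weight-$w$ element of $B_i$ to the weight-$w$ element of the matching nonzero block of $\rho$ (and matching the leftover zero-block elements arbitrarily) then carries $\pi$ to $\rho$.

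The step I expect to be the main obstacle — such as it is — is the peeling argument in the second move: one must verify that splitting a block of a weighted partition and reassigning weights on the two pieces really does yield an element \emph{strictly below} the original in the Dowling order, which is exactly where the precise form of the order relation in $\Pi_{n,d}$ (and the identification of a new block's weights up to cyclic shift) gets used. Everything else is routine bookkeeping with multiplicities of weights modulo $d$.
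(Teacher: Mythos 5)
Your argument is correct and is exactly the direct verification that the paper leaves implicit: Proposition~\ref{Prop:Filter} is stated there without proof, as an observation ``following directly from the definitions'' of $\Pi_{n,d}$ and $\Pi_n^d$. Your two moves (balancedness is inherited upward because each nonzero block above is a disjoint union of weight-shifted nonzero blocks from below, and the minimal balanced partitions are forced by the size count $kd$ to be precisely the $\mathfrak S_n$-orbit of $\pi$) are the intended justification, spelled out carefully and with the relevant subtleties (zero block maps to zero block; weights within a block only matter up to a global shift) correctly handled.
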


\begin{remark}\label{Remark:Rank}
Note that $r(\Pi_n^d)=a(d)$ and $\ell(x)=r([x,\hat{1}])$ for $x\in \Pi_n^d$.  Also observe 
that $\Pi_n^d$ has a bottom element $\hat{0}$ if and only if $n=d=2$ or $d>n$.  In the latter case, 
the top and bottom elements coincide, i.e., $\Pi_n^d$ is the trivial poset consisting of the element $\underline{01\cdots n}$.  
\end{remark}

\begin{theorem}\label{Model}
Let $d>1$.  Then $E(\mathfrak S_n,\zeta)\cong \Pi_n^d$ as $\mathfrak S_n$-posets.
\end{theorem}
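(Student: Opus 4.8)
The plan is to build an explicit $\mathfrak S_n$-equivariant poset isomorphism by chasing the identifications already in place. Recall that Theorem~\ref{Cor:Dowling} gives $E(\mathfrak S_n,\zeta)$ as an upper order ideal of $\L_{W'}$ where $W'=G(1\vee d,1,n)=G(d,1,n)$ (using $m=1$), and that the equivariant isomorphism~\eqref{Equation:Isomorphism} identifies $\Pi_{n,d}$ with $L(\mathcal A_{n,d})=\L_{W'}$. So it suffices to prove that, under this identification, the upper order ideal $E(\mathfrak S_n,\zeta)\subseteq\L_{W'}$ corresponds exactly to the upper order ideal $\Pi_n^d\subseteq\Pi_{n,d}$. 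Both are upper order ideals, so it is enough to check that their sets of \emph{generators} (minimal elements) coincide. By Proposition~\ref{Prop:Springer:Transitive}\eqref{Springer:2}, the generators of $E(\mathfrak S_n,\zeta)$ form a single $\mathfrak S_n$-orbit of maximal $\zeta$-eigenspaces, and by Proposition~\ref{Prop:Filter} the generators of $\Pi_n^d$ form the $\mathfrak S_n$-orbit of one explicit balanced partition $\pi$. Hence the whole theorem reduces to the single assertion: the image of $\pi$ under~\eqref{Equation:Isomorphism} is a maximal $\zeta$-eigenspace of $\mathfrak S_n$.

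First I would compute the image $V(B_0)\oplus V(B_1)\oplus\cdots\oplus V(B_{a(d)})$ of the generator $\pi$ from Proposition~\ref{Prop:Filter}. Since $B_0=\{0\}\cup\{a(d)d+1,\dots,n\}$, we get $V(B_0)=\mathbf 0$ together with $z_i=0$ for $a(d)d<i\le n$. Each nonzero block $B_j=\{((j-1)d+1)^0,((j-1)d+2)^1,\dots,(jd)^{d-1}\}$ contributes the line $z_{(j-1)d+1}=\zeta^{-1}z_{(j-1)d+2}=\cdots=\zeta^{-(d-1)}z_{jd}$. Comparing with Lemma~\ref{lemma:color}, equations~\eqref{Eigen:Equation:1}--\eqref{Eigen:Equation:2}, this line is exactly $V(\sigma^{(j)},\zeta)$ for the $d$-cycle $\sigma^{(j)}=\bigl((j-1)d+1,\ (j-1)d+2,\ \dots,\ jd\bigr)$ in $\mathfrak S_n$ (all colors $1$, so the eigenvalue condition $\zeta^\ell=\prod\epsilonA_i$ reads $\zeta^d=1$, which holds). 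So the image of $\pi$ is $\bigoplus_{j=1}^{a(d)}V(\sigma^{(j)},\zeta)$, which by Proposition~\ref{Prop:Smallest} (with $m=1$, so $\ell=\ell(d)=d$) is precisely a maximal $\zeta$-eigenspace of $\mathfrak S_n$; alternatively one checks directly that it is the $\zeta$-eigenspace of the product $\sigma^{(1)}\cdots\sigma^{(a(d))}$ and has dimension $a(d)=\lfloor n/d\rfloor$, matching Proposition~\ref{Prop:Springer:Transitive}\eqref{Springer:1}.

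With this identification of generators, the two upper order ideals of $\Pi_{n,d}\cong\L_{W'}$ agree: a weighted partition lies in $\Pi_n^d$ iff it lies above some $\mathfrak S_n$-translate of $\pi$, iff its image lies above the corresponding maximal $\zeta$-eigenspace, iff (by Theorem~\ref{Cor:Dowling}, which says $E(\mathfrak S_n,\zeta)$ is the upper order ideal of $\L_{W'}$ generated by the maximal eigenspaces) its image lies in $E(\mathfrak S_n,\zeta)$. The isomorphism is $\mathfrak S_n$-equivariant because~\eqref{Equation:Isomorphism} is and the $\mathfrak S_n$-actions on $\Pi_n^d$ and on $E(\mathfrak S_n,\zeta)$ are the restrictions of the ambient actions. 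The main obstacle, and really the only non-bookkeeping point, is the computation in the second paragraph verifying that the explicit generator $\pi$ maps to a genuinely maximal $\zeta$-eigenspace; once the colors-to-cycles dictionary of Lemma~\ref{lemma:color} is lined up with the block structure, this is immediate, and the rest is formal manipulation of upper order ideals. One should also separately note the degenerate cases recorded in Remark~\ref{Remark:Rank} ($d>n$, where both posets are a single point, and $n=d=2$), but these are trivially consistent.
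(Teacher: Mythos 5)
Your proposal is correct and follows essentially the same route as the paper: reduce to matching the minimal elements of the two upper order ideals, use equivariance of~\eqref{Equation:Isomorphism} together with Springer's transitivity to reduce to the single distinguished generator of Proposition~\ref{Prop:Filter}, and identify its image as the $\zeta$-eigenspace of a product of $a(d)$ disjoint $d$-cycles, maximal by dimension count. The only discrepancy is a harmless sign convention: with the paper's definition of $V(B)$ the line attached to $B_j$ is $z_{(j-1)d+1}=\zeta z_{(j-1)d+2}=\cdots=\zeta^{d-1}z_{jd}$ (the eigenspace of the cycle $((j-1)d+1,\dots,jd)$ rather than of its reversal as your exponents suggest), but either way the image is a maximal $\zeta$-eigenspace and the argument goes through.
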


\begin{proof}
Because $\Pi_n^d$ and $E(\mathfrak S_n,\zeta)$ are upper order ideals of $\Pi_{n,d}$ and $L(\mathcal A_{n,d})$, respectively, 
it suffices to show that the minimal elements of $\Pi_n^d$ correspond to those of $E(\mathfrak S_n,\zeta)$ under the isomorphism 
$\Pi_{n,d}\rightarrow L(\mathcal A_{n,d})$ given in~\eqref{Equation:Isomorphism}.  In fact, since the 
isomorphism is equivariant, by transitivity it suffices to see that the distinguished minimal element of 
$\Pi_n^d$ in Proposition~\ref{Prop:Filter} corresponds to some maximal $\zeta$-eigenspace of $\mathfrak S_n$.  But this is clear, 
for the distinguished element corresponds to the $\zeta$-eigenspace of 
\[(1,2,\ldots, d)\ (d+1,d+2,\ldots, 2d)\ \cdots\ (\ (a(d)-1) d+1\ ,\ (a(d)-1) d+2\ ,\ \ldots\ ,\ a(d)d\ ),\]
which is maximal under inclusion by considering dimension.  
\end{proof}

A general feature of Dowling lattices is that their upper intervals $[x,\hat{1}]$ are again Dowling 
lattices; see~\cite[Thm. 2]{Dowling}.  
For $x\in \Pi_{n,d}$ one can see this by writing $x=B_0/B_1/\cdots/B_\ell$ and then constructing a poset 
isomorphism $[x,\hat{1}]\rightarrow \Pi_{\ell(x),d}$ by mapping any block of the form 
$B_{i_1}^{w_1}\cup\cdots\cup B_{i_j}^{w_j}$ with $w_i\in\mathbb Z/d\mathbb Z$ to $\{i_1^{w_1},\ldots,i_j^{w_j}\}$ and mapping 
any zero block of the form $B_0\cup B_{i_1}^\z\cup\cdots\cup B_{i_j}^\z$ to zero block $\{0,i_1,\ldots, i_j\}$.  

\begin{lemma}\label{Upper:Intervals}
$[x,\hat{1}]\cong \Pi_{\ell(x),d}$ for each $x\in\Pi_n^d$.  
\end{lemma}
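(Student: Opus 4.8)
The plan is to obtain the lemma as a restriction of a known isomorphism, using two facts already in hand: that $\Pi_n^d$ is an upper order ideal of the Dowling lattice $\Pi_{n,d}$ (Proposition~\ref{Prop:Filter}), and that upper intervals of $\Pi_{n,d}$ are again Dowling lattices via the explicit block-contraction map recalled in the paragraph immediately preceding the statement (equivalently \cite[Thm.~2]{Dowling}).

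First I would observe that, since $x\in\Pi_n^d$ and $\Pi_n^d$ is an upper order ideal of $\Pi_{n,d}$, every $y\geq x$ in $\Pi_{n,d}$ already lies in $\Pi_n^d$; hence the interval $[x,\hat 1]$ is the same poset whether it is formed inside $\Pi_n^d$ or inside the ambient $\Pi_{n,d}$. If one wishes to see this directly rather than cite Proposition~\ref{Prop:Filter}: each nonzero block of such a $y$ has the form $B_{i_1}^{w_1}\cup\cdots\cup B_{i_j}^{w_j}$, and since $x$ is balanced each $B_{i_k}$ has every residue in $\mathbb Z/d\mathbb Z$ occurring with the common multiplicity $|B_{i_k}|/d$; as adding $w_k$ permutes $\mathbb Z/d\mathbb Z$, the shifted block $B_{i_k}^{w_k}$ retains that property, and so does their union, while the zero block of $y$ carries no weights and is balanced vacuously. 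This is the point worth stating with care, since it is precisely what makes the target of the isomorphism the \emph{full} Dowling lattice $\Pi_{\ell(x),d}$ and not some balanced subposet of it: once the blocks of $x$ have been packaged into balanced ``super-letters,'' any further merging is unconstrained. At bottom this is just the freeness of the $\mathbb Z/d\mathbb Z$-action on weight classes.

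Then I would invoke the isomorphism $[x,\hat 1]\stackrel{\sim}{\longrightarrow}\Pi_{\ell(x),d}$ described before the lemma: writing $x=B_0/B_1/\cdots/B_{\ell}$ with $\ell=\ell(x)$, send a nonzero block $B_{i_1}^{w_1}\cup\cdots\cup B_{i_j}^{w_j}$ of an element of $[x,\hat 1]$ to $\{i_1^{w_1},\ldots,i_j^{w_j}\}$ and the zero block $B_0\cup B_{i_1}^{\z}\cup\cdots\cup B_{i_j}^{\z}$ to $\{0,i_1,\ldots,i_j\}$; one checks this is a well-defined bijection with order-preserving inverse. Composing with the identification of intervals from the previous step yields $[x,\hat 1]\cong\Pi_{\ell(x),d}$. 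As a consistency check one notes $r(\Pi_{\ell(x),d})=\ell(x)$, in agreement with the identity $\ell(x)=r([x,\hat 1])$ recorded in Remark~\ref{Remark:Rank}. I do not expect a genuine obstacle here; the only thing to get right is the bookkeeping that ``balanced'' propagates upward and becomes vacuous on the contracted block set.
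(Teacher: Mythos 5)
Your proposal is correct and follows the same route as the paper: the paper's justification is precisely the paragraph preceding the lemma (upper intervals of $\Pi_{n,d}$ are Dowling lattices via the explicit block-contraction map), combined with the fact that $\Pi_n^d$ is an upper order ideal of $\Pi_{n,d}$ so the interval may be computed in the ambient Dowling lattice. Your extra verification that the balanced condition propagates upward and becomes vacuous after contraction is a correct (and slightly more explicit) unpacking of what the paper leaves to Proposition~\ref{Prop:Filter}.
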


\subsection{The topology of balanced partitions}\label{Section:Topology}  
Recall that $E(\mathfrak S_n,\zeta)\diff\{\hat{1}\}$ being shellable implies that its order complex is homotopy 
equivalent to a bouquet of spheres.  The aim of this subsection is to calculate the exact number of such spheres.  
Central to our analysis is the following consequence of the Hopf trace formula.

\begin{theorem}[Sundaram~\cite{Sundaram}]\label{Theorem:Sundaram}
For $P$ a Cohen-Macaulay $G$-poset with top element $\hat{1}$, one has an isomorphism of virtual $G$-modules
\[\widetilde{H}_{r(P)-1}(P\diff\{\hat{1}\})
\cong
\bigoplus_{i=0}^{r(P)}\ (-1)^{r(P)+i}\bigoplus_{x^G\in P/G}\Ind_{G_x}^G\, \widetilde{H}_{i-2}(x,\hat{1}).
\]
\end{theorem}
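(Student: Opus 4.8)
The plan is to deduce this from the Hopf trace formula together with the fact that Cohen--Macaulayness concentrates reduced homology in top degree. Set $r:=r(P)$, and for $x\in P$ let $r(x)$ denote its rank, so that $[x,\hat1]$ has rank $r-r(x)$. First I would record two standard consequences of $P$ being CM. (a) The poset $P\diff\{\hat1\}$ is CM of dimension $r-1$, so its only nonvanishing reduced homology is in degree $r-1$, whence $\widetilde H_{r-1}(P\diff\{\hat1\})=(-1)^{r-1}\,\widetilde\chi_G$ in the representation ring $R(G)$, where $\widetilde\chi_G:=\sum_i(-1)^i[\widetilde H_i(\Delta(P\diff\{\hat1\}))]$ is the equivariant reduced Euler characteristic. (b) Every open interval $(x,\hat1)=\overline{[x,\hat1]}$ is CM --- subintervals of CM posets are CM --- of dimension $r-r(x)-2$, with the conventions of the Remark above covering the boundary cases $x=\hat1$ and $x$ a coatom; hence $\widetilde H_{i-2}(x,\hat1)=0$ unless $i=r-r(x)$, in which case $\widetilde H_{i-2}(x,\hat1)=(-1)^{r-r(x)}\,\widetilde\chi_{G_x}\big((x,\hat1)\big)$ in $R(G_x)$, the equivariant reduced Euler characteristic of the $G_x$-complex $\Delta\big((x,\hat1)\big)$. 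Granting (a) and (b), for each orbit representative $x$ the $i$-sum on the right of the theorem collapses to a single term, and a quick exponent check ($(-1)^{r+i}=(-1)^{r(x)}$ at $i=r-r(x)$) identifies it with $(-1)^{r}\Ind_{G_x}^G\widetilde\chi_{G_x}\big((x,\hat1)\big)$; thus the entire right-hand side equals $(-1)^{r}\bigoplus_{x^G\in P/G}\Ind_{G_x}^G\widetilde\chi_{G_x}\big((x,\hat1)\big)$.

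Together with (a), the theorem therefore reduces to the purely equivariant-combinatorial identity
\[
\widetilde\chi_G\big(\Delta(P\diff\{\hat1\})\big)\;=\;-\bigoplus_{x^G\in P/G}\Ind_{G_x}^G\,\widetilde\chi_{G_x}\big((x,\hat1)\big)\qquad\text{in }R(G),
\]
after which one multiplies by $(-1)^{r-1}$. To prove this identity I would expand each $\widetilde\chi_{G_x}\big((x,\hat1)\big)$ by the Hopf trace formula as the alternating sum of the reduced simplicial chain modules of $\Delta\big((x,\hat1)\big)$; this is legitimate because in any order complex a group element fixing a chain setwise fixes it pointwise, so each chain module is an honest permutation module $\bigoplus_{[c]}\Ind_{(G_x)_c}^{G_x}\triv$. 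Applying $\Ind_{G_x}^G$ and transitivity of induction, and noting $(G_x)_c=G_{\{x\}\cup c}$, the double sum over orbit representatives $x$ and $G_x$-orbits of chains $c$ in $(x,\hat1)$ becomes a sum over $G$-orbits of \emph{pairs} $(x,c)$; the map $(x,c)\mapsto D:=\{x\}\cup c$ is a $G$-equivariant bijection onto the \emph{nonempty} chains $D$ of $P\diff\{\hat1\}$ (one recovers $x=\min D$), and it raises dimension by one. Hence the positive-length chains contribute $-\sum_{j\ge 1}(-1)^j[C_j(\Delta(P\diff\{\hat1\}))]$, the empty chains contribute $-[C_0(\Delta(P\diff\{\hat1\}))]$ (one copy of $\Ind_{G_x}^G\triv$ per vertex $x\ne\hat1$), and the lone degenerate interval $(\hat1,\hat1)$ contributes $+[\triv]$; since $\widetilde\chi_G(\Delta(P\diff\{\hat1\}))=-[\triv]+\sum_{j\ge 0}(-1)^j[C_j]$, these tally to exactly $-\widetilde\chi_G(\Delta(P\diff\{\hat1\}))$.

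I expect the main obstacle to be bookkeeping rather than anything conceptual: keeping the reduced-versus-unreduced Euler-characteristic shift by $[\triv]$ consistent with the Remark's conventions for the degenerate empty complex (the interval $(\hat1,\hat1)$ contributing $\widetilde H_{-2}=\mathbb C$, a coatom $x$ contributing $\widetilde H_{-1}=\mathbb C$), and pinning down the sign of the surviving interval-homology term. It is also worth noting at the outset that $P$, being CM, is graded (all maximal chains have the same length), so that $r(\cdot)$, $r(P)$, and the phrase ``homology concentrated in top degree'' are meaningful, and citing the standard facts that $P$ CM implies that both $P\diff\{\hat1\}$ and every open subinterval of $P$ are CM.
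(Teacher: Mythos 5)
The paper does not prove this statement: it is quoted from Sundaram's paper \cite{Sundaram}, with only the framing remark that it is ``a consequence of the Hopf trace formula,'' so there is no internal argument to compare against. Your derivation is correct and is, in substance, the standard one (and Sundaram's own): Cohen--Macaulayness concentrates the homology of $P\diff\{\hat{1}\}$ and of each interval $(x,\hat{1})$ in top degree, which collapses both sides of the asserted isomorphism to signed equivariant reduced Euler characteristics, reducing the theorem to the identity $\widetilde{\chi}_G\bigl(\Delta(P\diff\{\hat{1}\})\bigr)=-\sum_{x^G}\Ind_{G_x}^G\widetilde{\chi}_{G_x}\bigl((x,\hat{1})\bigr)$; that identity then follows from the Hopf trace formula together with the reindexing of nonempty chains of $P\diff\{\hat{1}\}$ by their minimum element, using that an order-preserving map fixing a chain setwise fixes it pointwise so that every chain module is a genuine permutation module. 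Your sign bookkeeping checks out: at the surviving index $i=r(P)-r(x)$ the coefficient $(-1)^{r(P)+i}$ combines with the factor $(-1)^{r(P)-r(x)}$ from converting top homology to an Euler characteristic to give a uniform $(-1)^{r(P)}$, and the Remark's conventions for the degenerate interval $(\hat{1},\hat{1})$ and for coatoms supply exactly the $+[\triv]$ and $-[C_0]$ terms needed to assemble the \emph{reduced} Euler characteristic of $\Delta(P\diff\{\hat{1}\})$. The only ingredients you cite without proof (purity of CM complexes, CM-ness of $P\diff\{\hat{1}\}$ and of open intervals) are standard and are used freely elsewhere in the paper.
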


We shall also need the following consequence of Dowling's well-known description of 
the M\"obius function for any Dowling lattice~\cite{Dowling} and Lemma~\ref{Upper:Intervals}.

\begin{lemma}\label{Lemma:Dowling}
Let $x\in \Pi_n^d$.  Then $\dim\widetilde{H}_{\ell(x)-2}(x,\hat{1})=\prod_{i=0}^{\ell(x)-1}(1+id)$.
\end{lemma}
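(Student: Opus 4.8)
The plan is to combine two ingredients already available: the poset isomorphism $[x,\hat 1]\cong \Pi_{\ell(x),d}$ from Lemma~\ref{Upper:Intervals}, and Dowling's formula for the M\"obius function of a Dowling lattice. Since each upper interval $[x,\hat 1]$ is a Dowling lattice $\Pi_{\ell(x),d}$, which is a geometric lattice (as is every Dowling lattice, by~\cite{Dowling}), the reduced homology of its proper part is concentrated in the top degree, and its dimension is $|\mu_{\Pi_{\ell(x),d}}(\hat 0,\hat 1)|$, where $\mu$ is the M\"obius function. The rank of $\Pi_{\ell(x),d}$ is $\ell(x)$, so the proper part of $[x,\hat 1]$ has dimension $\ell(x)-2$, matching the index $\ell(x)-2$ in the statement. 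Thus $\dim\widetilde H_{\ell(x)-2}(x,\hat 1)=|\mu_{\Pi_{\ell(x),d}}(\hat 0,\hat 1)|$.

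Next I would invoke Dowling's evaluation of the M\"obius function. For the Dowling lattice $\Pi_{k,d}$ built from a group of order $d$, Dowling~\cite[Theorem~4 or the M\"obius function computation]{Dowling} gives
\[
\mu_{\Pi_{k,d}}(\hat 0,\hat 1)=(-1)^k\prod_{i=0}^{k-1}(1+id).
\]
Taking $k=\ell(x)$ and then absolute values yields $\dim\widetilde H_{\ell(x)-2}(x,\hat 1)=\prod_{i=0}^{\ell(x)-1}(1+id)$, which is exactly the claim. The only two small points to check are: (a) that the homology of $\overline{\Pi_{k,d}}$ really is one-dimensional in the top degree with dimension $|\mu(\hat 0,\hat 1)|$ — this is the standard fact that a geometric lattice (indeed any Cohen-Macaulay bounded poset) has its order-complex homology concentrated at the top, of rank equal to the absolute M\"obius number, a consequence of the Euler–Poincar\'e formula $\widetilde\chi(\overline P)=\mu_P(\hat 0,\hat 1)$; and (b) that the degenerate cases are handled correctly, namely when $\ell(x)=0$ (so $x=\hat 1$, the open interval $(x,\hat 1)$ is the degenerate empty complex with $\widetilde H_{-2}=\mathbb C$, and the empty product is $1$) and when $\ell(x)=1$ (so $(x,\hat 1)=\varnothing=\{\varnothing\}$ with $\widetilde H_{-1}=\mathbb C$, and the product is $1+0\cdot d=1$); both agree with the stated conventions in the Remark on degenerate intervals.

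I do not anticipate a genuine obstacle here — the lemma is essentially a translation step. The only thing requiring care is citing Dowling's M\"obius formula in the right normalization (some sources state $\mu=(-1)^k\prod_{i=0}^{k-1}(1+ib)$ for the Dowling lattice $Q_k(G)$ with $|G|=b$, others absorb signs differently), so I would state the formula I use and check it against the small cases $k=0,1,2$ directly: $\Pi_{0,d}$ is a point ($\mu=1$), $\Pi_{1,d}$ is a chain of length one ($\mu=-1$), and $\Pi_{2,d}$ has $\mu=1\cdot(1+d)=1+d$, which one can confirm by hand since its atoms number $d+ \binom{2}{1}$... in any case a direct rank-count confirms $|\mu|=1+d$. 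With the formula pinned down, the proof is two sentences: apply Lemma~\ref{Upper:Intervals}, then apply the M\"obius formula and take absolute values, noting that Cohen-Macaulayness (Theorem~\ref{Thm:Main:Shelling}, or simply that geometric lattices are CL-shellable) forces the homology to live in the single top degree.
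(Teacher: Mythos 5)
Your proposal is correct and matches the paper's intended argument exactly: the paper states the lemma as a direct consequence of Lemma~\ref{Upper:Intervals} together with Dowling's M\"obius function computation, which is precisely the two-step reduction you carry out. Your verification of the degenerate cases and of the sign/normalization of Dowling's formula is a welcome (and accurate) addition to what the paper leaves implicit.
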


Regard an \emph{integer partition} $\lambda$ of $n\geq 0$ as a 
multiset of positive integers $\lambda_1,\ldots,\lambda_\ell$ with 
$\sum \lambda_i=n$, and write $|\lambda|=n$ or $\lambda\vdash n$.  
The integer $\ell(\lambda):=\ell$ is the \emph{length} of $\lambda$.  We shall also 
regard the partition $\lambda$ as a sequence $\lambda=(\lambda_1^{m_1},\ldots,\lambda_\wp^{m_\wp})$ 
of distinct positive integers $0<\lambda_1<\lambda_2<\cdots<\lambda_\wp$ 
with positive weights $m_i$ satisfying $\sum_{i=1}^\wp m_i\lambda_i=n$.  The integer $\wp(\lambda):=\wp$ 
is the number of distinct parts of $\lambda$.  
In what follows, the convention being used will be made clear by the presence or absence of weights $m_i$.

A \emph{pointed partition} $\Lambda=(\lambda_0,\lambda)$ of $n\geq 0$ 
is a nonnegative integer $0\leq\lambda_0\leq n$ together with a partition $\lambda$ of $n-\lambda_0$.  
Such a partition naturally arises in Dowling lattices as the \emph{type} of a partition 
$x=B_0/B_1/\cdots/B_\ell$, defined to be
\[\type(x)\Def(|B_0\diff\{0\}|,\{|B_1|,\ldots,|B_\ell|\}).\]

\begin{corollary}\label{Cor:Formula}
  $\Delta(\Pi_n^d\diff\{\hat{1}\})$ is homotopy equivalent to a bouquet of
\begin{equation}
\sum_{0\leq |\lambda| \leq a(d)}\frac{(-1)^{a(d)+\ell(\lambda)}}{d^{\ell(\lambda)}(n-d|\lambda|)!}\ 
\frac{n!}{\prod_{j=1}^p \lambda_j!^{m_jd}\,m_j!}
\prod_{i=0}^{\ell(\lambda)-1}(1+id)\label{homotopy:type:formula}
\end{equation}
many $(a(d)-1)$-spheres.
\end{corollary}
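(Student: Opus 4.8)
The plan is to compute $\dim_{\mathbb C}\widetilde{H}_{a(d)-1}(\Pi_n^d\setminus\{\hat{1}\})$ by feeding the upper intervals of $\Pi_n^d$ into Sundaram's consequence of the Hopf trace formula, and then to reorganize the resulting alternating sum over $\mathfrak S_n$-orbits as a sum over integer partitions. First I would note that, by Theorem~\ref{Model} and the case $W=G(1,1,n)$ of Theorem~\ref{Thm:Main:Shelling}, the poset $\Pi_n^d\cong E(\mathfrak S_n,\zeta)$ is (homotopy) Cohen-Macaulay, so $\Delta(\Pi_n^d\setminus\{\hat{1}\})$ is homotopy equivalent to a wedge of spheres of dimension $r(\Pi_n^d)-1=a(d)-1$ (Remark~\ref{Remark:Rank}), the number of spheres being exactly this homology dimension. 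We may assume $a(d)\geq 1$; when $a(d)=0$ (that is, $d>n$) the poset $\Pi_n^d$ is a single point and, with the conventions on degenerate complexes, both sides reduce to a bouquet of one $(-1)$-sphere.

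Next I would apply Theorem~\ref{Theorem:Sundaram} with $P=\Pi_n^d$ and $G=\mathfrak S_n$ and take $\mathbb C$-dimensions, using $\dim\Ind_H^G M=[G:H]\dim M$. For each $x\in\Pi_n^d$, Lemma~\ref{Upper:Intervals} identifies $(x,\hat{1})$ with the proper part of the Dowling lattice $\Pi_{\ell(x),d}$, which is Cohen-Macaulay of rank $\ell(x)-1$; hence $\widetilde{H}_{i-2}(x,\hat{1})=0$ unless $i=\ell(x)$, in which case its dimension is $\prod_{i=0}^{\ell(x)-1}(1+id)$ by Lemma~\ref{Lemma:Dowling}. (For $x=\hat{1}$ this is the degenerate term contributing $(-1)^{a(d)}$.) Therefore
\[
\dim_{\mathbb C}\widetilde{H}_{a(d)-1}\bigl(\Pi_n^d\setminus\{\hat{1}\}\bigr)
=\sum_{x}(-1)^{a(d)+\ell(x)}\,[\mathfrak S_n:(\mathfrak S_n)_x]\prod_{i=0}^{\ell(x)-1}(1+id),
\]
the sum over a set of $\mathfrak S_n$-orbit representatives $x\in\Pi_n^d$, with $[\mathfrak S_n:(\mathfrak S_n)_x]$ the size of the orbit of $x$.

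Finally I would parametrize the orbits. As each nonzero block of a balanced partition has cardinality divisible by $d$, the $\mathfrak S_n$-orbit of $x=B_0/B_1/\cdots/B_\ell$ is determined by the integer partition $\lambda=(\lambda_1^{m_1},\ldots,\lambda_p^{m_p})$ recording its nonzero block sizes as $d\lambda_j$ with multiplicity $m_j$; the size $|B_0|-1=n-d|\lambda|$ of the zero-block complement is then forced to be nonnegative, i.e.\ $|\lambda|\leq a(d)$, and conversely every such $\lambda$ arises, with $\ell(x)=\ell(\lambda)$. Counting the orbit of such an $x$ — first the underlying set partition of $[n]$ into a distinguished part of size $n-d|\lambda|$ together with $m_j$ parts of size $d\lambda_j$, then, for each of these $\ell(\lambda)$ nonzero blocks of size $d\lambda_j$, a balanced weighting considered up to the (free) simultaneous shift of all of its weights, of which there are $\tfrac1d\cdot\tfrac{(d\lambda_j)!}{(\lambda_j!)^d}$ — gives
\[
[\mathfrak S_n:(\mathfrak S_n)_x]
=\frac{n!}{(n-d|\lambda|)!\,\prod_j (d\lambda_j)!^{m_j}\,m_j!}\cdot\prod_j\Bigl(\frac{(d\lambda_j)!}{d\,(\lambda_j!)^{d}}\Bigr)^{m_j}
=\frac{1}{d^{\ell(\lambda)}}\cdot\frac{n!}{(n-d|\lambda|)!\,\prod_j \lambda_j!^{m_jd}\,m_j!}.
\]
Substituting this and $\ell(x)=\ell(\lambda)$ into the previous display yields precisely~\eqref{homotopy:type:formula}.

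The step I expect to be the main obstacle is the combinatorial bookkeeping of this last paragraph: one must confirm that $\mathfrak S_n$-orbits on $\Pi_n^d$ biject with integer partitions $\lambda$ satisfying $d|\lambda|\leq n$ (the point being that $\mathfrak S_n$ already acts transitively on the balanced weightings of a block of any fixed size, so the orbit is recorded by nothing more than the multiset of block sizes), and — the delicate point — correctly produce the factor $d^{-\ell(\lambda)}$, which reflects the identification, in the Dowling lattice $L(\mathcal A_{n,d})\cong\Pi_{n,d}$ via~\eqref{Equation:Isomorphism}, of weighted partitions that differ by simultaneously shifting all the weights within a nonzero block.
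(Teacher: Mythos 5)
Your proposal is correct and follows essentially the same route as the paper: Sundaram's formula collapses to the top-degree terms of each upper interval via Lemma~\ref{Upper:Intervals} and Lemma~\ref{Lemma:Dowling}, orbits are classified by type, and the orbit size you derive is exactly $n!$ divided by the stabilizer order $d^{\ell(\lambda)}\lambda_0!\prod_i m_i!(\lambda_i/d)!^{m_id}$ that the paper records. The only differences are cosmetic (you count orbits directly rather than stabilizers, and you treat the degenerate case $a(d)=0$ a bit more explicitly).
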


\begin{proof}
Start by noting that two partitions $x,y\in \Pi_n^d$ are in the same 
$\mathfrak S_n$-orbit if and only if $\type(x)=\type(y)$, 
and observe that for $\type(x)=(\lambda_0,\lambda)$ one has
\begin{equation}\label{Equation:Stab}
|\Stab_{\mathfrak S_n}(x)|=d^{\ell(\lambda)}\lambda_0!\prod_{i=1}^\wp m_i!(\lambda_i/d)!^{m_id}.
\end{equation}
Taking dimensions in Theorem~\ref{Theorem:Sundaram} and employing~\eqref{Equation:Stab} and Lemma~\ref{Lemma:Dowling} 
gives the result. 
\end{proof}

\noindent
We list some initial values of~\eqref{homotopy:type:formula} in Table~\ref{Table:Homology:Dimensions}.
\begin{table}[hbt]
\center
\begin{tabular}{l@{\hskip .6cm}llllllll}
\toprule
$n\backslash d$
& 2 & 3 & 4 & 5 & 6 & 7 & 8 & 9\\ 
\midrule
2 &0  &  &   &  &  &  &  &    \\
3 &2  &1  &   &  &  &  &  &    \\
4 &1  &7  &5   &  &  &  &  &    \\
5 &21  &19  &29   &23  &  &  &  &    \\
6 &24  &91  &89   &143  &119  &   &  &  \\
7 &510  &841  &209   &503  &839  &719  &  &  \\
8 &918  &3529  &5251   &1343  &3359  &5759  &5039  &   \\
9 &22246  &32367  &50275   &3023  &10079  &25919  &45359  &40319   \\ 
\bottomrule
\end{tabular}
\tableskip
\caption{$\dim\widetilde{H}_{a(d)-1}(\Pi_n^d\diff\{\hat{1}\})$ for $n\leq 9$ and $a(d)\neq 0$}\label{Table:Homology:Dimensions}
\end{table}
\begin{remark}
One may identify the stabilizer $\Stab_{\mathfrak S_n}(x)$ of $x=B_0/B_1/\cdots/B_\ell$ as a 
(suitably defined) product of nested wreath products 
\[\mathfrak S_{\lambda_0}\times 
\mathfrak S_{m_1}[C_d][\mathfrak S_{\lambda_1/d}]\times
\cdots\times\mathfrak S_{m_\wp}[C_d][\mathfrak S_{\lambda_\wp/d}],\]
from which one can show that
\[\widetilde{H}_{\ell-2}(x,\hat{1})\cong_{\Stab_{\mathfrak S_n}(x)} 
\Res^{\mathfrak S_{\lambda_0}\times\mathfrak S_{\ell}[C_d][\mathfrak S_n]}_
{\Stab_{\mathfrak S_n}(x)}\triv\otimes \widetilde{H}_{\ell-2}(\overline{\Pi}_{\ell,d})[\triv]\]
for a suitable embedding $\Stab_{\mathfrak S_n}(x)\subseteq \mathfrak S_{\lambda_0}\times\mathfrak S_{\ell}[C_d][\mathfrak S_n]$.  
Here, $ \widetilde{H}_{\ell-2}(\overline{\Pi}_{\ell,d})[\triv]$ denotes the $\mathfrak S_{\ell}[C_d][\mathfrak S_n]$-module 
$\triv^{\otimes\, \ell d }\otimes \widetilde{H}_{\ell-2}(\overline{\Pi}_{\ell,d})$ given by 
\[(\sigma_1,\ldots, \sigma_{\ell d}\ ;\, \sigma)
(v_1\otimes\cdots\otimes v_n\otimes w)= v_{1}\otimes\cdots\otimes v_{n}\otimes \sigma w\]
for $\sigma_i\in\mathfrak S_n$ and $\sigma\in\mathfrak S_{\ell}[C_d]$, and $\Pi_{\ell,d}$ is viewed as a $\mathfrak S_\ell[C_d]$-poset.  
Employing Theorem~\ref{Theorem:Sundaram}, one then obtains a (virtual) expression for $\widetilde{H}_{a(d)-1}(\Pi_n^d\diff\{\hat{1}\})$ 
as an $\mathfrak S_n$-module in terms of the $\mathfrak S_\ell[C_d]$-modules $\widetilde{H}_{\ell-2}(\overline{\Pi}_{\ell,d})$ 
studied by Hanlon~\cite{Hanlon} and Gottlieb-Wachs~\cite{GottliebWachs}.
\end{remark}

\begin{problem}
  Describe the $\mathfrak S_n$-irreducible decomposition of $\widetilde{H}_{a(d)-1}(\Pi_n^d\diff\{\hat{1}\})$ explicitly in general.
\end{problem}

Tables~\ref{Table:S4}-\ref{Table:S7} 
give the decomposition of $\widetilde{H}_{a(d)-1}(\Pi_n^d\diff\{\hat{1}\})$ 
into irreducible submodules for $n=4,5,6,7$.  
Each entry lists the multiplicity with which the Specht 
module $S^\lambda$ occurs in $\widetilde{H}_{a(d)-1}(\Pi_{|\lambda|}^d\diff\{\hat{1}\})$.

\begin{table}[hbt]
\centering
\begin{tabular}{l@{\hskip.6cm }cccc}
\toprule
$d{\hskip .05cm}\backslash\lambda$
& \includegraphics[scale=.8]{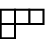}& \includegraphics[scale=.8]{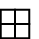}& \includegraphics[scale=.8]{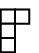}& 
{\raisebox{+0ex}[1\height][0ex]{\includegraphics[scale=.8]{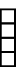}}}\\
\midrule
2&              
                                        &                                          &                                            &        1                                    \\
3&                                     
         1                              &                                          &              1                            &        1                                     \\
4&                                     
                                         &              1                          &              1                            &                                           \\
\bottomrule
\end{tabular}
\tableskip
\caption{Decomposition of the $\mathfrak S_4$-module $\widetilde{H}_{a(d)-1}(\Pi_4^d\diff\{\hat{1}\})$}
\label{Table:S4}
\end{table}
\begin{table}[hbt]
\centering
\begin{tabular}{l@{\hskip .6cm}cccccc}
\toprule
$d{\hskip .05cm}\backslash\lambda$
& \includegraphics[scale=.8]{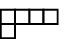}& \includegraphics[scale=.8]{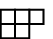}& \includegraphics[scale=.8]{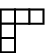}& \includegraphics[scale=.8]{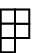}& \includegraphics[scale=.8]{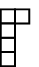}& 
{\raisebox{+0ex}[1\height][0pt]{\includegraphics[scale=.8]{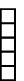}}}\\
\midrule
2&                  
                                         &              1                          &          1                                &                     1                   &                1                            &      1                                        \\   
3&                               
                 1                     &              1                          &          1                               &                                          &                1                            &                                                \\
4&                               
                 1                     &              1                          &          1                                &                     2                   &                1                            &                                                \\
5&                               
                                        &              1                          &          2                                &                     1                   &                                              &      1                                  \\
\bottomrule      
\end{tabular}
\tableskip
\caption{Decomposition of the $\mathfrak S_5$-module $\widetilde{H}_{a(d)-1}(\Pi_5^d\diff\{\hat{1}\})$}
\end{table}\begin{table}[hbt]
\centering
 \begin{tabular}{ l@{\hskip .6cm}cccccccccc }
\toprule
$d{\hskip .05cm}\backslash\lambda$
& \includegraphics[scale=.8]{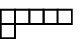}& \includegraphics[scale=.8]{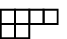}& \includegraphics[scale=.8]{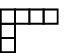}& \includegraphics[scale=.8]{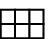}& \includegraphics[scale=.8]{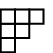}& \includegraphics[scale=.8]{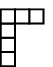}& 
 \includegraphics[scale=.8]{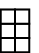}& \includegraphics[scale=.8]{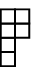}& \includegraphics[scale=.8]{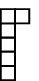}& {\raisebox{+0ex}[1\height][0ex]{\includegraphics[scale=.8]{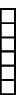}}}\\
\midrule

 2& 
                                         &                                          &                                            &                                         &                                            &              1                             & 
                                         &             1                              &                     1                         &                                                  \\
 3& 
                                         &                                          &              1                            &                                         &                  2                        &              2                             & 
           1                             &          2                                 &         1                                     &       1                                         \\
 4& 
            1                            &              2                          &               1                           &                                         &                  2                        &           1                                & 
                       1                   &         1                                  &                                                &                                                  \\
 5& 
            1                            &              1                          &         2                                 &       1                                &      4                                    &     2                                      & 
                  1                        &   1                                        &                    1                          &            1                                    \\
 6& 
                                         &                2                        &           2                               &            1                           &          2                                &  2                                         & 
                    2                      &          1                                 &         1                                     &                                              \\
\bottomrule
 \end{tabular}\tableskip\caption{Decomposition of the $\mathfrak S_6$-module $\widetilde{H}_{a(d)-1}(\Pi_6^d\diff\{\hat{1}\})$} 
 \end{table}\begin{table}[hbt]
\centering
\begin{tabular}{ l@{\hskip .6cm}cccccccccccccc }
\toprule
$d{\hskip .05cm}\backslash\lambda$
&\includegraphics[scale=.8]{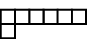}& \includegraphics[scale=.8]{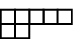}& \includegraphics[scale=.8]{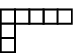}& \includegraphics[scale=.8]{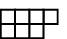}& \includegraphics[scale=.8]{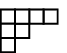}& 
\includegraphics[scale=.8]{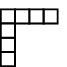}& \includegraphics[scale=.8]{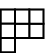}& \includegraphics[scale=.8]{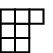}& \includegraphics[scale=.8]{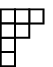}& \includegraphics[scale=.8]{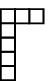}&
\includegraphics[scale=.8]{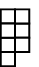}& \includegraphics[scale=.8]{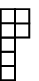}& \includegraphics[scale=.8]{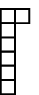}& {\raisebox{+0ex}[1\height][0ex]{\includegraphics[scale=.8]{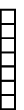}}}\\
\midrule
2&  
                                          &                                         &                                            &                       1                 &                    2                     &                 2                           &                           2                      &                  2                        &                   5                        &                 3                              &                 2                         &                3                              &      2                                        &                                                    \\
3& 
                                          &               1                        &                 2                         &                    1                    &                   5                      &          4                                  &                  3                               &                3                          &          7                                 &          4                                     &            3                              &          3                                    &     2                                         &    1                                              \\
4&  
          1                              &         2                              &       1                                   &          1                              &                2                         &       1                                     &                                                   &                 1                         &        1                                   &                                                 &                                            &                                                &                                                &                                                    \\
5& 
                1                        &          2                             &                  2                        &          1                              &          4                               &                2                            &                     3                            &          2                                &        3                                   &     1                                          &      1                                    &                                                &    1                                          &                                                    \\
6& 
                1                        &                   2                    &       2                                   &         3                               &        6                                 &            4                                &                    3                             &         4                                 &       5                                    &     3                                          &     3                                     &           2                                   &     1                                         &                                                    \\
7& 
                                         &                            2           &         3                                 &                2                        &         5                                &         2                                   &                       3                          &         3                                 &        5                                   &           3                                    &        2                                  &   2                                             &                                                &    1\\
\bottomrule
\end{tabular}\tableskip\caption{Decomposition of the $\mathfrak S_7$-module $\widetilde{H}_{a(d)-1}(\Pi_7^d\diff\{\hat{1}\})$}
\label{Table:S7}
\end{table}

\subsection{The $d$-divisible partition poset}\label{Section:Module2}
Throughout this section we shall assume that $d>1$.  
For $n+1$ divisible by $d$, recall that a set partition of $\{1,2,\ldots,n+1\}$ 
is called \emph{$d$-divisible} if all blocks have size divisible by $d$, and let $P_{n+1}^d$ denote 
the $\mathfrak S_{n+1}$-poset of all such partitions ordered by refinement.  
The \emph{$d$-divisible partition lattice} $P_{n+1}^d\cup\{\hat{0}\}$ has been extensively studied, starting 
with Sylvester~\cite{Sylvester} computing the M\"obius function for $d=2$.  Stanley~\cite{Stanley:Exponential} generalized Sylvester's 
result to arbitrary $d$ and conjectured that the restriction of the top (reduced) homology module of $P_{n+1}^d\diff\{\hat{1}\}$ to 
$\mathfrak S_n$ is the Specht module of skew ribbon shape $(d-1,d,d,\ldots, d)$; 
see Figure~\ref{Fig:Skew}.  (One instead obtains the regular representation of $\mathfrak S_n$ if $d=1$; see~\cite{Stanley:Aspects}.)    
Calderbank-Hanlon-Robinson~\cite{CHR} established Stanley's conjecture via character calculations, and Wachs~\cite{Wachs:Divisible} 
later exhibited an explicit basis for the top homology group via an EL-shelling.  
Ehrenborg and Jung~\cite{Ehrenborg} extended this phenomenon to more general posets of \emph{pointed partitions} $\Pi^\bullet_{\vec{c}}$, 
showing that each $\Pi^\bullet_{\vec{c}}\diff\{\hat{1}\}$ is either contractible or has top homology the $\mathfrak S_n$-Specht module 
of skew ribbon shape associated with the composition $\vec{c}=(c_1,c_2,\ldots,c_k)$.

\begin{figure}[hbt]
\center
\includegraphics[scale=.7,angle=180]{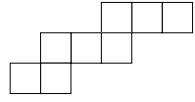}
\caption{Skew ribbon shape $(2,3,3)$}\label{Fig:Skew}
\end{figure}

In this section we present an intermediate family of posets $\Pi_n^{(d)}$, between the family of $d$-divisible partition posets 
$P_{n+1}^d$ and the family of pointed partition posets $\Pi^\bullet_{\vec{c}}$, that arises naturally in the context of eigenspace 
arrangements for $\mathfrak S_n$.  Furthermore, we conjecture that the top homology of $\Pi_n^{(d)}\diff\{\hat{1}\}$ 
is isomorphic to a submodule of the top homology of $\Pi_n^d\diff\{\hat{1}\}$.

For a pointed partition $\Lambda=(\lambda_0,\lambda)$ of $n$, let $\Pi_\Lambda\subseteq \Pi_{n,1}$ 
denote the upper order ideal generated by the elements of type $\Lambda$ and regard it as an $\mathfrak S_n$-poset 
through the usual action of $\mathfrak S_n$ on $[n]$.  
For the particular choice 
\[\Lambda=(n-d\cdot a(d)\ ,\ \{d,d,\ldots, d\}),\] write 
$\Pi_n^{(d)}$ for $\Pi_\Lambda$ and call it the \emph{pointed $d$-divisible partition poset}.  It is 
$\mathfrak S_n$-isomorphic to Ehrenborg and Jung's poset $\Pi^{\bullet}_{(d,d,\ldots, d,n-d\cdot a(d))}$ 
via the map that removes all weights and then replaces each zero block 
$B_0$ with the \emph{distinguished block} $Z:=B_0\diff \{0\}$; see~\cite{Ehrenborg} for details.  
For $n+1$ divisible by $d$, the map
\begin{equation}\label{Map:Remove}0\mapsto n+1 \quad\text{and}\quad B\mapsto B^\z\ \ \text{for each nonzero block $B$}\end{equation}
 extends to an $\mathfrak S_n$-isomorphism $\Pi_n^{(d)}\to P_{n+1}^d$ when $P_{n+1}^d$ is considered as an 
$\mathfrak S_n$-poset by restricting its natural $\mathfrak S_{n+1}$-action to $\mathfrak S_{\{1,\ldots,n\}}$.  
As a consequence of this discussion and the main results of~\cite{Ehrenborg} we have the following.

\begin{theorem}[Calderbank-Hanlon-Robinson, Ehrenborg-Jung, Wachs]\label{Theorem:Skew}\ \\ 
$\widetilde{H}_{a(d)-1}(\Pi_n^{(d)}\diff\{\hat{1}\})$ is isomorphic to the $\mathfrak S_n$-Specht 
module of skew ribbon shape associated with $(n-d\cdot a(d), d,d,\ldots,d)$ when $d\nmid n$, and $0$ otherwise.
\end{theorem}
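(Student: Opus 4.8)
The plan is to read this off from the $\mathfrak S_n$-isomorphism, established in the discussion preceding the theorem, between $\Pi_n^{(d)}$ and Ehrenborg and Jung's pointed partition poset, and then to invoke the main theorem of~\cite{Ehrenborg}. Concretely, the weight-forgetting map that sends each nonzero block $B$ to $B^\z$ and replaces the zero block $B_0$ by the distinguished block $Z=B_0\diff\{0\}$ is an $\mathfrak S_n$-isomorphism from $\Pi_n^{(d)}$ onto $\Pi^\bullet_{\vec c}$ for the composition $\vec c=(d,d,\ldots,d,r)$ of $n$, where $r=n-d\cdot a(d)$ and there are $a(d)$ entries equal to $d$; when $d\mid n+1$ (so $r=d-1$) this poset is furthermore $\mathfrak S_n$-isomorphic to the $d$-divisible partition poset $P_{n+1}^d$. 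The first step is thus to quote these identifications and to note, by counting the $a(d)$ block-merges along any maximal chain from a generator of type $\Lambda$ up to $\hat 1$, that $r(\Pi_n^{(d)})=a(d)$; hence $\Pi_n^{(d)}\diff\{\hat 1\}$ has $(a(d)-1)$-dimensional order complex, and since~\cite{Ehrenborg} determines its homotopy type, it suffices to pin down the single module $\widetilde H_{a(d)-1}$.

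Next I would split according to whether $d$ divides $n$. If $d\nmid n$ then $r=n-d\lfloor n/d\rfloor\in\{1,\ldots,d-1\}$ is positive, so $\vec c$ is an honest composition of $n$ into $a(d)+1$ positive parts, and the main theorem of~\cite{Ehrenborg} places us in its non-degenerate branch: $\Pi^\bullet_{\vec c}\diff\{\hat 1\}$ is homotopy equivalent to a wedge of $(a(d)-1)$-spheres whose top reduced homology is, as an $\mathfrak S_n$-module, the Specht module of the skew ribbon (border strip) whose successive row lengths are the parts of $\vec c$, namely $(r,d,d,\ldots,d)=(n-d\cdot a(d),d,d,\ldots,d)$; transporting along the isomorphism gives the statement for $\Pi_n^{(d)}$. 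In the subcase $r=d-1$ one may instead cite Calderbank--Hanlon--Robinson~\cite{CHR} and Wachs~\cite{Wachs:Divisible} via $\Pi_n^{(d)}\cong P_{n+1}^d$, which is where this instance of the result originates. If on the other hand $d\mid n$ then $r=0$, the distinguished block of every generator of $\Pi_n^{(d)}$ is empty, and this is exactly the degenerate branch of the dichotomy of~\cite{Ehrenborg}, in which $\Pi^\bullet_{\vec c}\diff\{\hat 1\}$ is contractible; so $\widetilde H_{a(d)-1}(\Pi_n^{(d)}\diff\{\hat 1\})=0$. (This matches the small cases: for $n=d$ the poset $\Pi_n^{(d)}$ is a two-element chain, and for $n=2d$ its proper part with $\hat 1$ removed is a tree.)

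Because the substance is imported from~\cite{Ehrenborg,CHR,Wachs:Divisible}, what needs care is the dictionary rather than any new argument. One must check that the isomorphism $\Pi_n^{(d)}\cong\Pi^\bullet_{(d,\ldots,d,r)}$ is set up with the conventions of~\cite{Ehrenborg}, in particular that the distinguished block of the target corresponds to $B_0\diff\{0\}$ and that its size $r$ occupies the entry of $\vec c$ that governs the dichotomy; this bookkeeping is the main obstacle, since getting the conventions or the role of $r$ wrong would misidentify the contractible case. It then remains to verify that the border strip read off from $\vec c$ is precisely the one in the statement and that its Specht module lands in homological degree $a(d)-1$, both of which are immediate from $|\vec c|=n$, from $\vec c$ having $a(d)+1$ parts when $r\geq1$, and from the rank count above. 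Finally, $\mathfrak S_n$-equivariance is automatic throughout, every map in the chain of identifications being $\mathfrak S_n$-equivariant by construction.
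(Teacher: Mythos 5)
Your proposal is correct and follows essentially the same route as the paper, which likewise proves the theorem by the $\mathfrak S_n$-isomorphism $\Pi_n^{(d)}\cong\Pi^\bullet_{(d,\ldots,d,\,n-d\cdot a(d))}$ (and, when $d\mid n+1$, the further identification with $P_{n+1}^d$) and then cites the main results of Ehrenborg--Jung, with the contractible branch accounting for the case $d\mid n$. Your extra care about the reversal of the composition is exactly the point the paper handles in the remark immediately following the theorem, via the $180^\circ$ rotation of ribbon shapes.
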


\begin{remark}
We have stated Theorem~\ref{Theorem:Skew} in a way that requires the observation that 
the skew shapes associated to $(n-d\cdot a(d), d,d,\ldots,d)$ and its reverse $(d,d,\ldots,d,n-d\cdot a(d))$ 
are related by $180^\circ$ rotation, and hence their Specht modules are $\mathfrak S_n$-isomorphic; see~\cite[Ex. 7.56]{Stanley:EC2}.
\end{remark}

\begin{figure}[hbt]
\center
\includegraphics[width=1\textwidth]{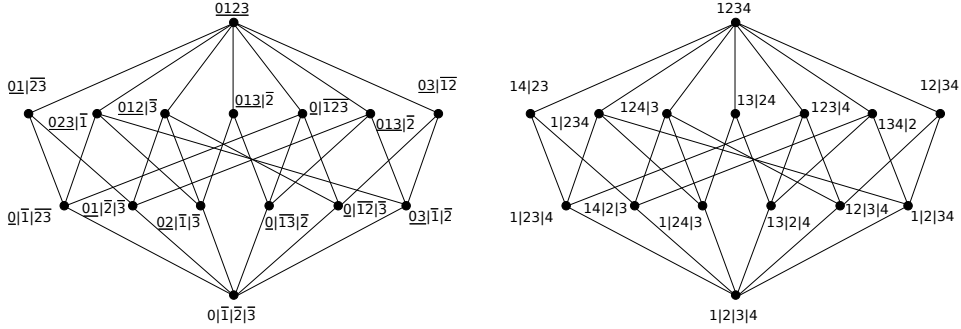}
\caption{The posets $\Pi_{n,1}$ and $\Pi_{n+1}$ for $n=3$}\label{Pi:Iso}
\end{figure}

Note that the $\mathfrak S_n$-ribbon representation of Theorem~\ref{Theorem:Skew} 
arises from a subarrangement of the reflection arrangement of $\mathfrak S_{n+1}$:
\[\Pi_n^{(d)}\subseteq \Pi_{n,1}\cong_{\mathfrak S_n}\Pi_{n+1}\cong_{\mathfrak S_n}
E(\mathfrak S_{n+1},1)\quad\text{with}\quad 
\mathfrak S_n=\mathfrak S_{\{1,\ldots, n\}},\]
where the isomorphism $\Pi_{n,1}\cong_{\mathfrak S_n}\Pi_{n+1}$ is obtained by extending~\eqref{Map:Remove}; see Figure~\ref{Pi:Iso}.    
We conjecture that these $\mathfrak S_n$-ribbon representations also appear in $E(\mathfrak S_n,\zeta_d)$, without the (a priori) mysterious shift of index.  
To make the assertion more precise, we introduce the new operation $B\mapsto B^\zero$ of \emph{zeroing}: given a block $B$, define $B^\zero$ to be 
the block obtained from $B$ by replacing each weight $w$ with $0$.  
An immediate observation is the following; see Figure~\ref{Z:Map}.

\begin{proposition}\label{d-divisible_map}
The map $Z:\Pi_n^d\to \Pi_n^{(d)}\subseteq \Pi_{n,1}$ given by zeroing each block
\[Z:B_0/B_1/\cdots/B_\ell\mapsto B_0/B_1^\zero/\cdots/B_\ell^\zero\]
is a rank-preserving surjective $\mathfrak S_n$-poset map.
\end{proposition}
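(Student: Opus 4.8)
The plan is to establish, one at a time, the claims packed into the statement: that $Z$ actually lands in $\Pi_n^{(d)}$, that it is order-preserving, that it is rank-preserving, and that it is both surjective and $\mathfrak S_n$-equivariant. The organizing observation is a concrete description of the target: unwinding the definition of $\Pi_\Lambda$ as the upper order ideal of $\Pi_{n,1}$ generated by the $\mathfrak S_n$-orbit of any partition of type $\Lambda=(n-d\cdot a(d),\{d,d,\ldots,d\})$, one sees that $\Pi_n^{(d)}$ is precisely the set of (unweighted) partitions $C_0/C_1/\cdots/C_k$ of $\{0,1,\ldots,n\}$ all of whose nonzero blocks $C_i$ have size divisible by $d$. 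In one direction, if $x\geq y$ and $y$ has type $\Lambda$, then each nonzero block of $x$ is a union of the size-$d$ nonzero blocks of $y$, hence has size divisible by $d$; in the other, given such an $x$ one splits each nonzero block into pieces of size $d$ and peels $a(d)-k$ further size-$d$ pieces off the zero block to produce a type-$\Lambda$ partition lying below $x$ (this uses $\sum_{i\geq 1}|C_i|\leq n$, so that at most $a(d)=\lfloor n/d\rfloor$ pieces are produced and enough room remains in the zero block).

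Granting this, most of the claims go quickly. Well-definedness: if $\pi=B_0/B_1/\cdots/B_\ell$ is balanced then each nonzero $B_i$ contains every weight equally often, so $|B_i^\zero|=|B_i|$ is a multiple of $d$, whence $Z(\pi)\in\Pi_n^{(d)}$. Order-preservation follows because $Z$ is nothing but the restriction of the weight-forgetting map $\Pi_{n,d}\to\Pi_{n,1}$, and a relation $\pi_1\leq\pi_2$ in $\Pi_{n,d}$ forces the support of each block of $\pi_1$ into the support of a block of $\pi_2$ (the zero block into the zero block), which is exactly the condition defining $\leq$ in $\Pi_{n,1}$; since $\Pi_n^{(d)}$ carries the induced order, this suffices. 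Surjectivity: given $y=C_0/C_1/\cdots/C_k\in\Pi_n^{(d)}$, write $|C_i|=dm_i$ and weight each $C_i$ so that every element of $\mathbb{Z}/d\mathbb{Z}$ occurs $m_i$ times; the result is balanced and is sent to $y$ by $Z$. Equivariance: the $\mathfrak S_n$-action permutes labels and leaves weights alone, so it commutes with zeroing, giving $Z(\sigma\pi)=\sigma Z(\pi)$.

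The point that takes a moment is that $Z$ preserves rank. By Remark~\ref{Remark:Rank} the rank of $x\in\Pi_n^d$ is $a(d)-\ell(x)$, where $\ell(x)$ is the number of nonzero blocks of $x$ (and $\ell(x)\leq a(d)$ because $\ell(x)\cdot d\leq\sum_{i\geq 1}|B_i|\leq n$). Inside the ideal $\Pi_n^{(d)}$ the covering relations either merge two nonzero blocks or merge a nonzero block into the zero block, each lowering the number of nonzero blocks by one, so the rank of $y\in\Pi_n^{(d)}$ is likewise $a(d)$ minus its number of nonzero blocks. Since $Z(B_0/B_1/\cdots/B_\ell)=B_0/B_1^\zero/\cdots/B_\ell^\zero$ visibly has exactly as many nonzero blocks as $B_0/B_1/\cdots/B_\ell$, it preserves rank. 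I do not expect a genuine obstacle here; the only thing to watch is not to confuse the internal rank functions of these two upper order ideals with the rank functions of the ambient Dowling lattice $\Pi_{n,d}$ and partition lattice $\Pi_{n,1}$.
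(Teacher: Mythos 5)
Your argument is correct. The paper offers no proof of this proposition (it is introduced as ``an immediate observation''), so there is nothing to compare against; your verification --- characterizing $\Pi_n^{(d)}$ as the partitions of $\{0,1,\ldots,n\}$ whose nonzero blocks have size divisible by $d$, and identifying the rank in both posets as $a(d)$ minus the number of nonzero blocks --- supplies exactly the details the paper leaves implicit. The only blemish is the count ``$a(d)-k$'' in your construction of a generator below $x=C_0/C_1/\cdots/C_k$: splitting the nonzero blocks yields $\sum_{i\geq 1}|C_i|/d$ pieces, so the number to peel off the zero block is $a(d)-\sum_{i\geq 1}|C_i|/d$; your parenthetical shows you have the correct bound in mind, so this is a typo rather than a gap.
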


\begin{figure}[hbt]
\center
\includegraphics[width=.8\textwidth]{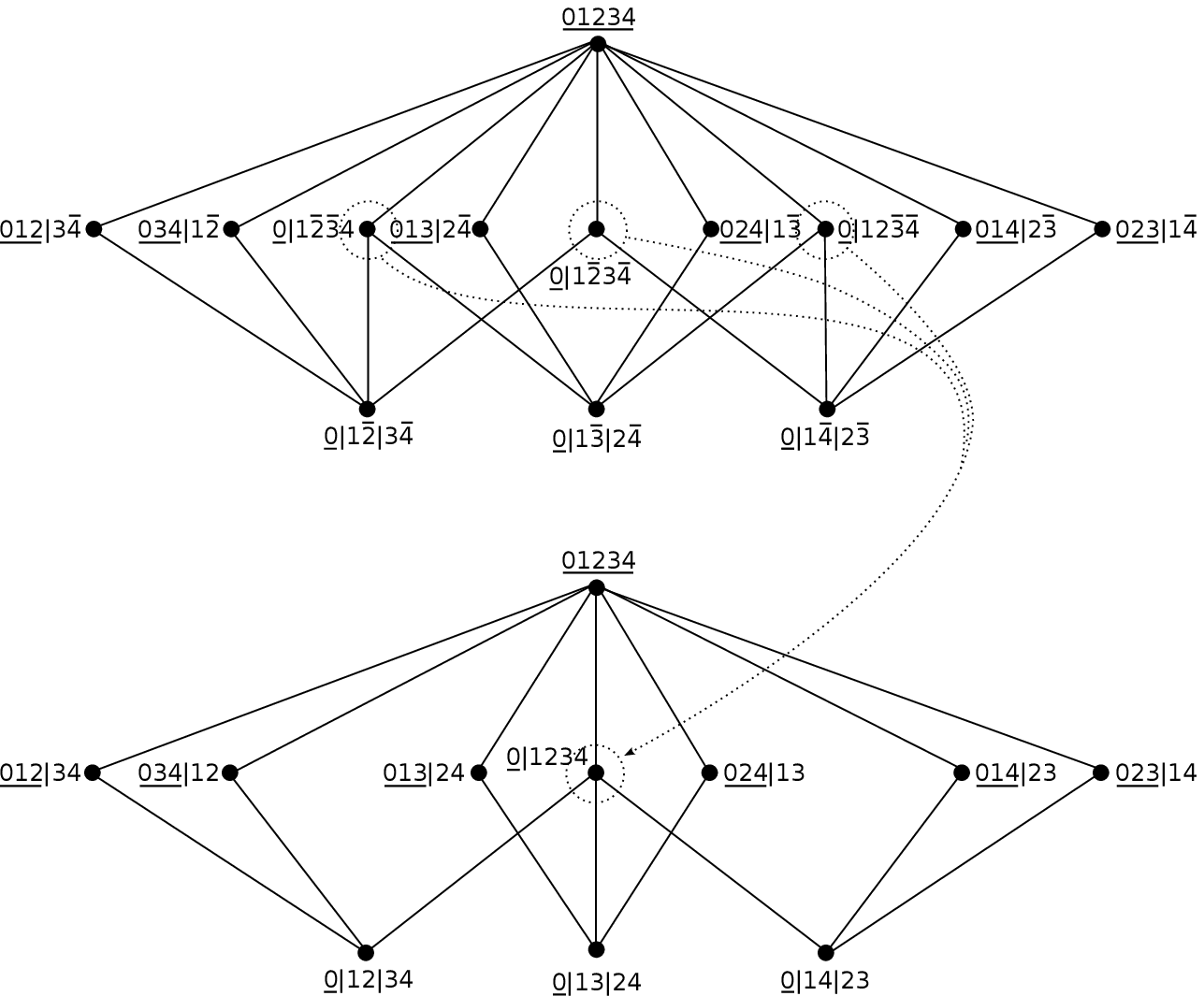}
\caption{The zeroing map $\Pi_4^2\to\Pi_4^{(2)}$}\label{Z:Map}
\end{figure}

\begin{conjecture}\label{Conjecture:Main}
Let $Z_*:\widetilde{H}_{a(d)-1}(\Pi_n^d\diff\{\hat{1}\})\rightarrow\widetilde{H}_{a(d)-1}(\Pi_n^{(d)}\diff\{\hat{1}\})$ 
be the homomorphism induced by the poset map of Proposition~\ref{d-divisible_map}.  Then $Z_*$ is surjective.
\end{conjecture}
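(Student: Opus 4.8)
Here is my plan.

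\textbf{Step 1: reduce to a kernel computation.} Write $\Delta:=\Delta(\Pi_n^d\diff\{\hat{1}\})$ and $\Delta':=\Delta(\Pi_n^{(d)}\diff\{\hat{1}\})$. If $d\mid n$ the target $\widetilde{H}_{a(d)-1}(\Delta')$ already vanishes by Theorem~\ref{Theorem:Skew}, so assume $d\nmid n$. Both $\Delta$ and $\Delta'$ are pure of dimension $a(d)-1$ and shellable ($\Delta$ by Theorem~\ref{Theorem:Shelling} together with Theorem~\ref{Model}, and $\Delta'$ by Ehrenborg--Jung~\cite{Ehrenborg}, or Wachs~\cite{Wachs:Divisible} when $d\mid n+1$), hence their reduced homology sits in degree $a(d)-1$ only. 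I would first note that the chain map $Z_\#\colon\widetilde{C}_*(\Delta)\to\widetilde{C}_*(\Delta')$ induced by the poset map $Z$ of Proposition~\ref{d-divisible_map} is surjective in every degree: $Z$ is rank-preserving, hence injective on each chain and so carries chains to chains, and any chain of $\Pi_n^{(d)}$ lifts greedily — pick an arbitrary balanced weighting of the blocks of its bottom element, then at each successive cover choose any one of the $d$ twisted merges of two nonzero blocks, respectively the unique merge of a block into the zero block, lying over it. Thus $0\to K_*\to\widetilde{C}_*(\Delta)\xrightarrow{Z_\#}\widetilde{C}_*(\Delta')\to 0$ is exact with $K_*:=\ker Z_\#$, and its long exact homology sequence, using $\widetilde{H}_i(\Delta)=\widetilde{H}_i(\Delta')=0$ for $i<a(d)-1$, collapses to
\[
0\to\widetilde{H}_{a(d)-1}(K_*)\to\widetilde{H}_{a(d)-1}(\Delta)\xrightarrow{Z_*}\widetilde{H}_{a(d)-1}(\Delta')\xrightarrow{\ \delta\ }\widetilde{H}_{a(d)-2}(K_*)\to 0.
\]
Hence $Z_*$ is surjective if and only if $\widetilde{H}_{a(d)-2}(K_*)=0$ (and $K_*$ is acyclic outside degrees $a(d)-2$ and $a(d)-1$), so it suffices to show the connecting map $\delta$ vanishes.

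\textbf{Step 2: show $\delta=0$ via compatible shellings.} Fix Wachs's EL-labeling of $P_{n+1}^d\cong\Pi_n^{(d)}$, whose falling maximal chains index a homology basis $\{z_{c'}\}$ of $\widetilde{H}_{a(d)-1}(\Delta')$ (the standard skew tableaux of ribbon shape $(n-d\,a(d),d,\dots,d)$ of Theorem~\ref{Theorem:Skew}). I would then engineer a recursive atom ordering (or EL-labeling) of $\widehat{E(\mathfrak S_n,\zeta)}=\widehat{\Pi_n^d}$ — not necessarily the lexicographic one of Theorem~\ref{Theorem:Shelling} — that \emph{lies over} Wachs's labeling, i.e.\ such that $Z$ sends descending edges to descending edges, hence falling chains to falling chains, and is order-compatible on the remaining chains. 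Given such a pair of labelings, each falling chain $c'$ of $\Pi_n^{(d)}$ has a canonical falling lift $c$ in $\Pi_n^d$, the non-leading terms of the homology cycle $z_{c}$ are supported on chains lying over earlier chains of $\Pi_n^{(d)}$, and so $Z_\#(z_{c})$ is a cycle of $\Delta'$ with leading term $\pm c'$; shelling-triangularity in $\Delta'$ then forces $Z_*[z_{c}]=\pm z_{c'}+(\text{combination of earlier }z_{c''})$, and triangularity over all falling chains yields surjectivity. Before attempting the general construction I would sanity-check the count — Corollary~\ref{Cor:Formula} and Tables~\ref{Table:S4}--\ref{Table:S7} against Theorem~\ref{Theorem:Skew} — confirming that the number of falling chains of $\Pi_n^d$ mapping to falling chains of $\Pi_n^{(d)}$ is at least $\dim\widetilde{H}_{a(d)-1}(\Pi_n^{(d)}\diff\{\hat{1}\})$.

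\textbf{An alternative, and the main obstacle.} A cleaner variant restricts $Z$ to upper intervals: $Z$ maps $[x,\hat{1}]\cong\Pi_{\ell(x),d}$ onto $[Z(x),\hat{1}]\cong\Pi_{\ell(x),1}$ by the Dowling forgetful map, and $Z$ induces a bijection on $\mathfrak{S}_n$-orbits; one would combine Sundaram's formula (Theorem~\ref{Theorem:Sundaram}) for $\Pi_n^d$ and $\Pi_n^{(d)}$ with the (very plausible, and perhaps extractable from Hanlon~\cite{Hanlon} and Gottlieb--Wachs~\cite{GottliebWachs}) statement that the forgetful map induces a surjection $\widetilde{H}_{\ell-2}(\overline{\Pi}_{\ell,d})\twoheadrightarrow\widetilde{H}_{\ell-2}(\overline{\Pi}_{\ell,1})$ — dimensions being compatible since $\prod_{i=0}^{\ell-1}(1+id)\ge\ell!$ — after bookkeeping the stabilizer indices $[\Stab_{\mathfrak S_n}(Z(x)):\Stab_{\mathfrak S_n}(x)]$. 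The genuine difficulty, shared by both approaches, is that $Z$ is massively many-to-one on chains: its fibers record all balanced weightings of the blocks together with the $d$ twisted ways of carrying out each merge. A generic EL-labeling downstairs and a generic recursive atom ordering upstairs are not compatible, and one must choose the labelings so that a controlled, nonempty subset of the balanced-weighting lifts of each falling chain remains falling \emph{coherently along whole chains} — or, in the alternative, so that the induced-module summands produced by Sundaram's formula line up under the stabilizer inclusions. Making these choices cohere is the crux, and is what leaves the statement a conjecture here.
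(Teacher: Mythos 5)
The statement you are addressing is stated in the paper as Conjecture~\ref{Conjecture:Main} and is \emph{not proved} there: the author only observes that it holds trivially when $d\mid n$ (the target vanishes by Theorem~\ref{Theorem:Skew}) and when $a(d)=1$ (where the whole top homology module is computed directly from transitivity). So there is no proof in the paper to compare yours against, and your proposal does not close the gap either --- as you yourself concede in your final paragraph. Your Step~1 is sound as far as it goes: $Z_\#$ is surjective on chains (a saturated chain of $\Pi_n^{(d)}$ lifts by choosing a balanced weighting of its bottom element and a twisted merge at each cover), both order complexes have reduced homology concentrated in degree $a(d)-1$ by shellability, and the long exact sequence of $0\to K_*\to\widetilde{C}_*(\Delta)\to\widetilde{C}_*(\Delta')\to 0$ reduces surjectivity of $Z_*$ to the vanishing of $\widetilde{H}_{a(d)-2}(K_*)$. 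But this only trades one unproved vanishing statement for another.

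The genuine gap is all of Step~2. The existence of a recursive atom ordering (or EL-labeling) of $\widehat{\Pi_n^d}$ lying over Wachs's labeling of $\Pi_n^{(d)}$ --- in the strong sense that falling chains lift to falling chains and the non-leading support of each lifted basis cycle maps to strictly earlier chains --- is exactly the coherence statement that would constitute a proof; you assert it as an engineering goal rather than establish it. Even granting such a pair of labelings, you would still have to rule out cancellation: since $Z$ is many-to-one on chains, several chains in the support of $z_c$ may map to the same chain of $\Delta'$ with opposite orientations, so the coefficient of $c'$ in $Z_\#(z_c)$ need not be $\pm 1$, and triangularity does not follow automatically. The alternative route has the same status: the surjectivity of the forgetful map $\widetilde{H}_{\ell-2}(\overline{\Pi}_{\ell,d})\to\widetilde{H}_{\ell-2}(\overline{\Pi}_{\ell,1})$ is itself an unproved assertion (a dimension inequality $\prod_{i=0}^{\ell-1}(1+id)\geq \ell!$ is far from surjectivity of a specific map), and Theorem~\ref{Theorem:Sundaram} only yields \emph{virtual} module identities, so matching summands orbit by orbit cannot by itself control the actual homomorphism $Z_*$. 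In short, your plan correctly locates the difficulty and proposes reasonable lines of attack, but the statement remains a conjecture both in the paper and after your proposal.
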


Note that by Theorem~\ref{Theorem:Skew} the conjecture is trivially true when $d\mid n$.  
The conjecture also clearly holds when $r(\Pi_n^d)=1$, i.e., when $a(d)=1$.  In fact, 
the entire top homology module is straightforward in this case:

\begin{proposition}
Suppose $a(d)=1$.  Let $C_d$ be the 
cyclic group generated by the $d$-cycle $(1,2,\ldots, d)$ and let $\mathfrak S_{n-d}=\mathfrak S_{\{d+1,d+2,\ldots,n\}}$.  Then
\[\widetilde{H}_{a(d)-1}(\Pi_n^d\diff\{\hat{1}\})\cong_{\mathfrak S_n} \left(\Ind_{ C_d\times\mathfrak S_{n-d}}^{\mathfrak S_n}\triv\right)-\triv.\] 
\end{proposition}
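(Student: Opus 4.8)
The plan is to exploit the fact that $a(d)=1$ collapses $\Pi_n^d$ to a very small poset. By Remark~\ref{Remark:Rank} the poset $\Pi_n^d$ is graded of rank $a(d)=1$, so each of its elements is either minimal (rank $0$) or the top element $\hat 1$ (rank $1$); hence $\Pi_n^d\diff\{\hat 1\}$ is exactly the antichain $X$ of minimal elements of $\Pi_n^d$, and $\Delta(\Pi_n^d\diff\{\hat 1\})$ is a disjoint union of points. Since $a(d)-1=0$, this gives $\widetilde H_{a(d)-1}(\Pi_n^d\diff\{\hat 1\})=\widetilde H_0(X)$, and over $\mathbb C$ the augmentation yields an $\mathfrak S_n$-module splitting $\mathbb C[X]\cong\widetilde H_0(X)\oplus\triv$; in the notation of the statement, $\widetilde H_0(X)\cong_{\mathfrak S_n}\mathbb C[X]-\triv$. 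So everything reduces to identifying the permutation module $\mathbb C[X]$.

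First I would check that $X$ is a single $\mathfrak S_n$-orbit. By Proposition~\ref{Prop:Filter}, $\Pi_n^d$ is the upper order ideal of $\Pi_{n,d}$ generated by the $\mathfrak S_n$-orbit of the distinguished element $\pi$, which has $\ell(\pi)=a(d)$; since $\ell(x)=r([x,\hat 1])$ (Remark~\ref{Remark:Rank}) and $r(\Pi_n^d)=a(d)$, the element $\pi$ has rank $0$, hence so does its entire orbit, and these elements are precisely the minimal ones. (Alternatively, one may transport Proposition~\ref{Prop:Springer:Transitive}\eqref{Springer:2} through the isomorphism of Theorem~\ref{Model}.) Therefore $\mathbb C[X]=\Ind_{\Stab_{\mathfrak S_n}(\pi)}^{\mathfrak S_n}\triv$.

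The one substantive point is to show $\Stab_{\mathfrak S_n}(\pi)=C_d\times\mathfrak S_{n-d}$ on the nose, not merely up to order. As recorded in the proof of Theorem~\ref{Model}, the distinguished $\pi$ corresponds under~\eqref{Equation:Isomorphism} to the $\zeta$-eigenspace $V(c,\zeta)$ of the $d$-cycle $c=(1,2,\ldots,d)$. Both $C_d=\langle c\rangle$ and $\mathfrak S_{n-d}=\mathfrak S_{\{d+1,\ldots,n\}}$ stabilize $V(c,\zeta)$: the former because $c\cdot V(c,\zeta)=V(ccc^{-1},\zeta)=V(c,\zeta)$, the latter because $V(c,\zeta)$ constrains only the coordinates $z_1,\ldots,z_d$ and is fixed pointwise by any permutation of $z_{d+1},\ldots,z_n$. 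Hence $C_d\times\mathfrak S_{n-d}\subseteq\Stab_{\mathfrak S_n}(\pi)$, and since $\type(\pi)=(n-d,\{d\})$, formula~\eqref{Equation:Stab} gives $|\Stab_{\mathfrak S_n}(\pi)|=d\,(n-d)!=|C_d\times\mathfrak S_{n-d}|$, forcing equality. Substituting this into the previous paragraph yields $\widetilde H_{a(d)-1}(\Pi_n^d\diff\{\hat 1\})\cong_{\mathfrak S_n}\bigl(\Ind_{C_d\times\mathfrak S_{n-d}}^{\mathfrak S_n}\triv\bigr)-\triv$. The degenerate case $n=d=2$ (where $\Pi_n^d$ has a $\hat 0$) requires no special treatment: $X$ is then the single point $\hat 0$, and both sides vanish. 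Thus the only obstacle is routine bookkeeping with the stabilizer, and I expect no genuine difficulty.
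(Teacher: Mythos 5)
Your argument is correct and is essentially the paper's own proof: when $a(d)=1$ the proper part is the single $\mathfrak S_n$-orbit of minimal elements, its order complex is a discrete set of points whose $\widetilde H_0$ is the permutation module minus the trivial module, and the stabilizer of the distinguished generator is $C_d\times\mathfrak S_{n-d}$. You have merely filled in the routine details (the augmentation splitting and the order count for the stabilizer) that the paper leaves implicit.
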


\begin{proof}
$\mathfrak S_n$ acts transitively on the elements of $\Pi_n^d\diff\{\hat{1}\}$ in this case, and 
$C_d\times\mathfrak S_{n-d}$ is the stabilizer of the distinguished element of Proposition~\ref{Prop:Filter}.  
Hence the result.
\end{proof}

\section{Proof of Theorem~\ref{Thm:Main:KPi1}}\label{Section:KPi1}
The motivating case $d=1$ of Theorem~\ref{Thm:Main:KPi1} has a long history, 
starting with  Fadell and Neuwirth~\cite{Fadell:Neuwirth} showing that $\mathcal M(\mathfrak S_n,1)$ is 
$K(\pi,1)$.  Deligne~\cite{Deligne} established Brieskorn's conjecture that $\mathcal M(W,1)$ is $K(\pi,1)$ 
for all (complexified) real reflection 
groups.  Orlik and Solomon~\cite{Orlik:Solomon} subsequently showed that $\mathcal M(W,1)$ is $K(\pi,1)$ for 
every \emph{Shephard group} $W$ (see~\cite[\S6.6]{Orlik:Terao}) by showing that 
$\mathcal M(W,1)/W$ is the same as $\mathcal M(W',1)/W'$ for some associated real reflection group $W'$ 
and then invoking Deligne's result, while 
Nakamura~\cite{Nakamura} established the result for the family $G(m,p,n)$, leaving only six exceptional cases.  
Bessis~\cite{Bessis} 
resolved the question in the remaining cases, while also recovering many of the previous results with his approach.
In particular, he obtained a new proof of Deligne's result for real reflection groups.

\begin{theorem}[Bessis]\label{BessisTheorem}
For any reflection group $W$, the reflection complement $\mathcal M(W,1)$ is $K(\pi,1)$, i.e., its universal 
cover is contractible.
\end{theorem}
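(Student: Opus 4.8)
The final statement is the theorem of Bessis~\cite{Bessis}, so what follows is a sketch of the strategy I would pursue; it recovers along the way the earlier results of Deligne, Orlik--Solomon, and Nakamura. First I would reduce to the irreducible case. By Proposition~\ref{Prop:Decomposition} one has $W\cong W_1\times\cdots\times W_k$ with each $W_i$ irreducible on $V_i$, and the reflecting hyperplanes of $W$ are precisely the preimages, under the coordinate projections $V\to V_i$, of those of the $W_i$; hence $\mathcal M(W,1)$ is homeomorphic to $V^W\times\mathcal M(W_1,1)\times\cdots\times\mathcal M(W_k,1)$. Since a finite product of $K(\pi,1)$ spaces is again $K(\pi,1)$, it suffices to treat $W$ irreducible. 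Among irreducible $W$, the groups $G(m,p,n)$ with $1<p<m$ are not well generated, but for them the statement is Nakamura's theorem~\cite{Nakamura}; the real reflection groups are Deligne's theorem~\cite{Deligne}, and the Shephard groups are handled by Orlik--Solomon~\cite{Orlik:Solomon}. So the real goal is to settle the case where $W$ is \emph{well generated}, that is, generated by $n=\dim V$ reflections.

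For such $W$ the plan is to go through the \emph{dual braid monoid}. Fix a regular element $c\in W$ of order the Coxeter number $h=d_n$ (a ``Coxeter element''); such elements exist for well generated $W$ and, by Springer's theory together with Proposition~\ref{Prop:Springer:Transitive}, form a single conjugacy class. Writing $\ell_R$ for reflection length and $\mathrm{NC}(W,c)$ for the interval $\{w\in W:\ell_R(w)+\ell_R(w^{-1}c)=\ell_R(c)\}$ in the absolute order, the key combinatorial input is that $\mathrm{NC}(W,c)$ is a \emph{lattice}; I would verify this uniformly for the infinite families and case by case (with machine assistance) for the exceptional groups. Granting the lattice property, the associated interval monoid $M(W,c)$ --- with one generator $\tilde w$ for each $w\in\mathrm{NC}(W,c)$ and relations $\tilde u\,\tilde v=\widetilde{uv}$ whenever $\ell_R(u)+\ell_R(v)=\ell_R(uv)$ and $uv\in\mathrm{NC}(W,c)$ --- is a Garside monoid with Garside element $c$, and a presentation comparison identifies its group of fractions with the braid group $B(W)=\pi_1(\mathcal M(W,1)/W)$, with the pure braid group $P(W)=\pi_1(\mathcal M(W,1))$ appearing as the kernel of $B(W)\twoheadrightarrow W$.

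The Garside structure on $M(W,c)$ then yields a finite classifying space for $B(W)$ by general Garside theory: its interval complex (simplices $=$ chains of simple elements, i.e.\ divisors of $c$) has exactly the lattice property of $\mathrm{NC}(W,c)$ as the input needed to make the relevant local homotopy links contractible. The geometric heart of the proof is to match this complex with the topology of the arrangement, and for this I would use the Lyashko--Looijenga covering $\mathrm{LL}$ --- which sends a point of $V/W$ to the multiset of roots of the restriction of the discriminant to a generic affine line --- together with a coherent system of ``tunnels'' (paths avoiding the discriminant) to build a deformation retraction of $\mathcal M(W,1)/W$ onto a finite $\mathrm{CW}$-complex identified with the geometric realization of the Garside category of $M(W,c)$. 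Asphericity of that category then forces $\mathcal M(W,1)/W$ to be a $K(B(W),1)$, and passing to the finite covering $\mathcal M(W,1)\to\mathcal M(W,1)/W$ gives that $\mathcal M(W,1)$ is a $K(P(W),1)$; in particular its universal cover is contractible.

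The hard part will be twofold. The lattice property of $\mathrm{NC}(W,c)$ has no known conceptual, case-free proof and must be checked by computer for the exceptional groups; and the Lyashko--Looijenga deformation retraction requires delicate control --- one must show the tunnels can be chosen compatibly across strata and that the resulting cellulation of $\mathcal M(W,1)/W$ is \emph{exactly} the interval complex of $M(W,c)$. By contrast, the product reduction of the first paragraph and the implication ``Garside monoid $\Rightarrow$ finite-dimensional $K(\pi,1)$'' for its group of fractions are comparatively formal.
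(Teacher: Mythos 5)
You should first be aware that the paper does not prove this statement at all: Theorem~\ref{BessisTheorem} is imported verbatim from Bessis~\cite{Bessis} and used as a black box in the proof of Theorem~\ref{Thm:Main:KPi1}, so there is no internal argument to compare yours against. Judged against the literature, your sketch is an accurate reconstruction of the architecture of Bessis's published proof: the product reduction to irreducibles, the dispatch of $G(m,p,n)$ to Nakamura~\cite{Nakamura}, of the real groups to Deligne~\cite{Deligne}, and of the Shephard groups to Orlik--Solomon~\cite{Orlik:Solomon}, followed, for well-generated groups, by the dual braid monoid on the noncrossing partition lattice $\mathrm{NC}(W,c)$, the Garside interval complex as a finite classifying space, and the Lyashko--Looijenga covering with a compatible system of tunnels. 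You have also correctly located where the difficulty sits.

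Two caveats. First, your case division is not exhaustive: $G_{31}$ is irreducible, exceptional, not real, not a Shephard group, and not well generated (it has rank $4$ but requires $5$ generating reflections), so it is covered neither by the prior results you cite nor by the dual-monoid machinery as you set it up. Bessis treats it separately, realizing $G_{31}$ as the reflection subquotient $W(4)$ of $G_{37}=W(E_8)$ on the $4$-dimensional eigenspace of a regular element of order $4$ (precisely the construction of Theorem~\ref{Thm:Lehrer:Springer}; cf.\ Table~\ref{Big:Table}) and descending the $K(\pi,1)$ property from $\mathcal M(W(E_8),1)$ to the induced arrangement on that eigenspace. Without some such argument your proof is incomplete; the remaining non-well-generated exceptionals all have rank $2$, where the claim is elementary for any central line arrangement in $\mathbb C^2$, as noted in \S\ref{Section:KPi1}. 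Second, as you yourself concede, the two genuinely hard steps --- the lattice property of $\mathrm{NC}(W,c)$ for the exceptional types and the identification of the tunnel cellulation of $\mathcal M(W,1)/W$ with the interval complex of the dual monoid --- are only named, not executed, so what you have is a correct road map rather than a proof.
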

\noindent
The most one could hope for in general is that $\mathcal M(W,d)$ is $K(\pi,1)$ whenever $a(d)$ is 
$n$ or $n-1$, since any complex arrangement $\mathcal A$ whose complement is $K(\pi,1)$ must 
contain a hyperplane; see~\cite[Cor. 3.2]{Bjorner:Welker} for a much stronger result.  
Since Proposition~\ref{Prop:Springer:Transitive}\eqref{Springer:1} tells us that 
$\mathcal A(W,d)$ is in fact a hyperplane arrangement when $a(d)$ is 
$n$ or $n-1$, one may also hope it is \emph{free} in the sense of Terao~\cite{Terao}, who 
established freeness for reflection arrangements $\mathcal A(W,1)$.

\begin{theorem}[Terao]\label{TeraoTheorem}
For any reflection group $W$, the reflection arrangement $\mathcal A(W,1)$ is free.
\end{theorem}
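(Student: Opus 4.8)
The plan is to prove freeness by exhibiting an explicit free basis of the module of logarithmic derivations $D(\mathcal{A}(W,1)) = \{\theta \in \mathrm{Der}(S) : \theta(\alpha_H) \in S\alpha_H \text{ for every reflecting hyperplane } H\}$, where $S = S(V^*)$ and $\alpha_H \in V^*$ is a linear form with $H = \ker\alpha_H$, and then invoking Saito's criterion: homogeneous derivations $\theta_1,\dots,\theta_n \in D(\mathcal{A}(W,1))$ form an $S$-basis if and only if the coefficient determinant $\det(\theta_i(x_j))$ equals the defining polynomial $Q = \prod_H \alpha_H$ up to a nonzero scalar. The derivations I would use are the $W$-\emph{invariant} ones. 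Concretely, the goal is to show that $D(\mathcal{A}(W,1))$ is obtained by extension of scalars from the $S^W$-module $\mathrm{Der}(S)^W$ of $W$-invariant polynomial vector fields, and that the latter is free over the polynomial ring $S^W$.

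\textbf{Freeness of the invariant derivations.} By Chevalley--Shephard--Todd, $S^W$ is a polynomial ring and $S$ is a free $S^W$-module, so $\mathrm{Der}(S)\cong S\otimes_{\mathbb C} V$ is free over $S^W$, hence Cohen--Macaulay. The Reynolds operator $\tfrac{1}{|W|}\sum_{w\in W} w$ is $S^W$-linear and exhibits $\mathrm{Der}(S)^W$ as a direct summand of $\mathrm{Der}(S)$, so $\mathrm{Der}(S)^W$ is Cohen--Macaulay of dimension $n=\dim S^W$; since $S^W$ is regular, Auslander--Buchsbaum forces $\mathrm{Der}(S)^W$ to be free of rank $n$. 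I would fix a homogeneous $S^W$-basis $\theta_1,\dots,\theta_n$, whose degrees are the coexponents of $W$.

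\textbf{Membership and the Saito determinant (the crux).} For a reflection $r$ with hyperplane $H$, the form $\alpha_H$ spans the nontrivial eigenline of $r$ on $V^*$, say with eigenvalue $\lambda\neq 1$. A short eigenvalue computation from $W$-invariance gives $r(\theta_i(\alpha_H))=\lambda\,\theta_i(\alpha_H)$, and in coordinates adapted to the eigenbasis of $r$ every $\lambda$-eigenvector of $r$ in $S$ is divisible by $\alpha_H$; thus $\theta_i(\alpha_H)\in S\alpha_H$, i.e.\ $\theta_i\in D(\mathcal{A}(W,1))$. Setting $M=(\theta_i(x_j))$ and reducing modulo each $\alpha_H$ (the $\alpha_H$-column of $M$ vanishes mod $\alpha_H$) yields $\alpha_H\mid \det M$ for every $H$; since the $\alpha_H$ are pairwise non-proportional and $Q$ is squarefree, $Q\mid\det M$. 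Faithful flatness of $S$ over $S^W$ keeps the $\theta_i$ linearly independent over $S$, so $\det M\neq 0$ and $\det M=Q\cdot g$. To finish one matches degrees: $\deg\det M=\sum_i\deg\theta_i=\sum_i(\text{coexponents})=|\mathcal{A}(W,1)|=\deg Q$, whence $g$ is a nonzero scalar, $\det M \doteq Q$, and Saito's criterion gives that $\theta_1,\dots,\theta_n$ is a free $S$-basis of $D(\mathcal{A}(W,1))$.

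The step I expect to be the main obstacle is the degree identity $\sum_i\deg\theta_i=|\mathcal{A}(W,1)|$. Everything up to divisibility produces, essentially for free, a full-rank free $S$-submodule of $D(\mathcal{A}(W,1))$; promoting this to an equality (equivalently, forcing $g$ to be constant) is exactly where the special invariant-theoretic structure of reflection groups enters in an essential way, via the values of the coexponents as extracted from Molien's formula. I would therefore isolate and cite this numerical input, after which the remaining arguments are formal consequences of Chevalley--Shephard--Todd, the Reynolds projection, and Saito's criterion.
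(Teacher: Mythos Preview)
The paper does not give its own proof of this theorem: it is quoted as a background result of Terao (reference~\cite{Terao}) and used only in the final section to settle the freeness clause of Theorem~\ref{Thm:Main:KPi1} in the classical case $a(d)=n$. So there is no in-paper argument to compare against; the relevant comparison is with Terao's original proof.

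Your outline is essentially that proof. The three ingredients---freeness of $\mathrm{Der}(S)^W$ over $S^W$, the fact that every $W$-invariant derivation is logarithmic along each reflecting hyperplane, and Saito's criterion with the numerical identity $\sum_i \deg\theta_i=|\mathcal A(W,1)|$---are exactly the pieces Terao assembles. Your eigenvalue computation showing $\alpha_H\mid\theta_i(\alpha_H)$ is correct (the precise eigenvalue is $\lambda^{-1}$ rather than $\lambda$ under the action $g\cdot f(v)=f(g^{-1}v)$, but this is immaterial since all that is used is that the eigenvalue is nontrivial, forcing the $\alpha_H$-exponent in each monomial to be positive). You are also right that the degree identity is where the invariant theory bites: it is the statement that the sum of the coexponents of $W$ equals the number of reflecting hyperplanes, a companion to the classical identity that the sum of the exponents equals the number of reflections, and both can be read off from the graded character of $S$ via Molien/Shephard--Todd--Solomon. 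Citing that identity, as you propose, is the standard way to close the argument.
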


Recall that a collection of hyperplanes $\mathcal A$ in a vector space $V$ is a \emph{free arrangement} 
if the $S$-module of derivations $\delta:S\to S$ that satisfy $\delta(\alpha_H)\in \alpha_H S$ for 
every hyperplane $H=\ker(\alpha_H)$ in $\mathcal A$ is free, where $S=S(V^*)$ as in \S\ref{Section:General}.  
In particular, 
the \emph{null arrangement} $\mathcal N_V$ of $V$, which contains no subspaces at all, is a free arrangement.  
Less obvious is that any finite collection of lines through the origin in $\mathbb C^2$ is free, 
which follows from \emph{Saito's criterion}.  (It is also straightforward~\cite[Examples 4.20 and 5.4]{Orlik:Terao} 
to see that they are $K(\pi,1)$, 
by which we mean that their complements have the property.)   
Lastly, 
recall that the product
\[\mathcal A_1\times\mathcal A_2:=\{H_1\oplus V_2\ :\ H_1\in\mathcal A_1\}\cup\{V_1\oplus H_2\ :\ H_2\in \mathcal A_2\}\]
of two arrangements is free if and only if both 
$\mathcal A_1$ and $\mathcal A_2$ are free; see~\cite{Orlik:Terao}.

\pagebreak
The crux of Theorem~\ref{Thm:Main:KPi1} is the following observation.
\begin{lemma}\label{KPi1:Lemma}
Let $W$ be a complex reflection group and let $d>0$.
\begin{enumerate}[(i)]
\item  If $a(d)=n$, then $\mathcal A(W,d)=\mathcal A(W,1)$ by Corollary~\ref{Cor:Springer}.\label{K:Pi:1:n}
\item  If $a(d)=n-1$ and $W$ is irreducible, then one of the following 
holds:\label{K:Pi:1:n1}
\begin{enumerate}[a.]
\item  $W$ has rank $2$, so that $\mathcal A(W,d)$ is a set of 
lines through the origin in $\mathbb C^2$.\label{K:Pi:1:n1:2}
\item  $W=G_{25}$, $d=2$ (or $6$), and $\mathcal A(W,d)=\mathcal A(W',1)$ for $W'=G(3,3,3)$.\label{K:Pi:1:n1:G25}
\item  $W=W(A_3)$, $d=2$, and $\mathcal A(W,d)$ is defined by $z_1z_2z_3$.\label{K:Pi:1:n1:A3}
\item  $W=G(m,p,n)$, $d\mid m$ and $d\nmid \frac{nm}{p}$, and $\mathcal A(W,d)$ is defined 
by $z_1z_2\cdots z_n$. \label{K:Pi:1:n1:Infinite}
\end{enumerate}
\end{enumerate}
\end{lemma}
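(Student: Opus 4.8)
The plan is to extract everything from Springer's theory together with the Shephard--Todd classification. The case $a(d)=n$ is immediate: by Corollary~\ref{Cor:Springer} it forces $E(W,\zeta)=\L_W$, whose inclusion-maximal proper elements are exactly the reflecting hyperplanes of $W$, so $\mathcal A(W,d)=\mathcal A(W,1)$. For the case $a(d)=n-1$ the key reformulation is: by Proposition~\ref{Prop:Springer:Transitive}\eqref{Springer:1} every maximal $\zeta$-eigenspace is a hyperplane, so $\mathcal A(W,d)$ is an honest hyperplane arrangement, which $W$ permutes transitively by Proposition~\ref{Prop:Springer:Transitive}\eqref{Springer:2}; and Proposition~\ref{Prop:Springer:Intersection} gives $\bigcup_{X\in\mathcal A(W,d)}X=\bigcap_{d\nmid d_i}H_i$, which equals the single hypersurface $\{f_j=0\}$ for the basic invariant $f_j$ attached to the unique degree $d_j$ with $d\nmid d_j$ --- unique because $a(d)=n-1$ means exactly one degree, counted with multiplicity, escapes divisibility by $d$, so $d_j$ has multiplicity one and $f_j$ is determined up to a scalar. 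Hence $f_j$ must split into linear factors and $\mathcal A(W,d)$ is precisely the set of irreducible (necessarily linear) components of $\{f_j=0\}$; the whole problem then reduces to naming this one invariant.

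I would then run through the irreducible $W$ of rank $n\ge 3$ (the case $n=1$ being trivial since then $\mathcal A(W,d)$ is the single subspace $\{0\}$, and $n=2$ being case (a)). If $W$ is exceptional, Table~\ref{Table:Exceptionals} records every pair $(W,d)$ with $a(d)\notin\{0,1,n\}$, and inspection leaves $W=G_{25}$ (degrees $6,9,12$) with $d\in\{2,6\}$ as the only rank-$\ge 3$ exceptional instance of $a(d)=n-1$; since $A(2)=A(6)=\{6,12\}$ for $G_{25}$, Theorem~\ref{Thm:Strong:Independence} collapses $d=6$ to $d=2$. The missing degree is $9$, so $f_j$ is the degree-$9$ invariant of the Hessian group; a direct (finite) computation in the Shephard--Todd coordinates --- or the classical theory of the Hesse configuration --- shows $f_j=\prod_{i<j}(z_i^3-z_j^3)$, whose zero locus is the nine reflecting hyperplanes $z_i=\xi z_j$ ($\xi^3=1$) of $G(3,3,3)$, which is case (b). (As a consistency check, $\#\mathcal A(G_{25},2)$ equals the number of involutions of $G_{25}$, and a centralizer count in $G_{25}\cong 3^{1+2}\rtimes\mathrm{SL}(2,3)$ gives $9$.)

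It remains to handle $W=W(A_n)$ and $W=G(m,p,n)$ of rank $n\ge 3$. For $W(A_n)$ the degrees are $2,3,\dots,n+1$, so $a(d)=n-1$ is the equation $\lfloor(n+1)/d\rfloor=n-1$, whose only solution with $n\ge 3$ is $n=3$, $d=2$; the missing degree is $3$, and its invariant restricted to the hyperplane $\sum_i x_i=0$ is a scalar multiple of $(x_1+x_2)(x_1+x_3)(x_1+x_4)$, a product of three linearly independent forms, so in the coordinates $z_1=x_1+x_2$, $z_2=x_1+x_3$, $z_3=x_1+x_4$ we are in case (c). For $W=G(m,p,n)$ the degrees are $m,2m,\dots,(n-1)m$ together with $nm/p$. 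If $d\mid m$, then $d$ divides all of $m,\dots,(n-1)m$, so $a(d)=n-1$ exactly when $d\nmid nm/p$ (which also forces $p\nmid n$, hence $nm/p$ has multiplicity one); its basic invariant is $(z_1\cdots z_n)^{m/p}$, with zero locus $\{z_1\cdots z_n=0\}$, and since a hyperplane contained in $\{z_1\cdots z_n=0\}$ must be one of the $\{z_i=0\}$ and all of these do occur, $\mathcal A(W,d)=\{\,z_i=0:i\in[n]\,\}$, which is case (d). If instead $d\nmid m$, put $\ell=\ell(d)=d/\gcd(m,d)\ge 2$; a count of divisible degrees shows $a(d)=n-1$ with $n\ge 3$ would force $n=3$, $\ell(d)=2$, and $d\mid 3m/p$, but $\ell(d)=2$ forces $v_2(d)=v_2(m)+1$ while $v_2(3m/p)=v_2(m)-v_2(p)\le v_2(m)$ (writing $v_2$ for the $2$-adic valuation), so $d\nmid 3m/p$, a contradiction. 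Hence $d\nmid m$ never arises, and cases (a)--(d) are exhausted.

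The step I expect to be the main obstacle is case (b): pinning down $\mathcal A(G_{25},2)$ on the nose needs either a (routine) computer verification or a careful appeal to the classical invariant theory of the Hessian group. Everything else is bookkeeping with degrees together with elementary divisibility --- the one subtle point there being to track the multiplicities in the degree multiset so that the invariants $f_j$ really are unique.
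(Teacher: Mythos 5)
Your proposal is correct and follows essentially the same route as the paper: part (i) from Corollary~\ref{Cor:Springer}, and part (ii) by using Proposition~\ref{Prop:Springer:Intersection} to identify $\bigcup_{X\in\mathcal A(W,d)}X$ with the zero locus of the unique basic invariant whose degree is not divisible by $d$, then sweeping the classification, with the $G_{25}$ identification deferred (as in the paper) to Maschke's explicit degree-$9$ invariant. The only differences are cosmetic: you treat $W(A_3)$ by restricting $e_3$ to $\sum_i x_i=0$ where the paper folds it into case (d) via $W(A_3)\cong G(2,2,3)$, and you spell out the divisibility bookkeeping ruling out $d\nmid m$ for $G(m,p,n)$, which the paper leaves to ``consulting the classification'' (one nitpick: a basic invariant of a degree occurring with multiplicity one is \emph{not} determined up to scalar, but this is harmless since Proposition~\ref{Prop:Springer:Intersection} holds for any choice of basic invariants).
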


\begin{proof}
\eqref{K:Pi:1:n} is clear.  Consulting the classification~\cite{Shephard:Todd}, 
one finds that the pairs $(W,d)$ listed in~\eqref{K:Pi:1:n1} 
are the only cases such that $a(d)=n-1$.  Case~\eqref{K:Pi:1:n1:2} is clear.  
For~\eqref{K:Pi:1:n1:A3} and~\eqref{K:Pi:1:n1:Infinite}, 
set $f_i:=e_i(z_1^m,z_2^m,\ldots, z_n^m)$ for $1\leq i\leq n-1$ and set $f_n:=e_n(z_1^{m/p},z_2^{m/p},\ldots, z_n^{m/p})$, where 
here $e_i$ denotes the $i$th \emph{elementary symmetric polynomial} in $n$ variables.  These form a set of basic invariants 
for $G(m,p,n)$, the degrees of which are $m,2m,\ldots, (n-1)m, nm/p$, and only the last invariant polynomial 
$f_n=(z_1z_2\cdots z_n)^{m/p}$ has degree $d_n=nm/p$ not divisible by $d$.  Applying 
Proposition~\ref{Prop:Springer:Intersection} thus gives~\eqref{K:Pi:1:n1:A3} and~\eqref{K:Pi:1:n1:Infinite}.  
Similarly, for $G_{25}$ we may choose~\cite[eq. 9]{Maschke} coordinates and basic invariants $f_1,f_2,f_3$ (of degrees 6,9,12) 
such that $f_2$ is the polynomial $(z_1^3-z_2^3)(z_1^3-z_3^3)(z_2^3-z_3^3)$
that (by~\eqref{Eq:Hyp:1}) defines $\mathcal A(W',1)$, then apply Proposition~\ref{Prop:Springer:Intersection}.
\end{proof}

\noindent{\bf Proof of Theorem~\ref{Thm:Main:KPi1}.}  
As noted above, if $\mathcal A(W,d)$ is $K(\pi,1)$ (resp. free), then either $a(d)=n$ or $a(d)=n-1$.  
When $a(d)=n$, the converse follows from Theorem~\ref{BessisTheorem} (resp. Theorem~\ref{TeraoTheorem}), 
so it remains to see that the converse also holds when $a(d)=n-1$.  In this case,
decompose $W$ and $V:=\mathbb C^n$ as in Proposition~\ref{Prop:Decomposition} so that 
$W\cong W_1\times\cdots\times W_k$.  Then 
$\mathcal A(W,d)=\mathcal A(W_i,d)\times \mathcal N_{V_i^\perp}$ for some index $i$, and 
thus $\mathcal A(W,d)$ is $K(\pi,1)$ (resp. free) if and only if $\mathcal A(W_i,d)$ is $K(\pi,1)$ (resp. free).  
Now employ Lemma~\ref{KPi1:Lemma}\eqref{K:Pi:1:n1}.
\qed

\begin{remark}
V. Reiner pointed out that one may alternatively consider the question of real $K(\pi,1)$ complements.  
That is, for which (uncomplexified) real reflection groups 
$W\subset \GL(\mathbb R^n)$ is the \emph{real} complement 
\[\mathcal M_{\mathbb R}(W,2)\Def\mathbb R^n\diff \textstyle{\bigcup_{g\in W} V(g,-1)} \]
a $K(\pi,1)$ space?  Because such a complement is $K(\pi,1)$ only if the arrangement has codimension 2, 
one reduces the question to Weyl groups
$W(A_4)$, $W(A_5)$, and $W(E_6)$ via the classification.  
Though we have not explored these cases, 
we note that a similar question was answered in~\cite{Davis}, where Davis, Januszkiewicz, and Scott established a conjecture 
of Khovanov~\cite{Khovanov} asserting that the real complement of 
any $W$-invariant codimension-2 subarrangement of $\L_W$ is $K(\pi,1)$ when $W$ is a real reflection group.  
Khovanov, in turn, was motivated by (and answered positively) Bj\"orner's question~\cite[\S 13.7]{Bjorner:Europe} 
of whether the complement of the \emph{$3$-equal} arrangement 
\[\mathcal A_{n,3}:=\{{\mathbf x}\in\mathbb R^n\ :\ x_i=x_j=x_k\ \ \ \text{for some triple}\ \ \ i<j<k\}\] 
is a $K(\pi,1)$ space.
\end{remark}

\begin{landscape}
\begin{table}
\begin{center}
\begin{tabular}{l@{\text{ }}l@{\hskip .5cm}c@{\hskip .5cm}l@{\hskip .5cm}l@{\hskip 1.0cm}lllll@{\hskip 1.0cm}cllll}
\toprule
&&&&&\multicolumn{5}{l}{\!\!\!Rank sizes of $E(W,\zeta_d)$}&\multicolumn{5}{l}{\!\!\!Betti numbers of $\Delta(\overline{E(W,\zeta_d)})$}\\
$W$&&$a(d)$&$d$&$W(d)$&$r_0$&$r_1$&$r_2$&$r_3$&$r_4$&\footnotesize{$|C_{-1}|$}&\footnotesize{$|C_0|$}&\footnotesize{$|C_1|$}&\footnotesize{$|C_2|$}&\footnotesize{$|C_3|$}\\
\midrule
 $G_{25}$&    &  2                              &            2,6                          &             $G_5$                         &  9& 48     &1  &&&           1& 57& 72& &   \\ 
 $G_{28}$&&      2                              &            3,6                          &             $G_5$                         &  16& 64    & 1 &&&           1& 80& 128& &    \\
        &    & 2                              &            4                            &             $G_8$                         &  12& 36    & 1 &&&           1 &48& 72&  &     \\       

 $G_{30}$&&      2                              &            3,6                          &             $G_{20}$                      &  40& 400    & 1 &&&           1& 440& 800& &     \\
        &    & 2                              &            4                            &             $G_{22}$                      &  60& 900    & 1 &&&           1& 960& 1800& &       \\     
        &    & 2                              &            5,10                         &             $G_{16}$                      &  24& 144    & 1 &&&           1& 168& 288&  &        \\

 $G_{31}$&    & 2                              &            3,6,12                       &             $G_{10}$                      &  160& 880   & 1  &&&          1& 1040& 2240& &     \\
        &    & 2                              &            8                            &             $G_{9}$                       &  240&1800   & 1 &&&           1& 2040& 4320& &         \\

 $G_{32}$&    & 2                              &            4,12                         &             $G_{10}$                      &  540& 1980  & 1  &&&          1& 2520& 7560& &      \\

 $G_{33}$&    & 2\makebox[0cm]{\, *}           &            4                            &             $G_{6}$                       &  540& 1860  & 1  &&&          1& 2400 & 5400 & &      \\
        &    & 3                              &            3,6                          &             $G_{26}$                      &  40& 600& 925 & 1 &&          1& 1565& 7920& 6720&       \\     

 $G_{34}$&    & 2\makebox[0cm]{\, *}           &            4,12                         &             $G_{10}$                      &  34020& 47250 & 1 &&&         1 & 13230 & 68040 &&      \\
 \makebox[0cm]{{\hskip -2.2cm}(type $E_6$)}$G_{35}$& &      2                              &            4                            &             $G_{8}$          &  540& 540     & 1 &&&         1& 1080& 3240&  &   \\
        &    & 2                              &            6                            &             $G_{5}$                       &  720& 1680    & 1 &&&         1& 2400& 5760& &      \\     
        &    & 3                              &            3                            &             $G_{25}$                      &  80& 480& 330 & 1 &&          1& 890& 5040& 4800&         \\
        &    & 4                              &            2                            &             $G_{28}$                      &  45 & 810& 2730 & 1611 & 1&   1& 5196& 58590& 130680& 77760         \\
 \makebox[0cm]{{\hskip -2.2cm}(type $E_7$)}$G_{36}$& &      2\makebox[0cm]{\, *}           &            4                            &             $G_{8}$          & 11340& 1890       &1&&&       1& 13230& 68040 & &   \\
        &    & 3                              &            3,6                          &             $G_{26}$                      & 2240& 18480& 13272&1 &&       1& 33992& 322560& 376320&  \\     
 \makebox[0cm]{{\hskip -2.2cm}(type $E_8$)}$G_{37}$& &      2                              &            5,10                         &             $G_{16}$         & 1161216    &?& 1 &&&          1&?&$11\times r_0$&&        \\
        &    & 2                              &            8                            &             $G_{9}$                       & 3628800    &?& 1 &&&         1&?&$18\times r_0$&&    \\  
        &    & 2                              &            12                           &             $G_{10}$                      & 2419200    &?& 1 &&&         1&?&$14\times r_0$&&    \\     
        &    & 4                              &            3,6                          &             $G_{32}$                      & 4480    & 89600 & 319200 & 103040 &1&            1&$\sum_0^3r_i$&?&?&$6720\times r_0$ \\
        &    & 4                              &            4                            &             $G_{31}$                      & 15120   & 453600 & 2205000  & 1519560 & 1 &      1&$\sum_0^3r_i$&?&?&$22680\times r_0$\\
\bottomrule
 \end{tabular}
\tableskip
\caption{
An expanded version of Table~\ref{Table:Exceptionals}.  
}
\label{Big:Table}
\end{center}
\end{table}
\end{landscape}

{}

\begin{thebibliography}{}
\bibitem{Bergeron}
N. Bergeron,
\textit{A hyperoctahedral analogue of the free Lie algebra,}
J. Combin. Theory Ser. A {\bf 58} (1991), 256-278.

\bibitem{Bessis}
D. Bessis,
\textit{Finite complex reflection arrangements are $K(\pi,1)$,}
{\tt arXiv:math/0610777v3}.

\bibitem{Bjorner:Europe}
A. Bj\"orner,
\textit{Subspace arrangements,}
In:  Proc. of the First European Congress of Mathematics (Paris 1992), vol. 1, Birkh\"auser, 1994, 321-370.

 \bibitem{Bjorner:Methods}
 A. Bj\"orner,
 \textit{Topological Methods,}
 In: Handbook of Combinatorics (ed. R. Graham, M. Gr\"otschel, L. Lov\'asz), North-Holland, Amsterdam, 1995, 1819-1872.

\bibitem{BW}
A. Bj\"orner and M. Wachs,
\textit{On lexicographically shellable posets,}
Trans. Amer. Math. Soc. {\bf 277} (1983), 323-341.

 \bibitem{Shellable:Nonpure:II}
 A. Bj\"orner and M. Wachs,
 \textit{Shellable nonpure complexes and posets. II,}
 Trans. Amer. Math. Soc. {\bf 349} (1997), 3945-3975.

\bibitem{Bjorner:Welker}
A. Bj\"orner and V. Welker,
\textit{The homology of ``$k$-equal'' manifolds and related partition lattices,}
Adv. Math. {\bf 110} (1995), 277-313.

\bibitem{BMM}
M. Brou\'e, 
\textit{Reflection groups, braid groups, Hecke algebras, finite reductive groups,}
In: Current Developments in Mathematics 2000, pages 1-107, Intl. Press, 2001.

\bibitem{CHR}
A.R. Calderbank, P. Hanlon, and R.W. Robinson,
\textit{Partitions into even and odd block size and some unusual characters of the symmetric groups,}
Proc. London Math. Soc. {\bf 53} (1986), 288-320.

\bibitem{Davis}
M. Davis, T. Januszkiewicz, and R. Scott,
\textit{Nonpositive curvature of blow-ups,}
Sel. Math., New ser. {\bf 4} (1998), 491-547.

\bibitem{Deligne}
P. Deligne,
\textit{Les immeubles des groupes de tresses g\'en\'eralis\'es,}
Invent. Math. {\bf 17} (1972), 273-302.

\bibitem{Dowling}
T.A. Dowling,
\textit{A class of geometric lattices based on finite groups,}
J. Combin. Theory Ser. B {\bf 14} (1973), 61-86.

\bibitem{Ehrenborg}
R. Ehrenborg and J.Y. Jung, 
\textit{The topology of restricted partition posets,}
Proc. of FPSAC 2011, 281-292.

\bibitem{Fadell:Neuwirth}
E. Fadell and L. Neuwirth,
\textit{Configuration spaces,}
Math. Scandinavica {\bf 10} (1962), 111-118.

\bibitem{GoreskyMacPherson}
M. Goresky and R. MacPherson,
Stratified Morse Theory, part III,
Springer-Verlag, 1988.

\bibitem{GottliebWachs}
E. Gottlieb and M.L. Wachs,
\textit{Cohomology of Dowling lattices and Lie (super)algebras,}
Adv. Appl. Math. {\bf{24}} (2000), 301-336.

\bibitem{Hanlon:Fix}
P. Hanlon,
\text{The fixed-point partition lattices,}
Pacific J. Math. {\bf 96} (1981), 319-341.

\bibitem{Hanlon}
P. Hanlon,
\textit{The characters of the wreath product group acting on the homology groups 
of the Dowling lattices,}
J. Algebra {\bf 91} (1984), 430-463.

\bibitem{Joyal}
A. Joyal,
\textit{Foncteurs analytiques et esp\'eces de structures,}
Lecture Notes in Math., Vol. 1234, Springer, Berlin / Heidelberg, 1986.

\bibitem{Khovanov}
M. Khovanov,
\textit{Real $K(\pi,1)$ arrangements from finite root systems,}
Math. Res. Let. {\bf 3} (1996), 261-274.

\bibitem{Klyachko}
A.A. Klyachko,
\textit{Lie elements in the tensor algebra,}
Siberian Math. J. {\bf 15} (1974), 914-920.

\bibitem{Lehrer:Solomon}
G.I. Lehrer and L. Solomon,
\textit{On the action of the symmetric group on the chohomology of the complement of reflecting hyperplanes,}
J. Algebra {\bf 104} (1986), 410-424.

 \bibitem{Lehrer:Springer}
 G.I. Lehrer and T.A. Springer,
 \textit{Intersection multiplicities and reflection subquotients of unitary reflection groups, I.}
 In: Geometric group theory down under (ed. J. Cossey), W. de Gruyter, 1999, 181-193.

 \bibitem{Lehrer:Springer:Canada}
 G.I. Lehrer and T.A. Springer,
 \textit{Reflection subquotients of unitary reflection groups,}
Canad. J. Math. {\bf 51} (1999), 1175-1193.

\bibitem{Lehrer}
G.I. Lehrer and D.E. Taylor,
Unitary Reflection Groups,
Australian Mathematical Society Lecture Notes, {\bf 20}.
Cambridge University Press, 2009.

\bibitem{Maschke}
H. Maschke,
\textit{Aufstellung des vollen Formensystems einer quatern\"aren Gruppe von 51840 linearen 
Substitutionen,} Math. Ann. {\bf 33} (1888), 317-344.

\bibitem{Munkres}
J.R. Munkres,
\textit{Topological results in combinatorics,}
Michigan Math. J. {\bf 31} (1984), 113-128.

\bibitem{Nakamura}
T. Nakamura,
\textit{A note on the $K(\pi,1)$-property of the orbit space of the unitary reflection group $G(m,l,n)$,}
Sci. Papers College of Arts and Sciences, Univ. Tokyo {\bf 33} (1983), 1-6.

\bibitem{Orlik}
P. Orlik and L. Solomon,
\textit{Unitary reflection groups and cohomology,}
Invent. Math. {\bf 59} (1980), 77-94.

\bibitem{Orlik:Solomon}
P. Orlik and L. Solomon,
\textit{Discriminants in the invariant theory of reflection groups,}
Nagoya Math. J. {\bf 109} (1988), 23-45.

\bibitem{Orlik:Terao}
P. Orlik and H. Terao,
Arrangements of Hyperplanes,
Springer-Verlag, 1991.

\bibitem{Quillen}
D. Quillen,
\textit{Homotopy properties of the poset of nontrivial $p$-subgroups of a group,}
Adv. Math. {\bf 28} (1978), 101-128.

\bibitem{Sagan}
B.E. Sagan,
\textit{Shellability of exponential structures,}
Order {\bf 3} (1986), 47-54.

\bibitem{Shephard:Todd}
G.C. Shephard and J.A. Todd,
\textit{Finite unitary reflection groups,}
Canad. J. Math. {\bf 6} (1954), 274-304.

\bibitem{Springer}
T.A. Springer,
\textit{Regular elements of finite reflection groups,}
Invent. Math. {\bf 25} (1974), 159-198.

\bibitem{Stanley:EC2}
R.P. Stanley,
Enumerative Combinatorics, vol. 2,
Cambridge Studies in Advanced Mathematics, {\bf 62}.  
Cambridge University Press, Cambridge, 1999.

\bibitem{Stanley:Exponential}
R.P. Stanley,
\textit{Exponential structures,}
Stud. Appl. Math. {\bf 59} (1978), 73-82.

\bibitem{Stanley:Aspects}
R.P. Stanley,
\textit{Some aspects of groups acting on finite posets,}
J. Combin. Theory Ser. A {\bf 32} (1982), 132-161.

\bibitem{Steinberg}
R. Steinberg,
\textit{Differential equations invariant under finite reflection groups,}
Trans. Amer. Math. Soc. {\bf 112} (1964), 392-400.

\bibitem{Sundaram}
S. Sundaram,
\textit{The homology representations of the symmetric group on Cohen-Macaulay subposets 
of the partition lattice,}
Adv. Math. {\bf 104} (1994), 225-296.

\bibitem{Sundaram:Welker}
S. Sundaram and V. Welker,
\textit{Group actions on arrangements and applications to configuration spaces,}
Trans. Amer. Math. Soc. {\bf 349} (1997), 1389-1420.

\bibitem{Sylvester}
G.S. Sylvester,
Continuous-Spin Ising Ferromagnets,
Ph.D. Thesis, M.I.T., 1976.

\bibitem{Terao}
H. Terao,
\textit{Free arrangements of hyperplanes and unitary reflection groups,}
Proc. Japan Acad. Ser. A {\bf 56} (1980) 389-392.

\bibitem{Wachs:Divisible}
M.L. Wachs,
\textit{A basis for the homology of the $d$-divisible partition lattice,}
Adv. Math. {\bf 117} (1996), 294-318.

\bibitem{Wachs:Tools}
M.L. Wachs,
\textit{Poset topology: tools and applications,}
IAS/Park City Math. Ser., Amer. Math. Soc. {\bf 13} (2007), 497-615.

\bibitem{Walker}
J.W. Walker,
\textit{Canonical Homeomorphisms of Posets,}
Europ. J. Combinatorics {\bf 9} (1988), 97-107.
\end{thebibliography}
\end{document}